\theoremstyle{plain}
\newtheorem{theorem}{Theorem}[section]
\newtheorem{lemma}[theorem]{Lemma}
\newtheorem{prop}[theorem]{Proposition}
\newtheorem{cor}[theorem]{Corollary}
\newtheorem{setting}[theorem]{Setting}
\theoremstyle{remark}
\theoremstyle{definition}
\newcommand{\mb}[1]{\mathbb{#1}}
\newcommand{\mc}[1]{\mathcal{#1}}
\newcommand{\mf}[1]{\mathfrak{#1}}
\newcommand{\A}{\mathbb{A}}
\newcommand{\B}{\mathbb{B}}
\newcommand{\E}{\mathbb{E}}
\renewcommand{\P}{\mathbb{P}}
\newcommand{\R}{\mathbb{R}}
\newcommand{\N}{\mathbb{N}}
\newcommand{\Sym}{\mathbb{S}}
\newcommand{\Z}{\mathbb{Z}}
\newcommand{\Borel}{\mathcal{B}}
\newcommand{\cF}{\mathcal{F}}
\newcommand{\cO}{\mathcal{O}}
\newcommand{\cV}{\mathcal{V}}
\newcommand{\smallsum}{\textstyle\sum}
\newcommand{\Exp}[1]{ \E \! \left[ #1 \right]}
\newcommand{\EXP}[1]{ \E  [ #1 ]}
\newcommand{\EXPP}[1]{ \E \big[ #1 \big]}
\newcommand{\norm}[1]{ \left\| #1 \right\| }
\newcommand{\Norm}[1]{ \| #1 \| }
\newcommand{\HSnorm}[1]{ \left\vvvert #1 \right\vvvert }
\newcommand{\qandq}{\qquad\text{and}\qquad}
\newcommand{\var}[1]{ \operatorname{Var}\!\left[ #1 \right]}
\newcommand{\Var}[1]{ \operatorname{Var} [ #1 ] }
\newcommand{\VAR}[1]{ \operatorname{Var} \big[ #1 \big]}
\newcommand{\VARRRRR}[1]{ \operatorname{Var} \Bigg[ #1 \Bigg]}
\newcommand{\Cost}{\mf C}
\newcommand{\Forall}{\forall\,}
\newcommand{\is}{\leftarrow}
\newcommand{\vast}{\bBigg@{3.5}}
\newcommand{\Vast}{\bBigg@{4}}
\newcounter{AuthorCount}
\begin{document}

\title{Overcoming the curse of dimensionality \\
	in the numerical approximation \\
	of high-dimensional semilinear \\
	elliptic partial differential equations}
	
	\author{
		Christian Beck$^{\arabic{AuthorCount}}$, 
		\stepcounter{AuthorCount}
		Lukas Gonon$^{\arabic{AuthorCount}\stepcounter{AuthorCount}}$, 
		and 
		Arnulf Jentzen$^{\arabic{AuthorCount}\stepcounter{AuthorCount},\arabic{AuthorCount}\stepcounter{AuthorCount}}$
		\bigskip
		\setcounter{AuthorCount}{1}
		\\
		\small{$^{\arabic{AuthorCount}
				\stepcounter{AuthorCount}}$ 
			Department of Mathematics, 
			ETH Zurich, 
			Z\"urich,}\\
		\small{Switzerland, 
			e-mail: christian.beck@math.ethz.ch} 
		\\
		\small{$^{\arabic{AuthorCount}
				\stepcounter{AuthorCount}}$ 
			Faculty of Mathematics and Statistics, University of St.~Gallen,}\\
		\small{St.~Gallen, 
			Switzerland, 
			e-mail:	lukas.gonon@unisg.ch}
		\\
		\small{$^{\arabic{AuthorCount}
				\stepcounter{AuthorCount}}$ 
			Department of Mathematics, 
			ETH Zurich, 
			Z\"urich,}\\
		\small{Switzerland, 
			e-mail: arnulf.jentzen@sam.math.ethz.ch} 
		\\
		\small{$^{\arabic{AuthorCount}\stepcounter{AuthorCount}}$ 
			Faculty of Mathematics and Computer Science, 
			University of M\"unster, }\\
		\small{M\"unster, Germany, e-mail: ajentzen@uni-muenster.de}		
	}

\maketitle

\begin{abstract}
Recently, so-called full-history recursive multilevel Picard (MLP) approximation schemes have been introduced and shown to overcome the curse of dimensionality in the numerical approximation of semilinear \emph{parabolic} partial differential equations (PDEs) with Lipschitz nonlinearities. 
The key contribution of this article is to introduce and analyze a new variant of MLP approximation schemes for certain semilinear \emph{elliptic} PDEs with Lipschitz nonlinearities and to prove that the proposed approximation schemes overcome the curse of dimensionality in the numerical approximation of such semilinear elliptic PDEs.
\end{abstract}

\tableofcontents

\section{Introduction}

Partial differential equations (PDEs) are widely used as a modelling tool, e.g., in the natural sciences, in the engineering sciences, or in the financial industry. Usually the exact solutions to specific PDE problems in applications can hardly be found. Thus, there is a high demand for approximative solution techniques. In the scientific literature, there are several well-established approximation methods for PDEs like finite difference methods or finite element methods which work well in low dimensions. However, such classical deterministic approximation methods cannot be used in high dimensions as they suffer from the so-called \emph{curse of dimensionality} in the sense that the computational effort for calculating an approximation grows at least exponentially in the PDE dimension $ d \in \N $. Probabilistic approximation methods like Monte Carlo averaging, on the other hand, do not  suffer from the curse of dimensionality in the numerical approximation of linear second-order parabolic PDEs as well as linear second-order elliptic PDEs. 

Recently, several probabilistic approximation methods for high-dimensional nonlinear PDEs have been proposed in hopes of overcoming the curse of dimensionality in the numerical approximation of nonlinear PDEs. We refer, e.g., to 
	\cite{DeepSplitting,
		DeepKolmogorov,
		BeckEJentzen2017MachineLearning,
		becker2018deep,
		becker2019solving,
		chan2018machine,
		chen2019deep,
		dockhorn2019discussion,
		EHanJentzen2017DeepLearning,
		EYu17,
		FujiiTakahashiTakahashi17,
		goudenege2019machine,
		HanEJentzen2017SolvingHighdimensionalPDEs,
		han2018convergence,
		HanNicaStinchcombe2020,
		henry2017deep,
		hure2019some,
		LongLuMaDong17,
		lye2019deep,
		magill2018neural,
		pham2019neural, 
		SirignanoDGM2017} 
for deep learning based approximation methods for possibly nonlinear PDEs, 
e.g., to 
	\cite{agarwal2017branching,
		belak2019probabilistic,
		bouchard2019numerical,
		bouchard2017numerical,
		Chang2016Branching,
		Labordere2012CounterpartyRiskValuation,
		HenryLabordereEtAl2019BranchingDiffusion,
		LabordereTanTouzi2014NumericalAlgorithm,
		McKean1975ApplicationBrownianMotion,
		Rasulov2019MonteCarlo,
		Skorokhod1964BranchingDiffusion,
		Watanabe1965BranchingProcess} 
for approximation methods for nonlinear second-order parabolic PDEs based on branching diffusions, 
and, e.g., to 
	\cite{AllenCahnPaper,
		hutzenthaler2016multilevel,
		EHutzenthalerJentzenKruse2019MLP,
		Welti2019GeneralisedMLP,
		HutzenthalerJentzenKruse2019GradientDependent,
		Overcoming,	
		hutzenthaler2019overcoming,
		hutzenthaler2017multi}
for full-history recursive multilevel Picard (MLP) approximation methods for nonlinear second-order parabolic PDEs. Numerical experiments raise hopes that deep learning based approximation methods are able to approximate solutions of high-dimensional nonlinear PDE problems, but at the moment there are only partial explanations for the good performance of deep learning based approximation methods in numerical experiments for high-dimensional PDEs available (cf., e.g., \cite{BernerGrohsJentzen2018,
	ElbraechterSchwab2018,
	GononGrohsJentzenKoflerSiska2019Uniform,
	GrohsWurstemberger2018,
	Grohs2019ANNCalculus,
	grohs2019deep,
	HutzenthalerJentzenKruse2019,
	JentzenSalimovaWelti2018,
	KutyniokPetersen2019,
	ReisingerZhang2019}). 
In contrast to this, there are, however, complete mathematical analyses in the scientific literature showing that branching diffusion approximation methods can efficiently solve high-dimensional semilinear parabolic PDE problems for sufficiently small time horizons. 
The branching diffusion approximation method, however, breaks down when the time horizon is not sufficiently small anymore. MLP approximation methods are, to the best of our knowledge, up to now the only approximation methods for high-dimensional semilinear parabolic PDEs which are guaranteed by rigorous mathematical proofs to overcome the curse of dimensionality for all time horizons (see \cite{AllenCahnPaper,Welti2019GeneralisedMLP,Overcoming,hutzenthaler2019overcoming}). 
The MLP approximation schemes proposed and studied so far in the scientific literature, 
however, exclusively deal with parabolic PDE problems and none of these schemes and their analyses can be applied to nonlinear elliptic PDEs. 
The key contribution of this article is to introduce and analyze MLP approximation schemes for certain semilinear elliptic PDEs and to prove for the first time that approximations of such semilinear elliptic PDEs can be obtained without suffering from the curse of dimensionality. 
In particular, the main result of this article, \cref{prop:overcoming_the_curse}, proves that the computational effort to obtain an approximation of a desired accuracy $ \varepsilon \in (0,\infty) $ grows at most polynomially in the PDE dimension $ d\in\N $ as well as in the reciprocal of the desired accuracy.
To illustrate the findings of this article in more detail, we present in the following result, \cref{thm:intro} below, a special case of \cref{prop:overcoming_the_curse}. 

\begin{theorem} \label{thm:intro}
	Let $ c,L \in [0,\infty) $, 
		$ \lambda \in (L,\infty) $,
		$ M \in \N \cap ( (\sqrt{\lambda}+\sqrt{L})^2(\sqrt{\lambda}-\sqrt{L})^{-2}, \infty ) $, 
		$ \Theta = \cup_{n\in\N} \Z^n $, 
	let $ u_d \in C^2(\R^d,\R) $, $ d \in \N $, 
	and $ f_d \in C(\R^d\times\R,\R) $, $ d \in \N $, 
	satisfy for all $ d \in \N $, $ x \in \R^d $ that 
		\begin{equation} \label{intro:elliptic_PDEs}
		(\Delta u_d)(x) = f_d(x,u_d(x)),  
		\end{equation} 
	let $ ( \Omega, \mc F, \P ) $ be a probability space, 
	let $ W^{d,\theta} \colon [0,\infty) \times \Omega \to \R^d $, $ \theta\in\Theta$, $ d \in \N $, be i.i.d.~standard Brownian motions, 
	let $ R^{\theta} \colon \Omega \to [0,\infty) $, $ \theta \in \Theta $, be i.i.d.~random variables, 
	assume that 
		$ (R^{\theta})_{\theta\in\Theta} $ and $ (W^{d,\theta})_{(d,\theta)\in\N\times\Theta}$ are independent, 
	assume for all 
		$ d \in \N $, 
		$ x=(x_1,\ldots,x_d) \in \R^d $, 
		$ v,w \in \R $, 
		$ \varepsilon \in (0,\infty) $ 
	that 
		$ | f_d(x,v) - f_d(x,w) - \lambda ( v - w ) | \leq L | v-w | $, 
		$ | f_d(x,0) | \leq c d^c [1+\sum_{j=1}^d |x_j|]^c $,  
		$ \sup_{y=(y_1,\ldots,y_d)\in\R^d} [|u_d(y)|\exp(-\varepsilon \sum_{j=1}^d |y_j|)] < \infty $,  
	and 
		$ \P(R^0 \geq \varepsilon) = e^{-\lambda \varepsilon} $, 
	let $ U^{d,\theta}_{n} = (U^{d,\theta}_{n}(x))_{x\in\R^d}\colon\R^d\times\Omega\to\R$, $\theta\in\Theta$, $d,n\in\N_0$, 
	satisfy for all 
		$ d,n \in \N $, 
		$ \theta\in\Theta $, 
		$ x \in \R^d $
	that 
		$ U^{d,\theta}_{0}(x) = 0 $ 
	and 
		\begin{equation} \label{intro:mlp}
		\begin{split}
		&U^{d,\theta}_{n}(x) 
		=  
		\frac{-1}{\lambda M^n} \left[ 
		\sum_{m=1}^{M^n} f_d\big(x+\sqrt{2}\,W^{d,(\theta,0,m)}_{R^{(\theta,0,m)}},0\big) 
		\right]
		\\
		& 
		+ \sum_{k=1}^{n-1} \frac{1}{\lambda M^{(n-k)}} 
		\Bigg[ 
		\sum_{m=1}^ {M^{(n-k)}} 
		\bigg( 
		\lambda \Big[  U^{d,(\theta,k,m)}_{k}\big(x+\sqrt{2}\,W^{d,(\theta,k,m)}_{R^{(\theta,k,m)}}\big)
		- 
		U^{d,(\theta,k,-m)}_{k-1}\big(x+\sqrt{2}\,W^{d,(\theta,k,m)}_{R^{(\theta,k,m)}}\big) \Big]
		\\& 
		\qquad \qquad \qquad \qquad \qquad - 
		\Big[   
		f_d\!\left( x+\sqrt{2}\,W^{d,(\theta,k,m)}_{R^{(\theta,k,m)}}, 
		U^{d,(\theta,k,m)}_{k}\big(x+\sqrt{2}\,W^{d,(\theta,k,m)}_{R^{(\theta,k,m)}}\big) \right)
		\\
		& \qquad \qquad \qquad \qquad \qquad \qquad  
		- f_d\!\left(  
		x+\sqrt{2}\,W^{d,(\theta,k,m)}_{R^{(\theta,k,m)}}, 
		U^{d,(\theta,k,-m)}_{k-1}\big(x+\sqrt{2}\,W^{d,(\theta,k,m)}_{R^{(\theta,k,m)}}\big)
		\right) 
		\Big]
		\bigg)
		\Bigg], 
		\end{split} 
		\end{equation}
	and let $ \Cost_{d,n} \in \R $, $ d,n\in\N_0$, 
	satisfy for all 
		$ d,n \in \N_0 $ 
	that 
		$ \Cost_{d,n} \leq (d+1) M^n + \sum_{k=1}^{n-1} M^{(n-k)} ( d 	+ 1 + \Cost_{d,k} + \Cost_{d,k-1} ) $. 
	Then there exist $ \kappa \in \R $ and $ \mf N \colon (0,1] \times \N \to \N $
	such that for all 
		$ \varepsilon \in (0,1] $, 
		$ d \in \N $
	it holds that 	
		\begin{equation} \label{intro:claim}
		\begin{split}
		& 
		\Cost_{d,\mf N_{\varepsilon,d}} \leq \kappa d^{\kappa}
		{\varepsilon}^{-\kappa} 
		\qquad \text{and} \qquad
		\sup\nolimits_{ x \in [-c,c]^d }
		\big( \EXPP{ | u_d(x) - U^{d,0}_{\mf N_{\varepsilon,d}}(x) |^2 }\big)^{\!\nicefrac12} 
		\leq \varepsilon . 
		\end{split}
		\end{equation} 
\end{theorem}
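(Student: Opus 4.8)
The plan is to identify the scheme \eqref{intro:mlp} as the multilevel Picard (MLP) Monte--Carlo approximation of a stochastic fixed point equation satisfied by $u_d$, to control the resulting mean square error by a sum of a deterministic (Picard) bias and a statistical error that both decay geometrically in $n$ with constants polynomial in $d$, and then to balance this against the postulated cost recursion. To that end I would write $g_d(x,v):=f_d(x,v)-\lambda v$, so that by hypothesis $g_d(x,\cdot)$ is $L$-Lipschitz with $|g_d(x,0)|=|f_d(x,0)|\le c d^c(1+\sum_{j=1}^d|x_j|)^c$ and \eqref{intro:elliptic_PDEs} reads $(\lambda-\Delta)u_d=-g_d(\cdot,u_d)$. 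Since $\P(R^\theta\ge t)=e^{-\lambda t}$ exhibits $R^\theta$ as an exponential killing time of intensity $\lambda$ and $\sqrt{2}\,W^{d,\theta}$ has generator $\Delta$, a Feynman--Kac / verification argument --- using $u_d\in C^2$, the assumed sub-exponential growth of $u_d$, and the finiteness of the $\ell^1$-exponential moments of $W^{d,\theta}_{R^\theta}$ obtained by first conditioning on $R^\theta$ --- shows that $u_d$ is the unique fixed point, in the weighted space $\{w\in C(\R^d,\R)\colon\|w\|_{d,\varrho}:=\sup_{x\in\R^d}(|w(x)|e^{-\varrho\sum_{j=1}^d|x_j|})<\infty\}$ for a sufficiently small $\varrho>0$, of the map $(\Phi_d w)(x):=-\tfrac1\lambda\Exp{f_d(x+\sqrt{2}\,W^{d,0}_{R^0},w(x+\sqrt{2}\,W^{d,0}_{R^0}))}+\Exp{w(x+\sqrt{2}\,W^{d,0}_{R^0})}$; one checks $\Phi_d(w_1)-\Phi_d(w_2)=-\tfrac1\lambda\Exp{g_d(\cdot,w_1)-g_d(\cdot,w_2)}$, so $\Phi_d$ is a contraction on this space with some constant $q\in(0,1)$ (for $\varrho$ small, using $L/\lambda<1$). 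Consequently the Picard iterates $u_{d,0}:=0$, $u_{d,n+1}:=\Phi_d(u_{d,n})$ satisfy $\|u_d-u_{d,n}\|_{d,\varrho}\le q^n\|u_d\|_{d,\varrho}$, and the telescoping $u_{d,n}=\Phi_d(0)+\sum_{k=1}^{n-1}(\Phi_d(u_{d,k})-\Phi_d(u_{d,k-1}))$, combined with the identity $\Phi_d(u_{d,k})(x)-\Phi_d(u_{d,k-1})(x)=\tfrac1\lambda\Exp{\lambda(u_{d,k}-u_{d,k-1})-(f_d(\cdot,u_{d,k})-f_d(\cdot,u_{d,k-1}))}$, reproduces precisely the structure of \eqref{intro:mlp}: $U^{d,\theta}_n$ is the MLP estimator of $u_{d,n}$ that spends $M^{n-k}$ independent samples at level $k$.

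\textbf{Error analysis.} Then I would set $\mathcal E_n:=\sup_{x\in\R^d}[e^{-\varrho\sum_{j=1}^d|x_j|}(\Exp{|U^{d,0}_n(x)-u_d(x)|^2})^{\nicefrac12}]$, which by the i.i.d.\ hypotheses is independent of the chosen index. Expanding $U^{d,\theta}_n-u_d$ through the telescoping above and $u_d=\Phi_d(u_d)$, the Monte--Carlo terms freshly introduced at level $k$ are, conditionally on the randomness of the strictly smaller subtrees, sums of $M^{n-k}$ centred i.i.d.\ summands (plus the deterministic Picard bias), so their second moments add in quadrature; each such summand is, up to the factor $L/\lambda$, a difference $U^{d,\cdot}_k-U^{d,\cdot}_{k-1}$ evaluated at the displaced argument $x+\sqrt{2}\,W^{d,\cdot}_{R^\cdot}$, whose weighted $L^2$-size is bounded --- via the triangle inequality and the finite exponential moments of $W^{d,\cdot}_{R^\cdot}$ --- in terms of $\mathcal E_k,\mathcal E_{k-1}$ and the geometric bias $q^k\|u_d\|_{d,\varrho}$. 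This produces a recursion of the form $\mathcal E_n^2\le M^{-n}C_0(d)+\sum_{k=1}^{n-1}M^{-(n-k)}C_1(\mathcal E_k+\mathcal E_{k-1}+C_2(d)q^{k-1})^2$, where $C_0(d)=\tfrac1{\lambda^2}\sup_x[e^{-2\varrho\sum_j|x_j|}\Exp{|f_d(x+\sqrt{2}\,W^{d,0}_{R^0},0)|^2}]$, $C_1$ collects $L^2/\lambda^2$ and a dimension-free moment factor, and $C_2(d)=\|u_d\|_{d,\varrho}$. A generating-function / Gronwall argument then shows that the hypothesis $M>(\sqrt\lambda+\sqrt L)^2(\sqrt\lambda-\sqrt L)^{-2}$, equivalently $\sqrt{L/\lambda}<(\sqrt M-1)(\sqrt M+1)^{-1}$, is precisely the borderline for this recursion to be summable, and yields $\mathcal E_n\le C_3(d)\gamma^n$ with $\gamma=\gamma(\lambda,L,M)\in(0,1)$. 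The decisive quantitative point is that choosing $\varrho\asymp 1/d$ keeps $C_0(d)$, $C_2(d)$, $C_3(d)$ and $\|u_d\|_{d,\varrho}$ polynomial in $d$: this uses $|f_d(\cdot,0)|\le c d^c(1+\sum_j|x_j|)^c$, the sub-exponential growth of $u_d$, and the facts that for $\varrho\asymp 1/d$ both $\Exp{\exp(\varrho\sum_{j=1}^d|(W^{d,0}_{R^0})_j|)}$ and $\sup_{s\ge0}[e^{-\varrho s}(1+s+d)^c]$ remain $O(1)$ and $O(d^c)$ in $d$, while $\sup_{x\in[-c,c]^d}e^{\varrho\sum_j|x_j|}\le e^{\varrho c d}=O(1)$. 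Combining the bias and statistical parts gives $\sup_{x\in[-c,c]^d}(\Exp{|u_d(x)-U^{d,0}_n(x)|^2})^{\nicefrac12}\le\mathfrak c\,d^{\mathfrak c}\gamma^n$ for some $\mathfrak c\in\R$.

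\textbf{Complexity and conclusion.} An elementary induction on the hypothesis $\Cost_{d,n}\le(d+1)M^n+\sum_{k=1}^{n-1}M^{n-k}(d+1+\Cost_{d,k}+\Cost_{d,k-1})$ yields $\Cost_{d,n}\le(d+1)\mathfrak K^n$ for a constant $\mathfrak K$ depending only on $M$. I would then define $\mathfrak N_{\varepsilon,d}:=\inf\{n\in\N\colon\mathfrak c\,d^{\mathfrak c}\gamma^n\le\varepsilon\}$, so that $\mathfrak N_{\varepsilon,d}\le 1+(\ln(1/\gamma))^{-1}\ln(\mathfrak c\,d^{\mathfrak c}/\varepsilon)$. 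Then the mean square error estimate in \eqref{intro:claim} holds by the previous step, and $\Cost_{d,\mathfrak N_{\varepsilon,d}}\le(d+1)\mathfrak K^{\mathfrak N_{\varepsilon,d}}\le(d+1)\mathfrak K\,(\mathfrak c\,d^{\mathfrak c}/\varepsilon)^{\ln\mathfrak K/\ln(1/\gamma)}$, which is bounded by $\kappa d^\kappa\varepsilon^{-\kappa}$ once $\kappa$ is chosen large enough in terms of $\mathfrak c$, $\mathfrak K$, $\gamma$. This establishes \eqref{intro:claim}.

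\textbf{Main obstacle.} The crux is the error recursion of the second step, in two respects. First, one must exploit the precise ``full-history recursive'' coupling in \eqref{intro:mlp} --- in which $U^{d,(\theta,k,m)}_k$ and $U^{d,(\theta,k,-m)}_{k-1}$ are evaluated at the \emph{same} Brownian displacement but built from \emph{independent} fresh randomness --- so that the level-$k$ increment has variance proportional to $(\mathcal E_k+\mathcal E_{k-1}+q^k)^2$ rather than $O(1)$, and then pin down that $M>(\sqrt\lambda+\sqrt L)^2(\sqrt\lambda-\sqrt L)^{-2}$ is the sharp summability threshold, which needs a careful generating-function analysis. Second --- and this is the genuinely new difficulty relative to the parabolic MLP theory --- because the killing time is merely exponential rather than a fixed horizon, the argument $x+\sqrt{2}\,W^{d,\theta}_{R^\theta}$ ranges over all of $\R^d$ with only exponential-type tails, which forces one to work throughout in the weighted space $\|\cdot\|_{d,\varrho}$ and, in order to avoid an exponential-in-$d$ blow-up of the constants, to tune $\varrho\asymp 1/d$; carrying this dimension bookkeeping through every expectation over $W^{d,\theta}_{R^\theta}$ is exactly what yields the polynomial-in-$d$ constants required by \eqref{intro:claim}.
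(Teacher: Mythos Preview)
Your overall plan is the right one and matches the paper's architecture: transform to $g_d(x,v)=\lambda v-f_d(x,v)$, identify $u_d$ as the fixed point of the associated stochastic fixed point equation, run the MLP bias--variance recursion, and balance the resulting geometric error rate against the cost recursion. The cost bound and the choice of $\mathfrak N_{\varepsilon,d}$ are handled exactly as in the paper.

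The substantive difference is the norm in which you close the error recursion. You work in the spatial weighted sup norm $\|w\|_{d,\varrho}=\sup_x|w(x)|e^{-\varrho\sum_j|x_j|}$ and track $\mathcal E_n=\sup_x e^{-\varrho\sum_j|x_j|}(\EXP{|U^{d,0}_n(x)-u_d(x)|^2})^{1/2}$. Each time you push the weight through an expectation of the form $\EXP{|U_k(x+\sqrt{2}\,W_R)-u_d(x+\sqrt{2}\,W_R)|^2}$ you pick up a factor
\[
K_d^2=\EXPPP{\exp\bigl(2\sqrt{2}\,\varrho\,{\textstyle\sum_{j=1}^d}\,|(W^{d,0}_{R^0})_j|\bigr)},
\]
so the effective Lipschitz constant in your recursion is $(L/\lambda)K_d^2$, not $L/\lambda$. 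With $\varrho\asymp 1/d$ as you propose, $K_d$ is uniformly bounded but bounded \emph{away from} $1$ (one computes $K_d^2\to\int_0^\infty \lambda e^{-\lambda t+c'\sqrt{t}}\,dt>1$ as $d\to\infty$ for some $c'>0$). Hence the claim that ``$C_1$ collects $L^2/\lambda^2$ and a dimension-free moment factor'' and that $M>(\sqrt\lambda+\sqrt L)^2(\sqrt\lambda-\sqrt L)^{-2}$ is ``precisely the borderline'' is not justified by this norm: your recursion only sums under the stronger constraint $\sqrt{L/\lambda}\,K_d<(\sqrt M-1)/(\sqrt M+1)$. This is fixable---because the hypothesis on $M$ is strict you can take $\varrho_d$ small enough (for instance $\varrho_d=o(1/d)$, still polynomial in $1/d$) to force $K_d$ arbitrarily close to $1$, and the remaining constants $C_0(d),C_2(d)$ then grow like powers of $1/\varrho_d$, still polynomially in $d$---but it is a step you need to carry out, not assert.

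The paper avoids this bookkeeping altogether by tracking, instead of a spatial weighted sup norm, the time-integrated quantity
\[
\int_0^\infty e^{(L-\lambda)t}\,\EXPP{|U^0_n(x+BW^0_t)-u(x+BW^0_t)|^2}\,dt.
\]
Because $(\nu_{t,x})_{t\ge 0}$ is a convolution semigroup, this functional is \emph{exactly} self-similar under the map induced by $y\mapsto y+BW^0_{R^0}$ (the transform $\alpha\mapsto\beta$ with $\beta(A)\ge\int_A e^{-\lambda s}\int_{[0,s]}e^{\lambda t}\,\alpha(dt)\,ds$ has $e^{(L-\lambda)t}\,dt$ as an eigenmeasure), so the recursion closes with constant $L/\lambda$ on the nose and produces the rate $M^{-1/2}\bigl(1+(1+\sqrt M)\sqrt{L/\lambda}\bigr)$ with no extraneous factor. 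The Lyapunov weight $V_\varepsilon(x)=\exp(\varepsilon(1+\|x\|^2)^{1/2})$ only enters at the very end, to bound $\int_0^\infty e^{(L-\lambda)t}\EXP{|f(x+BW_t,0)|^2}\,dt$ and to derive the a~priori bound on $u_d$ from $f_d(\cdot,0)$ via Proposition~\ref{lem:a_priori_estimate_exact_solution}; choosing $\varepsilon\asymp 1/d$ at that single step is what keeps the constants polynomial. Your route can be made to work, but the paper's choice of norm is what delivers the stated threshold on $M$ without any additional tuning.
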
 

\cref{thm:intro} is an immediate consequence of \cref{prop:overcoming_the_curse_2}. \cref{prop:overcoming_the_curse_2}, in turn, follows from \cref{prop:overcoming_the_curse}, the main result of this article. In the following we add comments on some of the mathematical objects appearing in \cref{thm:intro}. 
The functions $ u_d \colon \R^d \to \R $, $ d \in \N $, in \cref{thm:intro} above describe the solutions of the elliptic PDEs which we intend to solve approximately; see \eqref{intro:elliptic_PDEs} above. 
The functions $ f_d \colon \R^d \times \R \to \R $, $ d \in \N $, represent the nonlinearities in the elliptic PDEs in \eqref{intro:elliptic_PDEs}. The nonlinearities are assumed to satisfy certain Lipschitz conditions which are formulated with the help of the real numbers $ L \in [0,\infty) $ and $ \lambda \in (L,\infty) $ in \cref{thm:intro} above. 
The random fields $ U^{d,\theta}_n \colon \R^d \times \Omega \to \R $, $ d \in \N $, $ n \in \N_0 $, $ \theta \in \Theta $, in \eqref{intro:mlp} above describe the approximation algorithm which we employ in this work to approximately calculate the PDE solutions $ u_d \colon \R^d \to \R $, $ d \in \N $. 
The motivation  for the specific form of the random fields $ U^{d,\theta}_n \colon \R^d \times \Omega \to \R $, $ d \in \N $, $ n \in \N_0 $, $ \theta \in \Theta $, in \eqref{intro:mlp} stems from multilevel Monte Carlo approximations of Picard approximations of certain stochastic fixed point equations (SFPEs) which are satisfied by the solutions $ u_d \colon \R^d \to \R $, $ d \in \N $, of the elliptic PDEs in \eqref{intro:elliptic_PDEs}. 
The different numbers of Monte Carlo samples in \eqref{intro:mlp} are determined by the constant $ M \in \N $ which is restricted by the real numbers $ L \in [0,\infty) $ and $ \lambda \in (L,\infty) $ used to formulate the Lipschitz assumptions for the nonlinearities $ f_d \colon \R^d \times \R \to \R $, $ d \in \N $, in the PDEs in \eqref{intro:elliptic_PDEs} above. 
For every $ d,n \in \N $, $ x \in \R^d $ we think of the quantity $ \Cost_{d,n} \in \R $ in \eqref{intro:claim} in \cref{thm:intro} as the computational cost to sample one realization of $ U^{d,0}_n(x) \in \R $ (cf.~\cref{subsec:computational_effort} below). 
\cref{thm:intro} thus, roughly speaking, proves that the random fields $ U^{d,0}_n \colon \R^d \times \Omega \to \R $, $ d \in \N $, $ n \in \N_0 $, (see \eqref{intro:mlp} in \cref{thm:intro}) can achieve an approximation accuracy of size $ \varepsilon \in (0,\infty) $ with a computational cost which is bounded by a polynomial in the PDE dimension $ d \in \N $ and in the reciprocal of the desired approximation accuracy $ \varepsilon \in (0,\infty) $ (see \eqref{intro:claim} in \cref{thm:intro}). 
The real number $ c \in [0,\infty) $ is an arbitrarily large real constant which we use to express the growth assumption in \cref{thm:intro} that for all $ d \in \N $, $ x=(x_1,\ldots,x_d) \in \R^d $ it holds that $ |f_d(x,0)| \leq c d^c [1+\sum_{j=1}^d |x_j|]^c $ as well as to specify the regions $ [-c,c]^d \subseteq \R^d $, $ d \in \N $, on which we measure the $L^2$-error between the exact solutions $ u_d \colon \R^d \to \R $, $d \in \N$, of the PDEs in \eqref{intro:elliptic_PDEs} and the random approximations $U^{d,0}_{\mf N_{\varepsilon,d}} \colon \R^d \times \Omega \to \R $, $ d \in \N $, $ \varepsilon \in (0,1] $, in \eqref{intro:claim} in \cref{thm:intro} above. 

The remainder of this article is organized as follows. \cref{sec:stochastic_representation} is devoted to the study of stochastic representations for suitable viscosity solutions of certain semilinear elliptic PDEs. 
In particular, we establish in \cref{sec:stochastic_representation} a one-to-one correspondence between suitable viscosity solutions of certain semilinear elliptic PDEs and solutions of SFPEs associated with such semilinear elliptic PDEs. 
\cref{sec:mlp} focusses on the introduction and the analysis of MLP schemes for the numerical approximation of solutions of certain SFPEs. 
Owing to the results in \cref{sec:stochastic_representation} these MLP schemes are thus apt to numerically approximate suitable viscosity solutions of certain semilinear elliptic PDEs. 
The main error estimates for the MLP approximation schemes can be found in \cref{subsec:bias_variance_estimates,subsec:convergence_analysis_mlp} and a computational cost analysis for the MLP approximation schemes is carried out in \cref{subsec:computational_effort}. 
An overall complexity analysis for the MLP approximation schemes is obtained in \cref{subsec:complexity_analysis} by combining the error estimates from \cref{subsec:bias_variance_estimates,subsec:convergence_analysis_mlp} with the computational cost analysis in \cref{subsec:computational_effort}.  

\section{Stochastic representations for elliptic partial differential equations (PDEs)}
\label{sec:stochastic_representation}

In the main result of this section, \cref{cor:existence_and_uniqueness} in \cref{subsec:existence_and_uniqueness} below, we establish a stochastic representation formula of the Feynman--Kac type for suitable viscosity solutions of certain semilinear elliptic PDEs (cf.~also \cref{cor:existence_and_uniqueness_strange_lipschitz} and \cref{cor:existence_and_uniqueness_polynomial_growing_nonlinearity} in \cref{subsec:existence_and_uniqueness}). 
The established Feynman--Kac type formula will be essential in our error analysis for MLP approximations in \cref{sec:mlp} below. 
Our proof of \cref{cor:existence_and_uniqueness} is particularly based on the following three tools: 
	(i) an existence and uniqueness result for SFPEs, see \cref{cor:existence_of_fixpoint} in \cref{subsec:sfpes_and_pdes} below, 
	(ii) a result which identifies solutions of certain SFPEs as viscosity solutions of certain semilinear elliptic PDEs, see \cref{cor:fixpoint_is_viscosity_solution} in \cref{subsec:sfpes_and_pdes}, and 
	(iii) a uniqueness result for suitable viscosity solutions of certain semilinear elliptic PDEs, see \cref{prop:comparison_principle} in  \cref{subsec:comparison_principle} below. 
Our proof of the existence and uniqueness result in \cref{cor:existence_of_fixpoint} in \cref{subsec:sfpes_and_pdes} (see (i) above) is based on an elementary application of Banach's fixed point theorem which is performed in \cref{subsec:sfpes} (cf.~also \cite{StochasticFixedPointEquations}). 
The identification result for certain semilinear elliptic PDEs in \cref{cor:fixpoint_is_viscosity_solution} in \cref{subsec:sfpes_and_pdes} (see (ii) above) follows from an approximation argument which combines a well-known convergence result for viscosity solutions (cf., e.g., Hairer et al.~\cite[Lemma 4.8]{HaHuJe2017_LossOfRegularityKolmogorov} or Barles \& Perthame~\cite[Theorem A.2]{BarlesPerthame}) with the well-known construction result for classical PDE solutions in \cref{lem:classical_solution_smooth_case} below (cf., e.g., Da Prato \& Zabczyk~\cite[Theorems 7.4.5 and 7.5.1]{DaPZab2002_SecondOrderPDEsInHilbertSpaces}). Our proof of the uniqueness result for suitable viscosity solutions of certain semilinear elliptic PDEs in \cref{prop:comparison_principle} in \cref{subsec:comparison_principle} (see (iii) above) is inspired by Crandall et al.~\cite[Theorem 2.1]{CrandallEvansLions}. 

\subsection{Stochastic fixed point equations (SFPEs)} 
\label{subsec:sfpes}

\begin{lemma}\label{lem:integrability} 
	Let $ d \in \N $, $ c,\gamma,\rho \in \R $, $ \lambda \in (\rho,\infty) $, 
	let $ \mc O \subseteq \R^d $ be a non-empty open set, 
	let $ (\Omega,\mc F,\P) $ be a probability space, 
	let $ X = (X_t)_{t\in [0,\infty)}\colon [0,\infty) \times \Omega \to \mc O $ be $(\Borel([0,\infty))\otimes\mc F)$/$\Borel(\mc O)$-measurable, 
	let $ h\colon \mc O \to \R $ be $\Borel(\mc O)$/$\Borel(\R)$-measurable, 
	let $ V\colon \mc O \to (0,\infty) $ be $\Borel(\mc O)$/$\Borel((0,\infty)) $-measurable, and assume for all 
		$ t \in [0,\infty) $, 
		$ x \in \mc O $ 
	that $\EXP{e^{-\rho t}V(X_t)} \leq \gamma $ and $ |h(x)|\leq c V(x) $. 
	Then it holds that 
	\begin{equation}\label{integrability:claim}
	\Exp{\int_0^{\infty} e^{-\lambda t} |h(X_t)|\,dt} 
	\leq \frac{c\gamma}{\lambda - \rho} . 
	\end{equation}
\end{lemma}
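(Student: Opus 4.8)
The plan is to reduce the assertion to a single application of Tonelli's theorem, after which the estimate is a short computation. First I would record a sign observation that is used throughout: since $\mc O$ is non-empty, fixing some $x \in \mc O$ and combining the pointwise bound $0 \le |h(x)| \le c\,V(x)$ with the strict positivity $V(x) > 0$ forces $c \ge 0$; this is needed so that multiplying estimates by $c$ preserves the direction of inequalities.

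Next, using that $X_t(\omega) \in \mc O$ for all $t \in [0,\infty)$ and all $\omega \in \Omega$, the hypothesis $|h(x)| \le c\,V(x)$ yields the pathwise bound $e^{-\lambda t}|h(X_t)| \le c\,e^{-\lambda t}V(X_t)$. The map $[0,\infty) \times \Omega \ni (t,\omega) \mapsto e^{-\lambda t}V(X_t(\omega))$ is non-negative and $(\Borel([0,\infty)) \otimes \mc F)$/$\Borel(\R)$-measurable -- it is the product of the continuous (hence measurable) function $t \mapsto e^{-\lambda t}$ with the composition of the assumed joint measurability of $X$ and the Borel measurability of $V$ -- and similarly $(t,\omega) \mapsto |h(X_t(\omega))|$ is jointly measurable since $h$ is Borel. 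Hence Tonelli's theorem applies and gives
\[
\Exp{\int_0^\infty e^{-\lambda t}|h(X_t)|\,dt} \;\le\; c\int_0^\infty e^{-\lambda t}\,\EXP{V(X_t)}\,dt .
\]

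Finally I would feed in the integrability hypothesis: for every $t \in [0,\infty)$ one has $\EXP{V(X_t)} = e^{\rho t}\,\EXP{e^{-\rho t}V(X_t)} \le \gamma\,e^{\rho t}$, so the right-hand side above is bounded by $c\gamma\int_0^\infty e^{-(\lambda-\rho)t}\,dt$, and since $\lambda - \rho > 0$ this last integral equals $(\lambda-\rho)^{-1}$; combining these estimates yields exactly the bound $\tfrac{c\gamma}{\lambda-\rho}$ claimed in \eqref{integrability:claim}. The only step here that is more than bookkeeping is verifying the measurability and non-negativity hypotheses needed to invoke Tonelli's theorem, and both are immediate from the stated assumptions, so I do not expect any genuine obstacle in this lemma.
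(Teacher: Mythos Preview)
Your proposal is correct and follows essentially the same route as the paper: bound $|h(X_t)|$ by $cV(X_t)$, swap expectation and time-integral via Fubini/Tonelli, and then use $\EXP{V(X_t)}\le\gamma e^{\rho t}$ to reduce to $\int_0^\infty e^{-(\lambda-\rho)t}\,dt$. The only cosmetic differences are that you make the observation $c\ge 0$ explicit and invoke Tonelli by name, whereas the paper writes ``Fubini'' and leaves the sign of $c$ implicit.
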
 

\begin{proof}[Proof of \cref{lem:integrability}] 
	First, observe that 
	the hypothesis that $h\colon\mc O \to \R$ is $\Borel(\mc O)$/$\Borel(\R)$-measurable 
	and 
	the hypothesis that $ X\colon [0,\infty) \times \Omega \to \mc O $ is $ (\Borel([0,\infty))\otimes\mc F) $/$\Borel(\mc O)$-measurable 
	prove that 
	$ [0,\infty) \times \Omega \ni (t,\omega) \mapsto e^{-\lambda t} h(X_t(\omega)) \in \R $ 
	is $(\Borel([0,\infty))\otimes\mc F)$/$\Borel(\R)$-measurable. This, the hypothesis that for all 
		$x\in\mc O$ 
	it holds that $|h(x)| \leq c V(x)$, 
	the hypothesis that 
	for all $ t\in [0,\infty) $ it holds that $ \EXP{e^{-\rho t}V(X_t)} \leq \gamma $, 
	and Fubini's theorem ensure that 
	\begin{equation}
	\begin{split}
	\Exp{\int_0^{\infty} e^{-\lambda t} |h(X_t)| \,dt} 
	& \leq 
	\Exp{\int_0^{\infty} e^{-\lambda t} c V(X_t) \,dt} 
	= 
	c \int_0^{\infty} e^{-\lambda t} \,\Exp{V(X_t)}\!\,dt 
	\\
	& \leq 
	c\gamma \int_0^{\infty} e^{-(\lambda-\rho)t} \,dt
	= \frac{c\gamma}{\lambda-\rho} . 
	\end{split}
	\end{equation}
	This establishes \eqref{integrability:claim}. The proof of 
	\cref{lem:integrability} is thus completed. 
\end{proof} 

\begin{lemma} \label{bounded_case} 
	Let $ d \in \N $, $ c,\gamma,\rho \in \R $, $ \lambda \in ( \rho, \infty ) $, 
	let $ \norm{\cdot}\colon\R^d \to [0,\infty) $ be a norm on $ \R^d $, 
	let $ \mc O \subseteq \R^d $ be a non-empty open set, 
	let $ ( \Omega, \mc F, \P ) $ be a probability space, 
	for every $ n \in \N_0 $ let $ X_n = ( X_{n,t} )_{t\in [0,\infty)} \colon [0,\infty) \times \Omega \to \mc O $ 
	be $ ( \Borel([0,\infty)) \otimes \mc F ) $/$ \Borel( \mc O ) $-measurable, 
	assume for all 
		$ \varepsilon,t \in (0,\infty) $ 
	that 
		$ \limsup_{ n \to \infty } \P(\norm{X_{n,t} - X_{0,t}} \geq \varepsilon ) = 0$, 
	let $ h \in C( \mc O, \R ) $ be bounded, 
	let $ V \colon \mc O \to (0,\infty) $ be $ \Borel(\mc O) $/$ \Borel((0,\infty))$-measurable, 
	and assume for all 
		$ n \in \N_0 $, 
		$ t \in [0,\infty) $, 
		$ x \in \mc O $
	that 
		$ |h(x)| \leq c V(x) $ 
	and 
		$ \Exp{e^{-\rho t} V(X_{n,t})} \leq \gamma $.  
	Then 
	\begin{enumerate}[(i)]
		\item \label{bounded_case:item1}
		it holds for all $ n \in \N_0 $ that 
			$ \EXPP{\int_0^{\infty} e^{-\lambda t} |h(X_{n,t})|\,dt} < \infty $
		and 
		\item \label{bounded_case:item2} 
		it holds that 
		\begin{equation} \label{bounded_case:claim}
		\limsup_{ n \to \infty } 
		\left| 
		\Exp{\int_0^{\infty} e^{-\lambda t} h(X_{n,t})\,dt} 
		- 
		\Exp{\int_0^{\infty} e^{-\lambda t} h(X_{0,t})\,dt}
		\right| = 0. 
		\end{equation} 
	\end{enumerate}
\end{lemma}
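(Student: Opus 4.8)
The plan is to obtain item~(i) immediately from \cref{lem:integrability} and to establish item~(ii), i.e., \eqref{bounded_case:claim}, by an interchange-of-limits argument: use Fubini's theorem to pull the time integral out of the expectation, prove pointwise-in-$t$ convergence of the expectations $\EXPP{h(X_{n,t})}$, and then conclude with a dominated convergence argument in the time variable.

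For item~(i), I would apply \cref{lem:integrability} with $X$ replaced by $X_n$ for each fixed $n\in\N_0$: the assumptions $|h(x)|\le cV(x)$ and $\EXPP{e^{-\rho t}V(X_{n,t})}\le\gamma$, together with the measurability hypotheses on $X_n$, $h$, and $V$ and the condition $\lambda>\rho$, are precisely the hypotheses of \cref{lem:integrability}, and hence \cref{lem:integrability} yields $\Exp{\int_0^\infty e^{-\lambda t}|h(X_{n,t})|\,dt}\le \frac{c\gamma}{\lambda-\rho}<\infty$. I would record that this bound is uniform in $n\in\N_0$, since this is what will supply the dominating function later. For item~(ii), the first step is Fubini's theorem, which is legitimate by item~(i) and the joint measurability of $[0,\infty)\times\Omega\ni(t,\omega)\mapsto e^{-\lambda t}h(X_{n,t}(\omega))\in\R$ (established as in the proof of \cref{lem:integrability}); it gives, for every $n\in\N_0$,
\begin{equation}
\Exp{\int_0^\infty e^{-\lambda t}h(X_{n,t})\,dt}=\int_0^\infty e^{-\lambda t}\,\EXPP{h(X_{n,t})}\,dt ,
\end{equation}
so it remains to let $n\to\infty$ on the right-hand side.

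The second step is the claim that $\lim_{n\to\infty}\EXPP{h(X_{n,t})}=\EXPP{h(X_{0,t})}$ for every fixed $t\in[0,\infty)$, for which I would invoke the subsequence characterisation of convergence in probability. Given an arbitrary subsequence $(n_k)_{k\in\N}$, the hypothesis that $\limsup_{n\to\infty}\P(\norm{X_{n,t}-X_{0,t}}\ge\varepsilon)=0$ for all $\varepsilon\in(0,\infty)$ provides a further subsequence $(n_{k_j})_{j\in\N}$ along which $X_{n_{k_j},t}\to X_{0,t}$ $\P$-almost surely; since all of these random variables take values in the open set $\mc O$ and $h\in C(\mc O,\R)$, continuity of $h$ transfers this to $h(X_{n_{k_j},t})\to h(X_{0,t})$ $\P$-almost surely; and since $h$ is bounded, the bounded convergence theorem yields $\EXPP{h(X_{n_{k_j},t})}\to\EXPP{h(X_{0,t})}$. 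As every subsequence of $(\EXPP{h(X_{n,t})})_{n\in\N_0}$ thus admits a further subsequence converging to $\EXPP{h(X_{0,t})}$, the full sequence converges to that value. The third step is dominated convergence with respect to the Lebesgue measure on $[0,\infty)$: for all $n\in\N_0$ and $t\in[0,\infty)$ one has
\begin{equation}
\bigl|e^{-\lambda t}\EXPP{h(X_{n,t})}\bigr|\le e^{-\lambda t}\EXPP{|h(X_{n,t})|}\le c\,e^{-\lambda t}\EXPP{V(X_{n,t})}=c\,e^{-(\lambda-\rho)t}\EXPP{e^{-\rho t}V(X_{n,t})}\le c\gamma\,e^{-(\lambda-\rho)t} ,
\end{equation}
and $t\mapsto c\gamma e^{-(\lambda-\rho)t}$ is Lebesgue integrable on $[0,\infty)$ because $\lambda>\rho$; combining this domination with the pointwise convergence from the second step gives \eqref{bounded_case:claim}.

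I expect the main obstacle to be the pointwise-in-$t$ convergence $\EXPP{h(X_{n,t})}\to\EXPP{h(X_{0,t})}$: the delicate points are that $h$ is continuous only on the open set $\mc O$ (not on all of $\R^d$), so the continuous mapping step has to be run within $\mc O$, and that the hypothesis only gives convergence in probability separately for each $t$ rather than jointly in $(t,\omega)$; the subsequence argument above circumvents both by first passing to $\P$-almost sure convergence inside $\mc O$, where continuity and boundedness of $h$ may then be used directly. The remaining ingredients — the two applications of \cref{lem:integrability} and Fubini's theorem and the two dominated convergence steps — are routine.
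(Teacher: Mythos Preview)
Your proposal is correct and follows essentially the same approach as the paper: item~(i) via \cref{lem:integrability}, then Fubini, pointwise-in-$t$ convergence of $\EXPP{h(X_{n,t})}$, and dominated convergence in $t$ using the bound $c\gamma e^{-(\lambda-\rho)t}$. The only cosmetic difference is in the pointwise step: the paper invokes the continuous mapping theorem (Kallenberg) to get $h(X_{n,t})\to h(X_{0,t})$ in probability and then Vitali's convergence theorem (uniform integrability from boundedness), whereas you pass to an a.s.\ convergent sub-subsequence and use bounded convergence --- these are equivalent routes to the same conclusion.
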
 

\begin{proof}[Proof of \cref{bounded_case}]
	First, observe that \cref{lem:integrability} (with 
		$ d \is d $, 
		$ c \is c $, 
		$ \gamma \is \gamma $, 
		$ \rho \is \rho $, 
		$ \mc O \is \mc O $, 
		$ ( \Omega, \mc F, \P ) \is ( \Omega, \mc F, \P ) $,
		$ X \is X_n $, 
		$ h \is h $, 
		$ V \is V $
	for $ n \in \N_0 $
	in the notation of \cref{lem:integrability}) establishes Item~\eqref{bounded_case:item1}. 	
	Next note that Kallenberg~\cite[Lemma 4.3]{Kallenberg2002Foundations}, 
	the assumption that for all $ \varepsilon,t\in (0,\infty) $ it holds that $ \limsup_{n\to\infty} \P(\norm{X_{n,t}-X_{0,t}}\geq\varepsilon) = 0$, and the assumption that $ h \in C(\mc O,\R) $ assure that 
	for all 
		$ \varepsilon, t \in (0,\infty) $
	it holds that 
	\begin{equation} 
	\limsup_{ n \to \infty } 
	\left[ \P(|h(X_{n,t})-h(X_{0,t})|\geq\varepsilon) \right] = 0. 
	\end{equation} 
	Combining this with the assumption that $h$ is bounded and Vitali's convergence theorem implies for all 
		$ t \in (0,\infty) $ 
	that 
		\begin{equation} \label{bounded_case:convergence_on_time_slices}
		 \limsup_{n\to\infty} \Exp{\left|h(X_{n,t}) - h(X_{0,t})\right|} = 0 . 
		\end{equation} 
	Moreover, note that for all 
		$ n \in \N_0 $, 
		$ t \in (0,\infty) $ 
	it holds that 
		\begin{equation} 
		e^{-\lambda t} \, \Exp{|h(X_{n,t})|} 
		\leq c e^{-\lambda t} \, \Exp{V(X_{n,t})} 
		\leq c \gamma e^{-(\lambda-\rho) t}. 
		\end{equation} 
	Lebesgue's dominated convergence theorem and \eqref{bounded_case:convergence_on_time_slices} hence 
	establish Item~\eqref{bounded_case:item2}. The proof of \cref{bounded_case} is thus completed. 
\end{proof}

\begin{lemma} \label{lem:continuity} 
	Let $ d \in \N $, $ \rho \in \R $, $ \lambda \in (\rho,\infty) $, 
	let $ \norm{\cdot} \colon \R^d \to [0,\infty) $ be a norm on $ \R^d $, 
	let $ \mc O \subseteq \R^d $ be a non-empty open set, 
	for every $ r \in (0,\infty) $ let $ O_r \subseteq \mc O $ 
	satisfy 
		$ O_r = \{x \in \mc O \colon \norm{x} \leq r~\text{and}~\{y\in\R^d\colon \norm{y-x}<\nicefrac{1}{r}\}\subseteq \mc O \} $, 
	let $ ( \Omega, \mc F, \P ) $ be a probability space, 
	for every $ x \in \mc O $ let $ X^x = (X^x_t)_{t\in [0,\infty) } \colon [0,\infty) \times \Omega \to \mc O $ be $(\Borel([0,\infty))\otimes \mc F)$/$\Borel(\mc O)$-measurable, 
	assume for all 
		$ \varepsilon,t \in (0,\infty) $
	and all 
		$ x_n \in \mc O $, $ n \in \N_0$, 
	with 
		$ \limsup_{n\to\infty} \norm{x_n-x_0} = 0$ 
	that 
		$ \limsup_{n \to \infty} \P(\norm{X^{x_n}_t - X^{x_0}_t} \geq \varepsilon ) = 0$, 
	let 
		$ h \in C( \mc O, \R) $, 
		$ V \in C( \mc O, (0,\infty) ) $ 
	satisfy for all 
		$ t \in [0,\infty) $, 
		$ x \in \mc O $
	that 
		$ \EXP{e^{-\rho t}V(X^x_t)} \leq V(x) $
	and 
		$ \inf_{r\in (0,\infty)} [\sup_{y\in\mc O\setminus O_r} (\frac{|h(y)|}{V(y)})] = 0 $, 
	and let $ u \colon \mc O \to \R $ satisfy for all 
		$ x \in \mc O $ 
	that 
		\begin{equation} 
		u(x) = \Exp{\int_0^{\infty} e^{-\lambda t} h(X^x_t)\,dt}
		\end{equation} 
	(cf.\ \cref{lem:integrability}). Then 
	\begin{enumerate}[(i)]
		\item \label{continuity:item1} it holds that $ u \in C(\mc O,\R) $ and 
		
		\item \label{continuity:item2} it holds in the case of $\sup_{r\in (0,\infty)}[\inf_{x\in\mc O\setminus O_r} V(x)] = \infty $ that 
		\begin{equation} 
		\limsup_{r\to\infty} \left[ 
		\sup_{x\in\mc O\setminus O_r} \left(  
		\frac{|u(x)|}{V(x)}
		\right)
		\right] = 0. 
		\end{equation} 
	\end{enumerate}
\end{lemma}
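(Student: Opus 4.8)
The plan is to establish the two assertions separately, after recording two preliminary facts. First, for every $r\in(0,\infty)$ the set $O_r$ is closed and bounded in $\R^d$, hence a compact subset of $\mc O$, so (as $h,V$ are continuous and $V>0$) $\nicefrac{|h|}{V}$ is bounded on $O_r$; together with the hypothesis $\inf_{r\in(0,\infty)}[\sup_{y\in\mc O\setminus O_r}(\nicefrac{|h(y)|}{V(y)})]=0$ this shows that $c:=\sup_{y\in\mc O}(\nicefrac{|h(y)|}{V(y)})<\infty$, and hence, by \cref{lem:integrability} and $\EXP{e^{-\rho t}V(X^x_t)}\leq V(x)$, that $|u(x)|\leq\nicefrac{(c\,V(x))}{(\lambda-\rho)}<\infty$ for all $x\in\mc O$. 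Second, for every $\varepsilon\in(0,\infty)$ the same infimum hypothesis lets us fix a radius $r_\varepsilon\in(0,\infty)$ with $\sup_{y\in\mc O\setminus O_{r_\varepsilon}}(\nicefrac{|h(y)|}{V(y)})\leq\varepsilon$, and with $K_\varepsilon:=\sup_{y\in O_{r_\varepsilon}}|h(y)|\in[0,\infty)$ one then has $|h(y)|\leq\varepsilon V(y)+K_\varepsilon$ for all $y\in\mc O$.

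For the continuity assertion, I would fix $x_0\in\mc O$ and an arbitrary sequence $(x_n)_{n\in\N}$ in $\mc O$ with $\limsup_{n\to\infty}\Norm{x_n-x_0}=0$ and show $\limsup_{n\to\infty}|u(x_n)-u(x_0)|=0$; since $\mc O$ is metrizable, this yields $u\in C(\mc O,\R)$. Given $\varepsilon>0$, set $h_\varepsilon:=\min(K_\varepsilon,\max(-K_\varepsilon,h))\in C(\mc O,\R)$, which is bounded and satisfies $|h_\varepsilon|\leq|h|\leq cV$; moreover $|h-h_\varepsilon|\leq\varepsilon V$ on $\mc O$ (this difference vanishes unless $|h(y)|>K_\varepsilon$, in which case necessarily $y\notin O_{r_\varepsilon}$). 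Writing $u_\varepsilon(z):=\EXPP{\int_0^\infty e^{-\lambda t}h_\varepsilon(X^z_t)\,dt}$, \cref{lem:integrability} (used with $\varepsilon V$ in place of $h$) gives $|u(z)-u_\varepsilon(z)|\leq\nicefrac{(\varepsilon V(z))}{(\lambda-\rho)}$ for all $z\in\mc O$, while \cref{bounded_case} --- applied with the processes $X^{x_n}$, $n\in\N$, together with $X^{x_0}$, with $h\is h_\varepsilon$, $V\is V$, $\gamma\is\sup_{n\in\N_0}V(x_n)\in(0,\infty)$, and the assumed time-slice convergence $\P(\Norm{X^{x_n}_t-X^{x_0}_t}\geq\delta)\to0$ for $\delta,t\in(0,\infty)$ --- gives $\limsup_{n\to\infty}|u_\varepsilon(x_n)-u_\varepsilon(x_0)|=0$. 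The triangle inequality and $\limsup_{n\to\infty}V(x_n)=V(x_0)$ then yield $\limsup_{n\to\infty}|u(x_n)-u(x_0)|\leq\nicefrac{(2\varepsilon V(x_0))}{(\lambda-\rho)}$, and letting $\varepsilon\downarrow0$ finishes this part.

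For the decay assertion, assume $\sup_{r\in(0,\infty)}[\inf_{x\in\mc O\setminus O_r}V(x)]=\infty$, fix $\varepsilon,s\in(0,\infty)$, and split the time integral at $s$. On $[0,s]$ I would use $|h|\leq\varepsilon V+K_\varepsilon$, the bound $\EXP{e^{-\rho t}V(X^x_t)}\leq V(x)$, and $\beta_s:=\int_0^s e^{-\lambda t}\,dt<\infty$ to get $\EXPP{\int_0^s e^{-\lambda t}|h(X^x_t)|\,dt}\leq\nicefrac{(\varepsilon V(x))}{(\lambda-\rho)}+K_\varepsilon\beta_s$; on $[s,\infty)$ I would use only $|h|\leq cV$ together with $\lambda>\rho$ to get $\EXPP{\int_s^\infty e^{-\lambda t}|h(X^x_t)|\,dt}\leq\nicefrac{(c\,e^{-(\lambda-\rho)s}V(x))}{(\lambda-\rho)}$. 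Dividing by $V(x)>0$, passing to $\sup_{x\in\mc O\setminus O_r}$, and then letting $r\to\infty$ (so that $\inf_{x\in\mc O\setminus O_r}V(x)\to\infty$ by the case hypothesis and the monotonicity of $r\mapsto O_r$, which annihilates the term $\nicefrac{(K_\varepsilon\beta_s)}{\inf_{x\in\mc O\setminus O_r}V(x)}$) gives $\limsup_{r\to\infty}[\sup_{x\in\mc O\setminus O_r}(\nicefrac{|u(x)|}{V(x)})]\leq\nicefrac{\varepsilon}{(\lambda-\rho)}+\nicefrac{(c\,e^{-(\lambda-\rho)s})}{(\lambda-\rho)}$. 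As this holds for every $s\in(0,\infty)$, sending $s\to\infty$ removes the last summand, and since $\varepsilon>0$ was arbitrary the limit in question equals $0$.

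The routine parts are the measurability/Fubini bookkeeping and the compactness argument behind $c<\infty$. The step I expect to be the main obstacle is the decay assertion: the contribution coming from $K_\varepsilon$ near $O_{r_\varepsilon}$ is of order $1$, not of order $V(x)$, so it cannot be absorbed into a small multiple of $V(x)$ and must instead be handled by a finite-horizon estimate --- which is legitimate exactly because $\beta_s<\infty$ for each fixed $s$, regardless of the sign of $\lambda$ --- whereas the remaining infinite-time tail, where only the crude global bound $c$ is available, is made negligible through the factor $e^{-(\lambda-\rho)s}$; taking the three limits $r\to\infty$, then $s\to\infty$, then $\varepsilon\downarrow0$ in this order is what lets the two mechanisms cooperate. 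For the continuity assertion the only real idea is the truncation reduction to \cref{bounded_case}, the decay hypothesis being exactly what makes the truncation error $\nicefrac{(\varepsilon V(z))}{(\lambda-\rho)}$ suitably uniform along the sequence.
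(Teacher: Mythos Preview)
Your proof is correct. For Item~(i) you follow essentially the same route as the paper---approximate $h$ by bounded continuous functions, invoke \cref{bounded_case}, and pass to the limit using the weighted uniform closeness---with the only difference being that you construct the approximants explicitly by clipping, whereas the paper cites a density result (\cite[Corollary~2.4]{StochasticFixedPointEquations}) to obtain compactly supported $\mf h_n$ with $\sup_{y}\tfrac{|\mf h_n(y)-h(y)|}{V(y)}\to 0$. Your version is more self-contained; the paper's buys nothing extra here.

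For Item~(ii) the two arguments genuinely diverge. The paper first observes that, under the case hypothesis, $V$ must attain its infimum, and the Lyapunov inequality at a minimizer forces $\rho\geq 0$, hence $\lambda>0$; this then yields the global bound $|\mf u_n(x)|\leq \tfrac{1}{\lambda}\sup_y|\mf h_n(y)|$, so that $\tfrac{|\mf u_n(x)|}{V(x)}\to 0$ as $V(x)\to\infty$, and one concludes via the same weighted uniform estimate as before. Your time-splitting bypasses the $\rho\geq 0$ step entirely: the bounded piece $K_\varepsilon$ is integrated only over $[0,s]$ (so finiteness of $\int_0^s e^{-\lambda t}\,dt$ needs no sign condition on $\lambda$), and the tail $[s,\infty)$ is controlled via $\lambda-\rho>0$ alone. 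Both work under the stated hypotheses; the paper's is slightly shorter once $\lambda>0$ is in hand, while yours is more robust in that it uses only $\lambda>\rho$ throughout and avoids the auxiliary minimizer argument.
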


\begin{proof}[Proof of \cref{lem:continuity}] 
	Throughout this proof let 
		$ \mf h_n\colon \mc O \to \R $, $ n \in \N $, 
	be compactly supported continuous functions which satisfy 
		\begin{equation} \label{continuity:convergence_of_hs}
		\limsup_{n\to\infty} 
		\left[ 
		\sup_{x \in \mc O} \left( \frac{|\mf h_n(x) - h(x)|}{V(x)} \right) 
		\right] 
		= 0
		\end{equation} 
	(cf.~\cite[Corollary 2.4]{StochasticFixedPointEquations}) and let $ \mf u_n \colon \mc O \to \R $, $ n \in \N $, satisfy for all 
		$ n \in \N $, 
		$ x \in \mc O $ 
	that 
		\begin{equation} \label{continuity:definition_of_uns}
		\mf u_n(x) = \Exp{\int_0^{\infty} e^{-\lambda t} \mf h_n(X^x_t) \,dt} 
		\end{equation} 
	(cf.~\cref{lem:integrability}). Note that the assumption that $ V $ is continuous implies that $ V $ is locally bounded.  \cref{bounded_case} (with 
		$ d \is d $, 
		$ c \is \sup_{y\in\mc O} (\frac{|\mf h_n(y)|}{V(y)}) $, 
		$ \gamma \is \sup_{k\in\N_0} V(x_k) $, 
		$ \rho \is \rho $, 
		$ \lambda \is \lambda $, 
		$ \mc O \is \mc O $, 
		$ (\Omega,\mc F,\P) \is (\Omega,\mc F,\P) $, 
		$ (X_k)_{ k \in \N_0 } \is (X^{x_k})_{ k \in \N_0 } $, 
		$ h \is \mf h_n $, 
		$ V \is V $ 
	for 
		$ n \in \N $, 
		$ (x_k)_{k\in\N_0} \subseteq \mc O $ 
	with 
		$ \limsup_{k\to\infty} \norm{x_k-x_0} = 0 $
	in the notation of \cref{bounded_case}) hence ensures for all 	
		$ n \in \N $, 
		$ x_k \in \mc O $, $ k \in \N_0 $, 
	with 
		$ \limsup_{k\to\infty} \norm{x_k-x_0} = 0 $ 
	that 
		\begin{equation} 
		\limsup_{k\to\infty} | \mf u_n(x_k) - \mf u_n(x_0) | 
		=
		\limsup_{ k \to \infty} 
		\left|   \Exp{\int_0^{\infty} e^{-\lambda t} \mf h_n(X^{x_k}_t) \,dt } 
		- \Exp{\int_0^{\infty} e^{-\lambda t} \mf h_n(X^{x_0}_t) \,dt }
		\right| = 0.  
		\end{equation} 	
	Hence, we obtain for all $ n \in \N $ that $\mf u_n$ is continuous. Next observe that for all 
		$ n \in \N $, 
		$ x \in \cO $ 
	it holds that 
		\begin{equation} \label{continuity:locally_uniform_convergence}
		\begin{split}
		&
		|\mf u_n(x) - u(x)| 
		= 
		\left| \Exp{\int_0^{\infty} e^{-\lambda t} \mf h_n(X^x_t)\,dt} 
		- 
		\Exp{\int_0^{\infty} e^{-\lambda t} h(X^x_t)\,dt} 
		\right|
		\\
		& = 
		\left| 
		\Exp{\int_0^{\infty} e^{-\lambda t} (\mf h_n(X^x_t) - h(X^x_t)) \,dt}
		\right| 
		\leq 
		\Exp{\int_0^{\infty} e^{-\lambda t} |\mf h_n(X^x_t) - h(X^x_t)| \,dt }
		\\
		& = 
		\int_0^{\infty} 
		e^{-\lambda t} \, \Exp{|\mf h_n(X^x_t)-h(X^x_t)|}\!\,dt 
		= 
		\int_0^{\infty} 
		e^{-\lambda t} \,\Exp{\frac{|\mf h_n(X^x_t) - h(X^x_t)|}{V(X^x_t)}V(X^x_t)}\!\,dt 
		\\
		& 
		\leq 
		\int_0^{\infty} e^{-\lambda t} \left[ \sup_{y\in\mc O} \left(\frac{|\mf h_n(y)-h(y)|}{V(y)}\right) \right] 
		e^{\rho t}V(x) \,dt 
		= 
		\frac{V(x)}{\lambda-\rho} \left[ \sup_{y\in\mc O} \left(\frac{|\mf h_n(y)-h(y)|}{V(y)}\right) \right]\!.
		\end{split} 
		\end{equation} 
	This, \eqref{continuity:convergence_of_hs}, the assumption that $V\in C(\mc O,(0,\infty))$, and the fact that $ (\mf u_n)_{n\in\N} \subseteq C(\mc O,\R) $ imply that $u$ is continuous. This establishes Item~\eqref{continuity:item1}. Next we prove Item~\eqref{continuity:item2}. For this we assume that $\sup_{r\in (0,\infty)} [\inf_{x\in\mc O\setminus O_r} V(x)] = \infty$. The assumption that $ V \in C(\mc O,(0,\infty)) $ hence assures that $ \{x\in\mc O\colon V(x) = \inf_{y\in\mc O} V(y) \} \neq \emptyset $. The assumption that for all $ t \in [0,\infty) $, $ x \in \mc O $ it holds that $ e^{-\rho t}\EXP{V(X^x_t)} \leq V(x) $ therefore implies for all $ t \in (0,\infty) $ that $ 0 < e^{-\rho t} \min_{y\in\mc O} V(y) \leq \min_{y\in\mc O} V(y) $. Hence, we obtain that $ 0 \leq \rho < \lambda $. 
	This, the fact that for every $ n \in \N $ it holds that $ \mf h_n \colon \mc O \to \R $ is bounded, the assumption that $\sup_{r\in (0,\infty)} [\inf_{x\in\mc O\setminus O_r} V(x)] = \infty$, and \eqref{continuity:definition_of_uns}
	demonstrate for all $ n \in \N $ that 
		\begin{equation} 
		\begin{split}
		\limsup_{r\to\infty} \left[ \sup_{x\in\mc O\setminus O_r} \left(\frac{|\mf u_n(x)|}{V(x)}\right) \right] 
		& \leq 
		\left[ \sup_{x\in\mc O} |\mf u_n(x)|\right] 
		\limsup_{r\to\infty} \left[  \sup_{x\in\mc O\setminus O_r} \left( \frac{1}{V(x)} \right) \right] 
		\\
		& \leq \frac{1}{\lambda} \left( \sup_{x\in\mc O} |\mf h_n(x)| \right)  \left( \limsup_{r\to\infty} \left[  \sup_{x\in\mc O\setminus O_r} \left( \frac{1}{V(x)} \right) \right] \right) = 0. 
		\end{split}
		\end{equation} 
	Combining this with \eqref{continuity:locally_uniform_convergence} and \eqref{continuity:convergence_of_hs} ensures that 
		\begin{equation} 
		\limsup_{r\to\infty} \left[ \sup_{x\in\mc O\setminus O_r} 
		\left( \frac{|u(x)|}{V(x)} \right) \right] 
		= 0. 
		\end{equation} 
	This establishes Item~\eqref{continuity:item2}. The proof of \cref{lem:continuity} is thus completed. 
\end{proof} 

\begin{cor}\label{mapping_welldefinedness} 
	Let $ d \in \N $, 
		$ L,\rho \in \R $, 
		$ \lambda \in (\rho,\infty) $, 
	let $ \norm{\cdot}\colon\R^d \to [0,\infty) $ be a norm on $\R^d$, 
	let $ \mc O \subseteq \R^d $ be a non-empty open set, 
	for every $ r \in (0,\infty) $ let $ O_r \subseteq \mc O $ satisfy 
		$ O_r = \{ x \in \cO\colon \norm{x}\leq r~\text{and}~\{y\in\R^d\colon \norm{y-x} < \nicefrac{1}{r}\} \subseteq \cO\} $, 
	let $ ( \Omega, \mc F, \P ) $ be a probability space, 
	for every $ x \in \mc O $ let 
		$ X^x = (X^x_t)_{t\in [0,\infty)}\colon [0,\infty) \times \Omega \to \mc O $ be $ (\Borel([0,\infty))\otimes \mc F)$/$\Borel(\mc O)$-measurable, 
	assume for all 
		$ \varepsilon,t \in (0,\infty) $ 
	and all $ x_n \in \mc O $, $ n \in \N_0 $, with $ \limsup_{n\to\infty} \norm{x_n-x_0} = 0 $ that $ \limsup_{n\to\infty} \P(\norm{X^{x_n}_t-X^{x_0}_t}\geq\varepsilon) = 0 $, 
	let $ f \in C( \mc O\times\R, \R ) $, 
		$ u \in C( \mc O, \R )$, 
		$ V \in C( \mc O, (0,\infty) ) $ 
	satisfy for all 
	 	$ t \in [0,\infty) $, 
		$ x \in \mc O $ 
	that 
		$ \EXP{e^{-\rho t}V(X^x_t)} \leq V(x) $,
	and assume for all 
		$ x \in \mc O $, 
		$ v,w \in \R $
	that 
		$ \inf_{r\in (0,\infty)} [\sup_{y\in\mc O\setminus O_r} (\frac{|f(y,0)|+|u(y)|}{V(y)})] = 0 $
	and 
		$ |f(x,v)-f(x,w)|\leq L|v-w| $. 
	Then 
	\begin{enumerate}[(i)]
		\item \label{mapping_welldefinedness:item1} it holds for all 
			$ x \in \mc O $ 
		that 
			$\EXP{\int_0^{\infty} e^{-\lambda t} |f(X^x_t,u(X^x_t))|\,dt}<\infty$, 
		\item \label{mapping_welldefinedness:item2} it holds that 
			\begin{equation} 
			\mc O \ni x \mapsto \Exp{\int_0^{\infty} e^{-\lambda t} f(X^x_t,u(X^x_t))\,dt} \in \R
			\end{equation} 
		is continuous, and 
		\item \label{mapping_welldefinedness:item3} it holds in the case of $ \sup_{r\in (0,\infty)}[\inf_{x\in\mc O\setminus O_r} V(x)] = \infty $ that 
		\begin{equation} 
		\lim_{r\to\infty} \left[ 
		\sup_{x\in\mc O\setminus O_r} \left(  
		\frac{|\Exp{\int_0^{\infty} e^{-\lambda t} f(X^x_t,u(X^x_t))\,dt}|}{V(x)}
		\right) 
		\right] = 0.
	\end{equation}
	\end{enumerate}
\end{cor}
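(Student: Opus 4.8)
The strategy is to apply \cref{lem:integrability} and \cref{lem:continuity} to the single‑variable function $h\colon\mc O\to\R$ given by $h(x)=f(x,u(x))$ for $x\in\mc O$; since $f\in C(\mc O\times\R,\R)$ and $u\in C(\mc O,\R)$, the composition $h$ is continuous, so $h\in C(\mc O,\R)$. The only genuine preparatory step is to transfer the growth/tail hypothesis from the pair $(f(\cdot,0),u)$ to $h$. For this I would use the Lipschitz assumption on $f$ to estimate, for every $x\in\mc O$,
\[
|h(x)|=|f(x,u(x))|\le|f(x,u(x))-f(x,0)|+|f(x,0)|\le L|u(x)|+|f(x,0)|\le(L+1)\big(|u(x)|+|f(x,0)|\big),
\]
so that $\sup_{y\in\mc O\setminus O_r}(|h(y)|/V(y))\le(L+1)\sup_{y\in\mc O\setminus O_r}((|f(y,0)|+|u(y)|)/V(y))$ for every $r\in(0,\infty)$; combining this with the hypothesis $\inf_{r\in(0,\infty)}[\sup_{y\in\mc O\setminus O_r}((|f(y,0)|+|u(y)|)/V(y))]=0$ yields $\inf_{r\in(0,\infty)}[\sup_{y\in\mc O\setminus O_r}(|h(y)|/V(y))]=0$.

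Next I would observe that each $O_r$, $r\in(0,\infty)$, is a closed and bounded, hence compact, subset of $\R^d$, so the continuous function $\mc O\ni y\mapsto|h(y)|/V(y)\in[0,\infty)$ is bounded on $O_r$; picking $r$ large enough that this function is also bounded on $\mc O\setminus O_r$ (using the tail estimate just derived) shows that $c:=\sup_{y\in\mc O}(|h(y)|/V(y))<\infty$, i.e.\ $|h(y)|\le cV(y)$ for all $y\in\mc O$. Then \cref{lem:integrability} (applied for each $x\in\mc O$ with $\gamma\is V(x)$, $c\is c$, $X\is X^x$, $h\is h$ in its notation, using the hypothesis $\EXP{e^{-\rho t}V(X^x_t)}\le V(x)$) gives $\Exp{\int_0^{\infty}e^{-\lambda t}|f(X^x_t,u(X^x_t))|\,dt}=\Exp{\int_0^{\infty}e^{-\lambda t}|h(X^x_t)|\,dt}<\infty$, which is Item~\eqref{mapping_welldefinedness:item1}.

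Finally, to obtain Items~\eqref{mapping_welldefinedness:item2} and~\eqref{mapping_welldefinedness:item3} I would set $\mf v(x)=\Exp{\int_0^{\infty}e^{-\lambda t}f(X^x_t,u(X^x_t))\,dt}=\Exp{\int_0^{\infty}e^{-\lambda t}h(X^x_t)\,dt}$ for $x\in\mc O$ (well‑defined by Item~\eqref{mapping_welldefinedness:item1}) and apply \cref{lem:continuity} with $h\is h$, $V\is V$, and with the function called $u$ there replaced by $\mf v$. Its hypotheses are met because $h\in C(\mc O,\R)$, $V\in C(\mc O,(0,\infty))$, $\EXP{e^{-\rho t}V(X^x_t)}\le V(x)$, the convergence‑in‑probability‑on‑time‑slices assumption, and $\inf_{r\in(0,\infty)}[\sup_{y\in\mc O\setminus O_r}(|h(y)|/V(y))]=0$ have all been established or carried over verbatim. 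Item~\eqref{continuity:item1} of \cref{lem:continuity} then gives $\mf v\in C(\mc O,\R)$, which is Item~\eqref{mapping_welldefinedness:item2}, and Item~\eqref{continuity:item2} of \cref{lem:continuity} gives Item~\eqref{mapping_welldefinedness:item3} in the case $\sup_{r\in(0,\infty)}[\inf_{x\in\mc O\setminus O_r}V(x)]=\infty$.

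In short, the proof is essentially bookkeeping layered on top of \cref{lem:integrability} and \cref{lem:continuity}; the only step carrying real content is the passage from the growth condition on $f(\cdot,0)$ and $u$ to the growth condition on the composite nonlinearity $h=f(\cdot,u(\cdot))$ via the triangle inequality and the Lipschitz bound. I do not expect a serious obstacle, the one point requiring care being that continuity of $h$ together with compactness of the sets $O_r$ is what upgrades the tail estimate to the global bound $|h|\le cV$ that \cref{lem:integrability} demands.
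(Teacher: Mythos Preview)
Your proposal is correct and follows essentially the same approach as the paper's proof: define $h(x)=f(x,u(x))$, use the Lipschitz bound and the tail hypothesis on $(f(\cdot,0),u)$ to obtain the tail estimate for $h/V$, upgrade this to a global bound via continuity (the paper phrases this step more tersely, but your compactness-of-$O_r$ argument is exactly what underlies it), and then invoke \cref{lem:integrability} for Item~\eqref{mapping_welldefinedness:item1} and \cref{lem:continuity} for Items~\eqref{mapping_welldefinedness:item2} and~\eqref{mapping_welldefinedness:item3}.
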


\begin{proof}[Proof of \cref{mapping_welldefinedness}] 
	First, observe that $ \mc O \ni x \mapsto f(x,u(x)) \in \R $ is a continuous function which satisfies for all $ x \in \mc O $ that 
		\begin{equation}
		|f(x,u(x))| 
		\leq |f(x,0)| + |f(x,u(x))-f(x,0)| 
		\leq |f(x,0)| + L|u(x)|. 
		\end{equation} 
	The hypothesis that $\inf_{r\in (0,\infty)} [\sup_{x\in\mc O\setminus O_r}( \frac{|f(x,0)|+|u(x)|}{V(x)})] = 0$ hence ensures that 
		\begin{equation} \label{mapping_welldefinedness:growth_condition_check}
		\inf_{r\in (0,\infty)} 
		\left[ 
		\sup_{x\in\mc O\setminus O_r} 
		\left(  
		\frac{|f(x,u(x))|}{V(x)} 
		\right) 
		\right] 
		\leq 
		\inf_{r\in (0,\infty)} 
		\left[ 
		\sup_{x\in\mc O\setminus O_r} 
		\left( 
		\frac{|f(x,0)|}{V(x)} 
		+ 
		L \frac{|u(x)|}{V(x)}
		\right) 
		\right] = 0. 
		\end{equation} 
	This, the hypothesis that 
		$ f \in C(\mc O\times\R,\R) $, 
	the hypothesis that	
		$ u \in C(\mc O,\R)$, 
	and the hypothesis that
		$ V \in C(\mc O,(0,\infty)) $
	imply that 
		$ \sup_{x\in\mc O} (\frac{|f(x,u(x))|}{V(x)}) < \infty $.  
	\cref{lem:integrability} (with 
		$ d \is d $, 
		$ c \is \sup_{y\in\mc O} (\frac{|f(y,u(y))|}{V(y)}) $,
		$ \gamma \is V(x) $, 
		$ \rho \is \rho $, 
		$ \lambda \is \lambda $, 
		$ \mc O \is \mc O $, 
		$ ( \Omega, \mc F, \P ) \is ( \Omega, \mc F, \P ) $, 
		$ X \is X^x $, 
		$ h \is ( \mc O \ni y \mapsto f(y,u(y)) \in \R ) $ 
	for 
		$ x \in \mc O $
	in the notation of \cref{lem:integrability}) therefore establishes Item~\eqref{mapping_welldefinedness:item1}. In addition, note that \eqref{mapping_welldefinedness:growth_condition_check} and \cref{lem:continuity} (with 
		$ d \is d$, 
		$ \rho \is \rho$, 
		$ \lambda \is \lambda$, 
		$ \norm{\cdot} \is \norm{\cdot}$, 
		$ \mc O \is \mc O$, 
		$ (\Omega,\mc F,\P) \is (\Omega,\mc F,\P)$, 
		$ (X^x)_{ x \in \mc O } \is (X^x)_{ x \in \mc O }$, 
		$ h \is (\mc O \ni y \mapsto f(y,u(y)) \in \R)$, 
		$ V \is V$ 
	in the notation of \cref{lem:continuity}) establish Items~\eqref{mapping_welldefinedness:item2} and \eqref{mapping_welldefinedness:item3}. The proof of \cref{mapping_welldefinedness} is thus completed.  
\end{proof} 

\begin{lemma} \label{lipschitz_property}
	Let 
		$d\in\N$, 
		$L,\rho\in\R$,
	let 
		$\cO \subseteq  \R^d$ 
	be a non-empty open set, 
	let 
		$(\Omega,\mathcal{F},\P)$ 
	be a probability space, 
	for every 
		$x\in\cO$	
	let 
		$
		X^{x}=(X^{x}_t)_{t\in [0,\infty)}\colon [0,\infty)\times\Omega\to\cO$ 
	be 
		$(\Borel([0,\infty)) \otimes \mathcal{F})$/$\Borel(\cO)$-measurable, 
	let 
		$V\colon \cO\to (0,\infty) $ 
	be 
		$\Borel(\cO)$/$\Borel((0,\infty))$-measurable, 
	assume for all 
		$t\in [0,\infty)$, 
		$x\in\cO$ 
	that 
		$
		\Exp{e^{-\rho t}V(X^{x}_{t})}
		\leq 
		V(x)
		$, 
	let 
		$f\colon \cO\times\R\to\R$ 
	be  		
		$\Borel(\cO\times\R)$/$\Borel(\R)$-measurable, 
	assume for all 
		$x\in\cO$, 
		$v,w\in \R$ 
	that 
		$|f(x,v)-f(x,w)| \leq L | v -  w |$, 
	let 
		$v,w\colon \cO\to\R$ be 
		$\Borel(\cO)$/$\Borel(\R)$-measurable, 
	and assume that
		\begin{equation}\label{lipschitz_property:growth_assumption}
		\sup_{x\in \cO} 
		\left[
		\frac{ |v(x)| + |w(x)| }{V(x)} 
		\right] 
		< 
		\infty. 
		\end{equation}
	Then it holds for all 
		$\lambda \in (\rho,\infty)$, 
		$x\in\cO$ 
	that 
		\begin{equation}
		\label{lipschitz_property:claim}
		\begin{split}
		& 
		\frac{1}{V(x)}
		\Exp{
			\int_0^{\infty} e^{-\lambda s}\left| 
			f\big( X^{x}_{s}, v(X^{x}_{s}) \big) - 
			f\big( X^{x}_{s}, w(X^{x}_{s}) \big) 
			\right| \,ds
		}
		\leq 
		\frac{L}{\lambda-\rho} 
		\left[ 
		\sup_{y\in \cO} 
		\left( 
		\frac{|v(y) - w(y)|}{V(y)}
		\right) 
		\right]. 
		\end{split}
		\end{equation}
\end{lemma}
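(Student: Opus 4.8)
The plan is to reduce \eqref{lipschitz_property:claim} to a direct application of \cref{lem:integrability}. First I would introduce the auxiliary function $\mf h\colon\cO\to\R$ given by $\mf h(y)=f(y,v(y))-f(y,w(y))$ for $y\in\cO$ and verify that it is $\Borel(\cO)$/$\Borel(\R)$-measurable. Since $\cO$ and $\R$ are second countable we have $\Borel(\cO\times\R)=\Borel(\cO)\otimes\Borel(\R)$, so the map $\cO\ni y\mapsto(y,v(y))\in\cO\times\R$ is $\Borel(\cO)$/$\Borel(\cO\times\R)$-measurable, and composing with the $\Borel(\cO\times\R)$/$\Borel(\R)$-measurable function $f$ shows that $\cO\ni y\mapsto f(y,v(y))\in\R$ is $\Borel(\cO)$/$\Borel(\R)$-measurable; the same argument applies to $\cO\ni y\mapsto f(y,w(y))\in\R$, and hence $\mf h$ is measurable.

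Next I would set $\mf c=L\big[\sup_{y\in\cO}\big(\tfrac{|v(y)-w(y)|}{V(y)}\big)\big]$ and note, using the triangle inequality together with the growth assumption \eqref{lipschitz_property:growth_assumption}, that $\mf c\in[0,\infty)$. The hypothesis that $|f(x,v)-f(x,w)|\leq L|v-w|$ then yields for every $y\in\cO$ that $|\mf h(y)|=|f(y,v(y))-f(y,w(y))|\leq L|v(y)-w(y)|=L\,\tfrac{|v(y)-w(y)|}{V(y)}\,V(y)\leq \mf c\,V(y)$, so $\mf h$ is dominated by $\mf c\,V$ on $\cO$.

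Then, for fixed $\lambda\in(\rho,\infty)$ and $x\in\cO$, I would invoke \cref{lem:integrability} with $d\is d$, $c\is \mf c$, $\gamma\is V(x)$, $\rho\is\rho$, $\lambda\is\lambda$, $\mc O\is\mc O$, $(\Omega,\mc F,\P)\is(\Omega,\mc F,\P)$, $X\is X^{x}$, $h\is\mf h$, and $V\is V$, observing that the assumption $\Exp{e^{-\rho t}V(X^{x}_{t})}\leq V(x)$ provides exactly the required bound $\Exp{e^{-\rho t}V(X_t)}\leq\gamma$. This gives $\Exp{\int_0^{\infty}e^{-\lambda s}|\mf h(X^{x}_{s})|\,ds}\leq\tfrac{\mf c\,V(x)}{\lambda-\rho}$, and dividing by $V(x)\in(0,\infty)$ and unravelling the definitions of $\mf h$ and $\mf c$ yields \eqref{lipschitz_property:claim}.

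Since the argument reduces entirely to \cref{lem:integrability}, there is no genuine obstacle; the only point that requires a moment's attention is the measurability of $y\mapsto f(y,v(y))$, which relies on the identity $\Borel(\cO\times\R)=\Borel(\cO)\otimes\Borel(\R)$ for the second countable spaces $\cO$ and $\R$.
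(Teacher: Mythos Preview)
Your proposal is correct and follows essentially the same approach as the paper: both use the Lipschitz bound $|f(\cdot,v)-f(\cdot,w)|\leq L|v-w|$, factor out $\sup_{y}\tfrac{|v(y)-w(y)|}{V(y)}$, apply the Lyapunov estimate $\Exp{V(X^{x}_{t})}\leq e^{\rho t}V(x)$, and integrate $e^{-(\lambda-\rho)t}$. The only difference is that you invoke \cref{lem:integrability} to package the final estimate, whereas the paper carries out that same Fubini computation inline.
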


\begin{proof}[Proof of \cref{lipschitz_property}]
	First, note that the fact that 
	$f\colon \cO\times\R\to\R$ 
	is $\Borel(\cO\times\R)$/$\Borel(\R)$-measurable, 
	the fact that 
	$v,w\colon \cO\to\R$ 
	are $\Borel(\cO)$/$\Borel(\R)$-measurable, 
	and the fact that for all 
	$x\in\cO$ 
	it holds that 
	$X^{x} \colon [0,\infty)\times\Omega\to\cO$ 
	is $(\Borel([0,\infty))\otimes\cF)$/$\Borel(\cO)$-measurable ensure that for all 
	$x\in\cO$ 
	it holds that
	\begin{equation}
	[0,\infty) \times \Omega 
	\ni (t,\omega)  
	\mapsto 
	\left| 
	f\big( 
	X^{x}_{t}(\omega),
	v(X^{x}_{t}(\omega))
	\big) 
	- 
	f\big( 
	X^{x}_{t}(\omega),
	w(X^{x}_{t}(\omega))
	\big) 
	\right|\in\R
	\end{equation}
	is $(\Borel([0,\infty)) \otimes \mathcal{F})$/$\Borel(\R)$-measurable. Next observe that the hypothesis that for all 
		$t\in [0,\infty)$, 
		$x\in \cO$ 
	it holds that 
		$\EXP{e^{-\rho t}V(X^{x}_t)} \leq V(x)$, 
	the hypothesis that for all 
		$x\in \cO$, 
		$a,b\in\R$ 
	it holds that 
		$
		|f(x,a)-f(x,b)| \leq L|a-b|
		$, 
	and Fubini's theorem ensure that for all
		$\lambda\in (\rho,\infty)$, 
		$x\in\cO$
	it holds that 
		\begin{equation}
		\begin{split}
		& 
		\frac{1}{V(x)}\Exp{ 
			\int_0^{\infty} e^{-\lambda t}\left| 
			f\big( X^{x}_{t}, v(X^{x}_{t}) \big) 
			- 
			f\big( X^{x}_{t}, w(X^{x}_{t}) \big) 
			\right| \,dt
		}
		\\
		& \leq 
		\Exp{ 
			\int_0^\infty 
			Le^{-\lambda t} \frac{\left| v(X^{x}_{t}) - w(X^{x}_{t})\right|}{V(x)} \,dt 
		}
		= 
		L 
		\int_0^{\infty} e^{-\lambda t}\, \Exp{
			\left( \frac{ |v(X^{x}_{t})-w(X^{x}_{t})| }{ V(X^{x}_{t}) } \right) \!
			\left( \frac{ V(X^{x}_{t}) }{ V(x) } \right)
		}\!\,dt
		\\
		& 
		\leq 
		L
		\int_0^{\infty} 
		e^{-\lambda t}
		\left[ 
		\sup_{y\in\cO} 
			\left( 
		\frac{|v(y)-w(y)|}{V(y)}
			\right) 
		\right]
		\left[\frac{\Exp{V(X^{x}_{t})}}{V(x)}\right]\!\,dt
		\\
		& 
		\leq 
		L \left[ 
		\sup_{y\in\cO} 
			\left( 
		\frac{|v(y)-w(y)|}{V(y)}
			\right) 
		\right]
		\left[ 
		\int_0^{\infty} 
		e^{-(\lambda-\rho) t}
		\,dt \right]
		= \frac{L}{\lambda-\rho} 
		\left[ 
		\sup_{y\in\cO} 
		\left( 
		\frac{|v(y)-w(y)|}{V(y)}
		\right) 
		\right]\!. 
		\end{split}
		\end{equation}
	This establishes \eqref{lipschitz_property:claim}. 
	The proof of \cref{lipschitz_property} is thus completed.
\end{proof}

\begin{prop} \label{prop:existence_of_abstract_fixpoint} 
	Let $ d \in \N $, $ L, \rho \in \R $, $ \lambda \in (L+\rho,\infty) $,  
	let $\norm{\cdot}\colon \R^d \to [0,\infty) $ be a norm on $\R^d$, 
	let $ \mc O \subseteq \R^d $ be a non-empty open set, 
	for every $ r \in (0,\infty) $ let $ O_r \subseteq \mc O $ satisfy $ O_r = \{x \in \mc O \colon \norm{x}\leq r~\text{and}~\{y\in\R^d\colon \norm{y-x}<\nicefrac{1}{r}\}\subseteq\mc O\} $, 
	let $ ( \Omega, \mc F, \mb P ) $ be a probability space, 
	for every $ x \in \mc O $ let $ X^x = (X^x_t)_{t\in [0,\infty)}\colon [0,\infty) \times \Omega \to \mc O $ 
	be $(\Borel([0,\infty))\otimes\mc F)$/$\Borel(\mc O)$-measurable, 
	assume for all 
		$ \varepsilon,t \in (0,\infty) $ 
	and all 
		$ x_n \in \mc O$, $ n \in \N_0 $, 
	with 
		$ \limsup_{n\to\infty} \norm{x_n-x_0} = 0 $ 
	that 
		$ \limsup_{n\to\infty} \P(\norm{X^{x_n}_t-X^{x_0}_t}\geq\varepsilon) = 0$, 
	let $ V \in C( \mc O, (0,\infty) ) $, $ f \in C( \mc O \times \R, \R) $ satisfy for all 
		$ t\in [0,\infty) $, 
		$ x \in \mc O $, 
		$ v,w \in \R $
	that 
		$ \EXP{e^{-\rho t} V(X^x_t)} \leq V(x) $ 
	and 
		$ |f(x,v) - f(x,w)| \leq L|v-w| $, 
	and assume that 
		$ \inf_{r\in (0,\infty)} [\sup_{x\in\mc O\setminus O_r} (\frac{|f(x,0)|}{V(x)})] = 0 $ 
	and 
		$ \sup_{r \in (0,\infty) } [\inf_{x\in\mc O\setminus O_r} V(x)] = \infty $. 
	Then there exists a unique $u \in C(\mc O,\R)$ such that 
	\begin{enumerate}[(i)] 
		\item\label{existence_of_abstract_fixpoint:item1} it holds that 
		\begin{equation} 
		\limsup_{r\to\infty} \left[ \sup_{x\in \mc O \setminus O_r} \left( \frac{|u(x)|}{V(x)} \right) \right] = 0  
		\end{equation} 
		and 
		\item\label{existence_of_abstract_fixpoint:item2} it holds for all $ x \in \mc O $ that 
		$ \EXPP{ \int_0^{\infty} e^{-\lambda t} |f(X^x_t,u(X^x_t))|\,dt} < \infty $  and 
		\begin{equation}
		u(x) = \Exp{\int_0^{\infty} e^{-\lambda t} f(X^x_t,u(X^x_t))\,dt}\!. 
		\end{equation}
	\end{enumerate}
\end{prop}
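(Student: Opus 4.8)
The plan is to realize the claimed function $u$ as the unique fixed point of the integral operator
$\Phi v = \bigl(\cO\ni x\mapsto \Exp{\int_0^{\infty} e^{-\lambda t} f(X^x_t,v(X^x_t))\,dt}\bigr)$
on a suitable weighted space of continuous functions, via Banach's fixed point theorem; the four preparatory results \cref{lem:integrability,lem:continuity,mapping_welldefinedness,lipschitz_property} of this subsection are exactly the estimates needed for this.

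\emph{Step 1 (the Banach space and the operator).} First I would introduce the set $\mathbb{W}$ of all $v\in C(\cO,\R)$ which satisfy $\Norm{v}_{\mathbb{W}}:=\sup_{x\in\cO}\tfrac{|v(x)|}{V(x)}<\infty$ together with the decay property $\limsup_{r\to\infty}\bigl[\sup_{x\in\cO\setminus O_r}\tfrac{|v(x)|}{V(x)}\bigr]=0$. Since $v\mapsto v/V$ maps $\mathbb{W}$ isometrically onto a linear subspace of the Banach space $C_b(\cO,\R)$ of bounded continuous real-valued functions on $\cO$, and since this subspace is closed (the decay property survives uniform limits by a two-term estimate), $(\mathbb{W},\Norm{\cdot}_{\mathbb{W}})$ is a nonempty Banach space (note $0\in\mathbb{W}$). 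Then I would define $\Phi\colon\mathbb{W}\to\{g\colon\cO\to\R\}$ by $(\Phi v)(x)=\Exp{\int_0^{\infty}e^{-\lambda t}f(X^x_t,v(X^x_t))\,dt}$ for $v\in\mathbb{W}$ and $x\in\cO$.

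\emph{Step 2 ($\Phi$ maps $\mathbb{W}$ into $\mathbb{W}$).} Fix $v\in\mathbb{W}$. From $|f(x,v(x))|\leq|f(x,0)|+L|v(x)|$ for $x\in\cO$, the hypothesis $\inf_{r\in(0,\infty)}[\sup_{x\in\cO\setminus O_r}\tfrac{|f(x,0)|}{V(x)}]=0$, and $v\in\mathbb{W}$, one obtains $\inf_{r\in(0,\infty)}[\sup_{x\in\cO\setminus O_r}\tfrac{|f(x,0)|+|v(x)|}{V(x)}]=0$, and hence, using the continuity of $f$, $v$, $V$ and the compactness of the $O_r\subseteq\R^d$ exactly as in the proof of \cref{mapping_welldefinedness}, also $c_v:=\sup_{x\in\cO}\tfrac{|f(x,v(x))|}{V(x)}<\infty$. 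Now \cref{mapping_welldefinedness} (applied with $u\is v$) shows that $\Phi v$ is well defined with finite integrals, that $\Phi v\in C(\cO,\R)$, and that $\Phi v$ satisfies the required decay property, while \cref{lem:integrability} (applied with $h\is(\cO\ni y\mapsto f(y,v(y)))$, $c\is c_v$, $\gamma\is V(x)$) yields $|(\Phi v)(x)|\leq\tfrac{c_v}{\lambda-\rho}V(x)$, so that $\Norm{\Phi v}_{\mathbb{W}}<\infty$. Hence $\Phi v\in\mathbb{W}$.

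\emph{Step 3 (contraction and conclusion).} The Lipschitz hypothesis on $f$ forces $L\geq 0$ (as $\cO\neq\emptyset$), so $\lambda>L+\rho$ implies both $\lambda>\rho$ and $\tfrac{L}{\lambda-\rho}<1$. For $v,w\in\mathbb{W}$, \cref{lipschitz_property} (applied with $v\is v$, $w\is w$) gives $|(\Phi v)(x)-(\Phi w)(x)|\leq\tfrac{L}{\lambda-\rho}V(x)\Norm{v-w}_{\mathbb{W}}$ for all $x\in\cO$, hence $\Norm{\Phi v-\Phi w}_{\mathbb{W}}\leq\tfrac{L}{\lambda-\rho}\Norm{v-w}_{\mathbb{W}}$, so $\Phi$ is a contraction on the Banach space $\mathbb{W}$. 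Banach's fixed point theorem then produces a unique $u\in\mathbb{W}$ with $\Phi u=u$, and this $u$ satisfies items~\eqref{existence_of_abstract_fixpoint:item1} and~\eqref{existence_of_abstract_fixpoint:item2}. For the uniqueness statement I would note that any $u\in C(\cO,\R)$ fulfilling items~\eqref{existence_of_abstract_fixpoint:item1} and~\eqref{existence_of_abstract_fixpoint:item2} is a fixed point of $\Phi$ by~\eqref{existence_of_abstract_fixpoint:item2}, and that item~\eqref{existence_of_abstract_fixpoint:item1}, together with the continuity of $\cO\ni x\mapsto u(x)/V(x)$ and the compactness of the $O_r$, forces $\Norm{u}_{\mathbb{W}}<\infty$; thus $u\in\mathbb{W}$ and uniqueness follows from the uniqueness assertion in Banach's fixed point theorem. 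The bulk of the argument is bookkeeping, as \cref{lem:integrability,lem:continuity,mapping_welldefinedness,lipschitz_property} were built for precisely these estimates; the only points that need care are verifying that $\Phi$ preserves \emph{both} continuity and the decay-at-infinity property defining $\mathbb{W}$ (handled by \cref{mapping_welldefinedness}) and observing that condition~\eqref{existence_of_abstract_fixpoint:item1} is strong enough to place $u$ inside $\mathbb{W}$, where the compactness of the sets $O_r$ enters.
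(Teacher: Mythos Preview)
Your proposal is correct and follows essentially the same route as the paper: define the weighted space $\cV=\{v\in C(\cO,\R):\limsup_{r\to\infty}[\sup_{x\in\cO\setminus O_r}\tfrac{|v(x)|}{V(x)}]=0\}$ with norm $\sup_{x\in\cO}\tfrac{|v(x)|}{V(x)}$, use \cref{mapping_welldefinedness} to see that $\Phi$ is a self-map of $\cV$, use \cref{lipschitz_property} to get the contraction constant $\tfrac{L}{\lambda-\rho}<1$, and invoke Banach's fixed point theorem. The paper's proof is slightly terser (it simply asserts that $(\cV,\Norm{\cdot}_{\cV})$ is a Banach space and that \cref{mapping_welldefinedness} yields $\Phi\colon\cV\to\cV$ without separating out the $\Norm{\Phi v}_{\cV}<\infty$ bound), but your additional bookkeeping---the explicit isometry $v\mapsto v/V$ into $C_b(\cO,\R)$, the observation $L\ge 0$, and the compactness of $O_r$ to pass from~\eqref{existence_of_abstract_fixpoint:item1} to $\Norm{u}_{\mathbb W}<\infty$---is all sound and does not constitute a different argument.
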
 

\begin{proof}[Proof of \cref{prop:existence_of_abstract_fixpoint}] 
	Throughout this proof let $\cV$	be the set given by 
		\begin{equation} 
		\cV = \left\{ u \in C(\mc O,\R)\colon \limsup_{r\to\infty} \left[ \sup_{x\in\cO\setminus O_r} \left(\frac{|u(x)|}{V(x)}\right) \right] = 0 \right\}
		\end{equation} 
	and let $\norm{\cdot}_{\cV}\colon\cV\to [0,\infty)$ 	satisfy for all 
		$ v \in \cV $ 
	that 
		\begin{equation} 
		\norm{v}_{\cV} = \sup_{x\in\cO} \left( \frac{|v(x)|}{V(x)} \right)\!. 
		\end{equation} 
	Note that 
	$(\cV,\norm{\cdot}_{\cV})$ 
	is an $\R$-Banach space. Moreover, observe that \cref{mapping_welldefinedness} (with 
		$ d \is d $,  
		$ L \is L $, 
		$ \rho \is \rho $, 
		$ \lambda \is \lambda $, 
		$ \norm{\cdot} \is \norm{\cdot} $, 
		$ \mc O \is \mc O $, 
		$ (\Omega,\mc F,\P) \is (\Omega,\mc F,\P) $, 
		$ (X^x)_{ x \in \mc O } \is (X^x)_{ x \in \mc O } $ ,
		$ f \is f $, 
		$ V \is V $, 
		$ u \is v $ 
	for $ v \in \mc V $
	in the notation of \cref{mapping_welldefinedness}) demonstrates that there exists a unique function 
	$\Phi\colon\cV\to\cV$ 
	which satisfies for all 
		$ x \in \mc O $, 
		$ v \in \cV $ 
	that 
		\begin{equation} 
		[\Phi(v)](x) 
		= 
		\Exp{\int_0^{\infty} e^{-\lambda t} f(X^x_t,v(X^x_t))\,dt}\!. 
		\end{equation} 
	Moreover, note that \cref{lipschitz_property} (with 
		$ d \is d $, 
		$ L \is L $, 
		$ \rho \is \rho $, 
		$ \mc O \is \mc O $, 
		$ (\Omega, \mc F, \P) \is (\Omega,\mc F,\P) $, 
		$ (X^x)_{ x \in \mc O } \is (X^x)_{ x \in \mc O } $, 
		$ V \is V $, 
		$ f \is f $
	in the notation of \cref{lipschitz_property}) ensures for all 
		$v,w\in\cV$
	that 
	\begin{equation} 
	\norm{\Phi(v)-\Phi(w)}_{\cV} \leq \frac{L}{\lambda - \rho} \norm{v-w}_{\cV}. 
	\end{equation} 
	This, the hypothesis that $\lambda>L+\rho$, the fact that $(\cV,\norm{\cdot}_{\cV})$ is an $\R$-Banach space, and Banach's fixed point theorem demonstrate that there exists a unique $u\in\cV$ which satisfies $\Phi(u)=u$. This establishes Items~\eqref{existence_of_abstract_fixpoint:item1} and \eqref{existence_of_abstract_fixpoint:item2}. The proof of \cref{prop:existence_of_abstract_fixpoint} is thus completed. 
\end{proof}

\subsection{On the Feynman--Kac connection between SFPEs and  semilinear elliptic PDEs}
\label{subsec:sfpes_and_pdes}

\begin{cor} \label{cor:existence_of_fixpoint}
Let $d,m\in\N$, 
	$ B \in \R^{d\times m} $, 
	$ L, \rho \in \R $, 
	$ \lambda \in (\rho+L,\infty) $, 
let $\norm{\cdot}\colon\R^d\to [0,\infty)$ be a norm on $\R^d$, 
let 
	$f\in C(\R^d\times\R,\R)$, 
	$V\in C^2(\R^d,(0,\infty))$	
satisfy for all 
	$x\in\R^d$, 
	$v,w\in\R$ 
that 
	$|f(x,v)-f(x,w)|\leq L|v-w|$
and 
	\begin{equation} \label{existence_of_fixpoint:lyapunov_condition}
	 \tfrac12\operatorname{Trace}(  
	 BB^{*}(\operatorname{Hess} V)(x)
	 ) 
	 \leq 
	 \rho V(x), 
	\end{equation} 
assume that
	$
	\limsup_{r\to\infty} 
	[\sup_{\norm{x}>r} (\frac{|f(x,0)|}{V(x)})] 
	= 0
	$
and 
	$\sup_{r\in (0,\infty)}
	[\inf_{\norm{x}>r} V(x)] 
	= \infty$,
let 
	$(\Omega,\cF,\P)$ 
be a probability space, 
and let 
	$W\colon [0,\infty)\times\Omega\to\R^m$ 
be a standard Brownian motion. 
Then there exists a unique 
	$u\in C(\R^d,\R)$ 
such that 
\begin{enumerate}[(i)] 
	\item \label{existence_of_fixpoint:item1} 
	it holds that 
		\begin{equation}
		\limsup_{r\to\infty} 
		\left[\sup_{\norm{x}>r} \left(\frac{|u(x)|}{V(x)}\right)\right] = 0 
		\end{equation} 
	and 
	\item\label{existence_of_fixpoint:item2} for all $ x \in \R^d $ it holds that  
		$ \EXPP{\int_0^{\infty} e^{-\lambda s} |f(x+BW_s,u(x+BW_s))|\,ds} < \infty $
	and 
		\begin{equation}\label{existence_of_fixpoint:claim}
		u(x) = \Exp{\int_0^{\infty} e^{-\lambda s} f(x+BW_s,u(x+BW_s))\,ds}\!.
		\end{equation} 
\end{enumerate}
\end{cor}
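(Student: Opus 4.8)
The plan is to deduce \cref{cor:existence_of_fixpoint} from \cref{prop:existence_of_abstract_fixpoint} by specialising the abstract framework there to $\mc O = \R^d$ and to the stochastic processes $X^x = (x+BW_t)_{t\in[0,\infty)}$, $x \in \R^d$. Accordingly, almost all of the work consists in checking that, in this concrete Brownian setting, the hypotheses of \cref{prop:existence_of_abstract_fixpoint} are satisfied, after which the conclusion is immediate.

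First I would observe that the ``topological'' hypotheses are essentially free: since $\mc O = \R^d$ the sets $O_r$ appearing in \cref{prop:existence_of_abstract_fixpoint} satisfy $O_r = \{x\in\R^d\colon\norm{x}\leq r\}$ and hence $\mc O\setminus O_r = \{x\in\R^d\colon\norm{x}>r\}$, so that the two structural conditions $\inf_{r\in (0,\infty)}[\sup_{x\in\mc O\setminus O_r}(\tfrac{|f(x,0)|}{V(x)})]=0$ and $\sup_{r\in(0,\infty)}[\inf_{x\in\mc O\setminus O_r}V(x)]=\infty$ reduce exactly to the assumed conditions $\limsup_{r\to\infty}[\sup_{\norm{x}>r}(\tfrac{|f(x,0)|}{V(x)})]=0$ and $\sup_{r\in(0,\infty)}[\inf_{\norm{x}>r}V(x)]=\infty$. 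The continuity of $f$ and $V$ and the global Lipschitz bound for $f$ are assumed verbatim. Moreover, since $W$ is a standard Brownian motion, for every $x\in\R^d$ the map $X^x\colon[0,\infty)\times\Omega\to\R^d$ is $(\Borel([0,\infty))\otimes\cF)$/$\Borel(\R^d)$-measurable, and for all $\varepsilon,t\in(0,\infty)$ and all $x_n\in\R^d$, $n\in\N_0$, with $\limsup_{n\to\infty}\norm{x_n-x_0}=0$ one has the deterministic identity $\norm{X^{x_n}_t-X^{x_0}_t}=\norm{x_n-x_0}\to 0$, whence $\limsup_{n\to\infty}\P(\norm{X^{x_n}_t-X^{x_0}_t}\geq\varepsilon)=0$.

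The only substantive step is the Lyapunov-type supermartingale estimate $\EXP{e^{-\rho t}V(X^x_t)}\leq V(x)$ for all $t\in[0,\infty)$, $x\in\R^d$. To establish it I would apply It\^o's formula to $[0,\infty)\ni t\mapsto e^{-\rho t}V(x+BW_t)$, which is permissible since $V\in C^2(\R^d,(0,\infty))$; using that the quadratic covariation of $BW$ is governed by $BB^*$, this gives $\P$-a.s.\ for all $t\in[0,\infty)$ that
\begin{equation}
\begin{split}
e^{-\rho t}V(x+BW_t) = V(x) &+ \int_0^t e^{-\rho s}\Big(\tfrac12\operatorname{Trace}\!\big(BB^*(\operatorname{Hess}V)(x+BW_s)\big) - \rho V(x+BW_s)\Big)ds \\ &+ \int_0^t e^{-\rho s}\big\langle B^*(\nabla V)(x+BW_s), dW_s\big\rangle.
\end{split}
\end{equation}
By \eqref{existence_of_fixpoint:lyapunov_condition} the $ds$-integrand is nonpositive, so, choosing a localising sequence $\tau_n\uparrow\infty$ of stopping times for the local martingale in the last term, taking expectations in the stopped identity yields $\EXP{e^{-\rho(t\wedge\tau_n)}V(x+BW_{t\wedge\tau_n})}\leq V(x)$ for every $n\in\N$; letting $n\to\infty$ and invoking the strict positivity of $V$, the sample-path continuity of $s\mapsto e^{-\rho s}V(x+BW_s)$, and Fatou's lemma then gives $\EXP{e^{-\rho t}V(x+BW_t)}\leq V(x)$. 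I expect this localisation-plus-Fatou argument to be the one genuinely technical point, the care being that the stochastic-integral term need not be a true martingale without further integrability on $\nabla V$, so its expectation cannot be dropped without localising.

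With all hypotheses verified, \cref{prop:existence_of_abstract_fixpoint} (applied with $\mc O\is\R^d$, $X^x\is(x+BW_t)_{t\in[0,\infty)}$ for $x\in\R^d$, and with the present $L,\rho,\lambda,\norm{\cdot},f,V$) furnishes a unique $u\in C(\R^d,\R)$ with $\limsup_{r\to\infty}[\sup_{\norm{x}>r}(\tfrac{|u(x)|}{V(x)})]=0$ such that for every $x\in\R^d$ one has $\EXPP{\int_0^\infty e^{-\lambda s}|f(x+BW_s,u(x+BW_s))|\,ds}<\infty$ and $u(x)=\Exp{\int_0^\infty e^{-\lambda s}f(x+BW_s,u(x+BW_s))\,ds}$; these are precisely Items~\eqref{existence_of_fixpoint:item1} and \eqref{existence_of_fixpoint:item2}, and the uniqueness assertion transfers directly. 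This would complete the proof.
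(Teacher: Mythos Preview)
Your proposal is correct and follows essentially the same route as the paper: both reduce the corollary to \cref{prop:existence_of_abstract_fixpoint} with $\mc O=\R^d$ and $X^x_t=x+BW_t$, checking the convergence-in-probability hypothesis via the deterministic identity $\norm{X^{x_n}_t-X^{x_0}_t}=\norm{x_n-x_0}$ and the Lyapunov bound $\EXP{e^{-\rho t}V(x+BW_t)}\leq V(x)$. The only difference is cosmetic: the paper cites external lemmas (\cite[Lemmas~3.1 and~3.2]{StochasticFixedPointEquations}) for the supermartingale estimate, whereas you spell out the underlying It\^o-formula/localisation/Fatou argument directly.
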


\begin{proof}[Proof of \cref{cor:existence_of_fixpoint}] 
	First, observe that combining 
		\cite[Lemmas 3.1 and 3.2]{StochasticFixedPointEquations}  
	with \eqref{existence_of_fixpoint:lyapunov_condition}
	ensures that for all 
		$ t\in [0,\infty) $, 
		$ x \in \R^d $ 
	it holds that 
		\begin{equation} \label{existence_of_fixpoint:supermartingale_type_control}
		\Exp{e^{-\rho t}V(x+BW_t)} \leq V(x). 
		\end{equation} 
	Moreover, note that for all 
		$ \varepsilon, t \in (0,\infty) $ 
	and all 
		$ x_n \in \R^d $, $ n \in \N_0 $, 
	with 
		$ \limsup_{ n \to \infty } \norm{ x_n - x_0 } = 0 $ 
	it holds that 
		$ \limsup_{ n \to \infty } \P(\norm{(x_n+BW_t)-(x_0+BW_t)} \geq \varepsilon) = 0 $. 
	\cref{prop:existence_of_abstract_fixpoint} (with 
		$ d \is d $, 
		$ L \is L $, 
		$ \rho \is \rho $, 
		$ \lambda \is \lambda $, 
		$ \norm{\cdot} \is \norm{\cdot} $,  
		$ \mc O \is \R^d $, 
		$ (\Omega,\mc F, \P) \is (\Omega,\mc F,\P) $, 
		$ (X^x)_{x\in\R^d} \is ([0,\infty)\times\Omega\ni (t,\omega) \mapsto x + BW_t(\omega) \in \R^d)_{x\in\R^d} $, 
		$ V \is V $, 
		$ f \is f $ 
	in the notation of \cref{prop:existence_of_abstract_fixpoint}) and \eqref{existence_of_fixpoint:supermartingale_type_control} hence assure that there exists a unique $ u \in C(\R^d,\R) $ 
	such that  
		\begin{enumerate}[(I)] 
			\item it holds that 
			\begin{equation} 
				\limsup_{ r \to \infty } \left[ \sup_{\norm{x}>r} \left( \frac{|u(x)|}{V(x)} \right) \right] 
				= 0
			\end{equation} 
			and 
			\item for all $ x \in \R^d $ it holds that 
				$ \EXPP{\int_0^{\infty} e^{-\lambda s} |f(x+BW_s,u(x+BW_s))|\,ds} < \infty $ 
			and 
			\begin{equation}
				u(x) = \Exp{\int_0^{\infty} e^{-\lambda t} f(x+BW_t,u(x+BW_t))\,dt}\!.
			\end{equation}
		\end{enumerate}
	This establishes Items~\eqref{existence_of_fixpoint:item1} and \eqref{existence_of_fixpoint:item2}. The proof of \cref{cor:existence_of_fixpoint} is thus completed. 
\end{proof} 

\begin{lemma} \label{lem:classical_solution_smooth_case}
	Let $ T \in (0,\infty) $, 
		$ d,m \in \N $, 
		$ B \in \R^{d\times m} $, 
		$ \varphi \in C^{2}(\R^d,\R) $ 
	satisfy  
		$ \sup_{x\in\R^d} 
		[ 
		\sum_{i,j=1}^d 
		( | \varphi(x) | + | (\frac{\partial }{\partial x_i}\varphi)(x) | + |(\frac{\partial^2}{\partial x_i\partial x_j} \varphi)(x)| ) ] < \infty $, 
	let $ (\Omega,\mc F,\P) $ be a probability space, 
	let $ Z \colon \Omega \to \R^m $ be a standard normal random variable, 
	and let $ u \colon [0,T] \times \R^d \to \R $ satisfy for all 
		$ t \in [0,T] $, 
		$ x \in \R^d $ 
	that 
		\begin{equation} \label{classical_solution_smooth_case:solution_formula}
		u(t,x) = \EXPP{\varphi(x+\sqrt{t}B Z)}. 
		\end{equation} 
	Then  
	\begin{enumerate} [(i)]
		\item\label{classical_solution_smooth_case:item1}
		it holds that 
			$ u \in C^{1,2}([0,T] \times \R^d, \R) $ 
		and 
		\item\label{classical_solution_smooth_case:item2}
		it holds for all 
			$ t \in [0,T] $, 
			$ x \in \R^d $ 
		that 
			\begin{equation} \label{classical_solution_smooth_case:claim}
			(\tfrac{\partial u}{\partial t})(t,x) = \tfrac12 \operatorname{Trace}\!\big(BB^{*}(\operatorname{Hess}_x u)(t,x)\big).  
			\end{equation} 
	\end{enumerate}
\end{lemma}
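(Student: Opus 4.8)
The plan is to read off the spatial regularity by differentiating under the expectation sign and to obtain the time derivative by combining Itô's formula with the fundamental theorem of calculus; this route, in contrast to a direct differentiation of \eqref{classical_solution_smooth_case:solution_formula} in $t$ or an argument via the Gaussian density, needs no non-degeneracy assumption on $BB^{*}$ and handles the boundary $t=0$ cleanly.

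First I would establish that $u$ is continuous with continuous spatial partial derivatives up to second order. Since $\varphi$ together with its first and second partial derivatives is bounded, the standard differentiation-under-the-integral theorem applies and yields for all $t\in[0,T]$, $x\in\R^d$, $i,j\in\{1,\dots,d\}$ that
\begin{equation}
(\tfrac{\partial u}{\partial x_i})(t,x) = \EXPP{(\tfrac{\partial \varphi}{\partial x_i})(x+\sqrt{t}B Z)}
\qandq
(\tfrac{\partial^2 u}{\partial x_i\,\partial x_j})(t,x) = \EXPP{(\tfrac{\partial^2 \varphi}{\partial x_i\,\partial x_j})(x+\sqrt{t}B Z)}.
\end{equation}
For every $\omega\in\Omega$ and every bounded continuous $\psi\colon\R^d\to\R$ the map $[0,T]\times\R^d\ni(t,x)\mapsto\psi(x+\sqrt{t}B Z(\omega))\in\R$ is continuous (note that $[0,\infty)\ni t\mapsto\sqrt{t}$ is continuous, albeit not differentiable, at $0$) and bounded by the constant $\sup_{y\in\R^d}|\psi(y)|$, so Lebesgue's dominated convergence theorem gives joint continuity of $u$ and of each of the two right-hand sides displayed above. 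This proves that $u\in C^{0,2}([0,T]\times\R^d,\R)$ and, in particular, that $[0,T]\times\R^d\ni(t,x)\mapsto\operatorname{Trace}(BB^{*}(\operatorname{Hess}_x u)(t,x))\in\R$ is continuous.

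Next I would obtain the time derivative. Let $W\colon[0,\infty)\times\Omega\to\R^m$ be a standard Brownian motion on $(\Omega,\mc F,\P)$; then $x+\sqrt{t}B Z$ and $x+BW_t$ have the same distribution, so $u(t,x)=\EXPP{\varphi(x+BW_t)}$ for all $t\in[0,T]$, $x\in\R^d$. Applying Itô's formula to $s\mapsto\varphi(x+BW_s)$, using that $B$ is constant and that $\nabla\varphi$ is bounded so that the resulting stochastic integral is a true martingale with vanishing expectation, and taking expectations yields for all $t\in[0,T]$, $x\in\R^d$ that
\begin{equation}
u(t,x) = \varphi(x) + \tfrac12\int_0^t \EXPP{\operatorname{Trace}\!\big(BB^{*}(\operatorname{Hess}\varphi)(x+BW_s)\big)}\,ds = \varphi(x) + \tfrac12\int_0^t \operatorname{Trace}\!\big(BB^{*}(\operatorname{Hess}_x u)(s,x)\big)\,ds,
\end{equation}
where the last equality uses the identity $(\operatorname{Hess}_x u)(s,x)=\EXPP{(\operatorname{Hess}\varphi)(x+BW_s)}$ from the previous step together with linearity of the trace and of the expectation. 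Since $s\mapsto\operatorname{Trace}(BB^{*}(\operatorname{Hess}_x u)(s,x))$ is continuous, the fundamental theorem of calculus shows that $[0,T]\ni t\mapsto u(t,x)$ is continuously differentiable with $(\tfrac{\partial u}{\partial t})(t,x)=\tfrac12\operatorname{Trace}(BB^{*}(\operatorname{Hess}_x u)(t,x))$ for all $t\in[0,T]$, which is exactly \eqref{classical_solution_smooth_case:claim}; combined with the joint continuity noted above this gives $u\in C^{1,2}([0,T]\times\R^d,\R)$, so both items follow.

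The only genuinely delicate point is the non-smoothness of $t\mapsto\sqrt{t}$ at $t=0$: naively differentiating \eqref{classical_solution_smooth_case:solution_formula} in $t$ produces a factor $\tfrac{1}{2\sqrt{t}}$ that is unbounded near $0$, so one cannot simply pass the $t$-derivative inside the expectation. The Itô-formula argument circumvents this by rewriting $u$ as $\varphi(x)$ plus a time integral of a bounded continuous integrand, which makes the (one-sided, at $t=0$) time derivative exist and equal $\tfrac12\operatorname{Trace}(BB^{*}(\operatorname{Hess}\varphi)(x))$ with no extra work. An alternative would be to invoke Da Prato \& Zabczyk~\cite[Theorems 7.4.5 and 7.5.1]{DaPZab2002_SecondOrderPDEsInHilbertSpaces}, but the argument sketched here is elementary and self-contained.
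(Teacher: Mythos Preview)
Your proof is correct and takes a genuinely different route from the paper. The paper differentiates \eqref{classical_solution_smooth_case:solution_formula} directly in $t$ for $t\in(0,T]$, producing precisely the singular factor $\tfrac{1}{2\sqrt{t}}$ you anticipated, and then identifies $\E[\langle(\nabla\varphi)(x+\sqrt{t}BZ),\tfrac{1}{2\sqrt{t}}BZ\rangle]$ with $\tfrac12\operatorname{Trace}(BB^{*}(\operatorname{Hess}_x u)(t,x))$ via Gaussian integration by parts (the identity $\int_{\R^m}(\Delta\psi)(y)\,e^{-|y|^2/2}\,dy=\int_{\R^m}\langle(\nabla\psi)(y),y\rangle\,e^{-|y|^2/2}\,dy$ applied to $\psi_{t,x}(y)=\varphi(x+\sqrt{t}By)$); the endpoint $t=0$ is then handled by a separate fundamental-theorem-of-calculus argument using the already-established continuity of $(\operatorname{Hess}_x u)$. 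Your It\^o-formula route delivers the integral representation $u(t,x)=\varphi(x)+\tfrac12\int_0^t\operatorname{Trace}(BB^{*}(\operatorname{Hess}_x u)(s,x))\,ds$ in one stroke, which automatically covers $t=0$ and bypasses both the $\tfrac{1}{2\sqrt{t}}$ singularity and the Gaussian integration-by-parts computation. The trade-off is that you invoke It\^o's formula (heavier machinery than the paper's elementary Gaussian calculus) and you tacitly assume a standard Brownian motion lives on the \emph{given} probability space $(\Omega,\mc F,\P)$, which is not part of the hypotheses; since $u$ depends only on the law of $Z$, this is harmless---just pass to an auxiliary probability space supporting a Brownian motion---but you should say so explicitly.
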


\begin{proof}[Proof of \cref{lem:classical_solution_smooth_case}]
	Throughout this proof 
	let $ e_1 = ( 1, 0, \ldots, 0), e_2 = ( 0, 1, \ldots, 0), \ldots, e_m = (0, \ldots, 0, 1) \in \R^m $, 
	let $ \langle\cdot,\cdot\rangle \colon (\cup_{k\in\N} (\R^k\times\R^k) ) \to \R $ satisfy for all $ k \in \N $, $ x = (x_1,x_2,\ldots,x_k), y = (y_1,y_2,\ldots,y_k) \in \R^k $ that $ \langle x,y \rangle = \sum_{i=1}^k x_iy_i $, 
	let $ \norm{\cdot}\colon \R^m \to [0,\infty) $ be the standard norm on $ \R^m $, 
	and let 
		$ \psi_{t,x}=(\psi_{t,x}(y))_{y\in\R^m} \colon \R^m \to \R $, $ t\in [0,T] $, $ x \in \R^d $, 
	satisfy for all 
		$ t \in [0,T] $, 
		$ x \in \R^d $, 
		$ y \in \R^m $
	that 
		$ \psi_{t,x}(y) = \varphi(x+\sqrt{t}By) $. 
	Note that the assumption that $ \varphi \in C^2(\R^d,\R) $, the assumption that $ \sup_{x\in\R^d} [\sum\nolimits_{i,j=1}^d (|\varphi(x)| + |(\frac{\partial}{\partial x_i}\varphi)(x)| + | (\frac{\partial^2}{\partial x_i\partial x_j}\varphi)(x) |)] < \infty $, 
	the chain rule, and Lebesgue's dominated convergence theorem ensure that 
	\begin{enumerate}[(I)] 
		\item \label{classical_solution_smooth_case:proof_item1} 
		for all 
			$ x \in \R^d $ 
		it holds that 
			$ (0,T] \ni t \mapsto u(t,x) \in \R $ 
		is differentiable, 
		\item \label{classical_solution_smooth_case:proof_item2} 
		for all  
			$ t \in [0,T] $
		it holds that 
			$ \R^d \ni x \mapsto u(t,x) \in \R $ is twice differentiable, 		
		\item \label{classical_solution_smooth_case:proof_item3} 
		for all 
			$ t \in (0,T] $, 
			$ x \in \R^d $ 
		it holds that 
			\begin{equation} 
			(\tfrac{\partial u}{\partial t})(t,x) 
			= 
			\EXPP{\langle (\nabla\varphi)(x+\sqrt{t}BZ),\tfrac{1}{2\sqrt{t}}BZ\rangle}, 
			\end{equation} 
		and 
		\item \label{classical_solution_smooth_case:proof_item4} 
		for all 
			$ t \in [0,T] $, 
			$ x \in \R^d $
		it holds that 
			\begin{equation} 
			(\operatorname{Hess}_x u)(t,x) 
			= 
			\EXPP{(\operatorname{Hess} \varphi)(x+\sqrt{t}BZ) }.   
			\end{equation} 
		\end{enumerate}
	Observe that Items~\eqref{classical_solution_smooth_case:proof_item3} and \eqref{classical_solution_smooth_case:proof_item4}, the assumption that $ \varphi \in C^2(\R^d,\R) $, the assumption that $ \sup_{x\in\R^d} [\sum\nolimits_{i,j=1}^d (|\varphi(x)| + |(\frac{\partial}{\partial x_i}\varphi)(x)| + | (\frac{\partial^2}{\partial x_i\partial x_j}\varphi)(x) |)] < \infty $, the fact that $ \EXP{\Norm{Z}} < \infty $, and Lebesgue's dominated convergence theorem prove that $ (0,T] \times \R^d \ni (t,x) \mapsto (\frac{\partial u}{\partial t})(t,x) \in \R $ and $ [0,T] \times \R^d \ni (t,x) \mapsto (\operatorname{Hess}_x u)(t,x) \in \R^{d\times d} $ are continuous. 
	Next note that Item~\eqref{classical_solution_smooth_case:proof_item4} and the fact that for all 
		$ X \in \R^{m \times d} $, 
		$ Y \in \R^{d \times m} $ 
	it holds that 
		$ \operatorname{Trace}(X Y) = \operatorname{Trace}(Y X) $	
	imply that for all 
		$ t \in (0,T] $, 
		$ x \in \R^d $ 
	it holds that 
		\begin{equation} 
		\begin{split}
		& 
		\tfrac12 \operatorname{Trace}\!\big(B B^{*} (\operatorname{Hess}_x u)(t,x)\big) 
		= 
		\Exp{\tfrac12\operatorname{Trace}\!\big( B B^{*} (\operatorname{Hess} \varphi)(x+\sqrt{t}BZ) \big)}
		\\
		& = 
		\tfrac{1}{2} \, \Exp{\operatorname{Trace}\!\big( B^{*} (\operatorname{Hess} \varphi)(x+\sqrt{t}BZ) B \big) } 
		= 
		\tfrac{1}{2} \, \Exp{\smallsum\limits_{k=1}^m \displaystyle\langle e_k, B^{*}  (\operatorname{Hess} \varphi)(x+\sqrt{t}BZ) B e_k \rangle }
		\\		
		& = 
		\tfrac{1}{2} \, \Exp{\smallsum\limits_{k=1}^m \displaystyle\langle B e_k ,  (\operatorname{Hess} \varphi)(x+\sqrt{t}BZ) B e_k \rangle }
		= 
		\tfrac{1}{2} \, \Exp{\smallsum\limits_{k=1}^m \displaystyle\varphi^{\prime\prime}( x + \sqrt{t}BZ )(B e_k, B e_k) }
		\\
		& = 
		 \tfrac{1}{2t} \, \Exp{\smallsum\limits_{k=1}^m \displaystyle(\psi_{t,x})^{\prime\prime}(Z)(e_k,e_k) } 
		= 
		\tfrac{1}{2t} \, \Exp{\smallsum\limits_{k=1}^m \displaystyle(\tfrac{\partial^2 }{\partial y_k^2}\psi_{t,x})(Z) }
		= 
		\tfrac{1}{2t} \,
		\Exp{ (\Delta \psi_{t,x})(Z)}\!.
		\end{split}
		\end{equation} 
	The assumption that $ Z \colon \Omega \to \R^m $ is a standard normal random variable and integration by parts hence ensure that for all 
		$ t \in (0,T] $, 
		$ x \in \R^d $ 
	it holds that  
		\begin{equation} 
		\begin{split} 
		& \tfrac12 \operatorname{Trace}\!\big(B B^{*} (\operatorname{Hess}_x u)(t,x)\big) 
		\\
		& = 
		\frac{1}{2t} 
		\int_{\R^m} (\Delta\psi_{t,x})(y)  \left[ \frac{\exp(-\tfrac{\langle y,y\rangle}{2})}{(2\pi)^{\nicefrac{m}{2}}}\right]\!\,dy 
		= 
		\frac{1}{2t} 
		\int_{\R^m} \langle (\nabla\psi_{t,x})(y), y \rangle \left[ \frac{\exp(-\frac{\langle y,y \rangle}{2})}{(2\pi)^{\nicefrac{m}{2}}} \right]\!\,dy 
		\\
		& = 
		\frac{1}{2\sqrt{t}} 
		\int_{\R^m} \left\langle B^{*}(\nabla \varphi)(x+\sqrt{t}By), y \right\rangle \left[\frac{\exp(-\frac{\langle y, y\rangle}{2})}{(2\pi)^{\nicefrac{m}{2}}}\right]\!\,dy 
		\\
		& =
		\frac{1}{2\sqrt{t}} \, 
		\EXPP{ \langle B^{*}(\nabla\varphi)(x+\sqrt{t}BZ),Z \rangle }
		= 
		\EXPP{ \langle (\nabla\varphi)(x+\sqrt{t}BZ), \tfrac{1}{2\sqrt{t}} B Z\rangle }.
		\end{split}
		\end{equation} 
	Item~\eqref{classical_solution_smooth_case:proof_item3} therefore proves for all 
		$ t \in (0,T] $, 
		$ x \in \R^d $ 
	that 
		\begin{equation} \label{classical_solution_smooth_case:t_positive}
		(\tfrac{\partial u}{\partial t})(t,x) = \tfrac12 \operatorname{Trace}\!\big(B B^{*} (\operatorname{Hess}_x u)(t,x)\big). 
		\end{equation} 
	The fundamental theorem of calculus hence implies that for all 
		$ t,s \in (0,T] $, 
		$ x \in \R^d $ 
	it holds that 
		\begin{equation} 
		\begin{split}
		u(t,x) - u(s,x) 
		& = 
		\int_{s}^t (\tfrac{\partial u}{\partial t})(r,x) \,dr 
		= \int_{s}^t \tfrac12 \operatorname{Trace}\!\big(BB^{*}(\operatorname{Hess}_x u)(r,x)\big) \,dr . 
		\end{split}
		\end{equation} 
	The fact that $ [0,T] \times \R^d \ni (t,x) \mapsto (\operatorname{Hess}_x u)(t,x) \in \R^{d\times d} $ 
	is continuous therefore ensures for all 
		$ t \in (0,T] $, 
		$ x \in \R^d $ 
	that 	
		\begin{equation} 
		\frac{u(t,x) - u(0,x)}{t} 
		= 
		\lim_{ s \searrow 0 } 
		\left[ \frac{u(t,x)-u(s,x)}{t} \right] 
		= 
		\frac{1}{t} 
		\int_0^t \tfrac12 \operatorname{Trace}\!\big( B B^{*} (\operatorname{Hess}_x u)(r,x) \big) \,dr. 
		\end{equation} 
	This and again the fact that 
		$ [0,T]\times\R^d \ni (t,x) \mapsto (\operatorname{Hess}_x u)(t,x) \in \R^{d\times d} $ 
	is continuous imply for all 
		$ x \in \R^d $ 
	that 	
		\begin{equation} \label{classical_solution_smooth_case:t_0}
		\begin{split}
		& \limsup_{ t \searrow 0 } \left| \frac{u(t,x) - u(0,x)}{t} - \tfrac12 \operatorname{Trace}\!\big( B B^{*} (\operatorname{Hess}_x u)(0,x) \big) \right| 
		\\
		& 
		\leq  
		\limsup_{ t \searrow 0 } \left[ \frac{1}{t} \int_0^t \left| \tfrac12 \operatorname{Trace}\!\big( B B^{*} (\operatorname{Hess}_x u)(s,x) \big) - \tfrac12 \operatorname{Trace}\!\big( B B^{*} (\operatorname{Hess}_x u)(0,x) \big) \right| \,ds \right]
		\\
		& 
		\leq 
		\limsup_{ t \searrow 0 } \left[ \sup_{s\in [0,t]} \left| \tfrac12 \operatorname{Trace}\!\Big( B B^{*} \big((\operatorname{Hess}_x u)(s,x) - (\operatorname{Hess}_x u)(0,x) \big) \Big) \right| \right] 
		= 0. 
		\end{split}
		\end{equation} 
	Item~\eqref{classical_solution_smooth_case:proof_item1} hence establishes that for all $ x \in \R^d $ it holds that $ [0,T] \ni t \mapsto u(t,x) \in \R $ is differentiable. Combining this with \eqref{classical_solution_smooth_case:t_0} and \eqref{classical_solution_smooth_case:t_positive} ensures that for all 
		$ t \in [0,T] $, 
		$ x \in \R^d $ 
	it holds that 
		\begin{equation} \label{classical_solution_smooth_case:end_of_proof}
		(\tfrac{\partial u}{\partial t})(t,x) = \tfrac12 \operatorname{Trace}\!\big( B B^{*} (\operatorname{Hess}_x u)(t,x) \big). 
		\end{equation}
	This and the fact that $ [0,T] \times \R^d \ni (t,x) \mapsto (\operatorname{Hess}_x u)(t,x) \in \R^{d\times d} $ is continuous establish Item~\eqref{classical_solution_smooth_case:item1}. In addition, note that \eqref{classical_solution_smooth_case:end_of_proof} establishes Item~\eqref{classical_solution_smooth_case:item2}. The proof of \cref{lem:classical_solution_smooth_case} is thus completed. 
\end{proof} 

\begin{cor} \label{cor:classical_solution_brownian_motion}
	Let $ T \in (0,\infty) $, 
		$ d,m \in \N $, 
		$ B \in \R^{d\times m} $, 
		$ \varphi \in C^{2}(\R^d,\R) $ 
	satisfy  
		$ \sup_{x\in\R^d}
		[ 
		\sum_{i,j=1}^d
		( | \varphi(x) | + | (\frac{ \partial }{\partial x_i}\varphi)(x) | + |(\frac{\partial^2}{\partial x_i\partial x_j} \varphi)(x)| ) ] < \infty $, 
	let $ (\Omega,\mc F,\P) $ be a probability space, 
	let $ W \colon [0,T] \times \Omega \to \R^m $ be a standard Brownian motion, 
	and let $ u \colon [0,T] \times \R^d \to \R $ satisfy for all 
		$ t \in [0,T] $, 
		$ x \in \R^d $ 
	that 
		\begin{equation} 
		u(t,x) = \EXPP{\varphi(x+BW_t)}. 
		\end{equation} 
	Then  
	\begin{enumerate} [(i)]
		\item \label{classical_solution_brownian_motion:item1}
		it holds that 
			$ u \in C^{1,2}([0,T] \times \R^d, \R) $ 
		and 
		\item \label{classical_solution_brownian_motion:item2}
		it holds for all 
			$ t \in [0,T] $, 
			$ x \in \R^d $ 
		that 
			\begin{equation} 
			(\tfrac{\partial u}{\partial t})(t,x) = \tfrac12 \operatorname{Trace}\!\big(BB^{*}(\operatorname{Hess}_x u)(t,x)\big).  
			\end{equation} 
	\end{enumerate}
\end{cor}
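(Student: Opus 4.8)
The plan is to reduce Corollary \ref{cor:classical_solution_brownian_motion} to Lemma \ref{lem:classical_solution_smooth_case} by exploiting the fact that the expectation $\EXPP{\varphi(x+BW_t)}$ depends on the Brownian motion $W$ only through the law of $W_t$, which is the $m$-dimensional centered normal distribution with covariance matrix $t\,\mathrm{Id}_{\R^m}$. Concretely, I would first fix a standard normal random variable $Z\colon\Omega\to\R^m$ on the given probability space $(\Omega,\mc F,\P)$; since $W$ is a standard Brownian motion and $T\in(0,\infty)$, the random variable $Z = T^{-1/2}W_T$ does the job (alternatively one may pass to a product extension of $(\Omega,\mc F,\P)$, which changes none of the expectations involved).

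Next I would note that for every $t\in[0,T]$ the random variables $W_t$ and $\sqrt{t}\,Z$ are both $\R^m$-valued and $N(0,t\,\mathrm{Id}_{\R^m})$-distributed, hence have the same distribution, and therefore, since $\varphi\in C^2(\R^d,\R)$ is measurable (indeed continuous) and bounded by assumption, the change-of-variables/law-of-the-unconscious-statistician identity gives for all $t\in[0,T]$, $x\in\R^d$ that
\begin{equation}
u(t,x) = \EXPP{\varphi(x+BW_t)} = \EXPP{\varphi(x+\sqrt{t}\,BZ)}.
\end{equation}
(The integrability needed here is immediate from the assumed uniform bound on $\varphi$.)

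Having matched $u$ with the representation formula \eqref{classical_solution_smooth_case:solution_formula}, I would then invoke Lemma \ref{lem:classical_solution_smooth_case} (with $T\is T$, $d\is d$, $m\is m$, $B\is B$, $\varphi\is\varphi$, $(\Omega,\mc F,\P)\is(\Omega,\mc F,\P)$, $Z\is Z$, $u\is u$ in the notation of Lemma \ref{lem:classical_solution_smooth_case}), whose hypotheses on $\varphi$ are precisely the hypotheses imposed in the corollary, to conclude directly that $u\in C^{1,2}([0,T]\times\R^d,\R)$ and that $(\tfrac{\partial u}{\partial t})(t,x) = \tfrac12\operatorname{Trace}(BB^{*}(\operatorname{Hess}_x u)(t,x))$ for all $t\in[0,T]$, $x\in\R^d$, establishing Items~\eqref{classical_solution_brownian_motion:item1} and \eqref{classical_solution_brownian_motion:item2}. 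There is essentially no genuine obstacle here; the only point requiring a word of care is the mild bookkeeping of producing a standard normal random variable on the correct probability space and justifying that replacing $W_t$ by $\sqrt{t}\,Z$ leaves the relevant expectations unchanged, which follows from equality in distribution together with boundedness (hence integrability) of $\varphi$.
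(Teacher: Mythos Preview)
Your proposal is correct and follows essentially the same approach as the paper: the paper also sets $Z = W_T/\sqrt{T}$, observes that $u(t,x) = \EXPP{\varphi(x+\sqrt{t}\,B\,W_T/\sqrt{T})}$ by equality in distribution, and then invokes \cref{lem:classical_solution_smooth_case}.
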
 

\begin{proof}[Proof of \cref{cor:classical_solution_brownian_motion}]
	First, observe that the assumption that $ W\colon [0,T]\times\Omega\to\R^m $ is a standard Brownian motion ensures  
	for all 
		$ t \in [0,T] $, 
		$ x \in \R^d $
	that 
		\begin{equation} 
		u(t,x) = \Exp{ \varphi(x+BW_t) } = \Exp{ \varphi(x+\sqrt{t}B \frac{W_T}{\sqrt{T}}) }\!.
		\end{equation} 
	The fact that $ \frac{W_T}{\sqrt{T}} \colon \Omega \to \R^m $ is standard normally distributed and \cref{lem:classical_solution_smooth_case} hence establish Items~\eqref{classical_solution_brownian_motion:item1} and \eqref{classical_solution_brownian_motion:item2}. The proof of \cref{cor:classical_solution_brownian_motion} is thus completed. 
\end{proof} 

\begin{lemma} \label{lem:viscosity_solution_inhomogeneous_elliptic_equation}
	Let $ d,m \in \N $,
		$ B \in \R^{d\times m} $, 
		$ \lambda \in (0,\infty) $, 
		$ \rho \in (-\infty,\lambda) $, 
	let $ \norm{\cdot} \colon \R^d \to [0,\infty) $ be a norm on $ \R^d $, 
	let $ h \in C( \R^d, \R ) $, 
		$ V \in C^2( \R^d, (0,\infty) ) $, 
	assume for all 
		$ x \in \R^d $ 
	that 	
		\begin{equation} \label{viscosity_solution_inhomogeneous_elliptic_equation:lyapunov_assumption}
		\tfrac12\operatorname{Trace}(BB^{*}(\operatorname{Hess} V)(x)) \leq \rho V(x), 
		\end{equation} 
	assume that 
		$ \sup_{ r \in (0,\infty) } [ \inf_{\norm{x}>r} V(x) ] = \infty $ 
	and 
		$ \inf_{ r \in (0,\infty) } [ \sup_{ \norm{x}> r } ( \frac{| h(x) |}{ V(x) } ) ] = 0 $, 
	let 
		$ ( \Omega, \mc F, \P ) $ be a probability space, 
	let 
		$ W \colon [0,\infty) \times \Omega \to \R^m $ be a standard Brownian motion, 
	and let $ u \colon \R^d \to \R $ satisfy for all 
		$ x \in \R^d $ 
	that 
		\begin{equation} \label{viscosity_solution_inhomogeneous_elliptic_equation:fixed_point_equation}
		u(x) = \Exp{\int_0^{\infty} e^{-\lambda s} h(x+BW_s) \,ds }
		\end{equation}   
	(cf.~Item~\eqref{existence_of_fixpoint:item2} in \cref{cor:existence_of_fixpoint}). 
	Then it holds that $ u $ is a viscosity solution of 
		\begin{equation} \label{viscosity_solution_inhomogeneous_elliptic_equation:claim}
		\lambda u(x) - \tfrac12 \operatorname{Trace}(BB^{*}(\operatorname{Hess} u)(x)) = h(x)  
		\end{equation} 
	for $ x \in \R^d $. 
\end{lemma}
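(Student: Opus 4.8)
The plan is to approximate the (merely continuous) source term $h$ by smooth, compactly supported functions, to solve the corresponding inhomogeneous elliptic equations in the \emph{classical} sense via the parabolic Feynman--Kac representation from \cref{cor:classical_solution_brownian_motion}, and finally to identify $u$ as a viscosity solution of \eqref{viscosity_solution_inhomogeneous_elliptic_equation:claim} by invoking the stability of viscosity solutions under locally uniform convergence. As a preliminary I would record that \cite[Lemmas~3.1 and~3.2]{StochasticFixedPointEquations} together with the Lyapunov estimate \eqref{viscosity_solution_inhomogeneous_elliptic_equation:lyapunov_assumption} yield $\EXP{e^{-\rho s}V(x+BW_s)}\leq V(x)$ for all $s\in[0,\infty)$, $x\in\R^d$, and I would choose compactly supported $\mf h_n\in C^2(\R^d,\R)$ with bounded derivatives up to order $2$, $n\in\N$, such that $\limsup_{n\to\infty}[\sup_{x\in\R^d}(\nicefrac{|\mf h_n(x)-h(x)|}{V(x)})]=0$ (obtained by combining \cite[Corollary~2.4]{StochasticFixedPointEquations} with a mollification; continuity and positivity of $V$ make $V$ bounded away from $0$ on the relevant compact sets). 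For $n\in\N$ I would then set $\mf u_n(x)=\EXPP{\int_0^\infty e^{-\lambda s}\mf h_n(x+BW_s)\,ds}$, which is well defined and satisfies $\sup_{x\in\R^d}|\mf u_n(x)|\leq\lambda^{-1}\sup_{y\in\R^d}|\mf h_n(y)|<\infty$ since $\mf h_n$ is bounded.

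The heart of the argument is to prove that every $\mf u_n$ is a classical solution, i.e.\ that $\mf u_n\in C^2(\R^d,\R)$ and $\lambda\mf u_n(x)-\tfrac12\operatorname{Trace}(BB^{*}(\operatorname{Hess}\mf u_n)(x))=\mf h_n(x)$ for all $x\in\R^d$. For this I would introduce $v_n(t,x)=\EXPP{\mf h_n(x+BW_t)}$ and apply \cref{cor:classical_solution_brownian_motion} (with $\varphi\is\mf h_n$ and $T$ arbitrary) to obtain $v_n\in C^{1,2}$ with $(\partial_t v_n)(t,x)=\tfrac12\operatorname{Trace}(BB^{*}(\operatorname{Hess}_x v_n)(t,x))$; moreover, arguing as in the proof of \cref{lem:classical_solution_smooth_case}, one has $(\operatorname{Hess}_x v_n)(t,x)=\EXPP{(\operatorname{Hess}\mf h_n)(x+BW_t)}$, which is continuous in $(t,x)$ and bounded in absolute value by $\sup_{y\in\R^d}|(\operatorname{Hess}\mf h_n)(y)|<\infty$, and likewise $v_n$ and $\nabla_x v_n$ are bounded uniformly in $(t,x)$. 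Since $\mf u_n(x)=\int_0^\infty e^{-\lambda s}v_n(s,x)\,ds$, Lebesgue's dominated convergence theorem then permits differentiating twice under the integral, so that $\mf u_n\in C^2(\R^d,\R)$ and $(\operatorname{Hess}\mf u_n)(x)=\int_0^\infty e^{-\lambda s}(\operatorname{Hess}_x v_n)(s,x)\,ds$. Consequently $\tfrac12\operatorname{Trace}(BB^{*}(\operatorname{Hess}\mf u_n)(x))=\int_0^\infty e^{-\lambda s}(\partial_s v_n)(s,x)\,ds$, and an integration by parts in $s$ — using $v_n(0,x)=\mf h_n(x)$ together with $e^{-\lambda s}|v_n(s,x)|\leq e^{-\lambda s}\sup_{y\in\R^d}|\mf h_n(y)|\to0$ as $s\to\infty$ — gives $\int_0^\infty e^{-\lambda s}(\partial_s v_n)(s,x)\,ds=-\mf h_n(x)+\lambda\mf u_n(x)$, which rearranges to the asserted classical elliptic identity.

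To pass to the limit I would estimate, exactly as in the proof of \cref{lem:continuity}, that for all $n\in\N$, $x\in\R^d$
\begin{equation}
\begin{split}
|\mf u_n(x)-u(x)|
&\leq\int_0^\infty e^{-\lambda s}\,\EXP{|\mf h_n(x+BW_s)-h(x+BW_s)|}\,ds
\\&\leq\frac{V(x)}{\lambda-\rho}\left[\sup_{y\in\R^d}\left(\frac{|\mf h_n(y)-h(y)|}{V(y)}\right)\right],
\end{split}
\end{equation}
where I bounded the pointwise difference by the weighted supremum times $V$ and used $\EXP{e^{-\rho s}V(x+BW_s)}\leq V(x)$. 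Since $V$ is continuous, hence locally bounded, this shows $\mf u_n\to u$ locally uniformly on $\R^d$, and the same weighted bound gives $\mf h_n\to h$ locally uniformly. As each $\mf u_n$ is a classical, and therefore a viscosity, solution of $\lambda\mf u_n(x)-\tfrac12\operatorname{Trace}(BB^{*}(\operatorname{Hess}\mf u_n)(x))=\mf h_n(x)$, the well-known stability of viscosity solutions under locally uniform convergence of both the solutions and the right-hand sides (cf., e.g., Hairer et al.\ \cite[Lemma~4.8]{HaHuJe2017_LossOfRegularityKolmogorov} or Barles \& Perthame \cite[Theorem~A.2]{BarlesPerthame}) then establishes that $u$ is a viscosity solution of \eqref{viscosity_solution_inhomogeneous_elliptic_equation:claim} for $x\in\R^d$. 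I expect the main obstacle to lie in the second paragraph: carrying out the interchange of differentiation and $s$-integration and the integration by parts in time carefully enough to land on $\mf u_n\in C^2$ together with the \emph{exact} classical PDE; by contrast, the weighted approximation of $h$ and the concluding viscosity-stability passage to the limit are routine given the tools already developed.
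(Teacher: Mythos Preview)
Your proposal is correct and follows essentially the same approach as the paper's proof: approximate $h$ by smooth compactly supported $\mf h_n$, show via \cref{cor:classical_solution_brownian_motion} and an integration by parts in time that the corresponding $\mf u_n$ are classical (hence viscosity) solutions of the elliptic equation with source $\mf h_n$, and then pass to the limit using the weighted estimate $|\mf u_n(x)-u(x)|\leq\tfrac{V(x)}{\lambda-\rho}\sup_y\tfrac{|\mf h_n(y)-h(y)|}{V(y)}$ together with the viscosity-stability result of \cite[Lemma~4.8]{HaHuJe2017_LossOfRegularityKolmogorov}. The only cosmetic differences are that the paper takes $\mf h_n\in C^\infty$ (you take $C^2$, which already suffices for \cref{cor:classical_solution_brownian_motion}) and that the paper formalizes the stability step by introducing the symbols $F_n(x,r,p,A)=\lambda r-\tfrac12\operatorname{Trace}(BB^{*}A)-\mf h_n(x)$ and $F_0$ to match the hypotheses of \cite[Lemma~4.8]{HaHuJe2017_LossOfRegularityKolmogorov} explicitly.
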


\begin{proof}[Proof of \cref{lem:viscosity_solution_inhomogeneous_elliptic_equation}]
	Throughout this proof let 
		$ \mf h_n \in C^{\infty}(\R^d,\R) $, 
		$ n \in \N $,  
	be compactly supported functions which satisfy  
		\begin{equation} \label{viscosity_solution_inhomogeneous_elliptic_equation:approximations}
		\limsup_{n \to \infty} 
		\left[ 
		\sup_{x\in\R^d} \left( \frac{| \mf h_n(x) - h(x) |}{V(x)}  \right)
		\right] 
		= 0,   
		\end{equation} 
	let 
		$ F_n \colon \R^d \times \R \times \R^d \times \Sym_d \to \R $, $ n \in \N_0 $, 
	satisfy for all 
		$ n \in \N $, 
		$ x,p \in \R^d $, 
		$ r \in \R $, 
		$ A \in \Sym_d $ 
	that 
		\begin{equation} \label{viscosity_solution_inhomogeneous_elliptic_equation:Fn_functions}
		F_n(x,r,p,A) 
		= 
		\lambda r - \tfrac12 \operatorname{Trace}(BB^{*}A) - \mf h_n(x) 
		\,\,\text{and}\,\,
		F_0(x,r,p,A) 
		= 
		\lambda r - \tfrac12 \operatorname{Trace}(BB^{*}A) - h(x),  
		\end{equation} 	
	let $ \mf u_n \colon \R^d \to \R $, $ n \in \N $, 
	satisfy for all 
		$ n \in \N $, 
		$ x \in \R^d $ 
	that 
		\begin{equation} \label{viscosity_solution_inhomogeneous_elliptic_equations}
		\mf u_n(x) = \Exp{ \int_0^{\infty} e^{-\lambda s} \, \mf h_n(x+BW_s)\,ds}\!,  
		\end{equation}
	and let $ \mf v_n \colon [0,\infty) \times \R^d \to \R $, $n \in \N $, satisfy for all 
		$ n \in \N $, 
		$ t \in [0,\infty) $, 
		$ x \in \R^d $ 
	that $ \mf v_n(t,x) = \EXP{\mf h_n(x+BW_t)} $. 
	Observe that \cref{cor:classical_solution_brownian_motion} ensures for all 
		$ n \in \N $, 
		$ t \in [0,\infty) $, 
		$ x \in \R^d $
	that 
		$ \mf v_n \in C^{1,2}([0,\infty)\times\R^d,\R) $
	and 
		\begin{equation} 
		(\tfrac{\partial}{\partial t}\mf v_n)(t,x) 
		= 
		\tfrac12 \operatorname{Trace}(BB^{*}(\operatorname{Hess}_x \mf v_n)(t,x) ). 
		\end{equation} 
	This, \eqref{viscosity_solution_inhomogeneous_elliptic_equations}, the fact that for all 
		$ n \in \N $ 
	it holds that 
		$ \sup_{(t,x)\in [0,\infty) \times \R^d} [ \sum_{i,j=1}^d (|\mf v_n(t,x)|+|(\tfrac{\partial}{\partial x_i}\mf v_n)(t,x)|+|(\tfrac{\partial^2}{\partial x_i\partial x_j}\mf v_n)(t,x)|)] < \infty $, 
	integration by parts, and 
	Lebesgue's dominated convergence theorem guarantee that for all 
		$ n \in \N $, 
		$ x \in \R^d $ 
	it holds that 
		\begin{equation} 
		\begin{split}
		\mf u_n(x) 
		&= 
		\int_0^{\infty} e^{-\lambda t} \, \mf v_n(t,x) \,dt 
		= 
		\lim_{R\to\infty} 
		\left[  
		\int_0^R e^{-\lambda t} \, \mf v_n(t,x) \,dt \right] 
		\\
		&= 
		\lim_{R \to \infty} 
		\left[ 
		\frac{1}{\lambda} \mf v_n(0,x) 
		-
		\frac{e^{-\lambda R}}{\lambda}\mf v_n(R,x)
		+ 
		\frac{1}{\lambda} 
		\int_0^R e^{-\lambda t} (\tfrac{\partial}{\partial t}\mf v_n)(t,x) \,dt 
		\right] 
		\\
		&=
		\frac{1}{\lambda} 
		\left[ 
		\mf v_n(0,x) 
		+ 
		\int_0^{\infty} e^{-\lambda t} (\tfrac{\partial}{\partial t}\mf v_n)(t,x)\,dt 
		\right]
		\\
		&= 
		\frac{1}{\lambda} 
		\left[ 
		\mf h_n(x) 
		+ 
		\int_0^{\infty} e^{-\lambda t} 
		\tfrac12\operatorname{Trace}(BB^{*}(\operatorname{Hess}_x \mf v_n)(t,x))\,dt
		\right] 
		\\
		&= 
		\frac{1}{\lambda} \left[ 
		\mf h_n(x) 
		+ 
		\tfrac12 \operatorname{Trace}(BB^{*}(\operatorname{Hess} \mf u_n)(x))
		\right]\!.
		\end{split}
		\end{equation}
	This shows for every $ n \in \N $ that $ \mf u_n $ is a viscosity solution of 
		\begin{equation} \label{viscosity_solution_inhomogeneous_elliptic_equations:equation_for_u_n}
		\lambda \mf u_n(x) - \tfrac12 \operatorname{Trace}(BB^{*}(\operatorname{Hess} \mf u_n)(x)) = \mf h_n(x) 
		\end{equation} 
	for $ x \in \R^d $. 
	Next note that \eqref{viscosity_solution_inhomogeneous_elliptic_equation:approximations} and \eqref{viscosity_solution_inhomogeneous_elliptic_equation:Fn_functions} ensure for every non-empty compact set
		$ K \subseteq \R^d\times \R \times \R^d \times \Sym_d $ 
	that 
		\begin{equation} \label{viscosity_solution_inhomogeneous_elliptic_equation:convergence_rhs}
		\begin{split}
		& \limsup_{n \to \infty} 
		\left[ 
		\sup_{(x,r,p,A) \in K} \left|
		F_n(x,r,p,A) - F_0(x,r,p,A) 
		\right|
		\right] 
		= 
		\limsup_{n \to \infty} 
		\left[ 
		\sup_{(x,r,p,A) \in K} |\mf h_n(x) - h(x)| 
		\right]
		\\
		& \leq 
		\limsup_{n \to \infty} 
		\left( 
		\left[ 
		\sup_{y\in\R^d} \left( \frac{|\mf h_n(y) - h(y)|}{V(y)}  \right) 
		\right] 
		\left[ 
		\sup_{(x,r,p,A) \in K} V(x) 
		\right]
		\right) 
		= 0 . 
		\end{split} 
		\end{equation}
	Moreover, note that
	\eqref{viscosity_solution_inhomogeneous_elliptic_equation:lyapunov_assumption}, 
	\eqref{viscosity_solution_inhomogeneous_elliptic_equation:fixed_point_equation},
	and \eqref{viscosity_solution_inhomogeneous_elliptic_equations} guarantee for all 
		$ n \in \N $,
		$ x \in \R^d $ 
	that 
		\begin{equation} 
		\begin{split} 
		& 
		\frac{ | \mf u_n(x) - u(x) |}{ V(x) } 
		= 
		\frac{1}{V(x)} 
		\left| 
		\Exp{ \int_0^{\infty} e^{-\lambda s} \left( h(x+BW_s) - \mf h_n(x+BW_s) \right)\!\,ds }
		\right|  
		\\
		& \leq 
		\int_0^{\infty} e^{-\lambda s} \left[ \sup_{y\in\R^d} \left(  \frac{|h(y)-\mf h_n(y)|}{V(y)} \right) \right] \frac{\Exp{V(x+BW_s)}}{V(x)} \, ds 
		\leq 
		\frac{1}{\lambda-\rho}	\left[ \sup_{y\in\R^d} \left(  \frac{|h(y)-\mf h_n(y)|}{V(y)} \right) \right]	\!.
		\end{split} 
		\end{equation}  
	This and \eqref{viscosity_solution_inhomogeneous_elliptic_equation:approximations} imply for every non-empty compact set $ K \subseteq \R^d $ that 
		\begin{equation}
			\limsup_{n\to\infty} 
			\left[ 
			\sup_{x\in K} 
			|\mf u_n(x) - u(x) |
			\right] 
			= 0. 
		\end{equation} 
	This, \eqref{viscosity_solution_inhomogeneous_elliptic_equation:convergence_rhs}, 
	the fact that for all 
		$ n \in \N $ 
	it holds that 
		$ \mf u_n $ 
	is a viscosity solution of 
		\begin{equation}  
		F_n(x,\mf u_n(x),(\nabla\mf u_n)(x),(\operatorname{Hess} \mf u_n)(x)) = 0 
		\end{equation} 
	for $ x \in \R^d $ (cf.~\eqref{viscosity_solution_inhomogeneous_elliptic_equation:Fn_functions} and \eqref{viscosity_solution_inhomogeneous_elliptic_equations:equation_for_u_n}), 
	and Hairer et al.~\cite[Lemma 4.8]{HaHuJe2017_LossOfRegularityKolmogorov} (see also Barles \& Perthame~\cite{BarlesPerthame}) imply that $ u $ is a viscosity solution of
		\begin{equation} 
		F_0(x,u(x),(\nabla u)(x),(\operatorname{Hess} u)(x)) = 0 
		\end{equation} 
	for $ x \in \R^d $. 
	This establishes \eqref{viscosity_solution_inhomogeneous_elliptic_equation:claim}. The proof of \cref{lem:viscosity_solution_inhomogeneous_elliptic_equation} is thus completed. 
\end{proof} 

\begin{cor}\label{cor:fixpoint_is_viscosity_solution}
	Let $ d,m \in \N $, 
		$ B \in \R^{d\times m} $, 
		$ L,\rho \in \R $, 
		$ \lambda \in (L+\rho,\infty) $, 
	let $ \norm{\cdot}\colon\R^d\to [0,\infty) $ be a norm on $\R^d $, 
	let $ f \in C( \R^d\times\R,\R) $, $ u \in C(\R^d,\R)$, $V \in C^2(\R^d,(0,\infty)) $ 
	satisfy 
		$ \sup_{r\in (0,\infty)} [ \inf_{\norm{x}>r} V(x)] = \infty $
	and 
		$ \inf_{r\in (0,\infty)} [ \sup_{\norm{x}>r} (\frac{|f(x,0)|+|u(x)|}{V(x)})] = 0 $, 
	assume for all 
		$ x \in \R^d $ 
	that 
		\begin{equation} \label{fixpoint_is_viscosity_solution:supersolution}
		\tfrac12 \operatorname{Trace}\!\big( BB^{*}(\operatorname{Hess} V)(x) \big) 
		\leq \rho V(x), 
		\end{equation}  
	let $ ( \Omega, \mc F, \P ) $ be a probability space, 
	let $ W \colon [0,\infty) \times \Omega \to \R^m $ be a standard Brownian motion, 
	and assume for all 
		$ x \in \R^d $, 
		$ v,w \in \R $ 
	that $ | f(x,v) - f(x,w) | \leq L |v-w| $ and 
		\begin{equation}\label{fixpoint_is_viscosity_solution:sfpe}
		u(x) = \Exp{\int_0^{\infty} e^{-\lambda t} f(x+BW_t,u(x+BW_t))\,dt} 
		\end{equation} 
	(cf.\ \cref{cor:existence_of_fixpoint}).
Then it holds that $u$ is a viscosity solution of 
	\begin{equation}\label{fixpoint_is_viscosity_solution:claim}
	\lambda u(x) - 
	\tfrac12 \operatorname{Trace}\!\big(BB^{*}(\operatorname{Hess}u)(x)\big) 
	=  
	f(x,u(x)) 
	\end{equation}
for $ x \in \R^d $. 
\end{cor}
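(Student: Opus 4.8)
The plan is to reduce the claim to \cref{lem:viscosity_solution_inhomogeneous_elliptic_equation} by freezing the $u$-dependence of the nonlinearity. First I would introduce the function $h \colon \R^d \to \R$ given by $h(x) = f(x,u(x))$ for $x \in \R^d$; since $f \in C(\R^d\times\R,\R)$ and $u \in C(\R^d,\R)$, this map is continuous. Next I would verify the growth condition on $h$ required by \cref{lem:viscosity_solution_inhomogeneous_elliptic_equation}: applying the Lipschitz hypothesis $|f(x,v)-f(x,w)| \leq L|v-w|$ with $w = 0$ yields $|h(x)| \leq |f(x,0)| + L|u(x)| \leq (1+L)(|f(x,0)|+|u(x)|)$ for all $x \in \R^d$, so the assumption $\inf_{r\in(0,\infty)}[\sup_{\norm{x}>r}(\frac{|f(x,0)|+|u(x)|}{V(x)})] = 0$ gives $\inf_{r\in(0,\infty)}[\sup_{\norm{x}>r}(\frac{|h(x)|}{V(x)})] = 0$. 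This is exactly the elementary estimate already carried out in \eqref{mapping_welldefinedness:growth_condition_check}.

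The remaining hypotheses of \cref{lem:viscosity_solution_inhomogeneous_elliptic_equation} are either assumed here directly or immediate. The conditions $V \in C^2(\R^d,(0,\infty))$ and the Lyapunov bound are \eqref{fixpoint_is_viscosity_solution:supersolution}, which is precisely \eqref{viscosity_solution_inhomogeneous_elliptic_equation:lyapunov_assumption}; the coercivity $\sup_{r\in(0,\infty)}[\inf_{\norm{x}>r}V(x)] = \infty$ is assumed; and the requirement $\rho < \lambda$ holds because the Lipschitz inequality (take $v \neq w$) forces $L \geq 0$, whence $\lambda > L+\rho \geq \rho$. Moreover, after substituting $h(x) = f(x,u(x))$, the SFPE \eqref{fixpoint_is_viscosity_solution:sfpe} reads exactly as $u(x) = \Exp{\int_0^{\infty} e^{-\lambda s} h(x+BW_s)\,ds}$ for $x \in \R^d$, i.e.\ as the fixed point equation \eqref{viscosity_solution_inhomogeneous_elliptic_equation:fixed_point_equation}.

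I would then invoke \cref{lem:viscosity_solution_inhomogeneous_elliptic_equation} (with $d \is d$, $m \is m$, $B \is B$, $\lambda \is \lambda$, $\rho \is \rho$, $h \is (\R^d \ni y \mapsto f(y,u(y)) \in \R)$, $V \is V$, $W \is W$, $u \is u$ in its notation) to conclude that $u$ is a viscosity solution of $\lambda u(x) - \tfrac12 \operatorname{Trace}(BB^{*}(\operatorname{Hess} u)(x)) = h(x)$ for $x \in \R^d$, and since $h(x) = f(x,u(x))$ this is precisely \eqref{fixpoint_is_viscosity_solution:claim}. No step here is a genuine obstacle: the only point requiring care is the transfer of the growth condition from $f(\cdot,0)$ and $u$ to $h = f(\cdot,u(\cdot))$, which uses nothing beyond the Lipschitz estimate already exploited in the proof of \cref{mapping_welldefinedness}.
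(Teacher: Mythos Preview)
Your approach is the same as the paper's: freeze $u$ in the nonlinearity, set $h(x)=f(x,u(x))$, verify the growth hypothesis on $h$, and invoke \cref{lem:viscosity_solution_inhomogeneous_elliptic_equation}. The paper also spells out, via test functions $\varphi$ with $\varphi(x)=u(x)$, that the conclusion ``$u$ is a viscosity solution of $\lambda u - \tfrac12\operatorname{Trace}(BB^*\operatorname{Hess}u)=h(x)$'' transfers to the equation with right-hand side $f(x,u(x))$; since $h(x)=f(x,\varphi(x))$ at the touching point, this is indeed immediate, and your omitting it is harmless.

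There is, however, one hypothesis of \cref{lem:viscosity_solution_inhomogeneous_elliptic_equation} you have not checked: that lemma requires $\lambda\in(0,\infty)$, whereas \cref{cor:fixpoint_is_viscosity_solution} only assumes $\lambda\in(L+\rho,\infty)$ with $L,\rho\in\R$. Your observation $L\geq 0$ yields $\lambda>\rho$, but not $\lambda>0$ unless you also know $\rho\geq 0$. The paper supplies exactly this missing piece: from $V\in C(\R^d,(0,\infty))$ and $\sup_{r}[\inf_{\norm{x}>r}V(x)]=\infty$ one gets that $V$ attains its infimum at some $x_0\in\R^d$; there $(\operatorname{Hess}V)(x_0)\geq 0$, so $\tfrac12\operatorname{Trace}(BB^*(\operatorname{Hess}V)(x_0))\geq 0$, and \eqref{fixpoint_is_viscosity_solution:supersolution} then forces $\rho\geq 0$. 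Combined with your $L\geq 0$ this gives $\lambda>L+\rho\geq 0$. Add this short argument and your proof is complete.
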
 

\begin{proof}[Proof of \cref{cor:fixpoint_is_viscosity_solution}]
	Throughout this proof let $ h\colon \R^d \to \R $ satisfy for all 
		$ x \in \R^d $ 
	that 
		$ h(x) = f(x,u(x)) $. 
	Observe that the assumptions that $V\in C(\R^d,(0,\infty)) $ and $ \sup_{r\in (0,\infty)} [\inf_{ \norm{x}>r } V(x)] = \infty $ imply that $ \{ x \in \R^d \colon V(x) = \inf_{y\in\R^d} V(y) \} \neq \emptyset $. This, the fact that for all 
		$ x \in \{ y \in \R^d \colon V(y) = \inf_{z\in\R^d} V(z) \} $ it holds that 
		$ (\operatorname{Hess} V)(x) \geq 0 $, and \eqref{fixpoint_is_viscosity_solution:supersolution}
	ensure that $ \rho \geq 0 $. Hence, we obtain that $ \lambda \in (0,\infty) $. 
	Next note that \eqref{fixpoint_is_viscosity_solution:sfpe} and \cref{lem:viscosity_solution_inhomogeneous_elliptic_equation} prove that 
		$ u $ 
	is a viscosity solution of 
		\begin{equation} \label{viscosity_solution_inhomogeneous_elliptic_equation:equation}
		\lambda u(x) - 
		\tfrac12 \operatorname{Trace}\!\big(BB^{*}(\operatorname{Hess}u)(x)\big) 
		=  
		h(x) 
		\end{equation} 
	for $ x \in \R^d $. 
	This implies for all 
		$ x \in \R^d $, 
		$ \varphi \in C^2(\R^d,\R) $ 
	with 
		$ \varphi \geq u $ and $ \varphi(x) = u(x) $ 
	that 
		\begin{equation} \label{viscosity_solution_inhomogeneous_elliptic_equation:subsolution}
		\begin{split}
		0 
		& 
		\geq 
		\lambda \varphi(x) 
		- 
		\tfrac12 \operatorname{Trace}\!\big(BB^{*}(\operatorname{Hess}\varphi)(x)\big) 
		-
		h(x)
		\\
		& 
		= 
		\lambda \varphi(x) 
		- 
		\tfrac12 \operatorname{Trace}\!\big(BB^{*}(\operatorname{Hess}\varphi)(x)\big) 
		- 
		f(x,\varphi(x))
		. 
		\end{split}
		\end{equation} 
	Moreover, note that \eqref{viscosity_solution_inhomogeneous_elliptic_equation:equation} implies for all 
		$ x \in \R^d $, 
		$ \varphi \in C^2(\R^d,\R) $ 
	with 
		$ \varphi \leq u $ and $ \varphi(x) = u(x) $ 
	that 
		\begin{equation} 
		\begin{split}
		0 
		& 
		\leq 
		\lambda \varphi(x) 
		- 
		\tfrac12 \operatorname{Trace}\!\big(BB^{*}(\operatorname{Hess}\varphi)(x)\big) 
		- 
		h(x)
		\\
		& = 
		\lambda \varphi(x) 
		- 
		\tfrac12 \operatorname{Trace}\!\big(BB^{*}(\operatorname{Hess}\varphi)(x)\big) 
		-
		f(x,\varphi(x)).   
		\end{split}
		\end{equation} 
	This and \eqref{viscosity_solution_inhomogeneous_elliptic_equation:subsolution} demonstrate \eqref{fixpoint_is_viscosity_solution:claim}. The proof of \cref{cor:fixpoint_is_viscosity_solution} is thus completed. 
\end{proof} 

\subsection{On a comparison principle for viscosity solutions of semilinear elliptic PDEs}
\label{subsec:comparison_principle}

\begin{prop}[Comparison principle] \label{prop:comparison_principle}
	Let 
		$ d,m \in \N $, 
		$ B \in \R^{d\times m} $,
		$ L, \rho \in \R $, 
		$ \lambda \in (\rho+L,\infty) $, 
	let 
		$\norm{\cdot}\colon\R^d\to [0,\infty)$ 
	be the standard norm on $\R^d$, 
	let
		$ f \in C(\R^d\times\R,\R)$, 
		$ g,h,u,v \in C(\R^d,\R)$, 
		$ V \in C^2(\R^d,(0,\infty))$ 
	satisfy for all 
		$x\in\R^d$, 
		$a,b\in\R$ 
	that 
		$ [f(x,a)-f(x,b)-L(a-b)](a-b)\leq 0 $
	and 
		\begin{equation} \label{comparison_principle:lyapunov}
		\tfrac12\operatorname{Trace}\!\left(BB^{*}(\operatorname{Hess} V)(x)\right) 
		\leq 
		\rho V(x), 
		\end{equation} 
	assume that 
		$
		\limsup_{r\to\infty} 
		[\sup_{\norm{x}>r} (\frac{|f(x,0)|+|g(x)|+|h(x)|+|u(x)|+|v(x)|}{V(x)})] 
		= 0
		$,
	assume that $u$ is viscosity supersolution of 
		\begin{equation} \label{comparison_principle:supersolution} 
			\lambda u(x) 
			- 
			\tfrac12 \operatorname{Trace}\!\left( BB^{*}(\operatorname{Hess} u)(x) \right) 
			= 
			f(x,u(x)) 
			+
			g(x) 
		\end{equation} 
	for $x\in\R^d$, 
	and assume that $v$ is a viscosity subsolution of 
	\begin{equation} \label{comparison_principle:subsolution} 
		\lambda v(x)
		- 
		\tfrac12 
		\operatorname{Trace}\!\left( 
		BB^{*}(\operatorname{Hess} v)(x)
		\right) 
		=  
		f(x,v(x)) 
		+ 
		h(x) 
		\end{equation} 
	for $x\in\R^d$. Then it holds that 
		\begin{equation} \label{comparison_principle:claim}
		\sup_{x\in\R^d}
		\left[ \max\left\{\frac{v(x)-u(x)}{V(x)},0\right\} \right]
		\leq 
		\frac{1}{\lambda-(L+\rho)}
		\left[
		\sup_{x\in\R^d} \left(
		\max\left\{\frac{h(x)-g(x)}{V(x)},0\right\} \right)\right]\!.
		\end{equation} 	
	\end{prop}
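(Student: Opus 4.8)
The plan is to argue by a doubling-of-variables/maximum-principle technique in the spirit of Crandall et al.~\cite{CrandallEvansLions}, but \emph{localized} by a quadratic penalization so that all relevant maxima are attained even though $V$ need not be coercive. Write $\delta = \sup_{x\in\R^d}\max\{\tfrac{h(x)-g(x)}{V(x)},0\}$ and $\mu = \sup_{x\in\R^d}\max\{\tfrac{v(x)-u(x)}{V(x)},0\}$. The growth hypothesis forces $\tfrac{|g|+|h|}{V}\to 0$ and $\tfrac{|u|+|v|}{V}\to 0$ as $\norm{x}\to\infty$, so $\delta\in[0,\infty)$, and the continuous function $x\mapsto\max\{\tfrac{v(x)-u(x)}{V(x)},0\}$ vanishes at infinity; hence either $\mu=0$ (in which case \eqref{comparison_principle:claim} is trivial since its right-hand side is nonnegative) or there is $\bar x\in\R^d$ with $v(\bar x)-u(\bar x)=\mu V(\bar x)$, and then $\psi:=v-u-\mu V$ satisfies $\psi\leq 0=\psi(\bar x)$ on $\R^d$. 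It thus suffices to prove $(\lambda-(L+\rho))\mu\leq\delta$.

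For $\alpha,\eta\in(0,\infty)$ I would consider
\[
\Phi_{\alpha,\eta}(x,y)=v(x)-u(y)-\tfrac{\alpha}{2}\norm{x-y}^2-\tfrac{\mu}{2}\big(V(x)+V(y)\big)-\eta\norm{x-\bar x}^2-\eta\norm{y-\bar x}^2.
\]
Since $x\mapsto v(x)-\tfrac{\mu}{2}V(x)$ and $y\mapsto -u(y)-\tfrac{\mu}{2}V(y)$ are bounded above (again by the growth hypothesis), $\Phi_{\alpha,\eta}\to-\infty$ at infinity, so it attains its maximum at some $(x_{\alpha,\eta},y_{\alpha,\eta})$, which, because $\Phi_{\alpha,\eta}(x_{\alpha,\eta},y_{\alpha,\eta})\geq\Phi_{\alpha,\eta}(\bar x,\bar x)=0$, lies in a compact set independent of $\alpha$. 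The standard penalization lemma (cf.~\cite{CrandallEvansLions}) then gives, for fixed $\eta$ and along a subsequence $\alpha\to\infty$, that $x_{\alpha,\eta},y_{\alpha,\eta}\to\bar z_\eta$, that $\alpha\norm{x_{\alpha,\eta}-y_{\alpha,\eta}}^2\to 0$, and that $\psi(\bar z_\eta)-2\eta\norm{\bar z_\eta-\bar x}^2=\sup_{z\in\R^d}[\psi(z)-2\eta\norm{z-\bar x}^2]$; as $\psi\leq 0$ with $\psi(\bar x)=0$, this supremum equals $0$ and is attained only at $\bar x$, forcing $\bar z_\eta=\bar x$. Moreover $\Phi_{\alpha,\eta}(x_{\alpha,\eta},y_{\alpha,\eta})\geq 0$ yields $v(x_{\alpha,\eta})-u(y_{\alpha,\eta})\geq\tfrac{\mu}{2}(V(x_{\alpha,\eta})+V(y_{\alpha,\eta}))>0$.

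Next I would apply the Crandall--Ishii maximum principle for semicontinuous functions to $v(x)-u(y)$ and the smooth penalization $\varphi(x,y)=\tfrac{\alpha}{2}\norm{x-y}^2+\tfrac{\mu}{2}(V(x)+V(y))+\eta\norm{x-\bar x}^2+\eta\norm{y-\bar x}^2$ at $(x_{\alpha,\eta},y_{\alpha,\eta})$: there exist $X,Y\in\Sym_d$ with $(D_x\varphi,X)$ in the closure $\overline{J}^{2,+}v(x_{\alpha,\eta})$ of the second-order superjet of $v$, $(-D_y\varphi,Y)\in\overline{J}^{2,-}u(y_{\alpha,\eta})$, and, for a suitably small auxiliary parameter (chosen to vanish as $\alpha\to\infty$), $\begin{pmatrix}X&0\\0&-Y\end{pmatrix}\leq D^2\varphi(x_{\alpha,\eta},y_{\alpha,\eta})$ up to a term that is lower order in $\alpha$. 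Testing this matrix inequality against the vectors $(Be_1,Be_1),\dots,(Be_m,Be_m)\in\R^{2d}$ (where $e_1,\dots,e_m$ is the standard basis of $\R^m$), the block $\alpha\begin{pmatrix}I&-I\\-I&I\end{pmatrix}$ of $D^2\varphi$ and, likewise, its square annihilate these vectors (their two components coincide), so only the $V$-Hessian blocks and the $2\eta I$ blocks survive; using \eqref{comparison_principle:lyapunov} in the form $\operatorname{Trace}(BB^{*}(\operatorname{Hess}V)(z))\leq 2\rho V(z)$ and $\smallsum_{k=1}^m\norm{Be_k}^2=\operatorname{Trace}(BB^{*})$ one gets
\[
\operatorname{Trace}\!\big(BB^{*}(X-Y)\big)\leq\mu\rho\big(V(x_{\alpha,\eta})+V(y_{\alpha,\eta})\big)+4\eta\operatorname{Trace}(BB^{*})+o_\alpha(1).
\]

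Finally, combining the viscosity subsolution inequality for $v$ at $x_{\alpha,\eta}$ with the viscosity supersolution inequality for $u$ at $y_{\alpha,\eta}$, subtracting, inserting the trace estimate, using the one-sided Lipschitz assumption $f(x_{\alpha,\eta},v(x_{\alpha,\eta}))-f(x_{\alpha,\eta},u(y_{\alpha,\eta}))\leq L(v(x_{\alpha,\eta})-u(y_{\alpha,\eta}))$ (legitimate since $v(x_{\alpha,\eta})>u(y_{\alpha,\eta})$), absorbing $f(x_{\alpha,\eta},u(y_{\alpha,\eta}))-f(y_{\alpha,\eta},u(y_{\alpha,\eta}))$ and $g(x_{\alpha,\eta})-g(y_{\alpha,\eta})$ into $o_\alpha(1)$ via uniform continuity of $f$ and $g$ on compacts, and using $h(x_{\alpha,\eta})-g(x_{\alpha,\eta})\leq\delta V(x_{\alpha,\eta})$, I obtain
\[
(\lambda-L)\big(v(x_{\alpha,\eta})-u(y_{\alpha,\eta})\big)\leq\tfrac{\mu\rho}{2}\big(V(x_{\alpha,\eta})+V(y_{\alpha,\eta})\big)+\delta V(x_{\alpha,\eta})+2\eta\operatorname{Trace}(BB^{*})+o_\alpha(1).
\]
Letting $\alpha\to\infty$ (so $x_{\alpha,\eta},y_{\alpha,\eta}\to\bar x$ and $v(x_{\alpha,\eta})-u(y_{\alpha,\eta})\to\mu V(\bar x)$), then $\eta\downarrow 0$, and dividing by $V(\bar x)>0$ yields $(\lambda-(L+\rho))\mu\leq\delta$, which (since $\lambda-(L+\rho)>0$) is \eqref{comparison_principle:claim}. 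I expect the main obstacle to be the doubling/Ishii step in this unbounded, $V$-weighted setting: one must choose the localizing penalization $\eta(\norm{x-\bar x}^2+\norm{y-\bar x}^2)$ so that the doubled functional is coercive (hence its maximum is attained) while the maximizers still converge to $\bar x$ as $\alpha\to\infty$, and one must verify that the $\alpha$-dependent part of the matrix inequality disappears upon testing against the vectors $(Be_k,Be_k)$; the one-sided Lipschitz and Lyapunov hypotheses then deliver exactly the contraction constant $\tfrac{1}{\lambda-(L+\rho)}$.
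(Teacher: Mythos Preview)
Your argument is correct and is a genuinely different route from the paper's. The paper first \emph{normalizes} by $V$: it sets $w_1=u/V$, $w_2=v/V$ (which vanish at infinity by the growth hypothesis) and doubles variables for $w_2(x)-w_1(y)-\tfrac{\alpha}{2}\norm{x-y}^2$, so no extra localization parameter is needed. The price is that the viscosity inequalities for $w_1,w_2$ pick up first\nobreakdash-order terms $\langle BB^{*}\tfrac{\nabla V}{V},\nabla w_i\rangle$ and zeroth\nobreakdash-order terms $\tfrac12\operatorname{Trace}(BB^{*}\tfrac{\operatorname{Hess}V}{V})w_i$; the paper then has to show the gradient cross\nobreakdash-term $\langle BB^{*}(\tfrac{\nabla V}{V}(x_\alpha)-\tfrac{\nabla V}{V}(y_\alpha)),\alpha(x_\alpha-y_\alpha)\rangle$ vanishes as $\alpha\to\infty$ via local Lipschitz continuity of $\nabla V/V$, but the whole thing is a single\nobreakdash-parameter limit. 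You instead keep $u,v$ and bake $V$ into the penalization via $\tfrac{\mu}{2}(V(x)+V(y))$ plus the quadratic localizer $\eta\norm{\cdot-\bar x}^2$; this keeps the PDE purely second order (so no gradient bookkeeping) and lets the Lyapunov condition \eqref{comparison_principle:lyapunov} enter directly through the block\nobreakdash-diagonal part of $D^2\varphi$ after testing against $(Be_k,Be_k)$, at the cost of a two\nobreakdash-parameter limit $\alpha\to\infty$, then $\eta\downarrow 0$. Both approaches yield the same contraction constant $\tfrac{1}{\lambda-(L+\rho)}$; yours is arguably cleaner here because the equation has no first\nobreakdash-order term, while the paper's change of variables is more portable to equations that do.
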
 

\begin{proof} [Proof of \cref{prop:comparison_principle}]
	Throughout this proof let 
		$ \langle\cdot,\cdot\rangle\colon\R^d\times\R^d\to\R $ 
	be the standard scalar product on $\R^d$, 
	let 
		$w_1,w_2\in C(\R^d,\R)$ 
	satisfy for all 
		$x\in\R^d$ 
	that 
		$w_1(x)=\frac{u(x)}{V(x)}$ 
	and 
		$w_2(x)=\frac{v(x)}{V(x)}$,  
	and let 
		$\eta_{\alpha}\colon\R^d\times\R^d\to\R$, $\alpha\in (0,\infty)$, 
	satisfy for all 
		$\alpha\in (0,\infty)$, 
		$x,y\in\R^d$ 
	that 
		$\eta_{\alpha}(x,y) 
		= 
		w_2(x)-w_1(y)-\frac{\alpha}{2}\norm{x-y}^2$. 
	Note that \eqref{comparison_principle:claim} is clear in the case of $v\leq u$. 
	Therefore, we assume in the following that there exists 
		$x\in\R^d$ 
	such that 
		$v(x)-u(x)>0$. 
	This implies that there exists 
		$x\in\R^d$ 
	such that 
		$w_2(x)-w_1(x)>0$. 
	Next note that the hypothesis that
		$\limsup_{r\to\infty}[\sup_{\norm{x}>r} 
		(\frac{|u(x)|+|v(x)|}{V(x)})] = 0$
	demonstrates that 
		\begin{equation} \label{comparison_principle:w_1_and_w_2_vanishing_at_infinity}
		\limsup_{r\to\infty} 
		\left[ 
		\sup_{\norm{x}>r} 
		\Big( 
		|w_1(x)|+|w_2(x)|
		\Big) 
		\right] 
		= 0. 
		\end{equation}
	Combining this with the fact that 
		$w_1, w_2\in C(\R^d,\R)$ 
	and the fact that 
		$\sup_{x\in\R^d} (\max\{w_2(x)-w_1(x),0\})\in (0,\infty]$ 
	implies that there exists 
		$x_0\in\R^d$ 
	which satisfies that 
		\begin{equation} \label{comparison_principle:positive_maximum}
		w_2(x_0)-w_1(x_0) 
		= 
		\sup_{y\in\R^d} 
		\Big(w_2(y)-w_1(y)\Big) 
		> 0. 
		\end{equation} 
	In addition, note that \eqref{comparison_principle:w_1_and_w_2_vanishing_at_infinity} and the fact that 	
		$ w_1, w_2 \in C(\R^d,\R) $ 
	ensure that 
		$ \sup_{x\in\R^d} (|w_1(x)|+|w_2(x)|) < \infty $. 
	This and \eqref{comparison_principle:positive_maximum} imply for all 
		$ \alpha \in (0,\infty)$, 
		$ \beta \in (\alpha,\infty) $
	that 
		\begin{equation} \label{comparison_principle:chain_of_inequalities_for_maxs}
		\begin{split}
		\infty 
		& 
		> 
		\left[ \sup_{ x \in \R^d } |w_1(x)| \right] 
		+
		\left[ \sup_{ x \in \R^d } |w_2(x)| \right]  
		\geq 
		\sup_{ x,y \in \R^d } (w_2(x) - w_1(y)) 
		\geq 
		\sup_{ z \in \R^d \times \R^d } \eta_{\alpha}(z)
		\\
		& \geq 
		\sup_{ z \in \R^d \times \R^d } \eta_{\beta}(z)
		\geq 
		\sup_{ x \in \R^d } \eta_{\beta}(x,x) 
		= 
		\sup_{ x \in \R^d } (w_2(x)-w_1(x)) > 0. 
		\end{split}
		\end{equation} 
	Next let 
		$ r_{\alpha} \in (0,\infty) $, $ \alpha \in (0,\infty) $, 
	satisfy for all 
		$ \alpha \in (0,\infty) $ 
	that 
		$ r_{\alpha} = [ \frac{2}{\alpha}([\sup_{x\in\R^d} |w_1(x)|] + [\sup_{x \in \R^d} |w_2(x)|]) ]^{\nicefrac12} $
	and let 
		$ R \in (0,\infty) $ 
	satisfy that for all 
		$ x \in \R^d $ 
	with 
		$ \norm{x}>R $ 
	it holds that 
		$ |w_1(x)| + |w_2(x)| < \frac14 \sup_{y\in\R^d} (w_2(y)-w_1(y)) $.
	Furthermore, observe that for all 
		$ \alpha \in (0,\infty) $,  
		$ x,y \in \R^d $ 
	with 
		$ \norm{x-y} > r_{\alpha} $ 
	it holds that $ w_2(x)-w_1(y)-\frac{\alpha}{2}\norm{x-y}^2 \leq 0 $. Hence, we obtain for all 
		$ \alpha \in (0,\infty) $, 
		$ x,y \in \R^d $ 
	with 
		$ \max\{\norm{x},\norm{y}\} > R+r_{\alpha} $ 
	that 
		\begin{equation} 
		\eta_{\alpha}(x,y) 
		\begin{cases} 
		\leq 0 & \colon \norm{x-y} > r_{\alpha}, \\
		\leq \frac12 \sup_{z\in\R^d} (w_2(z)-w_1(z)) & \colon \norm{x-y} \leq r_{\alpha}.
		\end{cases}
		\end{equation} 
	Combining this with 
		\eqref{comparison_principle:chain_of_inequalities_for_maxs}
	demonstrates for all 
		$ \alpha \in (0,\infty) $ 
	that 
		\begin{equation} 
		\sup_{z\in\R^d\times\R^d} \eta_{\alpha}(z) = \sup_{\max\{\norm{x},\norm{y}\}\leq R+r_{\alpha}} \eta_{\alpha}(x,y).
		\end{equation}  
	Hence, we obtain that for every 
		$\alpha\in (0,\infty)$ 
	there exist
		$ x_{\alpha},y_{\alpha}\in\R^d$ 		
	which satisfy that 
		$ \max\{\norm{x_{\alpha}},\norm{y_{\alpha}}\} \leq R + r_{\alpha} $ 
	and 
	\begin{equation} \label{comparison_principle:minimizers_of_regularized_problems}
	w_2(x_{\alpha})-w_1(y_{\alpha})-
	\frac{\alpha}{2}\norm{x_{\alpha}-y_{\alpha}}^2 
	= 
	\sup_{z\in\R^d\times\R^d} \eta_{\alpha}(z)
	> 0. 
	\end{equation} 
	Crandall et al.~\cite[Theorem 3.2]{CIL1992UsersGuide} 
	(with 
		$ k \is 2 $, 
		$ N_1 \is d $, 
		$ N_2 \is d $, 
		$ \mc O_1 \is \R^d $, 
		$ \mc O_2 \is \R^d $, 
		$ u_1 \is w_2 $, 
		$ u_2 \is -w_1 $, 
		$ \varphi \is (\R^d\times\R^d\ni (x,y)\mapsto \frac{\alpha}{2}\norm{x-y}^2\in\R)$, 
		$ \hat{x} \is (x_{\alpha},y_{\alpha}) $ 
	for 
		$ \alpha \in (0,\infty) $
	in the notation of \cite[Theorem 3.2]{CIL1992UsersGuide}) therefore guarantees that there exist 
		$ X_{\alpha},Y_{\alpha}\in\Sym_{d} $, $ \alpha \in (0,\infty) $, 
	which satisfy for all 
		$ \alpha\in (0,\infty) $ 
	that 
	$ (\alpha(x_{\alpha}-y_{\alpha}),X_{\alpha}) \in (\hat{J}^{2}_{+}w_2)(x_{\alpha}) $, 
	$ (\alpha(x_{\alpha}-y_{\alpha}),Y_{\alpha}) \in (\hat{J}^{2}_{-}w_1)(y_{\alpha}) $, 
	and 
	\begin{equation} \label{comparison_principle:jensen_ishii} 
		-3 \alpha 
		\begin{pmatrix} 
			I & 0 \\
			0 & I
		\end{pmatrix} 
		\leq 
		\begin{pmatrix}
			X_{\alpha} & 0 \\
			0 & -Y_{\alpha} 
		\end{pmatrix} 
		\leq 
		 3 \alpha 
		\begin{pmatrix} 
			I & -I \\
		 	-I & I
		\end{pmatrix}
		\end{equation}
	(see Hairer et al.~\cite[Definition 4.3]{HaHuJe2017_LossOfRegularityKolmogorov} for definitions of $\hat{J}^{2}_{+}w_2$ and $\hat{J}^{2}_{-}w_1$).  
	Next observe that \eqref{comparison_principle:supersolution} implies that $w_1$ is a viscosity supersolution of 
	\begin{equation} 
	\begin{split}
	\lambda w_1(x) & - \bigg[ 
	\tfrac12\operatorname{Trace}\left( 
	BB^{*}(\operatorname{Hess} w_1)(x)
	\right) 
	+ 
	\left\langle BB^{*}\tfrac{(\nabla V)(x)}{V(x)}, (\nabla w_1)(x) \right\rangle \\
	& \qquad + 
	\tfrac12\operatorname{Trace}\!\left( 
	BB^{*}\tfrac{(\operatorname{Hess} V)(x)}{V(x)}
	\right) w_1(x) 
	+ 
	\tfrac{1}{V(x)}f\big(x,V(x)w_1(x)\big)
	+ \tfrac{g(x)}{V(x)}
	\bigg] 
	= 
	0
	\end{split}
	\end{equation} 
	for $x\in\R^d$. Combining this and \eqref{comparison_principle:jensen_ishii} assures for all 
		$ \alpha \in (0,\infty) $ 
	that
		\begin{equation} \label{comparison_principle:first_inequality}
		\begin{split}
		 &  
		 \lambda w_1(y_{\alpha})
		 - \bigg[ 
		 \tfrac12\operatorname{Trace}\left( 
		 BB^{*}Y_{\alpha}
		 \right) 
		 + 
		 \left\langle BB^{*}\tfrac{(\nabla V)(y_{\alpha})}{V(y_{\alpha})}, \alpha(x_{\alpha}-y_{\alpha}) \right\rangle 
		 \\
		 & \qquad + 
		 \tfrac12\operatorname{Trace}\!\left( 
		 BB^{*}\tfrac{(\operatorname{Hess} V)(y_{\alpha})}{V(y_{\alpha})}
		 \right) w_1(y_{\alpha}) 
		 +
		 \tfrac{1}{V(y_{\alpha})}f\big(y_{\alpha},V(y_{\alpha})w_1(y_{\alpha})\big)
		 +
		 \tfrac{g(y_{\alpha})}{V(y_{\alpha})} 
		 		 \bigg] 
		 \geq 0	 
		 . 
		\end{split}
		\end{equation} 
	In addition, note that \eqref{comparison_principle:subsolution} ensures that $w_2$ 	is a viscosity subsolution of 
		\begin{equation} 
		\begin{split}
		\lambda w_2(x) & - \bigg[ 
		\tfrac12\operatorname{Trace}\left( 
		BB^{*}(\operatorname{Hess} w_2)(x)
		\right) 
		+
		\left\langle BB^{*}\tfrac{(\nabla V)(x)}{V(x)}, (\nabla w_2)(x) \right\rangle\\
		& + 
		\tfrac12\operatorname{Trace}\!\left( 
		BB^{*}\tfrac{(\operatorname{Hess} V)(x)}{V(x)}
		\right) w_2(x) 
		+ 
		\tfrac{1}{V(x)}f\big(x,V(x)w_2(x)\big)
		+
		\tfrac{h(x)}{V(x)} 
		\bigg] 
		= 0
		\end{split}
		\end{equation} 
	for $ x \in \R^d $. Combining this and \eqref{comparison_principle:jensen_ishii} implies for all 
		$\alpha\in (0,\infty)$ 
	that 
		\begin{equation} 
		\begin{split}
		& 
		\lambda w_2(x_{\alpha}) 
		- 
		\bigg[
		\tfrac12\operatorname{Trace}\left( 
		BB^{*}X_{\alpha}
		\right) 
		+ 
		\left\langle BB^{*}\tfrac{(\nabla V)(x_{\alpha})}{V(x_{\alpha})}, \alpha(x_{\alpha}-y_{\alpha}) \right\rangle
		\\
		& \qquad 
		+ 
		\tfrac12\operatorname{Trace}\!\left( 
		BB^{*}\tfrac{(\operatorname{Hess} V)(x_{\alpha})}{V(x_{\alpha})}
		\right) w_2(x_{\alpha}) 
		+ 
		\tfrac{1}{V(x_{\alpha})}f\big(x_{\alpha},V(x_{\alpha})w_2(x_{\alpha})\big)
		+ 
		\tfrac{h(x_{\alpha})}{V(x_{\alpha})}
		\bigg] 
		\leq 0
		. 
		\end{split}
		\end{equation} 
	This and \eqref{comparison_principle:first_inequality} assure for all 
		$ \alpha \in (0,\infty) $ 
	that 
		\begin{equation} \label{comparison_principle:inequality}
		\begin{split}
		\lambda (w_2(x_{\alpha}) - w_1(y_{\alpha}))
		& 
		\leq 
		\tfrac12 \operatorname{Trace}\!\left( 
		BB^{*}(X_{\alpha}-Y_{\alpha})
		\right)
		+ 
		\left\langle 
		BB^{*}\left(
			\tfrac{(\nabla V)(x_{\alpha})}{V(x_{\alpha})}
			-
			\tfrac{(\nabla V)(y_{\alpha})}{V(y_{\alpha})}\right),
		\alpha(x_{\alpha}-y_{\alpha})	
		\right\rangle 
		\\
		& +
		\tfrac12 \operatorname{Trace}\!\left( 
		BB^{*}\left(\tfrac{(\operatorname{Hess} V)(x_{\alpha})}{V(x_{\alpha})} w_2(x_{\alpha})
		- 
		\tfrac{(\operatorname{Hess} V)(y_{\alpha})}{V(y_{\alpha})} w_1(y_{\alpha})\right)
		\right)
		\\ 
		& + 
		\tfrac{1}{V(x_{\alpha})}f\big(x_{\alpha},V(x_{\alpha})w_2(x_{\alpha})\big)
		- 
		\tfrac{1}{V(y_{\alpha})}f\big(y_{\alpha},V(y_{\alpha})w_1(y_{\alpha})\big)
		+ 
		\tfrac{h(x_{\alpha})}{V(x_{\alpha})}-\tfrac{g(y_{\alpha})}{V(y_{\alpha})}.  
		\end{split}
		\end{equation} 
	Next note that \eqref{comparison_principle:jensen_ishii} ensures for all 
		$ \alpha \in (0,\infty) $ 
	that	 
		$ X_{\alpha} \leq Y_{\alpha} $. 
	This and \eqref{comparison_principle:inequality} imply for all 
		$ \alpha \in (0,\infty) $ 
	that 
		\begin{equation} \label{comparison_principle:one_simplification}
		\begin{split} 
		\lambda (w_2(x_{\alpha})-w_1(y_{\alpha}) )
		& \leq 
		\left\langle 
		BB^{*}\left(
		\tfrac{(\nabla V)(x_{\alpha})}{V(x_{\alpha})}
		-
		\tfrac{(\nabla V)(y_{\alpha})}{V(y_{\alpha})}\right),
		\alpha(x_{\alpha}-y_{\alpha})	
		\right\rangle 
		\\
		& +
		\tfrac12 \operatorname{Trace}\!\left( 
		BB^{*}\!\left(\tfrac{(\operatorname{Hess} V)(x_{\alpha})}{V(x_{\alpha})} w_2(x_{\alpha})
		- 
		\tfrac{(\operatorname{Hess} V)(y_{\alpha})}{V(y_{\alpha})} w_1(y_{\alpha})\right)
		\right)
		\\ 
		& + 
		\tfrac{1}{V(x_{\alpha})}f\big(x_{\alpha},V(x_{\alpha})w_2(x_{\alpha})\big)
		- 
		\tfrac{1}{V(y_{\alpha})}f\big(y_{\alpha},V(y_{\alpha})w_1(y_{\alpha})\big)
		+ 
		\tfrac{h(x_{\alpha})}{V(x_{\alpha})}-\tfrac{g(y_{\alpha})}{V(y_{\alpha})}.  
		\end{split}
		\end{equation} 
	Moreover, observe that 		
		\eqref{comparison_principle:chain_of_inequalities_for_maxs}
	implies that 
		$ \lim_{\alpha\to\infty} [\sup_{z\in\R^d\times\R^d} \eta_{\alpha}(z)] \in \R $ exists.  
	Hairer et al.~\cite[Lemma 4.9]{HaHuJe2017_LossOfRegularityKolmogorov} (with 
		$ d \is 2d $, 
		$ O \is \R^d\times\R^d $, 
		$ \eta \is ( \R^d\times\R^d \ni (x,y) \mapsto w_2(x) - w_1(y) \in \R ) $, 
		$ \phi \is ( \R^d\times\R^d \ni (x,y) \mapsto \frac12 \norm{x-y}^2 \in [0,\infty) ) $, 
		$ x \is ( (0,\infty) \ni \alpha \mapsto (x_{\alpha},y_{\alpha}) \in \R^d\times\R^{d} ) $
	in the notation of Hairer et al.~\cite[Lemma 4.9]{HaHuJe2017_LossOfRegularityKolmogorov}) and 
	\eqref{comparison_principle:minimizers_of_regularized_problems}
	therefore ensure that 
		$\limsup_{\alpha\to\infty} [\alpha\norm{x_{\alpha}-y_{\alpha}}^2] = 0$. 
	This, the fact that 
		$ \limsup_{\alpha\to\infty} r_{\alpha} = 0 $, 
	the fact that 
		$\frac{\nabla V}{V} \colon \R^d \to \R^d $ 
	is locally Lipschitz continuous,
	and  \eqref{comparison_principle:minimizers_of_regularized_problems} 	
	imply that 
		\begin{equation} \label{comparison_principle:gradient_term_converging_to_zero}
		\limsup_{\alpha\to\infty} 
		\left| 
		\left\langle 
		BB^{*}\!\left( \tfrac{(\nabla V)(x_{\alpha})}{V(x_{\alpha})} 
		- 
		\tfrac{(\nabla V)(y_{\alpha})}{V(y_{\alpha})}
		\right), \alpha(x_{\alpha}-y_{\alpha}) 
		\right\rangle 
		\right| 
		= 0.  
		\end{equation} 
	In addition, note that the fact that 
		$ \limsup_{\alpha\to\infty} r_{\alpha} = 0 $ 
	and 
		\eqref{comparison_principle:minimizers_of_regularized_problems} 
	assure that there exist 
		$\hat{x} \in \R^d $ 
	and 
		$\alpha_n \in (0,\infty)$, $n\in\N$, 
	which satisfy that  
		$\liminf_{n\to\infty} \alpha_n=\infty$ 
	and 
		$\limsup_{n\to\infty} \norm{x_{\alpha_{n}}-\hat{x}} = 0$. 
	This, the fact that 
		$ (\operatorname{Hess} V) \in C(\R^d,\Sym_d) $,
	the fact that 
		$ f \in C(\R^d \times \R, \R) $, 
	the fact that 
		$ V \in C(\R^d, (0,\infty) ) $, 
	the fact that 
		$ u, v, g, h \in C(\R^d,\R) $,
	and the fact that 
		$ \limsup_{\alpha\to\infty} [\frac{\alpha}{2}\norm{x_{\alpha}-y_{\alpha}}^2] = 0 $
	prove that 
	\begin{enumerate}[(i)] 
		\item it holds that 
			\begin{equation} 
			\begin{split} 
			\limsup_{n\to\infty} & \Bigg[ \left| \tfrac12 \operatorname{Trace}\!\left(BB^{*} \tfrac{(\operatorname{Hess} V)(x_{\alpha_n})}{V(x_{\alpha_n})}w_2(x_{\alpha_n}) \right) 
			- \tfrac12 \operatorname{Trace}\!\left(BB^{*} \tfrac{(\operatorname{Hess} V)(\hat{x})}{V(\hat{x})} w_2(\hat{x}) \right) \right| 
			\\
			& 
			+ \left| \tfrac12 \operatorname{Trace}\!\left(BB^{*} \tfrac{(\operatorname{Hess} V)(y_{\alpha_n})}{V(y_{\alpha_n})}w_1(y_{\alpha_n}) \right) 
			- \tfrac12 \operatorname{Trace}\!\left(BB^{*} \tfrac{(\operatorname{Hess} V)(\hat{x})}{V(\hat{x})} w_1(\hat{x}) \right) \right| \Bigg] 
			= 0,
			\end{split}
			\end{equation} 
		\item it holds that 
			\begin{equation} 
			\begin{split}
			\limsup_{n\to\infty} & \Bigg[ \left| \tfrac{1}{V(x_{\alpha_n})} f\big(x_{\alpha_n},V(x_{\alpha_n})w_2(x_{\alpha_n})\big) -  \tfrac{1}{V(\hat{x})} f\big(\hat{x},V(\hat{x})w_2(\hat{x})\big) \right| 
			\\
			& \qquad + \left| \tfrac{1}{V(y_{\alpha_n})} f\big(y_{\alpha_n},V(y_{\alpha_n})w_1(y_{\alpha_n})\big) - \tfrac{1}{V(\hat{x})} f\big(\hat{x},V(\hat{x})w_1(\hat{x})\big) \right| \Bigg] = 0,
			\end{split} 
			\end{equation} 
		\item and it holds that 
			\begin{equation}
			\limsup_{n\to\infty} \left[ \left| \tfrac{h(x_{\alpha_n})}{V(x_{\alpha_n})} - \tfrac{h(\hat{x})}{V(\hat{x})} \right| 
			+ \left| \tfrac{g(y_{\alpha_n})}{V(y_{\alpha_n})} - \tfrac{g(\hat{x})}{V(\hat{x})} \right| \right] = 0 . 
			\end{equation} 
	\end{enumerate}  
	Combining this with \eqref{comparison_principle:one_simplification} and \eqref{comparison_principle:gradient_term_converging_to_zero} 
	shows that 
		\begin{equation} \label{comparison_principle:almost_there}
		\begin{split}
		& \lambda ( w_2(\hat{x}) - w_1(\hat{x}) ) 
		\\
		& \leq 
		\tfrac12 \operatorname{Trace}\!\left( 
		BB^{*}\tfrac{(\operatorname{Hess} V)(\hat{x})}{V(\hat{x})} 
		\right) 
		(w_2(\hat{x})-w_1(\hat{x}))
		+
		\tfrac{f(\hat{x},V(\hat{x})w_2(\hat{x}))-f(\hat{x},V(\hat{x})w_1(\hat{x}))}{V(\hat{x})}
		+ 
		\tfrac{h(\hat{x})}{V(\hat{x})} - \tfrac{g(\hat{x})}{V(\hat{x})}.  
		\end{split} 
		\end{equation} 
	Next note that the second part of the statement of Hairer et al.~\cite[Lemma 4.9]{HaHuJe2017_LossOfRegularityKolmogorov} (with 
		$ d \is 2d $, 
		$ O \is \R^d\times\R^d $, 
		$ \eta \is ( \R^d\times\R^d \ni (x,y) \mapsto w_2(x) - 	w_1(y) \in \R ) $, 
		$ \phi \is ( \R^d\times\R^d \ni (x,y) \mapsto \frac12 \norm{x-y}^2 \in [0,\infty) ) $, 
		$ x \is ( (0,\infty) \ni \alpha \mapsto (x_{\alpha},y_{\alpha}) \in \R^d\times\R^{d} ) $, 
		$ (\alpha_n)_{n\in\N} \is (\alpha_n)_{n\in\N} $, 
		$ x_0 \is (\hat{x},\hat{x}) $
	in the notation of Hairer et al.~\cite[Lemma 4.9]{HaHuJe2017_LossOfRegularityKolmogorov})		
	demonstrates that  
		$ w_2(\hat{x})-w_1(\hat{x}) = \sup_{x\in\R^d} (w_2(x)-w_1(x)) > 0 $. 	
	This, \eqref{comparison_principle:almost_there}, \eqref{comparison_principle:lyapunov}, 
	and the assumption that for all 
		$ x \in \R^d $, 
		$ a,b \in \R $ 
	it holds that 
		$[f(x,a)-f(x,b)](a-b) \leq L |a-b|^2 $ 
	ensure that 
		\begin{equation} 
		\begin{split} 
		& \lambda \left[ \sup_{z\in\R^d} ( \max\{ w_2(z)-w_1(z) , 0 \} ) \right] 
		\\[1ex]
		& =
		\lambda \left( w_2(\hat{x}) - w_1(\hat{x}) \right)
		\leq 
		\rho 
		\left( w_2(\hat{x})-w_1(\hat{x})\right)
		+ 
		\tfrac{ L [V(\hat{x})w_2(\hat{x}) - V(\hat{x})w_1(\hat{x})]}{V(\hat{x})} 
		+ 
		\tfrac{h(\hat{x})}{V(\hat{x})} - \tfrac{g(\hat{x})}{V(\hat{x})} 
		\\[1ex]
		& 		
		\leq 
		(\rho + L) (w_2(\hat{x})-w_1(\hat{x})) 
		+ 
		\sup_{z\in\R^d} \left[ \max\left\{ \tfrac{h(z)-g(z)}{V(z)},0 \right\} \right] 
		\\
		& = 
		(\rho + L) \left[ \sup_{z\in\R^d} ( \max\{ w_2(z)-w_1(z) , 0 \} ) \right]
		+ 
		\sup_{z\in\R^d} \left[ \max\left\{ \tfrac{h(z)-g(z)}{V(z)},0 \right\} \right] \!.
		\end{split}
		\end{equation} 
	Hence, we obtain that 
		\begin{equation}
		\begin{split} 
		\Big[\lambda - (L+\rho)\Big]
		\left[\sup_{x\in\R^d}\left( 
		\max\left\{\tfrac{v(x) - u(x)}{V(x)},0\right\}\right) 
		\right] \leq 
		\sup_{x\in\R^d}\left[\max\left\{ \tfrac{h(x)-g(x)}{V(x)},0 \right\} \right]\!.
		\end{split}
		\end{equation} 
	This establishes \eqref{comparison_principle:claim}. The proof of \cref{prop:comparison_principle} is thus completed. 
\end{proof} 

\subsection{Existence and uniqueness results for viscosity solutions of semilinear elliptic PDEs}
\label{subsec:existence_and_uniqueness}

\begin{prop}[Existence and uniqueness of viscosity solutions] \label{cor:existence_and_uniqueness} 
Let 
	$d,m\in\N$, 
	$B\in\R^{d\times m}$,
	$L,\rho\in \R$, 
	$\lambda\in (\rho+L,\infty)$, 
let 
	$\norm{\cdot}\colon\R^d\to [0,\infty)$ 
be a norm on $\R^d$, 
let
	$f\in C(\R^d\times\R,\R)$, 
	$V\in C^2(\R^d,(0,\infty))$ 
satisfy for all 
	$x\in\R^d$, 
	$v,w\in\R$  
that 
	$|f(x,v)-f(x,w)|\leq L|v-w|$
and 
	\begin{equation} 
	\tfrac12\operatorname{Trace}\!\left(BB^{*}(\operatorname{Hess} V)(x)\right) 
	\leq 
	\rho V(x), 
	\end{equation} 
assume that 
	$
	\inf_{r\in (0,\infty)} 
	[\sup_{\norm{x}>r} (\frac{|f(x,0)|}{V(x)})] 
	= 0
	$
and 
	$ 
	\sup_{r\in (0,\infty)} 
	[ \inf_{\norm{x}>r} V(x) ]
	= \infty
	$, 
let 
	$ ( \Omega, \mc F, \P ) $ be a probability space, 
and let 
	$ W \colon [0,\infty) \times \Omega \to \R^m $ 
be a standard Brownian motion.  
Then 
\begin{enumerate}[(i)] 
	\item \label{existence_and_uniqueness:item1}
	there exists a unique $ u \in \{ v \in C(\R^d,\R)\colon \inf_{r\in (0,\infty)} [\sup_{\norm{x}>r}(\frac{|v(x)|}{V(x)})] = 0 \} $
	which satisfies that $ u $ is a viscosity solution of 
		\begin{equation} \label{existence_and_uniqueness:claim}
		\lambda u( x ) 
		- 
		\tfrac12 \operatorname{Trace}\!\big(BB^{*}(\operatorname{Hess}u)(x)\big) 
		= 
		f(x,u(x))
		\end{equation} 
	for $x\in\R^d$ and 
	\item \label{existence_and_uniqueness:item2}
	it holds for all 
		$ x \in \R^d $ 
	that $ \EXPP{ \int_0^{\infty} e^{-\lambda t} |f(x+BW_t,u(x+BW_t))|\,dt} < \infty $ and
		\begin{equation} 
		u(x) = \Exp{ \int_0^{\infty} e^{-\lambda t} f(x+BW_t,u(x+BW_t))\,dt}\!.
		\end{equation} 
	\end{enumerate}
\end{prop}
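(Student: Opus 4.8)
The plan is to assemble the three main tools developed in \cref{subsec:sfpes,subsec:sfpes_and_pdes,subsec:comparison_principle}: the fixed point existence result \cref{cor:existence_of_fixpoint}, the identification result \cref{cor:fixpoint_is_viscosity_solution}, and the comparison principle \cref{prop:comparison_principle}. The first two give existence (and Item~\eqref{existence_and_uniqueness:item2}), while the comparison principle gives uniqueness in Item~\eqref{existence_and_uniqueness:item1}.

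First I would record the elementary observation that for any function $\psi\colon\R^d\to [0,\infty)$ the map $(0,\infty)\ni r\mapsto \sup_{\norm{x}>r}\psi(x)\in [0,\infty]$ is non-increasing, so that the hypothesis $\inf_{r\in (0,\infty)}[\sup_{\norm{x}>r}(\frac{|f(x,0)|}{V(x)})]=0$ is equivalent to $\limsup_{r\to\infty}[\sup_{\norm{x}>r}(\frac{|f(x,0)|}{V(x)})]=0$, and likewise for the growth conditions on $u$ below. With this reformulation, \cref{cor:existence_of_fixpoint} (applied with $d\is d$, $m\is m$, $B\is B$, $L\is L$, $\rho\is\rho$, $\lambda\is\lambda$, $\norm{\cdot}\is\norm{\cdot}$, $f\is f$, $V\is V$, and the given $(\Omega,\mc F,\P)$ and $W$) produces a unique $u\in C(\R^d,\R)$ with $\limsup_{r\to\infty}[\sup_{\norm{x}>r}(\frac{|u(x)|}{V(x)})]=0$ which satisfies the integrability assertion and the SFPE of Item~\eqref{existence_and_uniqueness:item2}; once we know this $u$ is the viscosity solution in Item~\eqref{existence_and_uniqueness:item1}, Item~\eqref{existence_and_uniqueness:item2} is established.

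Next I would feed this $u$ into \cref{cor:fixpoint_is_viscosity_solution}: the Lipschitz hypothesis on $f$ and the Lyapunov inequality for $V$ are assumed, and the combined growth condition $\inf_{r\in (0,\infty)}[\sup_{\norm{x}>r}(\frac{|f(x,0)|+|u(x)|}{V(x)})]=0$ follows from the two separate growth conditions since the supremum of a sum is at most the sum of the suprema. Hence $u$ is a viscosity solution of \eqref{existence_and_uniqueness:claim}, which yields the existence part of Item~\eqref{existence_and_uniqueness:item1}.

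For uniqueness in Item~\eqref{existence_and_uniqueness:item1}, suppose $\tilde u\in C(\R^d,\R)$ is any viscosity solution of \eqref{existence_and_uniqueness:claim} satisfying $\inf_{r\in (0,\infty)}[\sup_{\norm{x}>r}(\frac{|\tilde u(x)|}{V(x)})]=0$. Since all norms on $\R^d$ are equivalent, these growth conditions are unaffected by replacing $\norm{\cdot}$ with the standard norm, so \cref{prop:comparison_principle} applies. I would first note that the bound $|f(x,a)-f(x,b)|\leq L|a-b|$ implies, by the Cauchy--Schwarz inequality, that $[f(x,a)-f(x,b)-L(a-b)](a-b)\leq |f(x,a)-f(x,b)|\,|a-b|-L(a-b)^2\leq 0$ for all $x\in\R^d$, $a,b\in\R$, which is the structural condition on $f$ needed in \cref{prop:comparison_principle}. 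Then, invoking \cref{prop:comparison_principle} with $g\is 0$, $h\is 0$, and using that a viscosity solution is simultaneously a viscosity super- and subsolution, once with $(u,v)\is(u,\tilde u)$ and once with $(u,v)\is(\tilde u,u)$, gives $\sup_{x\in\R^d}[\max\{\frac{\tilde u(x)-u(x)}{V(x)},0\}]\leq 0$ and $\sup_{x\in\R^d}[\max\{\frac{u(x)-\tilde u(x)}{V(x)},0\}]\leq 0$, hence $\tilde u=u$. This completes Item~\eqref{existence_and_uniqueness:item1}, and since the unique viscosity solution is the $u$ supplied by \cref{cor:existence_of_fixpoint}, Item~\eqref{existence_and_uniqueness:item2} follows too. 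The only non-mechanical points are the two bookkeeping reductions (the equivalence of the $\inf_r$ and $\limsup_{r\to\infty}$ formulations of the growth conditions, and the passage from the two-sided Lipschitz bound to the one-sided condition of \cref{prop:comparison_principle}) together with the remark that norm equivalence licenses the use of \cref{prop:comparison_principle} although it is stated only for the Euclidean norm; none of these is a genuine obstacle.
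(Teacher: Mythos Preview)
Your proposal is correct and follows essentially the same three-step structure as the paper's proof: apply \cref{cor:existence_of_fixpoint} to produce the SFPE solution $u$, then \cref{cor:fixpoint_is_viscosity_solution} to identify it as a viscosity solution, and finally \cref{prop:comparison_principle} with $g=h=0$ for uniqueness. The extra bookkeeping you spell out (the $\inf_r$ versus $\limsup_{r\to\infty}$ equivalence, the norm-equivalence remark for invoking \cref{prop:comparison_principle}, and the reduction of the two-sided Lipschitz bound to the one-sided condition $[f(x,a)-f(x,b)-L(a-b)](a-b)\le 0$) is correct and merely makes explicit what the paper leaves implicit.
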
 

\begin{proof}[Proof of \cref{cor:existence_and_uniqueness}]
	First, observe that \cref{cor:existence_of_fixpoint} (with  
		$ d \is d $, 
		$ m \is m $, 
		$ B \is B $, 
		$ L \is L $, 
		$ \rho \is \rho $, 
		$ \lambda \is \lambda $, 
		$ \norm{\cdot} \is \norm{\cdot} $, 
		$ f \is f $, 
		$ V \is V $, 
		$ (\Omega,\mc F,\P) \is (\Omega,\mc F,\P) $, 
		$ W \is W $
	in the notation of \cref{cor:existence_of_fixpoint})
	guarantees that there exists $ u \in C(\R^d, \R) $ which satisfies for all 
 		$ x \in \R^d $ 
 	that 
 		$ \limsup_{r\to\infty} [\sup_{\norm{y}>r} (\frac{|u(y)|}{V(y)})] = 0 $, 
 		$ \EXPP{ \int_0^{\infty} e^{-\lambda t} |f(x+BW_t,u(x+BW_t))|\,dt} < \infty $,  
 	and 
 		\begin{equation} \label{existence_and_uniqueness:sfpe}
 		u(x) = \Exp{\int_0^{\infty} e^{-\lambda t} f(x+BW_t,u(x+BW_t))\,dt}\!. 
 		\end{equation} 
 	\cref{cor:fixpoint_is_viscosity_solution} (with 
 		$ d \is d $, 
 		$ m \is m $, 
 		$ B \is B $, 
 		$ L \is L $, 
 		$ \rho \is \rho $, 
 		$ \lambda \is \lambda $, 
 		$ \norm{\cdot} \is \norm{\cdot} $, 
 		$ f \is f $, 
 		$ u \is u $, 
 		$ V \is V $, 
 		$ (\Omega,\mc F,\P) \is (\Omega,\mc F,\P) $, 
 		$ W \is W $
 	in the notation of \cref{cor:fixpoint_is_viscosity_solution}) 
 	therefore implies that 
 		$ u $ 
 	is a viscosity solution of 
 		\begin{equation} \label{existence_and_uniqueness:viscosity_solution}
 		\lambda u(x) 
 		- 
 		\tfrac{1}{2}\operatorname{Trace}\!\big( BB^{*}(\operatorname{Hess} u)(x) 
 		\big)  
		= 
		f(x,u(x))
 		\end{equation} 
 	for $ x \in \R^d $. Furthermore, observe that \cref{prop:comparison_principle} (with 
 		$ d \is d $, 
 		$ m \is m $, 
 		$ B \is B $, 
 		$ L \is L $, 
 		$ \rho \is \rho $, 
 		$ \lambda \is \lambda $, 
 		$ f \is f $, 
 		$ g \is ( \R^d \ni x \mapsto 0 \in \R ) $, 
 		$ h \is ( \R^d \ni x \mapsto 0 \in \R ) $, 
 		$ V \is V $
 	in the notation of \cref{prop:comparison_principle}) demonstrates that for every $ v \in \{ w \in C(\R^d,\R) \colon  \inf_{ r \in (0,\infty) } [\sup_{\norm{x}>r} (\frac{|w(x)|}{V(x)})] = 0 \} $ which satisfies that $ v $ is a viscosity solution of 
 		\begin{equation} \label{existence_and_uniqueness:uniqueness_viscosity_solution}
 		\lambda v(x) 
 		- 
 		\tfrac{1}{2}\operatorname{Trace}\!\big( BB^{*}(\operatorname{Hess} v)(x) 
 		\big)  
		= 
		f(x,v(x))
 		\end{equation} 
 		for $ x \in \R^d $ 
  	it holds that $ u = v $. Combining this with \eqref{existence_and_uniqueness:sfpe} and  \eqref{existence_and_uniqueness:viscosity_solution} establishes Item~\eqref{existence_and_uniqueness:item1}. In addition, note that \eqref{existence_and_uniqueness:sfpe}, \eqref{existence_and_uniqueness:viscosity_solution}, and \eqref{existence_and_uniqueness:uniqueness_viscosity_solution} establish Item~\eqref{existence_and_uniqueness:item2}. The proof of \cref{cor:existence_and_uniqueness} is thus completed. 
\end{proof}

\begin{cor} \label{cor:existence_and_uniqueness_strange_lipschitz} 
	Let $ d,m \in \N $, 
		$ B \in \R^{d\times m} $, 
		$ L, \rho \in \R $, 
		$ \lambda \in (\rho + L,\infty) $, 
	let $ \norm{\cdot} \colon \R^d \to [0,\infty) $ be a norm on $ \R^d $, 
	let $ f \in C(\R^d\times\R,\R) $, 
		$ V \in C^2(\R^d,(0,\infty)) $ 
	satisfy for all 
		$ x \in \R^d $, 
		$ v,w \in \R $ 
	that 
		$ | f(x,v) - f(x,w) - \lambda (v-w) | \leq L | v-w | $ 
	and 
		$ \frac12 \operatorname{Trace}(BB^{*}(\operatorname{Hess }V)(x)) \leq \rho V(x) $, 
	assume that 
		$ \inf_{r\in (0,\infty)} [\sup_{\norm{x}>r} (\frac{|f(x,0)|}{V(x)})] = 0 $ 
	and 
		$ \sup_{r\in (0,\infty)} [\inf_{\norm{x}>r} V(x)] = \infty $, 
	let $ (\Omega,\mc F,\P) $ be a probability space, 
	and let 
		$ W \colon [0,\infty) \times \Omega \to \R^m $ 
	be a standard Brownian motion. 
	Then 
	\begin{enumerate}[(i)] 
		\item \label{existence_and_uniqueness_strange_lipschitz:item1}
		there exists a unique $ u \in \{ v \in C(\R^d,\R) \colon \inf_{r\in (0,\infty)} [\sup_{\norm{x}>r}(\frac{|v(x)|}{V(x)})] = 0 \} $ which satisfies that $u$ is a viscosity solution of 
			\begin{equation}
			\tfrac12 \operatorname{Trace}\!\big(BB^{*}(\operatorname{Hess}u)(x)\big)=f(x,u(x))
			\end{equation} 
		for $x\in\R^d$
		and 
		\item \label{existence_and_uniqueness_strange_lipschitz:item2}
		it holds for all 
			$ x \in \R^d $ 
		that 
			$ \EXPP{ \int_0^{\infty} e^{-\lambda t} | \lambda u(x+BW_t) - f(x+BW_t,u(x+BW_t)) | \,dt} < \infty $ 
		and  
			\begin{equation} 
			u(x) = \Exp{ \int_0^{\infty} e^{-\lambda t} ( \lambda u(x+BW_t) - f(x+BW_t,u(x+BW_t)) ) \,dt}\!. 
			\end{equation}
	\end{enumerate} 
\end{cor}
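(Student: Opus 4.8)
The plan is to reduce \cref{cor:existence_and_uniqueness_strange_lipschitz} to \cref{cor:existence_and_uniqueness} by an affine substitution in the nonlinearity. Concretely, I would introduce the function $\mf f \colon \R^d \times \R \to \R$ defined for all $x \in \R^d$, $v \in \R$ by $\mf f(x,v) = \lambda v - f(x,v)$. The elementary observation driving the proof is that the equation $\tfrac12 \operatorname{Trace}(BB^{*}(\operatorname{Hess} u)(x)) = f(x,u(x))$ and the equation $\lambda u(x) - \tfrac12 \operatorname{Trace}(BB^{*}(\operatorname{Hess} u)(x)) = \mf f(x,u(x))$ are the same equation: rearranging the second gives $\lambda u(x) - \tfrac12\operatorname{Trace}(BB^{*}(\operatorname{Hess}u)(x)) - \lambda u(x) + f(x,u(x)) = 0$, i.e.\ $\tfrac12\operatorname{Trace}(BB^{*}(\operatorname{Hess}u)(x)) = f(x,u(x))$. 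In particular the two associated degenerate elliptic operators differ only by an overall sign, so a function is a viscosity solution of one if and only if it is a viscosity solution of the other.

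Next I would verify that $\mf f$ satisfies the hypotheses of \cref{cor:existence_and_uniqueness}. Continuity of $\mf f$ is immediate from continuity of $f$. The Lipschitz estimate $|\mf f(x,v) - \mf f(x,w)| \leq L|v-w|$ for all $x\in\R^d$, $v,w\in\R$ is precisely the assumed bound $|f(x,v)-f(x,w)-\lambda(v-w)| \leq L|v-w|$. The Lyapunov inequality $\tfrac12\operatorname{Trace}(BB^{*}(\operatorname{Hess}V)(x)) \leq \rho V(x)$, the non-degeneracy condition $\sup_{r\in (0,\infty)}[\inf_{\norm{x}>r} V(x)] = \infty$, and the constraint $\lambda \in (\rho+L,\infty)$ are unchanged. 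Finally, since $\mf f(x,0) = -f(x,0)$ for all $x\in\R^d$, the source-term growth condition $\inf_{r\in (0,\infty)}[\sup_{\norm{x}>r}(\tfrac{|\mf f(x,0)|}{V(x)})] = 0$ coincides with the assumed $\inf_{r\in (0,\infty)}[\sup_{\norm{x}>r}(\tfrac{|f(x,0)|}{V(x)})] = 0$.

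With these checks in place, \cref{cor:existence_and_uniqueness} applied with $f \is \mf f$ yields a unique $u$ in the weighted class $\{ v \in C(\R^d,\R) \colon \inf_{r\in (0,\infty)}[\sup_{\norm{x}>r}(\tfrac{|v(x)|}{V(x)})] = 0 \}$ that is a viscosity solution of $\lambda u(x) - \tfrac12\operatorname{Trace}(BB^{*}(\operatorname{Hess}u)(x)) = \mf f(x,u(x))$ for $x\in\R^d$ and that satisfies, for all $x\in\R^d$, both $\EXPP{\int_0^{\infty} e^{-\lambda t} |\mf f(x+BW_t,u(x+BW_t))|\,dt} < \infty$ and $u(x) = \Exp{\int_0^{\infty} e^{-\lambda t} \mf f(x+BW_t,u(x+BW_t))\,dt}$. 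By the observation in the first paragraph, this $u$ is then a viscosity solution of $\tfrac12\operatorname{Trace}(BB^{*}(\operatorname{Hess}u)(x)) = f(x,u(x))$ for $x\in\R^d$, which is Item~\eqref{existence_and_uniqueness_strange_lipschitz:item1}; and substituting the identity $\mf f(x+BW_t,u(x+BW_t)) = \lambda u(x+BW_t) - f(x+BW_t,u(x+BW_t))$ into the integrability statement and the fixed point equation gives Item~\eqref{existence_and_uniqueness_strange_lipschitz:item2}.

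I do not expect a genuine obstacle: the entire proof is bookkeeping around \cref{cor:existence_and_uniqueness}. The only step meriting a sentence of justification is the invariance of the notion of a viscosity solution under the affine rearrangement of the PDE, and this is immediate since both formulations correspond, up to sign, to the same second-order operator, hence to the same test-function inequalities characterizing sub- and supersolutions.
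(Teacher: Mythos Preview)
Your proposal is correct and matches the paper's own proof essentially line for line: the paper also introduces $g(x,v)=\lambda v-f(x,v)$, checks that the assumed estimate $|f(x,v)-f(x,w)-\lambda(v-w)|\leq L|v-w|$ is exactly $|g(x,v)-g(x,w)|\leq L|v-w|$, that $g(x,0)=-f(x,0)$ transfers the growth condition, records the equivalence of the two viscosity formulations, and then invokes \cref{cor:existence_and_uniqueness} with $f\is g$. The only cosmetic difference is that the two PDE formulations in fact yield the \emph{same} degenerate elliptic operator $F(x,r,p,A)=f(x,r)-\tfrac12\operatorname{Trace}(BB^{*}A)$ after substituting $g$, not operators differing by a sign; your conclusion is unaffected.
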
 

\begin{proof}[Proof of \cref{cor:existence_and_uniqueness_strange_lipschitz}]
	Throughout this proof let 
		$ g \colon \R^d \times \R \to \R $ 
	satisfy for all 
		$ x \in \R^d $, 
		$ v \in \R $ 
	that 
		$ g(x,v) = \lambda v - f(x,v) $. 
	Note that the assumption that for all 
		$ x \in \R^d $, 
		$ v,w \in \R $ 
	it holds that 
		$ | f(x,v) - f(x,w) - \lambda (v-w) | \leq L | v - w | $ 
	ensures that for all 
		$ x \in \R^d $, 
		$ v,w \in \R $ 
	it holds that 
		$ | g(x,v) - g(x,w) | \leq L | v - w | $. 
	Moreover, observe that the assumption that 
		$ \inf_{r\in (0,\infty)} [\sup_{\norm{x}>r}(\frac{|f(x,0)|}{V(x)})] = 0 $ 
	implies that 
		$ \inf_{r\in (0,\infty)} 
		[\sup_{\norm{x}>r}(\frac{|g(x,0)|}{V(x)})] = 0 $. 
	In addition, note that for all 
		$ u \in C(\R^d,\R) $ 
	it holds that 
	\begin{equation} 
		\begin{gathered}
		\left( 
		\begin{array}{c} 
		u~\text{is a viscosity solution of}\\
		\tfrac12\operatorname{Trace}\!\big(BB^{*}(\operatorname{Hess} u)(x)\big) = f(x,u(x))\\
		\text{for}~x\in\R^d
		\end{array}
		\right) 
		\\
		\Leftrightarrow
		\\
		\left( 
		\begin{array}{c} 
		u~\text{is a viscosity solution of}\\
		\lambda u(x) - \tfrac12\operatorname{Trace}\!\big(BB^{*}(\operatorname{Hess} u)(x)\big) = g(x,u(x)) \\
			\text{for}~x\in\R^d
		\end{array}
		\right).   
		\end{gathered}
	\end{equation}	
	\cref{cor:existence_and_uniqueness} (with 
		$ d \is d $, 
		$ m \is m $, 
		$ B \is B $, 
		$ L \is L $, 
		$ \rho \is \rho $, 
		$ \lambda \is \lambda $,
		$ \norm{\cdot} \is \norm{\cdot} $, 
		$ f \is g $, 
		$ V \is V $, 
		$ (\Omega,\mc F,\P) \is (\Omega,\mc F,\P) $, 
		$ W \is W $
	in the notation of \cref{cor:existence_and_uniqueness}) therefore establishes Items~\eqref{existence_and_uniqueness_strange_lipschitz:item1} and \eqref{existence_and_uniqueness_strange_lipschitz:item2}. 
	The proof of \cref{cor:existence_and_uniqueness_strange_lipschitz} is thus completed. 	
\end{proof}

\begin{lemma} \label{lem:special_lyapunov_functions}
	Let $ d,m \in \N $, 
		$ B \in \R^{d\times m} $, 
		$ \varepsilon \in (0,\infty) $, 
		$ \norm{\cdot} \colon (\cup_{k\in\N} \R^k) \to [0,\infty) $ 
	satisfy for all 
		$ k \in \N $, 
		$ x = (x_1,\ldots,x_k) \in \R^k $ 
	that 
		$ \norm{x} = [\sum_{i=1}^k |x_i|^2]^{\nicefrac{1}{2}} $ 
	and let 
		$ V \colon \R^d \to (0,\infty) $ 
	satisfy for all 
		$ x \in \R^d $ 
	that 
		\begin{equation} 
		V(x) = \exp\!\left( \varepsilon (1+\norm{x}^2)^{\!\nicefrac12} \right)\!.
		\end{equation}  
	Then 
		\begin{enumerate}[(i)]
		\item \label{special_lyapunov_functions:item1}
		it holds for all 
			$ x \in \R^d $ 
		that 
			\begin{equation} 
			\frac{\norm{B^{*}(\nabla V)(x)}^2}{V(x)} 
			\leq \varepsilon^2 \left[\sup_{y\in\R^m\setminus\{0\}} \left(\frac{\norm{By}}{\norm{y}}\right)\right]^2 V(x) 
			\end{equation} 
		and
		\item \label{special_lyapunov_functions:item2}
		it holds for all 
			$ x \in \R^d $ 
		that 
			\begin{equation} 
			\begin{split} 
			\operatorname{Trace}\!\big( BB^{*}(\operatorname{Hess} V)(x) \big) 
			&\leq 
			(\varepsilon^2 + \varepsilon d)
			\left[\sup_{y\in\R^m\setminus\{0\}} \left(\frac{\norm{By}}{\norm{y}}\right)\right]^2 V(x). 
			\end{split} 
			\end{equation} 
	\end{enumerate}
\end{lemma}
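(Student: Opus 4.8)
The plan is to reduce both estimates to elementary differentiation of the scalar function $\phi\colon\R^d\to[1,\infty)$ given by $\phi(x)=(1+\norm{x}^2)^{1/2}$, so that $V=\exp(\varepsilon\phi)$. Throughout, abbreviate $q=\sup_{y\in\R^m\setminus\{0\}}(\norm{By}\,\norm{y}^{-1})$ for the operator norm of $B$, which equals the operator norm of $B^{*}$. It then suffices to show that for all $x\in\R^d$ it holds that $\norm{B^{*}(\nabla V)(x)}^2\leq \varepsilon^2 q^2 (V(x))^2$ and $\operatorname{Trace}(BB^{*}(\operatorname{Hess} V)(x))\leq(\varepsilon^2+\varepsilon d)q^2 V(x)$, since dividing by $V(x)>0$ then yields Items~(i) and~(ii) respectively.

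First I would record the derivatives of $\phi$: a direct computation shows $(\nabla\phi)(x)=\phi(x)^{-1}x$ and $(\operatorname{Hess}\phi)(x)=\phi(x)^{-1}I_d-\phi(x)^{-3}xx^{*}$, and in particular $\norm{(\nabla\phi)(x)}=\phi(x)^{-1}\norm{x}\leq 1$ and $\phi(x)\geq 1$. The chain rule then gives $(\nabla V)(x)=\varepsilon V(x)(\nabla\phi)(x)$ and $(\operatorname{Hess} V)(x)=\varepsilon^2 V(x)(\nabla\phi)(x)(\nabla\phi)(x)^{*}+\varepsilon V(x)(\operatorname{Hess}\phi)(x)$.

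For Item~(i), I would estimate $\norm{B^{*}(\nabla V)(x)}=\varepsilon V(x)\norm{B^{*}(\nabla\phi)(x)}\leq\varepsilon V(x)\,q\,\norm{(\nabla\phi)(x)}\leq\varepsilon V(x)\,q$ and square. For Item~(ii), I would take $\operatorname{Trace}(BB^{*}\,\cdot\,)$ of the above formula for $(\operatorname{Hess} V)(x)$: using the identity $\operatorname{Trace}(BB^{*}vv^{*})=\norm{B^{*}v}^2$ for $v\in\R^d$, the rank-one term contributes $\varepsilon^2 V(x)\norm{B^{*}(\nabla\phi)(x)}^2\leq\varepsilon^2 q^2 V(x)$, and the remaining term contributes $\varepsilon V(x)[\phi(x)^{-1}\operatorname{Trace}(BB^{*})-\phi(x)^{-3}\norm{B^{*}x}^2]$. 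Here I would discard the nonpositive summand $-\phi(x)^{-3}\norm{B^{*}x}^2$, use $\phi(x)^{-1}\leq 1$, and bound $\operatorname{Trace}(BB^{*})\leq d q^2$ (the positive semidefinite matrix $BB^{*}\in\R^{d\times d}$ has at most $\min(d,m)\leq d$ nonzero eigenvalues, each at most $q^2$). Adding the two contributions gives $(\varepsilon^2+\varepsilon d)q^2 V(x)$, as claimed.

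I do not expect a genuine obstacle here; the only points needing a little care are the trace identity $\operatorname{Trace}(BB^{*}vv^{*})=\norm{B^{*}v}^2$, the equality of the operator norms of $B$ and $B^{*}$ with respect to the Euclidean norms, and the eigenvalue bound $\operatorname{Trace}(BB^{*})\leq d q^2$, all of which are standard linear algebra that I would simply invoke.
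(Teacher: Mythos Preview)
Your proof is correct and follows essentially the same route as the paper: both compute $(\nabla V)(x)=\varepsilon V(x)\,\phi(x)^{-1}x$ and $(\operatorname{Hess} V)(x)=V(x)\bigl[\varepsilon^2\phi(x)^{-2}xx^{*}+\varepsilon\phi(x)^{-1}I_d-\varepsilon\phi(x)^{-3}xx^{*}\bigr]$, then bound $\norm{B^{*}x}^2\leq q^2\norm{x}^2$, $\operatorname{Trace}(BB^{*})\leq d q^2$, drop the nonpositive term, and use $\phi(x)\geq 1$. Your packaging via the auxiliary function $\phi$ and the chain rule is slightly more structured than the paper's direct computation, but the content is the same.
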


\begin{proof}[Proof of \cref{lem:special_lyapunov_functions}]
	Throughout this proof let 	
		$ \HSnorm{\cdot} \colon \R^{d\times m} \to [0,\infty) $ 
	be the Frobenius norm on $ \R^{d\times m} $. 
	Observe that for all 
		$ x \in \R^d $ 
	it holds that 
		\begin{equation} \label{special_lyapunov_functions:derivatives}
		\begin{split}
		(\nabla V)(x) &= \frac{\varepsilon x}{(1+\norm{x}^2)^{\nicefrac12}} V(x)
		\qandq 
		\\
		(\operatorname{Hess} V)(x) &= \left[ \frac{\varepsilon^2 x\otimes x}{1+\norm{x}^2} 
		+ \frac{\varepsilon \operatorname{Id}_{\R^{d}}}{(1+\norm{x}^2)^{\nicefrac12}} 
		- \frac{\varepsilon x\otimes x}{(1+\norm{x}^2)^{\nicefrac32}}
		\right] V(x). 
		\end{split}
		\end{equation} 
	Hence, we obtain for all 
		$ x \in \R^d $ 
	that 
		\begin{equation} 
		\frac{\norm{B^{*}(\nabla V)(x)}^2}{V(x)} 
		= \frac{\varepsilon^2 \norm{B^{*}x}^2}{1+\norm{x}^2} V(x)
		\leq \varepsilon^2 \left[\sup_{y\in\R^m\setminus\{0\}} \left(\frac{\norm{By}}{\norm{y}}\right)\right]^2 V(x) .
		\end{equation} 	
	This establishes Item~\eqref{special_lyapunov_functions:item1}. 
	Moreover, note that \eqref{special_lyapunov_functions:derivatives} demonstrates for all 
		$ x \in \R^d $
	that 
		\begin{equation} 
		\begin{split} 
		\operatorname{Trace}\!\big( BB^{*}(\operatorname{Hess} V)(x) \big) 
		&= 
		\left[  
		\frac{\varepsilon^2\norm{B^{*}x}^2}{1+\norm{x}^2} + \frac{\varepsilon\HSnorm{B}^2}{(1+\norm{x}^2)^{\nicefrac12}} - 
		\frac{\varepsilon\norm{B^{*}x}^2}{(1+\norm{x}^2)^{\nicefrac32}}
		\right] V(x)
		\\
		&\leq 
		(\varepsilon^2 + \varepsilon d)
		\left[\sup_{y\in\R^m\setminus\{0\}} \left(\frac{\norm{By}}{\norm{y}}\right)\right]^2 V(x). 
		\end{split}
		\end{equation}
	This establishes Item~\eqref{special_lyapunov_functions:item2}. The proof of \cref{lem:special_lyapunov_functions} is thus completed. 
\end{proof}

\begin{cor} \label{cor:existence_and_uniqueness_polynomial_growing_nonlinearity}
	Let $ d,m \in \N $, 
		$ B \in \R^{d\times m} $, 
		$ L \in \R $, 
		$ \lambda \in (L,\infty) $,
		$ f \in C(\R^d\times\R,\R) $ 
	satisfy for all 
		$ x \in \R^d $, 
		$ v,w \in \R $ 
	that 
		$ | f(x,v) - f(x,w) - \lambda ( v - w ) | \leq L | v-w | $, 
	assume that $ f $ is at most polynomially growing, 
	let $ (\Omega,\mc F,\P) $ be a probability space, 
	and let 
	$ W \colon [0,\infty) \times \Omega \to \R^m $ 
	be a standard Brownian motion. 
	Then 
	\begin{enumerate}[(i)] 
		\item \label{existence_and_uniqueness_polynomial_growth_nonlinearity:item1}
		there exists a unique $ u \in \{ v \in C(\R^d,\R) \colon (\Forall\varepsilon\in (0,\infty)\colon
		[\sup_{x=(x_1,\ldots,x_d)\in\R^d} (\frac{|v(x)|}{\exp(\varepsilon \sum_{i=1}^d |x_i|)})] < \infty) \} $ which satisfies that 
		$ u $ is a viscosity solution of 
		\begin{equation} 
		\tfrac12\operatorname{Trace}\!\big(BB^{*}(\operatorname{Hess} u)(x)\big) = f(x,u(x)) 
		\end{equation}  
		for $ x \in \R^d $ and
		\item \label{existence_and_uniqueness_polynomial_growth_nonlinearity:item2}
		it holds for all 
			$ x \in \R^d $ 
		that 
			$ \EXPP{ \int_0^{\infty} e^{-\lambda t} | \lambda u(x+BW_t) - f(x+BW_t,u(x+BW_t)) | \,dt} < \infty $ 
		and 
			\begin{equation} 
			u(x) = \Exp{ \int_0^{\infty} e^{-\lambda t} ( \lambda u(x+BW_t) - f(x+BW_t,u(x+BW_t)) ) \,dt}\!. 
			\end{equation} 
	\end{enumerate} 
\end{cor}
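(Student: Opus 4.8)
The plan is to derive this from \cref{cor:existence_and_uniqueness_strange_lipschitz} by choosing, for every sufficiently small $\varepsilon\in(0,\infty)$, the Lyapunov function $V_{\varepsilon}\colon\R^d\to(0,\infty)$ with $V_{\varepsilon}(x)=\exp(\varepsilon(1+\norm{x}^2)^{1/2})$, where $\norm{\cdot}$ is the standard Euclidean norm on $\R^d$, together with the constant $\rho_{\varepsilon}=\tfrac12(\varepsilon^2+\varepsilon d)\mathfrak b^2$, where $\mathfrak b=\sup_{y\in\R^m\setminus\{0\}}(\norm{By}/\norm{y})\in[0,\infty)$. First I would check, using Item~\eqref{special_lyapunov_functions:item2} in \cref{lem:special_lyapunov_functions}, that $\tfrac12\operatorname{Trace}(BB^{*}(\operatorname{Hess}V_{\varepsilon})(x))\leq\rho_{\varepsilon}V_{\varepsilon}(x)$ for all $x\in\R^d$, and since $\rho_{\varepsilon}\downarrow0$ as $\varepsilon\downarrow0$ and $\lambda-L>0$, there is $\varepsilon_0\in(0,\infty)$ with $\lambda>\rho_{\varepsilon}+L$ for all $\varepsilon\in(0,\varepsilon_0]$. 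Because $V_{\varepsilon}(x)\geq\exp(\varepsilon\norm{x})$ dominates every polynomial in $\norm{x}$ and $f$ is at most polynomially growing, one has $\inf_{r\in(0,\infty)}[\sup_{\norm{x}>r}(|f(x,0)|/V_{\varepsilon}(x))]=0$, and since $x\mapsto V_{\varepsilon}(x)$ is radially nondecreasing and tends to $\infty$, one has $\sup_{r\in(0,\infty)}[\inf_{\norm{x}>r}V_{\varepsilon}(x)]=\infty$; thus \cref{cor:existence_and_uniqueness_strange_lipschitz} (with $\rho=\rho_{\varepsilon}$ and $V=V_{\varepsilon}$) applies for every $\varepsilon\in(0,\varepsilon_0]$ and produces a unique $u_{\varepsilon}$ in $\mathcal{C}_{\varepsilon}:=\{v\in C(\R^d,\R)\colon\inf_{r\in(0,\infty)}[\sup_{\norm{x}>r}(|v(x)|/V_{\varepsilon}(x))]=0\}$ that is a viscosity solution of $\tfrac12\operatorname{Trace}(BB^{*}(\operatorname{Hess}u)(x))=f(x,u(x))$ for $x\in\R^d$ and satisfies the fixed point equation in Item~\eqref{existence_and_uniqueness_polynomial_growth_nonlinearity:item2}.

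Next I would identify all of the $u_{\varepsilon}$ with one another. Since $\varepsilon\mapsto V_{\varepsilon}(x)$ is nondecreasing, $\mathcal{C}_{\varepsilon}\subseteq\mathcal{C}_{\varepsilon_0}$ for $\varepsilon\in(0,\varepsilon_0]$, so each $u_{\varepsilon}$ is a viscosity solution lying in $\mathcal{C}_{\varepsilon_0}$, and the uniqueness part of \cref{cor:existence_and_uniqueness_strange_lipschitz} (for $V_{\varepsilon_0}$) forces $u_{\varepsilon}=u_{\varepsilon_0}=:u$. The elementary estimate $(1+\norm{x}^2)^{1/2}\leq1+\norm{x}\leq1+\sum_{i=1}^d|x_i|$ gives $V_{\varepsilon}(x)\leq e^{\varepsilon}\exp(\varepsilon\sum_{i=1}^d|x_i|)$; combining this with $\sup_{x\in\R^d}(|u(x)|/V_{\varepsilon}(x))<\infty$ (which follows from $u=u_{\varepsilon}\in\mathcal{C}_{\varepsilon}$, continuity of $u$, and $V_{\varepsilon}\geq1$) yields $\sup_{x\in\R^d}(|u(x)|/\exp(\varepsilon\sum_{i=1}^d|x_i|))<\infty$ for every $\varepsilon\in(0,\varepsilon_0]$, and for $\varepsilon>\varepsilon_0$ the corresponding supremum is even smaller. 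Hence $u$ lies in the function class of Item~\eqref{existence_and_uniqueness_polynomial_growth_nonlinearity:item1}, which together with the first paragraph settles the existence claim in Item~\eqref{existence_and_uniqueness_polynomial_growth_nonlinearity:item1} and all of Item~\eqref{existence_and_uniqueness_polynomial_growth_nonlinearity:item2}.

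For the uniqueness claim in Item~\eqref{existence_and_uniqueness_polynomial_growth_nonlinearity:item1}, I would take an arbitrary viscosity solution $v\in C(\R^d,\R)$ with $\sup_{x\in\R^d}(|v(x)|/\exp(\varepsilon\sum_{i=1}^d|x_i|))<\infty$ for every $\varepsilon\in(0,\infty)$, pick $\varepsilon\in(0,\infty)$ with $\varepsilon\sqrt d<\varepsilon_0$, and use $\sum_{i=1}^d|x_i|\leq\sqrt d\,\norm{x}\leq\sqrt d\,(1+\norm{x}^2)^{1/2}$ to obtain $|v(x)|\leq M\exp(\varepsilon\sqrt d\,(1+\norm{x}^2)^{1/2})$ for some $M\in[0,\infty)$, so that $|v(x)|/V_{\varepsilon_0}(x)\leq M\exp((\varepsilon\sqrt d-\varepsilon_0)(1+\norm{x}^2)^{1/2})\to0$ as $\norm{x}\to\infty$ and therefore $v\in\mathcal{C}_{\varepsilon_0}$; the uniqueness part of \cref{cor:existence_and_uniqueness_strange_lipschitz} (for $V_{\varepsilon_0}$) then forces $v=u_{\varepsilon_0}=u$. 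The only nonroutine part, and the step I expect to need the most care, is this bookkeeping between the Lyapunov-weighted classes $\mathcal{C}_{\varepsilon}$ and the subexponential growth class of the statement: one genuinely has to let $\varepsilon$ vary, because membership of $u$ in $\mathcal{C}_{\varepsilon_0}$ alone does not place $u$ in the subexponential growth class, and once the norm comparisons $\norm{x}\leq\sum_{i=1}^d|x_i|\leq\sqrt d\,\norm{x}$ and $(1+\norm{x}^2)^{1/2}\leq1+\norm{x}$ are exploited, everything else is an immediate consequence of \cref{cor:existence_and_uniqueness_strange_lipschitz} and \cref{lem:special_lyapunov_functions}.
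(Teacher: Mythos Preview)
Your proposal is correct and follows essentially the same route as the paper: apply \cref{cor:existence_and_uniqueness_strange_lipschitz} with the Lyapunov functions $V_\varepsilon(x)=\exp(\varepsilon(1+\norm{x}^2)^{1/2})$ and $\rho_\varepsilon=\tfrac12(\varepsilon^2+\varepsilon d)\mathfrak b^2$ for all sufficiently small $\varepsilon$, identify the resulting solutions via the nesting $\mathcal C_\varepsilon\subseteq\mathcal C_{\varepsilon_0}$, and translate between $V_\varepsilon$ and $\exp(\varepsilon\sum_i|x_i|)$ using $\norm{x}\leq\sum_i|x_i|\leq\sqrt d\,\norm{x}$. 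If anything, you are more explicit than the paper about the two-way norm comparisons needed to match the statement's function class with the Lyapunov-weighted classes $\mathcal C_\varepsilon$, which the paper handles rather tersely.
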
 

\begin{proof}[Proof of \cref{cor:existence_and_uniqueness_polynomial_growing_nonlinearity}] 
	Throughout this proof let $ \norm{\cdot} \colon \R^d \to [0,\infty) $ be the standard norm on $ \R^d $, 
	let 
		$ \beta, p \in [0,\infty) $ 
	satisfy  
		\begin{equation} 
		\sup_{x\in\R^d} \left(\frac{|f(x,0)|}{1+\norm{x}^p}\right) < \infty  
		\qandq 
		\beta = \tfrac12 \left[\sup_{x=(x_1,\ldots,x_m)\in\R^m\setminus\{0\}}\left(\frac{\norm{Bx}^2}{\sum_{i=1}^m |x_i|^2}\right)\right]\!,
		\end{equation}
	and let 
		$ V_{\varepsilon} \colon \R^d \to (0,\infty) $, $ \varepsilon \in (0,\infty) $, 
	satisfy for all 
		$ \varepsilon \in (0,\infty) $, 
		$ x \in \R^d $ 
	that 
		\begin{equation} \label{existence_and_uniqueness_polynomial_growth_nonlinearity:introducing_V_epsilon}
		V_{\varepsilon}(x) = \exp\!\left( \varepsilon (1+\norm{x}^2)^{\!\nicefrac12} \right)\!.
		\end{equation} 
	Observe that Item~\eqref{special_lyapunov_functions:item2} in \cref{lem:special_lyapunov_functions} (with 
		$ d \is d $, 
		$ m \is m $, 
		$ B \is B $, 
		$ \varepsilon \is \varepsilon $ 
	for $ \varepsilon \in (0,\infty) $ 
	in the notation of \cref{lem:special_lyapunov_functions}) demonstrates for all 
		$ \varepsilon \in (0,\infty) $, 
		$ x \in \R^d $ 
	that 
		\begin{equation} \label{existence_and_uniqueness_polynomial_growing_nonlinearity:lyapunov}
		\tfrac12\operatorname{Trace}\!\big( BB^{*} (\operatorname{Hess} V_{\varepsilon})(x) \big) 
		\leq 
		(\varepsilon^2+\varepsilon d)\beta V_{\varepsilon}(x). 
		\end{equation} 
	In addition, note that for all 
		$ \varepsilon \in (0,\infty) $, 
		$ x \in \R^d $  
	it holds that 
		\begin{equation} \label{existence_and_uniqueness_polynomial_growing_nonlinearity:equivalence}	
		\exp(\varepsilon \Norm{x}) 
		\leq 
		V_{\varepsilon}(x) 
		\leq 
		e^{\varepsilon} \exp(\varepsilon \Norm{x}). 
		\end{equation}		
	Moreover, note that for all 
		$ \varepsilon \in (0,\infty) $ 
	it holds that $ \sup_{r\in (0,\infty)} [ \inf_{\norm{x}>r} V_{\varepsilon}(x) ] = \infty $ and 
	\begin{equation} 
	\begin{split} 
	& \limsup_{r\to\infty} \left[ \sup_{\norm{x}>r} \left( \frac{|f(x,0)|}{V_{\varepsilon}(x)} \right) \right] 
	= 
	\limsup_{r\to\infty} \left[ \sup_{\norm{x}>r} \left( \frac{|f(x,0)|}{1+\norm{x}^p} \frac{1+\norm{x}^p}{V_{\varepsilon}(x)} \right) \right] 
	\\
	& \leq 
	\limsup_{r\to\infty} \left( \left[ \sup_{x\in\R^d}  \left( \frac{|f(x,0)|}{1+\norm{x}^p} \right) \right]  \left[ \sup_{\norm{x}>r} \left( \frac{1+\norm{x}^p}{V_{\varepsilon}(x)} \right) \right]\right) 
	\\
	& \leq 
	\left[ \sup_{x\in\R^d}  \left( \frac{|f(x,0)|}{1+\norm{x}^p} \right) \right]
	\left[ \limsup_{r\to\infty} \left( \sup_{\norm{x}>r} \left( \frac{1+\norm{x}^p}{\exp(\varepsilon\Norm{x})} \right) \right) \right] 
	= 0.  
	\end{split} 
	\end{equation} 
	This, \eqref{existence_and_uniqueness_polynomial_growing_nonlinearity:lyapunov}, \eqref{existence_and_uniqueness_polynomial_growing_nonlinearity:equivalence}, and \cref{cor:existence_and_uniqueness_strange_lipschitz} (with 
		$ d \is d $, 
		$ m \is m $, 
		$ B \is B $, 
		$ L \is L $, 
		$ \rho \is (\varepsilon^2+\varepsilon d) \beta $, 
		$ \lambda \is \lambda $, 
		$ f \is f $, 
		$ V \is V_{\varepsilon} $, 
		$ (\Omega,\mc F,\P) \is (\Omega,\mc F,\P) $, 
		$ W \is W $ 
	for $ \varepsilon \in (0,\infty) $
	in the notation of \cref{cor:existence_and_uniqueness_strange_lipschitz}) 
	ensure for every 
		$ \varepsilon \in (0,\infty) $ 
	with 
		$ (\varepsilon^2+\varepsilon d) \beta < \lambda - L $ 
	that there exists a unique 
		$ u_{\varepsilon} \in \{ v \in C(\R^d,\R) \colon \inf_{r\in (0,\infty)} [\sup_{\norm{x}>r} (\frac{|v(x)|}{V_{\varepsilon}(x)})] = 0 \} $ 
	which satisfies that 
		$ u_{\varepsilon} $ is a viscosity solution of 
	\begin{equation} \label{existence_and_uniqueness_polynomial_growth_nonlinearity:u_epsilon_viscosity_solution}
		\tfrac12 \operatorname{Trace}(BB^{*}(\operatorname{Hess} u_{\varepsilon})(x)\big) = f(x,u_{\varepsilon}(x))
	\end{equation} 
	for $ x\in \R^d $. \cref{cor:existence_and_uniqueness_strange_lipschitz} hence assures for all 
		$ \varepsilon \in (0,\infty) $, 
		$ x \in \R^d $
	with 
		$ (\varepsilon^2+\varepsilon d) \beta < \lambda - L $ 
	that 
		\begin{equation} \label{existence_and_uniqueness_polynomial_growth_nonlinearity:fixpoint_equation}
		u_{\varepsilon}(x) = \Exp{ \int_0^{\infty} e^{-\lambda t} ( \lambda u_{\varepsilon}(x+BW_t) - f(x+BW_t,u_{\varepsilon}(x+BW_t)) ) \,dt}\!. 
		\end{equation} 
	Next note that the fact that for all 
		$ \varepsilon \in (0,\infty) $, 
		$ \eta \in (0,\varepsilon) $, 
		$ x \in \R^d $ 
	it holds that 
		$ V_{\eta}(x) \leq V_{\varepsilon}(x) $
	and \eqref{existence_and_uniqueness_polynomial_growth_nonlinearity:u_epsilon_viscosity_solution} ensure that for all 
		$ \varepsilon \in (0,\infty) $, 
		$ \eta \in (0,\varepsilon) $, 
		$ x \in \R^d $ 
	with 
		$ (\varepsilon^2+\varepsilon d) \beta < \lambda - L $ 
	it holds that 
		$ u_{\varepsilon}(x) = u_{\eta}(x) $. 
	Hence, we obtain that there exists 
		$ \mf u \colon \R^d \to \R $ 
	which satisfies for all 
		$ \varepsilon \in (0,\infty) $, 
		$ x \in \R^d $ 
	with 
		$ (\varepsilon^2+\varepsilon d) \beta < \lambda - L $ 
	that 
		$ \mf u(x) = u_{\varepsilon}(x) $. 
	This and \eqref{existence_and_uniqueness_polynomial_growth_nonlinearity:u_epsilon_viscosity_solution} ensure that for every 
		$ v  \in \{ w \in C(\R^d,\R) \colon (\Forall\varepsilon\in (0,\infty) \colon \sup_{x\in\R^d}[(\frac{|w(x)|}{V_{\varepsilon}(x)})] < \infty) \} $ 
	which satisfies that $ v $ is a viscosity solution of 
		\begin{equation} 
		\tfrac12 \operatorname{Trace}(BB^{*}(\operatorname{Hess} v)(x)) = f(x,v(x)) 
		\end{equation}  
	for $ x \in \R^d $ 
	it holds 
	that 
		$ v = \mf u $. 
	This and \eqref{existence_and_uniqueness_polynomial_growth_nonlinearity:u_epsilon_viscosity_solution} establish Item~\eqref{existence_and_uniqueness_polynomial_growth_nonlinearity:item1}. Next note that  \eqref{existence_and_uniqueness_polynomial_growth_nonlinearity:fixpoint_equation} and the fact that there exist 
		$ \varepsilon_n \in (0,\infty) $, $ n \in \N $, 
	with $ \limsup_{n\to\infty} \varepsilon_n = 0 $ such that $ \mf u = u_{\varepsilon_n} $ establish Item~\eqref{existence_and_uniqueness_polynomial_growth_nonlinearity:item2}. The proof of \cref{cor:existence_and_uniqueness_polynomial_growing_nonlinearity} is thus completed. 	
\end{proof} 

\subsection{A priori estimates for solutions of SFPEs}
\label{subsec:a_priori_estimate}

\begin{prop}[A priori estimate] \label{lem:a_priori_estimate_exact_solution} 
	Let $ d,m \in \N $, 
		$ B \in \R^{d\times m} $, 
		$ L \in \R $, 
		$ \lambda \in ( L, \infty) $, 
		$ \varepsilon \in (\nicefrac{L^2}{\lambda},\infty) $, 
	let $(\Omega,\mathcal{F},\P)$ be a probability space, 
	let $ W \colon [0,\infty) \times \Omega \to \R^m $ be a standard Brownian motion,  
	let 
		$ f \colon \R^d\times\R \to \R $ 
	be  $ \Borel(\R^d\times\R)$/$\Borel(\R)$-measurable, 
	assume for all 
		$ x \in \R^d $, 
		$ v,w \in \R $ 
	that 
		$ |f(x,v)-f(x,w)| \leq L | v -  w | $ 
	and 
		$ \int_0^{\infty} e^{-\lambda t} \,
		\EXP{|f(x+BW_t,0)|}\,dt < \infty $,
	let $ u \colon \R^d \to \R $ be $ \Borel(\R^d) $/$ \Borel(\R) $-measurable, 
	and assume for all 
		$ x \in \R^d $ 
	that 
		$ \int_0^{\infty} e^{(\varepsilon-\lambda) t}\,\EXP{|u(x+BW_t)|^2}\,dt < \infty $
	and 		
	\begin{equation} \label{a_priori_estimate:fixed_point_equation}
	u(x) 
	=
	\int_0^{\infty} e^{-\lambda t} \, \Exp{f(x+BW_t, u(x+BW_t) )}\!\,dt .
	\end{equation} 
	Then 
	\begin{enumerate} [(i)]
		\item \label{a_priori_estimate:item1} 
		it holds for all 
			$ t \in [0,\infty) $, 
			$ x \in \R^d $ 
		that
			\begin{equation} 
			e^{-\lambda t} \, \Exp{|u(x+BW_t)|^2}
			\leq 
			\frac{1}{\lambda}
			\int_t^{\infty} e^{-\lambda s} \, \Exp{|f(x+BW_s,u(x+BW_s))|^2}\!\,ds, 
			\end{equation}
		\item \label{a_priori_estimate:item2}
		it holds for all 
			$ x \in \R^d $ 
		that  
			\begin{equation} \label{a_priori_estimate:claim}
			\begin{split} 
			& \left[ 
			\int_0^{\infty} e^{(\varepsilon-\lambda)t} \, \Exp{|u(x+BW_t)|^2}\!\,dt \right]^{\nicefrac12}
			\\
			& \leq 
			\frac{1}{\sqrt{\varepsilon\lambda}-L}
			\left[\int_0^{\infty}
			e^{(\varepsilon-\lambda)s} \,	\Exp{|f(x+BW_{s},0)|^2}\!\,ds\right]^{\nicefrac12}, 
			\end{split} 
			\end{equation} 
		and 
		\item \label{a_priori_estimate:item3} 
		it holds for all 
			$ x \in \R^d $ 
		that 	
			\begin{equation} \label{a_priori_estimate:addition}
			| u(x) | 
			\leq 
			\frac{\sqrt{\varepsilon}}{\sqrt{\varepsilon\lambda} - L} \left[ 
			\int_0^{\infty} e^{(\varepsilon-\lambda) t } \, \Exp{ | f(x+BW_t,0) |^2 }\!\,dt
			\right]^{\nicefrac12}. 
			\end{equation}
	\end{enumerate} 
\end{prop}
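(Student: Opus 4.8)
The plan is to first establish the pointwise-in-time bound in Item~\eqref{a_priori_estimate:item1} and then to deduce Items~\eqref{a_priori_estimate:item2} and~\eqref{a_priori_estimate:item3} from it by integrating in $t$, invoking the Lipschitz property of $f$, and carrying out some elementary algebra. For brevity I would, for every $\Borel((0,\infty))\otimes\mc F$/$\Borel(\R)$-measurable $g\colon (0,\infty)\times\Omega\to\R$, set $\Norm{g} = \big( \int_0^{\infty} e^{(\varepsilon-\lambda)s}\,\EXP{|g(s)|^2}\,ds \big)^{\nicefrac12} \in [0,\infty]$, and I would abbreviate $f_s = f(x+BW_s,u(x+BW_s))$; note that $\Norm{\cdot}$ satisfies $\Norm{g_1+g_2}\leq\Norm{g_1}+\Norm{g_2}$ (Minkowski's inequality) and that the standing hypotheses (which hold with $x$ replaced by any $y\in\R^d$), the Lipschitz bound, and the Cauchy--Schwarz inequality show $\int_0^{\infty}e^{-\lambda s}\,\EXPP{|f(y+BW_s,u(y+BW_s))|}\,ds<\infty$ for all $y\in\R^d$, so that \eqref{a_priori_estimate:fixed_point_equation} may be rewritten by Fubini's theorem as $u(y) = \EXPP{\int_0^{\infty}e^{-\lambda s}f(y+BW_s,u(y+BW_s))\,ds}$ for all $y\in\R^d$. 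In each of the three parts we may and do assume that the right-hand side of the asserted estimate is finite, as otherwise the claim is trivial.

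For Item~\eqref{a_priori_estimate:item1} I would apply the (Fubini-rewritten) fixed-point identity at an arbitrary base point $y\in\R^d$, bound the square of an expectation by the expectation of the square, and apply the Cauchy--Schwarz inequality to the time integral with the splitting $e^{-\lambda s}=e^{-\lambda s/2}e^{-\lambda s/2}$, which gives for all $y\in\R^d$ that
	\begin{equation}
	|u(y)|^2 \leq \frac{1}{\lambda}\int_0^{\infty} e^{-\lambda s}\,\EXPP{|f(y+BW_s,u(y+BW_s))|^2}\,ds .
	\end{equation}
Substituting $y=x+BW_t$, taking expectations, and using that $BW_t$ and $B(W_{t+\bullet}-W_t)$ are independent with $B(W_{t+\bullet}-W_t)$ distributed as $BW_\bullet$ (so that the law of $BW_{t+s}$ is the convolution of the laws of $BW_t$ and $BW_s$, which I would justify by Fubini's theorem for non-negative integrands) would then yield
	\begin{equation}
	\EXPP{|u(x+BW_t)|^2} \leq \frac{1}{\lambda}\int_0^{\infty} e^{-\lambda s}\,\EXP{|f_{t+s}|^2}\,ds = \frac{e^{\lambda t}}{\lambda}\int_t^{\infty} e^{-\lambda r}\,\EXP{|f_r|^2}\,dr,
	\end{equation}
and multiplication by $e^{-\lambda t}$ would establish Item~\eqref{a_priori_estimate:item1}. (Alternatively, the Markov property of Brownian motion shows that $e^{-\lambda t}u(x+BW_t) = \EXPP{\int_t^{\infty}e^{-\lambda s}f_s\,ds\,\big|\,\mc F_t}$ for a suitable filtration $(\mc F_t)_{t\in [0,\infty)}$, after which the conditional Jensen inequality, the Cauchy--Schwarz inequality, and the tower property give the same conclusion.)

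For Item~\eqref{a_priori_estimate:item2} I would multiply the inequality in Item~\eqref{a_priori_estimate:item1} by $e^{\varepsilon t}$, integrate over $t\in (0,\infty)$, interchange the order of integration by Tonelli's theorem, and use $\int_0^s e^{\varepsilon t}\,dt\leq\varepsilon^{-1}e^{\varepsilon s}$ to obtain $\Norm{u(x+BW_\bullet)}^2\leq(\varepsilon\lambda)^{-1}\Norm{f_\bullet}^2$; combining this with $\Norm{f_\bullet}\leq\Norm{f(x+BW_\bullet,0)}+L\Norm{u(x+BW_\bullet)}$, which follows from the pointwise bound $|f(y,v)|\leq|f(y,0)|+L|v|$ and Minkowski's inequality, would give a quadratic inequality for $\Norm{u(x+BW_\bullet)}$ from which, since $\Norm{u(x+BW_\bullet)}<\infty$ by hypothesis and $\sqrt{\varepsilon\lambda}>L$ (because $\varepsilon>\nicefrac{L^2}{\lambda}$), I would deduce \eqref{a_priori_estimate:claim} by taking square roots and rearranging. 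Finally, for Item~\eqref{a_priori_estimate:item3} I would use Item~\eqref{a_priori_estimate:item1} with $t=0$ and $e^{-\lambda s}\leq e^{(\varepsilon-\lambda)s}$ (valid as $\varepsilon>0$) to get $|u(x)|^2\leq\lambda^{-1}\Norm{f_\bullet}^2$, and then combine with $\Norm{f_\bullet}\leq\tfrac{\sqrt{\varepsilon\lambda}}{\sqrt{\varepsilon\lambda}-L}\Norm{f(x+BW_\bullet,0)}$, which results from \eqref{a_priori_estimate:claim} and the above triangle inequality; taking square roots then yields \eqref{a_priori_estimate:addition}.

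The step I expect to be the main obstacle is the rigorous justification of the ``freezing'' manipulation in the proof of Item~\eqref{a_priori_estimate:item1}: one must argue carefully that substituting the random argument $x+BW_t$ into the deterministic fixed-point identity and subsequently averaging is legitimate, i.e.\ that the relevant maps are jointly measurable and that, by the independence and stationarity of the Brownian increments, the expectation of the function $y\mapsto\EXPP{|f(y+BW_s,u(y+BW_s))|^2}$ evaluated at the random variable $x+BW_t$ coincides with $\EXP{|f_{t+s}|^2}$; in addition, every use of Fubini's/Tonelli's theorem and of the (conditional) Jensen inequality must be licensed by the integrability hypotheses $\int_0^{\infty}e^{-\lambda t}\,\EXP{|f(x+BW_t,0)|}\,dt<\infty$ and $\int_0^{\infty}e^{(\varepsilon-\lambda)t}\,\EXP{|u(x+BW_t)|^2}\,dt<\infty$, which amounts to routine but slightly delicate bookkeeping.
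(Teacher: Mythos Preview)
Your proposal is correct and follows essentially the same route as the paper: Cauchy--Schwarz on the fixed-point identity plus the Chapman--Kolmogorov (convolution) property of the Brownian laws for Item~\eqref{a_priori_estimate:item1}, then integration in $t$ with Tonelli and the bound $\int_0^s e^{\varepsilon t}\,dt\le \varepsilon^{-1}e^{\varepsilon s}$ followed by Minkowski and rearrangement for Item~\eqref{a_priori_estimate:item2}. The only cosmetic difference is in Item~\eqref{a_priori_estimate:item3}: the paper returns to the fixed-point equation, splits via Lipschitz first, and then applies Cauchy--Schwarz to each term, whereas you invoke Item~\eqref{a_priori_estimate:item1} at $t=0$ to get $|u(x)|\le\lambda^{-1/2}\Norm{f_\bullet}$ and only then split $\Norm{f_\bullet}$; both arrive at the identical constant $\sqrt{\varepsilon}/(\sqrt{\varepsilon\lambda}-L)$.
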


\begin{proof}[Proof of \cref{lem:a_priori_estimate_exact_solution}]
	Throughout this proof let 
		$ \nu_{t,x} \colon \Borel(\R^d) \to [0,1] $, $ t \in [0,\infty) $, $ x \in \R^d $, 
	satisfy for all 
		$ t \in [0,\infty) $, 
		$ x \in \R^d $, 
		$ A \in \Borel(\R^d) $
	that 
		$ \nu_{t,x}(A) = \P(x + BW_t \in A) $. 
	Observe that \eqref{a_priori_estimate:fixed_point_equation} and the Cauchy-Schwarz inequality ensure for all 
		$ x \in \R^d $ 
	that 
		\begin{equation} 
		\begin{split}
		|u(x)|^2 
		& \leq 
		\left[ \int_0^{\infty} e^{-\lambda t} \,dt \right]
		\left[ 
		\int_0^{\infty} e^{-\lambda t}
		\left| 
		\Exp{ f(x+BW_t,u(x+BW_t)) }
		\right|^2 \,dt \right]
		\\
		& \leq 
		\frac{1}{\lambda} 
		\int_0^{\infty} e^{-\lambda t} \,
		\Exp{ | f(x+BW_t,u(x+BW_t)) |^2}\!\,dt. 
		\end{split}
		\end{equation} 
	This, Fubini's theorem, and the fact that for all 
		$ t, s\in [0,\infty) $, 
		$ x \in \R^d $, 
		$ A \in \Borel(\R^d) $ 
	it holds that 
		$ 
		\nu_{t+s,x}(A) = \int_{\R^d} \nu_{s,y}(A)\,\nu_{t,x}(dy) $	
	ensure for all 
		$ t \in [0,\infty) $ 
		$ x \in \R^d $
	that 
		\begin{equation} 
		\begin{split}
		\Exp{|u(x+BW_t)|^2} 
		& = 
		\int_{\R^d} |u(y)|^2\,\nu_{t,x}(dy) 
		\\
		& 
		\leq 
		\frac{1}{\lambda} \int_{\R^d} \int_0^{\infty} e^{-\lambda s}\, \Exp{|f(y+BW_{s},u(y+BW_s))|^2}\!\,ds \,\nu_{t,x}(dy)
		\\
		& = 
		\frac{1}{\lambda} \int_0^{\infty} 
		e^{-\lambda s} \, \Exp{|f(x+BW_{t+s},u(x+BW_{t+s}))|^2}\!\,ds 
		\\
		& = 
		\frac{e^{\lambda t}}{\lambda} \int_t^{\infty} 
		e^{-\lambda s} \, \Exp{|f(x+BW_{s},u(x+BW_{s}))|^2}\!\,ds. 
		\end{split}
		\end{equation}
	This establishes Item~\eqref{a_priori_estimate:item1}. 
	Combining Item~\eqref{a_priori_estimate:item1} with Fubini's theorem ensures for all 
		$ x \in \R^d $ 
	that 
		\begin{equation} 
		\begin{split}
		\int_0^{\infty} e^{(\varepsilon-\lambda)t} \,
		\Exp{|u(x+BW_t)|^2} \!\,dt 
		& \leq 
		\int_0^{\infty} \frac{e^{\varepsilon t} }{\lambda} \, \int_t^{\infty} e^{-\lambda s} \, \Exp{| f(x+BW_s,u(x+BW_s))|^2 } \!\,ds \,dt
		\\
		& = 
		\int_0^{\infty} \left[ \int_0^s 
		\frac{e^{\varepsilon t} }{\lambda}
		\,dt \right] e^{-\lambda s}
		\,\Exp{|f(x+BW_s,u(x+BW_s))|^2} \!\,ds
		\\
		& \leq 
		\frac{1}{\varepsilon\lambda}
		\int_0^{\infty}  
		e^{(\varepsilon-\lambda)s} \,
		\Exp{|f(x+BW_s,u(x+BW_s))|^2}
		\!\,ds . 
		\end{split}
		\end{equation} 
	Minkowski's inequality hence shows for all 
		$ x \in \R^d $ 
	that 
		\begin{align}
		\nonumber
		& \left[\int_0^{\infty} e^{(\varepsilon-\lambda)t} \,
		\Exp{|u(x+BW_t)|^2} \!\,dt \right]^{\nicefrac12} 
		\\ 
		& \leq 
		\frac{1}{\sqrt{\varepsilon\lambda}} 
		\left[ \int_0^{\infty}  
		e^{(\varepsilon-\lambda)s} \,
		\Exp{|f(x+BW_s,u(x+BW_s))|^2}
		\!\,ds \right]^{\nicefrac12} 
		\\ \nonumber
		& \leq 
		\frac{1}{\sqrt{\varepsilon\lambda}} 
		\left[ \int_0^{\infty}  
		e^{(\varepsilon-\lambda)s} \,
		\Exp{|f(x+BW_s,0)|^2}
		\!\,ds \right]^{\nicefrac12} 
		+ 
		\frac{L}{\sqrt{\varepsilon\lambda}} 
		\left[ \int_0^{\infty}  
		e^{(\varepsilon-\lambda)s} \,
		\Exp{|u(x+BW_s)|^2}
		\!\,ds \right]^{\nicefrac12}.  
		\end{align}
	This implies for all 
		$ x \in \R^d $ 
	that 
		\begin{equation} 
		\left[ 
		\int_0^{\infty} e^{(\varepsilon-\lambda)t}\,\Exp{|u(x+BW_t)|^2}\!\,dt
		\right]^{\nicefrac{1}{2}} 
		\leq 
		\frac{1}{\sqrt{\varepsilon\lambda}-L} 
		\left[ 
		\int_0^{\infty} e^{(\varepsilon-\lambda)t} \, 
		\Exp{|f(x+BW_t,0)|^2}\!\,dt
		\right]^{\nicefrac12}. 
		\end{equation} 
	This establishes Item~\eqref{a_priori_estimate:item2}. 
	Next observe that the triangle inequality, Fubini's theorem, the assumption that for all 
		$ x \in \R^d $, 
		$ a,b \in \R $ 
	it holds that 
		$ | f(x,a) - f(x,b) | \leq L |a-b| $, 
	and \eqref{a_priori_estimate:fixed_point_equation} ensure for all 
		$ x \in \R^d $ 
	that 
		\begin{equation} 
		| u(x) | \leq \int_0^{\infty} e^{-\lambda t } \, \Exp{ | f(x+BW_t,0) | }\!\,dt 
		+ L \int_0^{\infty} e^{-\lambda t} \, \Exp{ |u(x+BW_t)| } \!\,dt.  
		\end{equation} 
	The Cauchy-Schwarz inequality and Item~\eqref{a_priori_estimate:item2} hence prove for all 
		$ x \in \R^d $ 
	that 
		\begin{equation} 
		\begin{split} 
		| u(x) | & \leq 
		\frac{1}{\sqrt{\lambda}} \left[ \int_0^{\infty} e^{-\lambda t } \, \Exp{ | f(x+BW_t,0) |^2 }\!\,dt \right]^{\nicefrac12} 
		+ \frac{L}{\sqrt{\lambda}} \left[ \int_0^{\infty} e^{-\lambda t} \, \Exp{ | u(x+BW_t) |^2 }\!\,dt \right]^{\nicefrac12} 
		\\
		& \leq 
		\frac{1}{\sqrt{\lambda}} \left[ 1 + \frac{L}{\sqrt{\varepsilon\lambda}-L} \right] 
		\left[  \int_0^{\infty} e^{(\varepsilon-\lambda) t } \, \Exp{ | f(x+BW_t,0) |^2 }\!\,dt \right]^{\nicefrac12}
		\\
		& = 
		\frac{\sqrt{\varepsilon}}{\sqrt{\varepsilon\lambda} - L} \left[ 
		\int_0^{\infty} e^{(\varepsilon-\lambda) t } \, \Exp{ | f(x+BW_t,0) |^2 }\!\,dt
		\right]^{\nicefrac12}.  
		\end{split}
		\end{equation}  
	This establishes Item~\eqref{a_priori_estimate:item3}. 
	The proof of \cref{lem:a_priori_estimate_exact_solution} is thus completed. 
\end{proof} 

\section{Full-history recursive multilevel Picard (MLP) approximations}
\label{sec:mlp}

In this section we introduce and analyze MLP approximation schemes for SFPEs related to semilinear elliptic PDEs (see \cref{setting} in \cref{subsec:setting_mlp_scheme} below). 
In \crefrange{subsec:mlp_measurability}{subsec:mlp_integrability} we establish some rather technical results concerning measurability, distributional, and integrability properties of MLP approximations. 
\cref{subsec:bias_variance_estimates} contains fundamental estimates for the biases and variances of MLP approximations (see \cref{lem:bias,lem:variance} below). 
These fundamental estimates for the biases and variances of MLP approximations are merged in \cref{prop:combined_estimate,cor:L_zero_case} in \cref{subsec:convergence_analysis_mlp} below to obtain a full error analysis for MLP approximations in \cref{cor:all_cases} in \cref{subsec:convergence_analysis_mlp} below. 
In \cref{subsec:computational_effort} we provide an upper bound for the computational cost to sample realizations of MLP approximations. 
In \cref{subsec:complexity_analysis} we finally relate the error analysis for MLP approximations established in \cref{cor:all_cases} in \cref{subsec:convergence_analysis_mlp} with the computational cost analysis for MLP approximations established in \cref{subsec:computational_effort} to obtain in \cref{prop:overcoming_the_curse} in \cref{subsec:complexity_analysis} an overall complexity analysis for the proposed MLP approximation schemes.

\subsection{MLP approximations} \label{subsec:setting_mlp_scheme}

\begin{setting} \label{setting}
Let $ d, M \in \N $, 
	$ \lambda \in (0,\infty) $, 
	$ \Theta = \cup_{n\in\N} \Z^n $, 
	$ B \in \R^{d\times d} $,
	$ f \in C( \R^d \times \R, \R ) $, 
let $ ( \Omega, \mc F, \P ) $ be a probability space, 
let	$ W^{\theta} \colon [0,\infty) \times \Omega \to \R^d $, $ \theta \in \Theta $, be i.i.d.~standard Brownian motions, 
let $R^{\theta}\colon\Omega\to [0,\infty)$, $\theta\in\Theta$, 
	be i.i.d.~random variables which satisfy for all 
	$t\in [0,\infty)$ that $\P(R^{0}\geq t)=e^{-\lambda t}$, 
assume that $(R^{\theta})_{\theta\in\Theta}$ and  	
	$(W^{\theta})_{\theta\in\Theta}$ are independent, 
and let $ U^{\theta}_{n} = ( U^{\theta}_n (x) )_{ x \in \R^d } \colon \R^d \times \Omega \to \R $, 
	$ \theta \in \Theta $, $ n \in \N_0 $, satisfy for all 
	$ \theta \in \Theta $, $ n \in \N $, $ x \in \R^d $
	that $ U^{\theta}_{0}(x)=0 $ and 
	\begin{equation} \label{mlp_scheme}
	\begin{split}
		& 
		U^{\theta}_{n}(x) 
		= 
		\sum_{k=1}^{n-1} \frac{1}{\lambda M^{(n-k)}}
		\Bigg[ 
		\sum_{m=1}^{M^{(n-k)}}
		\Big( 
			f\big(x+BW^{(\theta,k,m)}_{R^{(\theta,k,m)}},U^{(\theta,k,m)}_{k}(
				x+BW^{(\theta,k,m)}_{R^{(\theta,k,m)}})\big) 
		\\
		& 	
		- 
		f\big(x+BW^{(\theta,k,m)}_{R^{(\theta,k,m)}},U^{(\theta,k,-m)}_{k-1}(x+BW^{(\theta,k,m)}_{R^{(\theta,k,m)}})\big)
		\Big)
		\Bigg]
		+
		\frac{1}{\lambda M^n}\left[ 
		\sum_{m=1}^{M^n} f\big(x+BW^{(\theta,0,m)}_{R^{(\theta,0,m)}},0\big)
		\right]\!.
	\end{split}	
	\end{equation} 
\end{setting}

\subsection{Measurability properties of MLP approximations}
\label{subsec:mlp_measurability}
	
\begin{lemma} \label{lem:properties_of_mlp} 
	Assume \cref{setting}. Then 
	\begin{enumerate}[(i)] 
	\item \label{properties_of_mlp:item1}
		it holds for all 
			$ n \in \N_0 $, 
			$ \theta \in \Theta $ 
		that 
			$ U^{\theta}_n \colon \R^d \times \Omega \to \R $ 
		is a continuous random field, 
	\item \label{properties_of_mlp:item2}
		it holds for all 
			$ n \in \N_0 $, 
			$ \theta \in \Theta $ 
		that 
			$ \sigma_{\Omega}(U^{\theta}_n) \subseteq \sigma_{\Omega}((R^{(\theta,\vartheta)})_{\vartheta\in\Theta},(W^{(\theta,\vartheta)})_{\vartheta\in\Theta})$,
	\item \label{properties_of_mlp:item3}
		it holds that 
			$ \R^d\times\Omega \ni (x,\omega) \mapsto  x+BW^{\theta}_{R^{\theta}(\omega)}(\omega) \in \R^d $, $ \theta \in \Theta $, 
		are identically distributed random fields, and
	\item \label{properties_of_mlp:item4}		
		it holds for all 
			$ n\in\N_0 $ 
		that 
			$ U^{\theta}_n $, $ \theta \in \Theta $, 
		are identically distributed random fields. 
	\end{enumerate} 
\end{lemma}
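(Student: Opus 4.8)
The plan is to treat Item~\eqref{properties_of_mlp:item3} directly from the i.i.d.\ assumptions and to obtain Items~\eqref{properties_of_mlp:item1}, \eqref{properties_of_mlp:item2}, and \eqref{properties_of_mlp:item4} simultaneously by induction on $ n \in \N_0 $, strengthening the induction hypothesis to include the structural statement that for every $ n \in \N_0 $ there is a measurable map $ \mc U_n $, \emph{not depending on $ \theta $}, with $ U^{\theta}_n = \mc U_n\big( (R^{(\theta,\eta)})_{\eta\in\Theta}, (W^{(\theta,\eta)})_{\eta\in\Theta} \big) $ for all $ \theta \in \Theta $. The base case $ n = 0 $ is trivial, since $ U^{\theta}_0 \equiv 0 $ is a continuous random field, is measurable with respect to the trivial $ \sigma $-algebra, and is (constantly) represented by the map $ \mc U_0 \equiv 0 $.

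Before the induction I would record two elementary facts. First, for every $ \vartheta \in \Theta $ the sample paths of $ W^{\vartheta} $ are continuous, so $ [0,\infty) \times \Omega \ni (t,\omega) \mapsto W^{\vartheta}_t(\omega) \in \R^d $ is $ (\Borel([0,\infty)) \otimes \mc F) $/$ \Borel(\R^d) $-measurable; together with the measurability of $ R^{\vartheta} $ this yields that $ \Omega \ni \omega \mapsto B W^{\vartheta}_{R^{\vartheta}(\omega)}(\omega) \in \R^d $ is $ \mc F $/$ \Borel(\R^d) $-measurable, hence that $ \R^d \times \Omega \ni (x,\omega) \mapsto x + B W^{\vartheta}_{R^{\vartheta}(\omega)}(\omega) \in \R^d $ is a continuous $ \R^d $-valued random field that is moreover a measurable function of $ (R^{\vartheta}, W^{\vartheta}) $. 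Secondly, since $ (W^{\theta})_{\theta\in\Theta} $ and $ (R^{\theta})_{\theta\in\Theta} $ are each i.i.d.\ and the two families are independent, for every $ \theta \in \Theta $ the pair $ (W^{\theta}, R^{\theta}) $ has the same law as $ (W^{0}, R^{0}) $; applying the fixed measurable map $ (w,r) \mapsto (\R^d \ni x \mapsto x + Bw(r) \in \R^d) $ to this pair establishes Item~\eqref{properties_of_mlp:item3}.

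For the induction step, observe that \eqref{mlp_scheme} exhibits $ U^{\theta}_n $ as a finite weighted sum (with $ \lambda $- and $ M $-dependent weights) of terms of the form $ f\big( x + B W^{(\theta,k,m)}_{R^{(\theta,k,m)}}, U^{(\theta,k,m)}_k\big(x + B W^{(\theta,k,m)}_{R^{(\theta,k,m)}}\big) \big) $, of the analogous term with $ U^{(\theta,k,-m)}_{k-1} $ in place of $ U^{(\theta,k,m)}_k $, and of $ f\big( x + B W^{(\theta,0,m)}_{R^{(\theta,0,m)}}, 0 \big) $. By the first preliminary each inner argument $ (x,\omega) \mapsto x + B W^{(\theta,j,\pm m)}_{R^{(\theta,j,\pm m)}}(\omega) $ is a continuous $ \R^d $-valued random field, by the induction hypothesis $ U^{(\theta,k,\pm m)}_{k} $ and $ U^{(\theta,k,-m)}_{k-1} $ are continuous (hence jointly measurable) random fields, and $ f \in C(\R^d\times\R,\R) $, so each summand — and therefore $ U^{\theta}_n $ — is a continuous random field, proving Item~\eqref{properties_of_mlp:item1}. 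For the structural statement, the induction hypothesis expresses $ U^{(\theta,k,\pm m)}_{k} $, $ U^{(\theta,k,-m)}_{k-1} $ through $ \mc U_k $, $ \mc U_{k-1} $ evaluated at the families $ (R^{((\theta,k,\pm m),\zeta)})_{\zeta\in\Theta} $, $ (W^{((\theta,k,\pm m),\zeta)})_{\zeta\in\Theta} $, and associativity of tuple concatenation identifies these as subfamilies of $ (R^{(\theta,\eta)})_{\eta\in\Theta} $, $ (W^{(\theta,\eta)})_{\eta\in\Theta} $ obtained by a fixed, $ \theta $-independent injection $ \Theta \to \Theta $; substituting into \eqref{mlp_scheme} produces the desired $ \theta $-independent $ \mc U_n $. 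Since every index occurring in $ \mc U_n $ is of the form $ (\theta,\eta) $ with $ \eta \in \Theta $, this yields Item~\eqref{properties_of_mlp:item2}; and since, exactly as in the second preliminary, the countable family $ \big( (R^{(\theta,\eta)})_{\eta\in\Theta}, (W^{(\theta,\eta)})_{\eta\in\Theta} \big) $ has a law independent of $ \theta \in \Theta $ while $ \mc U_n $ does not depend on $ \theta $, pushing this common law forward under $ \mc U_n $ gives Item~\eqref{properties_of_mlp:item4}.

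The main obstacle I anticipate is organizational rather than conceptual: keeping the index bookkeeping over $ \Theta = \cup_{n\in\N}\Z^n $ under control, i.e.\ making precise that concatenating $ \theta $ with $ (k,m) $ and then with $ \zeta $ gives the same element of $ \Theta $ as concatenating $ \theta $ with the tuple $ (k,m,\zeta) $, that the induced reindexing $ \Theta \ni \zeta \mapsto (k,m,\zeta) \in \Theta $ is injective with $ \theta $-independent image, and that the i.i.d.\ property of $ (R^{\theta})_{\theta\in\Theta} $ (resp.\ $ (W^{\theta})_{\theta\in\Theta} $) and the independence of the two families are preserved under such reindexings, so that the laws of $ \big( (R^{(\theta,\eta)})_{\eta\in\Theta}, (W^{(\theta,\eta)})_{\eta\in\Theta} \big) $ genuinely coincide for all $ \theta $; the only genuinely measure-theoretic point, namely the joint measurability of the time-changed Brownian motion $ \omega \mapsto W^{\vartheta}_{R^{\vartheta}(\omega)}(\omega) $, is handled by the standard path-measurability argument noted above.
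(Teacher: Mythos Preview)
Your proposal is correct. The route you take differs from the paper's in organization rather than in substance: the paper proves Items~\eqref{properties_of_mlp:item1}, \eqref{properties_of_mlp:item2}, and \eqref{properties_of_mlp:item4} by three \emph{separate} inductions on $ n $ and handles the distributional statements (Item~\eqref{properties_of_mlp:item3} and the induction step of Item~\eqref{properties_of_mlp:item4}) by appealing to an external lemma, Hutzenthaler et al.~\cite[Corollary~2.5]{Overcoming}, which packages the ``same measurable function of identically distributed inputs'' argument. You instead fold Items~\eqref{properties_of_mlp:item1}, \eqref{properties_of_mlp:item2}, \eqref{properties_of_mlp:item4} into a single induction by strengthening the hypothesis to the existence of a $ \theta $-independent measurable map $ \mc U_n $ with $ U^{\theta}_n = \mc U_n\big((R^{(\theta,\eta)})_{\eta\in\Theta},(W^{(\theta,\eta)})_{\eta\in\Theta}\big) $; this makes the argument self-contained (no citation needed) and yields Items~\eqref{properties_of_mlp:item2} and \eqref{properties_of_mlp:item4} as immediate corollaries of the structural statement, at the cost of the index-concatenation bookkeeping you flag at the end. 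Both approaches are standard; yours is more unified, the paper's more modular.
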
 

\begin{proof}[Proof of \cref{lem:properties_of_mlp}]
	First, we prove Item~\eqref{properties_of_mlp:item1}. For this let $ \A $ and $ \B $ be the sets given by
	\begin{equation} 
	\A = \left\{n \in \N_0\colon 
	\left( 
	\begin{array}{c}
	\text{For all}~\theta\in\Theta~\text{it holds that} \\
	~U^{\theta}_n\colon\R^d\times\Omega\to\R~\text{is a continuous random field}
	\end{array}
	\right)\right\}
	\end{equation} 
	and $\B = \left\{ n \in \N \colon \{0,1,\ldots,n-1\} \subseteq \A \right\}$. 
	Observe that the assumption that for all 
		$ \theta \in \Theta $, $ x \in \R^d $ 
	it holds that 
		$ U^{\theta}_0(x) = 0 $ 
	shows that $ 0 \in \A $. Hence, we obtain that $ 1 \in \B $. Next note that the assumption that $ W^{\theta} $, $ \theta \in \Theta $, are Brownian motions, the assumption that $ f \in C(\R^d\times\R,\R) $, and \eqref{mlp_scheme} demonstrate for all 
		$ n \in \B $, $ \theta \in \Theta $ 
	that 
		$ U^{\theta}_n \colon \R^d \times \Omega \to \R $ 
	is a continuous random field. Hence, we obtain for every 
		$ n \in \B $ 
	that 
		$ n \in \A $. 
	This shows for all 
		$ n \in \N $ 
	that $(n \in \B \Rightarrow n+1 \in \B)$. 
	Combining this with the fact that 
		$ 1 \in \B $ 
	and induction demonstrates that $ \N \subseteq \B $. 
	Hence, we obtain that $ \N_0 \subseteq \A $. 
	This establishes Item~\eqref{properties_of_mlp:item1}. 
	Next we prove Item~\eqref{properties_of_mlp:item2}. For this let $ \A $ and $ \B $ be the sets given by 
		\begin{equation} 
		\A = \left\{ n \in \N_0 \colon 
		\left( 
		\begin{array}{c} 
		\text{For all}~\theta\in\Theta~\text{it holds that} \\
		\sigma_{\Omega}(U^{\theta}_n)\subseteq\sigma_{\Omega}((R^{(\theta,\vartheta)})_{\vartheta\in\Theta},(W^{(\theta,\vartheta)})_{\vartheta\in\Theta}) 
		\end{array}  
		\right) \right\}
		\end{equation}
	and $ \B = \{ n \in \N\colon \{0,1,\ldots,n-1\} \subseteq \A \} $. Observe that the assumption that for all 
		$ \theta \in \Theta $, 
		$ x \in \R^d $ 
	it holds that 
		$ U^{\theta}_0(x) = 0 $ 
	shows that $ 0 \in \A $. This implies that $ 1 \in \B $. Next  note that Item~\eqref{properties_of_mlp:item1} ensures for all 
		$ n \in \N_0 $, 
		$ \theta \in \Theta $ 
	that 
		$ U^{\theta}_n \colon \R^d\times\Omega \to \R $ 
	is $ (\Borel(\R^d) \otimes \sigma_{\Omega}(U^{\theta}_n)) $/$ \Borel(\R) $-measurable. Combining this with the fact that 
	for every 
		$ \theta \in \Theta $ 
	it holds that 
		$ W^{\theta} \colon [0,\infty) \times \Omega \to \R^d $ 
	is $ (\Borel([0,\infty)) \otimes \sigma_{\Omega}(W^{\theta})) $/$ \Borel(\R^d) $-measurable, 
	the fact that for every 
		$ \theta \in \Theta $ 
	it holds that 
		$ R^{\theta} \colon \Omega \to [0,\infty) $ 
	is $ \sigma_{\Omega}(R^{\theta}) $/$ \Borel([0,\infty)) $-measurable, 
	the assumption that $ f \in C(\R^d\times\R,\R) $, and  \eqref{mlp_scheme} demonstrates for all 
		$ n \in \B $, 
		$ \theta \in \Theta $	
	that 
		$ U^{\theta}_n \colon \R^d \times \Omega \to \R $ 
	is $ (\Borel(\R^d) \otimes \sigma_{\Omega}( (R^{(\theta,\vartheta)})_{\vartheta\in\Theta} ,(W^{(\theta,\vartheta)})_{\vartheta\in\Theta} ) ) $/$ \Borel(\R) $-measurable. This shows that for every 
		$ n \in \B $ it holds that 
		$ n \in \A $. 
	Hence, we obtain for all 
		$ n \in \N $ 
	that 
		$(n \in \B \Rightarrow n+1 \in \B)$. 
	This, the fact that $ 1 \in \B $, and induction demonstrate that 
		$ \N \subseteq \B $. 
	Therefore, we obtain that $ \N_0 \subseteq \A $. This establishes Item~\eqref{properties_of_mlp:item2}. 
	Next we prove Item~\eqref{properties_of_mlp:item3}. For this note that Hutzenthaler et al.~\cite[Corollary 2.5]{Overcoming} (with 
		$U_1 \is ([0,\infty) \times \Omega \ni (t,\omega) \mapsto \varphi(W^{\theta}_t(\omega)) \in \R )$, 
		$U_2 \is ([0,\infty) \times \Omega \ni (t,\omega) \mapsto
		\varphi(W^0_t(\omega)) \in \R )$, 
		$Y_1 \is (\R^d\times \Omega \ni (x,\omega) \mapsto R^{\theta}(\omega) \in [0,\infty) )	$, 
		$Y_2 \is (\R^d\times \Omega \ni (x,\omega) \mapsto 
		R^0(\omega) \in [0,\infty) ) $
	for $ \theta \in \Theta $, 
		$ \varphi = (\R^d \ni (x_1,x_2,\dots,x_d) \mapsto \sum_{i=1}^d a_ix_i \in \R) $ 
	for $ a_1,a_2,\ldots,a_d \in \R $
	in the notation of Hutzenthaler et al.~\cite[Corollary 2.5]{Overcoming}) 
	ensures for all 
		$ \theta \in \Theta $ 
	that 	
		$ W^{\theta}_{R^{\theta}} $ and $ W^{0}_{R^0} $ are identically distributed random variables. 
	Hence, we obtain for all 
		$ \theta \in \Theta $ 
	that 
		$ \R^d \times \Omega \ni (x,\omega) \mapsto x + BW^{\theta}_{R^{\theta}(\omega)}(\omega) \in \R^d $ 
	and 
		$ \R^d \times \Omega \ni (x,\omega) \mapsto x + BW^{0}_{R^{0}(\omega)}(\omega) \in \R^d $ 
	are identically distributed random fields. This establishes Item~\eqref{properties_of_mlp:item3}. 
	Next we prove Item~\eqref{properties_of_mlp:item4}. For this 
	let $ \A $ and $ \B $ be the sets given by 
		\begin{equation} 
		\A = \left\{ n \in \N_0 \colon 
		\left( \begin{array}{c} 
		U^{\theta}_n\colon \R^d\times\Omega\to\R, \theta\in\Theta,\\
		\text{are identically distributed random fields} 
		\end{array} 
		\right) 
		\right\}
		\end{equation} 
	and $ \B = \{ n \in \N \colon \{0,1,\ldots,n-1\} \subseteq \A \} $. Observe that the assumption that for all 
		$ \theta\in\Theta $, 
		$ x \in \R^d $ 
	it holds that 
		$ U^{\theta}_0(x) = 0 $ 
	shows that $ 0 \in \A $. This implies that $ 1 \in \B $. Next note that Item~\eqref{properties_of_mlp:item1}, 
	Item~\eqref{properties_of_mlp:item2}, 
	Item~\eqref{properties_of_mlp:item3}, 
	and Hutzenthaler et al.~\cite[Corollary 2.5]{Overcoming} ensure for all 
		$ n \in \B $, 
		$ \theta \in \Theta $ 
	that $ U^{\theta}_n $ and $ U^{0}_n $ are identically distributed random fields. This demonstrates that for all 
		$ n \in \B $ 
	it holds that 
		$ n \in \A $. 
	Hence, we obtain that for all 
		$ n \in \N $ 
	it holds that 
		$ (n\in\B \Rightarrow n+1\in\B)$.
	This, the fact that $1\in\B$, and induction establish Item~\eqref{properties_of_mlp:item4}. The proof of \cref{lem:properties_of_mlp} is thus completed. 
\end{proof}

\subsection{Integrability properties of MLP approximations} 
\label{subsec:mlp_integrability}

\begin{lemma} \label{lem:elementary_integration_rules}
	Assume \cref{setting}, 
	let $ p \in [1,\infty) $, 
		$ k \in \N_0 $, 
		$ \theta,\vartheta \in \Theta $, 
	assume that 
		$ \theta \notin \{(\vartheta,\eta)\colon \eta \in \Theta \} $, 
	and let 
		$ \nu_{t,x} \colon \Borel(\R^d) \to [0,\infty) $, $ t\in [0,\infty) $, $ x \in \R^d $, 
	satisfy for all 
		$ t \in [0,\infty) $, 
		$ x \in \R^d $, 
		$ A \in \Borel(\R^d) $  
	that 
		$ \nu_{t,x}(A) = \P( x + BW^0_t \in A ) $. 
	Then 
		\begin{enumerate} [(i)]
		\item \label{elementary_integration_rules:item1} 
		it holds for all $ x \in \R^d $ that 
			\begin{equation} 
			\begin{split}
			\Exp{| U^{\vartheta}_k(x + BW^{\theta}_{R^{\theta}}) |^p} 
			& = 
			\int_0^{\infty} \Exp{ |U^{\vartheta}_k(x+BW^{\theta}_s)|^p}\!\,\big( R^{\theta}(\P) \big)(ds)
			\\
			& =
			\int_0^{\infty} \lambda e^{-\lambda s} \, \Exp{ |U^{\vartheta}_k(x+BW^{\theta}_s)|^p}\!\,ds 
			\end{split}
			\end{equation}  
		and
		\item \label{elementary_integration_rules:item2} 
		it holds for all $ x \in \R^d $, $ t \in [0,\infty) $ that
			\begin{equation} 
			\Exp{|U^{\vartheta}_k(x+BW^{\theta}_t)|^p} 
			= 
			\int_{\R^d} 
			\Exp{|U^0_k(y)|^p} \!\,
			\nu_{t,x}(dy). 
			\end{equation} 
		\end{enumerate} 
\end{lemma}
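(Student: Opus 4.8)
The proof rests on the measurability and distributional properties collected in \cref{lem:properties_of_mlp}. The first thing I would record is the relevant independence: by Item~\eqref{properties_of_mlp:item2} in \cref{lem:properties_of_mlp} it holds that $\sigma_{\Omega}(U^{\vartheta}_k)\subseteq\sigma_{\Omega}((R^{(\vartheta,\eta)})_{\eta\in\Theta},(W^{(\vartheta,\eta)})_{\eta\in\Theta})$, and the hypothesis $\theta\notin\{(\vartheta,\eta)\colon\eta\in\Theta\}$ together with the assumptions that the $W^{\rho}$, $\rho\in\Theta$, are i.i.d., that the $R^{\rho}$, $\rho\in\Theta$, are i.i.d., and that $(R^{\rho})_{\rho\in\Theta}$ and $(W^{\rho})_{\rho\in\Theta}$ are independent imply that $R^{\theta}$ is independent of the pair $(U^{\vartheta}_k,W^{\theta})$ and that $U^{\vartheta}_k$ is independent of $W^{\theta}$ (here $U^{\vartheta}_k$ may be regarded as a $C(\R^d,\R)$-valued random variable, which is legitimate by Item~\eqref{properties_of_mlp:item1} in \cref{lem:properties_of_mlp}).

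For Item~\eqref{elementary_integration_rules:item1} I would proceed as follows. Item~\eqref{properties_of_mlp:item1} in \cref{lem:properties_of_mlp} and the continuity of Brownian sample paths show that $[0,\infty)\times\Omega\ni(s,\omega)\mapsto|U^{\vartheta}_k(x+BW^{\theta}_s(\omega),\omega)|^p\in\R$ is $(\Borel([0,\infty))\otimes\mc F)$/$\Borel(\R)$-measurable, so by Tonelli's theorem $[0,\infty)\ni s\mapsto\Exp{|U^{\vartheta}_k(x+BW^{\theta}_s)|^p}\in[0,\infty]$ is Borel measurable. Since $R^{\theta}$ is independent of $(U^{\vartheta}_k,W^{\theta})$, a disintegration argument (cf.\ the conventions in the proof of \cref{bounded_case}, e.g.\ Kallenberg~\cite{Kallenberg2002Foundations}) then yields $\Exp{|U^{\vartheta}_k(x+BW^{\theta}_{R^{\theta}})|^p}=\int_0^{\infty}\Exp{|U^{\vartheta}_k(x+BW^{\theta}_s)|^p}\,(R^{\theta}(\P))(ds)$, which is the first equality. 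Finally, the assumption $\P(R^0\geq t)=e^{-\lambda t}$ for all $t\in[0,\infty)$ and the fact that $R^{\theta}$ and $R^0$ are identically distributed give that $R^{\theta}(\P)$ is the exponential distribution with parameter $\lambda$, i.e.\ it has Lebesgue density $[0,\infty)\ni s\mapsto\lambda e^{-\lambda s}$; this yields the second equality.

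For Item~\eqref{elementary_integration_rules:item2} I would use that $U^{\vartheta}_k$ and $U^{0}_k$ are identically distributed random fields by Item~\eqref{properties_of_mlp:item4} in \cref{lem:properties_of_mlp}, so that $\Exp{|U^{\vartheta}_k(y)|^p}=\Exp{|U^{0}_k(y)|^p}$ for every $y\in\R^d$, together with the fact that the law of $x+BW^{\theta}_t$ equals $\nu_{t,x}$, the law of $x+BW^0_t$, because $W^{\theta}$ and $W^0$ are identically distributed. For fixed $t\in[0,\infty)$ the random field $U^{\vartheta}_k$ is independent of $x+BW^{\theta}_t$, so the product-measure form of Fubini's theorem, applied to the jointly measurable nonnegative map $(y,\omega)\mapsto|U^{\vartheta}_k(y,\omega)|^p$, gives $\Exp{|U^{\vartheta}_k(x+BW^{\theta}_t)|^p}=\int_{\R^d}\Exp{|U^{\vartheta}_k(y)|^p}\,\nu_{t,x}(dy)=\int_{\R^d}\Exp{|U^{0}_k(y)|^p}\,\nu_{t,x}(dy)$. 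The main point requiring care in both items is not an estimate but the bookkeeping that converts the independence of $U^{\vartheta}_k$ from $(W^{\theta},R^{\theta})$ into these integral identities: one must check the joint measurability that makes the inner expectations measurable functions of $s$ resp.\ $y$, and then invoke the appropriate Fubini/disintegration statement for a random element and an independent random variable. Once the independence extracted from \cref{lem:properties_of_mlp} is in place, the rest is routine.
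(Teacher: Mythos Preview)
Your proposal is correct and follows essentially the same route as the paper: both arguments extract from \cref{lem:properties_of_mlp} the independence of $R^{\theta}$ from $\sigma_{\Omega}(W^{\theta},(W^{(\vartheta,\eta)})_{\eta\in\Theta},(R^{(\vartheta,\eta)})_{\eta\in\Theta})$ and of $W^{\theta}$ from $\sigma_{\Omega}((W^{(\vartheta,\eta)})_{\eta\in\Theta},(R^{(\vartheta,\eta)})_{\eta\in\Theta})$, then apply a disintegration lemma (the paper cites Hutzenthaler et al.~\cite[Lemma~2.2]{Overcoming}, you invoke the equivalent Kallenberg-style statement) and finally use Item~\eqref{properties_of_mlp:item4} to pass from $U^{\vartheta}_k$ to $U^{0}_k$. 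The only cosmetic difference is that you phrase the independence by regarding $U^{\vartheta}_k$ as a $C(\R^d,\R)$-valued random variable, whereas the paper works directly with the generating $\sigma$-algebras; this changes nothing of substance.
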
 

\begin{proof}[Proof of \cref{lem:elementary_integration_rules}]
	First, observe that Item~\eqref{properties_of_mlp:item2} in \cref{lem:properties_of_mlp} and the fact that $ W^{\theta} \colon [0,\infty) \times \Omega \to \R^d $ is $ (\Borel([0,\infty))\otimes \sigma_{\Omega}(W^{\theta}) )$/$ \Borel(\R^d) $-measurable ensure for all 
		$ x \in \R^d $ 
	that 
		\begin{equation} 
		[0,\infty) \times \Omega \ni (t,\omega) \mapsto 
		|U^{\vartheta}_k(x+BW^{\theta}_t(\omega),\omega)|^p \in [0,\infty) 
		\end{equation}  
	is 
		$ (\Borel([0,\infty)) \otimes \sigma_{\Omega}(W^{\theta}, (W^{(\vartheta,\eta)})_{\eta\in\Theta},(R^{(\vartheta,\eta)})_{\eta\in\Theta}) ) $/$ \Borel( [0,\infty) ) $-measurable. 
	In addition, note that the assumption that $ \theta \notin \{(\vartheta,\eta)\colon \eta\in\Theta\} $ proves that $ R^{\theta} $ is independent of $ \sigma_{\Omega}(W^{\theta}, (W^{(\vartheta,\eta)})_{\eta\in\Theta},(R^{(\vartheta,\eta)})_{\eta\in\Theta}) $. 
	Hutzenthaler et al.~\cite[Lemma 2.2]{Overcoming} (with 
		$ (\Omega,\mc{F},\P) \is (\Omega,\mc{F},\P) $, 
		$ \mc{G} \is \sigma_{\Omega}(W^{\theta}, (W^{(\vartheta,\eta)})_{\eta\in\Theta},(R^{(\vartheta,\eta)})_{\eta\in\Theta}) $, 
		$ (S,\mc{S}) \is ([0,\infty),\Borel([0,\infty))) $, 
		$ U \is ( 	[0,\infty) \times \Omega \ni (t,\omega) \mapsto 
		|U^{\vartheta}_k(x+BW^{\theta}_t(\omega),\omega)|^p \in [0,\infty)  ) $, $ Y \is R^{\theta} $  
	for $ x \in \R^d $
	in the notation of Hutzenthaler et al.~\cite[Lemma 2.2]{Overcoming}) therefore demonstrates for all 
		$ x \in \R^d $ 
	that 
		\begin{equation} 
		\Exp{\left| U^{\vartheta}_k(x+BW^{\theta}_{R^{\theta}}) \right|^p} 
		= 
		\int_0^{\infty} \Exp{ \left| U^{\vartheta}_k(x+BW^{\theta}_{t}) \right|^p} \!\, \big( R^{\theta}(\P) \big)(dt). 
		\end{equation} 
	This and the assumption that for all $ t \in (0,\infty) $ it holds that $ \P(R^{\theta} \geq t) = e^{-\lambda t} $ establish Item~\eqref{elementary_integration_rules:item1}. Next observe that the assumption that $ \theta \notin \{(\vartheta,\eta) \colon \eta \in \Theta \} $ ensures that 
		$ W^{\theta} $ 
	and 
		$ \sigma_{\Omega}( (W^{(\vartheta,\eta)} )_{\eta\in\Theta},  (R^{(\vartheta,\eta)})_{\eta\in\Theta} ) $ 
	are independent. 
	Combining this with Item~\eqref{properties_of_mlp:item2} in \cref{lem:properties_of_mlp} and Hutzenthaler et al.~\cite[Lemma 2.2]{Overcoming} (with 
		$ ( \Omega, \mc F, \P ) \is ( \Omega, \mc F, \P ) $, 
		$ \mc G \is \sigma_{\Omega}( (W^{(\vartheta,\eta)} )_{\eta\in\Theta},  (R^{(\vartheta,\eta)})_{\eta\in\Theta} ) $, 
		$ ( S, \mc S ) \is ( \R^d, \Borel(\R^d) ) $, 
		$ U \is ( \R^d \times \Omega \ni (y,\omega) \mapsto |U_k^{\vartheta}(y,\omega)|^p \in [0,\infty) ) $, 
		$ Y \is ( \Omega \ni \omega \mapsto x + BW^{\theta}_t(\omega) \in \R^d )$ 
	for 
		$ t \in [0,\infty) $, 
		$ x \in \R^d $ 
	in the notation of Hutzenthaler et al.~\cite[Lemma 2.2]{Overcoming}) demonstrates for all 
		$ x \in \R^d $, 
		$ t \in [0,\infty) $ 
	that 
		\begin{equation} 
		\Exp{\left|U^{\vartheta}_k(x+BW^{\theta}_t)\right|^p} 
		= 
		\int_{\R^d} \Exp{|U^{\vartheta}_k(y)|^p} \!\, \nu_{t,x}(dy). 
		\end{equation} 
	Item~\eqref{properties_of_mlp:item4} in \cref{lem:properties_of_mlp} hence establishes Item~\eqref{elementary_integration_rules:item2}. 
	The proof of \cref{lem:elementary_integration_rules} is thus completed. 	
\end{proof}

\begin{lemma} \label{lem:mlp_integrability}
	Assume \cref{setting}, 
	let $ \varepsilon \in (0,\infty) $, 
		$ p \in [1,\infty) $,
		$ L \in \R $, 
	and 
	assume for all $ x \in \R^d $, $ v,w \in \R $ that
		$| f ( x , v ) - f ( x , w ) | \leq L | v - w | $ 
	and 
		$ \int_0^{\infty} e^{(\varepsilon-\lambda) s}\,\EXP{ | f ( x+BW^{0}_s , 0 ) |^p} \,ds<\infty$. 
	Then it holds for all
		$n \in \N_0 $, $ x \in \R^d $ 
	that 	
		\begin{equation} \label{mlp_integrability:claim}
		\int_0^{\infty} e^{(\varepsilon-\lambda) s} \,\Exp{  | U^{0}_n(x+BW^{0}_{s}) |^p }\!\,ds < \infty. 
		\end{equation}
\end{lemma}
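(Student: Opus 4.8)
The plan is to establish \eqref{mlp_integrability:claim} by induction on $n\in\N_0$. For $n\in\N_0$ and $x\in\R^d$ abbreviate $\Phi_n(x)=\int_0^{\infty}e^{(\varepsilon-\lambda)s}\,\EXP{|U^0_n(x+BW^0_s)|^p}\,ds\in[0,\infty]$, and, using the second hypothesis of the lemma, note that $g(x)=\int_0^{\infty}e^{(\varepsilon-\lambda)s}\,\EXP{|f(x+BW^0_s,0)|^p}\,ds\in[0,\infty)$. The measurability assertions in \cref{lem:properties_of_mlp} ensure that all expectations appearing below are jointly measurable in the relevant time variables, so that Fubini's theorem for nonnegative integrands may be applied without further comment. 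The base case $n=0$ is trivial since $U^0_0\equiv 0$, whence $\Phi_0(x)=0$. For the induction step I fix $n\in\N$ and assume that $\Phi_k(x)<\infty$ for all $k\in\{0,1,\dots,n-1\}$ and all $x\in\R^d$.

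Applying \eqref{mlp_scheme} with $\theta=0$, the elementary convexity inequality $|\smallsum_{i=1}^N a_i|^p\le N^{p-1}\smallsum_{i=1}^N|a_i|^p$, and the Lipschitz estimate $|f(z,v)|^p\le 2^{p-1}(|f(z,0)|^p+|L|^p|v|^p)$, one obtains a finite constant $C=C(n,M,p,\lambda,L)$ such that for all $s\in[0,\infty)$ and $x\in\R^d$ the quantity $\EXP{|U^0_n(x+BW^0_s)|^p}$ is bounded by $C$ times a finite sum whose summands are, up to multiplicative constants, of one of the three forms $\EXP{|f(x+BW^0_s+BW^{(0,j,m)}_{R^{(0,j,m)}},0)|^p}$ with $j\in\{0,1,\dots,n-1\}$, $\EXP{|U^{(0,k,m)}_k(x+BW^0_s+BW^{(0,k,m)}_{R^{(0,k,m)}})|^p}$ with $k\in\{1,\dots,n-1\}$, and $\EXP{|U^{(0,k,-m)}_{k-1}(x+BW^0_s+BW^{(0,k,m)}_{R^{(0,k,m)}})|^p}$ with $k\in\{1,\dots,n-1\}$, where in each case $m$ runs over a finite index set. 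It therefore suffices to show that applying $\int_0^{\infty}e^{(\varepsilon-\lambda)s}(\cdot)\,ds$ to any one of these three expressions yields a finite number.

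I treat the second form; the other two are identical after replacing $U^{(0,k,m)}_k$ by $U^{(0,k,-m)}_{k-1}$ or by $f(\cdot,0)$ (and $(0,k,m)$ by $(0,j,m)$ in the last case). By \cref{lem:properties_of_mlp} the random field $U^{(0,k,m)}_k$ is measurable with respect to the $\sigma$-algebra generated by $(W^{(0,k,m,\eta)})_{\eta\in\Theta}$ and $(R^{(0,k,m,\eta)})_{\eta\in\Theta}$; since $(W^{\theta})_{\theta\in\Theta}$ and $(R^{\theta})_{\theta\in\Theta}$ are i.i.d.\ families which are independent of one another, the four objects $x+BW^0_s$ (a measurable function of $W^0$), $W^{(0,k,m)}$, $R^{(0,k,m)}$, and $U^{(0,k,m)}_k$ are mutually independent. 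Conditioning on $R^{(0,k,m)}=r$ --- for which one invokes Hutzenthaler et al.~\cite[Lemma 2.2]{Overcoming} --- the point $x+BW^0_s+BW^{(0,k,m)}_r$ is independent of $U^{(0,k,m)}_k$ and, being the sum of the independent centered Gaussian vectors $BW^0_s$ and $BW^{(0,k,m)}_r$ with covariances $sBB^{*}$ and $rBB^{*}$, has the same distribution as $x+BW^0_{s+r}$; since $U^{(0,k,m)}_k$ and $U^0_k$ are identically distributed random fields (\cref{lem:properties_of_mlp}; see also Hutzenthaler et al.~\cite[Corollary 2.5]{Overcoming}) and $W^0$ is independent of $U^0_k$, this yields $\EXP{|U^{(0,k,m)}_k(x+BW^0_s+BW^{(0,k,m)}_r)|^p}=\EXP{|U^0_k(x+BW^0_{s+r})|^p}$. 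Integrating over $r$ against the density $[0,\infty)\ni r\mapsto\lambda e^{-\lambda r}$ of $R^{(0,k,m)}$ gives $\EXP{|U^{(0,k,m)}_k(x+BW^0_s+BW^{(0,k,m)}_{R^{(0,k,m)}})|^p}=\int_0^{\infty}\lambda e^{-\lambda r}\,\EXP{|U^0_k(x+BW^0_{s+r})|^p}\,dr$.

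Finally, multiplying by $e^{(\varepsilon-\lambda)s}$, integrating in $s$ over $[0,\infty)$, interchanging the order of integration, substituting $t=s+r$, and using the identity $\lambda e^{-\lambda r}e^{(\varepsilon-\lambda)(t-r)}=\lambda e^{-\varepsilon r}e^{(\varepsilon-\lambda)t}$ together with $\int_0^{\infty}\lambda e^{-\varepsilon r}\,dr=\lambda\varepsilon^{-1}$, one arrives at $\int_0^{\infty}e^{(\varepsilon-\lambda)s}\,\EXP{|U^{(0,k,m)}_k(x+BW^0_s+BW^{(0,k,m)}_{R^{(0,k,m)}})|^p}\,ds\le\lambda\varepsilon^{-1}\Phi_k(x)$, which is finite by the induction hypothesis. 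The analogous bounds $\lambda\varepsilon^{-1}\Phi_{k-1}(x)$ and $\lambda\varepsilon^{-1}g(x)$ hold for the terms built from $U^{(0,k,-m)}_{k-1}$ and from $f(\cdot,0)$ respectively, and are finite by the induction hypothesis and by assumption. Summing the finitely many contributions gives $\Phi_n(x)<\infty$, which completes the induction and hence the proof of \eqref{mlp_integrability:claim}. The only genuine difficulty is bookkeeping: one must verify that at each level of the recursion the Brownian motion and the exponential time used for the inner evaluation are independent both of the outer perturbation $BW^0_s$ and of the MLP random field being evaluated, so that the two Gaussian perturbations amalgamate into a single Brownian flow run for time $s+r$; once this is in place, the analytic estimates (convexity inequality, Lipschitz bound, Fubini, and the geometric decay furnished by $\int_0^{\infty}\lambda e^{-\varepsilon r}\,dr<\infty$) are routine.
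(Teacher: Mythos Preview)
Your proof is correct and follows essentially the same inductive route as the paper: bound $|U^0_n|^p$ via convexity, use the independence structure to reduce the inner time-randomized Brownian perturbation to a single Brownian flow at time $s+r$, and close the recursion via $\int_0^\infty \lambda e^{-\varepsilon r}\,dr = \lambda/\varepsilon$. The paper packages the independence step through \cref{lem:elementary_integration_rules} and the Chapman--Kolmogorov identity for the transition measures $\nu_{t,x}$, and applies the Lipschitz bound to the differences $f(\cdot,U_k)-f(\cdot,U_{k-1})$ rather than to the individual summands, but these are cosmetic rather than substantive differences.
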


\begin{proof}[Proof of \cref{lem:mlp_integrability}]
	Throughout this proof let $ x \in \R^d $, 
	let 
		$ \A \subseteq \N $ 
	be the set given by 
		\begin{equation} 
		\A 
		= 
		\left\{ 
			n \in \N
			\colon 
			\left( \Forall k \in \{0,1,\ldots,n-1\}\colon 
			\int_0^{\infty} e^{(\varepsilon-\lambda) s}\,\Exp{ |U^0_k(x+BW^{0}_s)|^p}\!\,ds < \infty 
			\right)  
		\right\} \!, 
		\end{equation}
	and let 
		$ \nu_{t,y} \colon \Borel(\R^d) \to [0,\infty) $, $ t\in [0,\infty) $, $ y \in \R^d $, 
	satisfy for all 
		$ t \in [0,\infty) $, 
		$ y \in \R^d $, 
		$ A \in \Borel(\R^d) $ 
	that 
		$ \nu_{t,y}(A) = \P( y + BW^0_t \in A) $.
	Observe that the assumption that for all $ y \in \R^d $ it holds that $ U^0_0(y) = 0 $ ensures that $ 1 \in \A $. 
	Moreover, note that \eqref{mlp_scheme}, the fact that for all 
		$ a,b \in \R $ 
	it holds that 
		$ |a+b|^p \leq 2^{p-1} |a|^p + 2^{p-1} |b|^p $, 
	and the assumption that for all 
		$ y \in \R^d $, 
		$ v,w \in \R $ 
	it holds that 
		$ | f(y,v) - f(y,w) | \leq L | v-w | $ 
	ensure for all 
		$ n \in \N $, 
		$ y \in \R^d $ 
	that
		\begin{align}
		\nonumber
		& |U^0_n(y)|^p
		\leq 
		2^{p-1} \Bigg[ \sum_{k=1}^{n-1} \frac{1}{\lambda M^{(n-k)}} 
		\sum_{m=1}^{M^{(n-k)}} 
		\bigg|
		f\big(y+BW^{(0,k,m)}_{R^{(0,k,m)}},U^{(0,k,m)}_{k}(y+BW^{(0,k,m)}_{R^{(0,k,m)}})\big) 
		\\ \nonumber
		& -
		f\big(y+BW^{(0,k,m)}_{R^{(0,k,m)}},U^{(0,k,-m)}_{k-1}(y+BW^{(0,k,m)}_{R^{(0,k,m)}})\big)\bigg| \Bigg]^p 
		+ 
		2^{p-1} \left[ 
		\frac{1}{\lambda M^n} \sum_{m=1}^{M^n} \left| f(y+BW^{(0,0,m)}_{R^{(0,0,m)}},0) \right|
		\right]^p
		\\
		& \leq
		2^{p-1} \left[ 
		\sum_{k=1}^{n-1} \frac{1}{\lambda M^{(n-k)}} 
		\sum_{m=1}^{M^{(n-k)}} 
		L \left| U^{(0,k,m)}_k(y+BW^{(0,k,m)}_{R^{(0,k,m)}}) - U^{(0,k,-m)}_{k-1}(y+BW^{(0,k,m)}_{R^{(0,k,m)}}) \right|
		\right]^p
		\\ \nonumber
		& + 
		2^{p-1} \left[ 
		\frac{1}{\lambda M^n} \sum_{m=1}^{M^n} \left| f(y+BW^{(0,0,m)}_{R^{(0,0,m)}},0) \right|
		\right]^p
		.  
		\end{align} 
	This and the fact that $ [0,\infty) \ni t \mapsto t^p \in [0,\infty) $ is convex guarantee for all 
		$ n \in \N $, 
		$ y \in \R^d $ 
	that 
		\begin{equation} 
		\begin{split}
		|U^0_n(y)|^p 
		& \leq 
		\frac{2^{p-1}}{\lambda^p M^n} \sum_{m=1}^{M^n} \left| f(y+BW^{(0,0,m)}_{R^{(0,0,m)}},0) \right|^p
		\\
		& + 
		\sum_{k=1}^{n-1} \frac{2^{p-1} n^p L^p}{\lambda^p M^{(n-k)}} 
		\sum_{m=1}^{M^{(n-k)}} 
		\left|
		U^{(0,k,m)}_{k}(y+BW^{(0,k,m)}_{R^{(0,k,m)}}) 
		- 
		U^{(0,k,-m)}_{k-1}(y+BW^{(0,k,m)}_{R^{(0,k,m)}})
		\right|^p
		\\
		& \leq 
		\frac{2^{p-1}}{\lambda^p M^n} \sum_{m=1}^{M^n} \left| f(y+BW^{(0,0,m)}_{R^{(0,0,m)}},0) \right|^p
		\\
		& + 
		\sum_{k=1}^{n-1} \frac{4^{p-1}n^pL^p}{\lambda^p M^{(n-k)}} 
		\sum_{m=1}^{M^{(n-k)}} 
		\left[ 
		\left|
		U^{(0,k,m)}_{k}(y+BW^{(0,k,m)}_{R^{(0,k,m)}}) \right|^p
		+ 
		\left| U^{(0,k,-m)}_{k-1}(y+BW^{(0,k,m)}_{R^{(0,k,m)}})
		\right|^p
		\right] \!.  
		\end{split}
		\end{equation} 
	Item~\eqref{properties_of_mlp:item3} in \cref{lem:properties_of_mlp} hence ensures for all 
		$ n \in\N $, 
		$ y \in \R^d $
	that 
		\begin{equation} \label{integrability:eq01}
		\begin{split}		
		& 
		\Exp{|U^0_n(y)|^p} 
		\leq 
		\frac{2^{p-1}}{\lambda^p} 
		\Exp{\left| f(y+BW^{0}_{R^{0}},0) \right|^p}
		\\
		& + 
		\sum_{k=1}^{n-1} \frac{4^{p-1}n^pL^p}{\lambda^p M^{(n-k)}} 
		\sum_{m=1}^{M^{(n-k)}} 
		\Exp{\left|
		U^{(0,k,m)}_{k}(y+BW^{(0,k,m)}_{R^{(0,k,m)}})\right|^p 
		+ 
		\left| 
		U^{(0,k,-m)}_{k-1}(y+BW^{(0,k,m)}_{R^{(0,k,m)}})
		\right|^p}\!.  
		\end{split}
		\end{equation}
	Next note that Item~\eqref{elementary_integration_rules:item1} in \cref{lem:elementary_integration_rules} (with 	
		$ k \is k $, 
		$ p \is p $, 
		$ \theta \is (0,k,m) $, 
		$ \vartheta \is (0,k,m) $ 
	for $k,m\in\N$ in the notation of \cref{lem:elementary_integration_rules}) demonstrates for all 
		$ k,m \in \N $, 
		$ y \in \R^d $ 
	that	
		\begin{equation} \label{integrability:eq02}
		\Exp{|U^{(0,k,m)}_k(y+BW^{(0,k,m)}_{R^{(0,k,m)}})|^p} 
		= 
		\int_0^{\infty} \lambda e^{-\lambda s} \, 
		\Exp{|U^{(0,k,m)}_k(y+BW^{(0,k,m)}_{s})|^p}\!\,ds. 
		\end{equation} 
	Moreover, Item~\eqref{elementary_integration_rules:item1} in \cref{lem:elementary_integration_rules} (with 
		$ k \is k-1 $, 
		$ p \is p $, 
		$ \theta \is (0,k,m) $, 
		$ \vartheta \is (0,k,-m) $ 
	for $ k,m \in \N $ in the notation of \cref{lem:elementary_integration_rules}) proves for all 
		$ k,m \in \N $, 
		$ y \in \R^d $ 
	that	
		\begin{equation} \label{integrability:eq03}
		\Exp{|U^{(0,k,-m)}_{k-1}(y+BW^{(0,k,m)}_{R^{(0,k,m)}})|^p} 
		= 
		\int_0^{\infty} \lambda e^{-\lambda s} \, 
		\Exp{|U^{(0,k,-m)}_{k-1}(y+BW^{(0,k,m)}_{s})|^p}\!\,ds.  
		\end{equation}
	This, \eqref{integrability:eq01}, and \eqref{integrability:eq02} ensure for all 
		$ n \in \N $, 
		$ y \in \R^d $ 
	that 
		\begin{align} 
		&
		\Exp{|U^0_n(y)|^p} 
		\leq 
		\frac{2^{p-1}}{\lambda^{p-1}} 
		\int_0^{\infty} e^{-\lambda s} \,
		\Exp{\left| f(y+BW^{0}_{s},0) \right|^p}\!\,ds
		\\ \nonumber
		& + 
		\sum_{k=1}^{n-1} \frac{4^{p-1}n^pL^p}{\lambda^{p-1} M^{(n-k)}} 
		\sum_{m=1}^{M^{(n-k)}} 
		\int_0^{\infty} e^{-\lambda s}\,
		\Exp{\left|
		U^{(0,k,m)}_{k}(y+BW^{(0,k,m)}_{s})\right|^p 
		+ 
		\left| 
		U^{(0,k,-m)}_{k-1}(y+BW^{(0,k,m)}_s)
		\right|^p}\!\,ds.  		
		\end{align} 
	Item~\eqref{elementary_integration_rules:item2} in \cref{lem:elementary_integration_rules} (with 
		$ k \is k $, 
		$ p \is p $, 
		$ \theta \is (0,k,m) $, 
		$\vartheta \is (0,k,m) $ 
	for $ k,m \in \N $
	in the notation of \cref{lem:elementary_integration_rules}) and 
	Item~\eqref{elementary_integration_rules:item2} in \cref{lem:elementary_integration_rules} (with 
		$ k \is k-1 $, 
		$ p \is p $, 
		$ \theta \is (0,k,m) $, 
		$ \vartheta \is (0,k,-m) $ 
	for $ k,m \in \N $
	in the notation of \cref{lem:elementary_integration_rules}) hence guarantee for all 
		$ n \in \N $, 
		$ y \in \R^d $ 
	that 
		\begin{equation} 
		\begin{split} 
		&
		\Exp{|U^0_n(y)|^p} 
		\leq 
		\frac{2^{p-1}}{\lambda^{p-1}} 
		\int_0^{\infty} e^{-\lambda s} \,
		\Exp{\left| f(y+BW^{0}_{s},0) \right|^p}\!\,ds
		\\
		& + 
		\sum_{k=1}^{n-1} \frac{4^{p-1}n^pL^p }{\lambda^{p-1}}
		\int_0^{\infty} e^{-\lambda s}\,
		\Exp{\left|
			U^{0}_{k}(y+BW^{0}_{s})\right|^p 
			+ 
			\left| 
			U^{0}_{k-1}(y+BW^{0}_{s})
			\right|^p}\!\,ds.  		
		\\
		& \leq 
		\frac{2^{p-1}}{\lambda^{p-1}} 
		\int_0^{\infty} e^{-\lambda s} 
		\left[ 
		\int_{\R^d} 
		\left(
		\left| f(z,0) \right|^p
		+ 
		2^pn^pL^p
		\sum_{k=0}^{n-1} 
		\Exp{ 
			\left|
			U^{0}_{k}(z)\right|^p}\!\right) \!\, \nu_{s,y}(dz) \right] \!\,ds
		.  		
		\end{split}
		\end{equation} 
	Item~\eqref{elementary_integration_rules:item2} in \cref{lem:elementary_integration_rules} (with 
		$ k \is n $, 
		$ p \is p $, 
		$ \theta \is 0 $, 
		$ \vartheta \is 0 $ 
	for $ n \in \N $
	in the notation of \cref{lem:elementary_integration_rules}) therefore implies for all 
		$ n \in \N $,
		$ t \in [0,\infty) $
	that 	
		\begin{equation} 
		\begin{split}
		& \Exp{|U^0_n(x+BW^{0}_t)|^p} 
		= 
		\int_{\R^d} \Exp{|U^0_n(y)|^p} \!\,\nu_{t,x}(dy)
		\\
		& \leq 
		\frac{2^{p-1}}{\lambda^{p-1}}
		\int_{\R^d}
		\int_0^{\infty} e^{-\lambda s} 
		\left[ 
		\int_{\R^d} 
		\left(
		\left| f(z,0) \right|^p 
		+ 
		2^pn^pL^p 
		\sum_{k=0}^{n-1} 
		\Exp{ 
			\left|
			U^{0}_{k}(z)\right|^p}\!\right) \! \nu_{s,y}(dz) \right] \!\,ds\,
		\nu_{t,x}(dy). 
		\end{split}
		\end{equation}
	Fubini's theorem hence proves for all 
		$ n \in \N $, 
		$ t \in [0,\infty) $
	that 
		\begin{equation}
		\begin{split}		
		& \Exp{|U^0_n(x+BW^{0}_t)|^p} 
		\\
		& \leq 
		\frac{2^{p-1}}{\lambda^{p-1}}
		\int_0^{\infty} 
		e^{-\lambda s} 
		\int_{\R^d}
		\left[ 
		\int_{\R^d} 
		\left(
		\left| f(z,0) \right|^p 
		+ 
		2^pn^pL^p \sum_{k=0}^{n-1} 
		\Exp{ 
			\left|
			U^{0}_{k}(z)\right|^p}\!\right) \! \nu_{s,y}(dz) \right] \!
		\nu_{t,x}(dy) \,ds. 
	\end{split}
	\end{equation} 
	The fact that for all 
		$ t,s\in [0,\infty) $,
		$ A \in \Borel(\R^d) $
	it holds that 
		$ \nu_{t+s,x}(A) = \int_{\R^d} \nu_{s,y}(A) \, \nu_{t,x}(dy) $ 
	therefore ensures for all 
		$ n \in \N $, 
		$ t \in [0,\infty) $ 
	that 
	\begin{equation} 
	\begin{split}
		& 
		\Exp{|U^0_n(x+BW^{0}_t)|^p} 
		\\
		& \leq  
		\frac{2^{p-1}}{\lambda^{p-1}} 
		\int_0^{\infty} e^{-\lambda s} 
		\left[ 
		\int_{\R^d} 
		\left(
		\left| f(z,0) \right|^p
		+ 
		2^pn^pL^p
		\sum_{k=0}^{n-1} 
		\Exp{ 
			\left|
			U^{0}_{k}(z)\right|^p}\!\right) \!\, \nu_{t+s,x}(dz) \right] \!\,ds. 
	\end{split} 
	\end{equation} 
	Combining this with Item~\eqref{elementary_integration_rules:item2} in \cref{lem:elementary_integration_rules} (with 
		$ k \is k $, 
		$ p \is p $, 
		$ \theta \is 0 $, 
		$ \vartheta \is 0 $
	for $ k \in \N $
	in the notation of \cref{lem:elementary_integration_rules}) 
	demonstrates for all 
		$ n \in \N $, 
		$ t \in [0,\infty) $ 
	that 
		\begin{equation} 
		\begin{split}
		&
		\Exp{|U^0_n(x+BW^{0}_t)|^p}
		\\
		& \leq 
		\frac{2^{p-1}}{\lambda^{p-1}}
		\int_0^{\infty} 
		e^{-\lambda s}
		\left[
		\Exp{|f(x+BW^{0}_{t+s},0)|^p} 
		+ 
		2^pn^pL^p
		\sum_{k=0}^{n-1} \Exp{|U^0_k(x+BW^{0}_{t+s})|^p}
		\right] \!\,ds 
		\\
		& = 
		e^{\lambda t} 
		\frac{2^{p-1}}{\lambda^{p-1}}
		\int_t^{\infty} 
		e^{-\lambda s} \left(
			\Exp{|f(x+BW^{0}_s,0)|^p}
			+	 
			2^pn^pL^p 
			\sum_{k=0}^{n-1} 
			\Exp{|U^0_k(x+BW^{0}_{s})|^p}
		\right)\!\,ds. 
		\end{split}
		\end{equation} 
	Hence, we obtain for all 
		$ n \in \N $
	that 
		\begin{equation}
		\begin{split}
		& 
		\int_0^{\infty} e^{(\varepsilon-\lambda) t} \, \Exp{|U^{0}_n(x+BW^{0}_{t}) |^p}\!\,dt
		\\
		& \leq 
		\frac{2^{p-1}}{\lambda^{p-1}}
		\int_0^{\infty} e^{\varepsilon t}
		\int_t^{\infty} e^{-\lambda s} \left(
	 	\Exp{|f(x+BW^{0}_s,0)|^p}
		+	 
		2^pn^pL^p 
		\sum_{k=0}^{n-1} 
		\Exp{|U^0_k(x+BW^{0}_{s})|^p}
		\right)\!\,ds\,dt 
		\\
		& = 
		\frac{2^{p-1}}{\lambda^{p-1}}
		\int_0^{\infty} 
		e^{-\lambda s}
		\left(
		 \Exp{|f(x+BW^{0}_s,0)|^p}
		+	 
		2^pn^pL^p
		\sum_{k=0}^{n-1} 
		\Exp{|U^0_k(x+BW^{0}_{s})|^p}
		\right)\!
		\left[
		\int_0^s e^{\varepsilon t} \,dt
		\right]\!\,ds 
		\\
		& \leq  
		\frac{2^{p-1}}{\lambda^{p-1}\varepsilon}
		\int_0^{\infty}
			e^{(\varepsilon-\lambda) s} \left(
			\Exp{|f(x+BW^{0}_s,0)|^p}
			+	 
			2^pn^pL^p
			\sum_{k=0}^{n-1} 
			\Exp{|U^0_k(x+BW^{0}_{s})|^p}
			\right)\!\,ds.  
		\end{split}
		\end{equation}
	This implies for all 
		$ n \in \A $
	that 
		$ \int_0^{\infty} e^{(\varepsilon-\lambda)s} \,\EXP{|U^0_n(x+BW^{0}_s)|^p}\,ds < \infty $. 
	Therefore, we obtain for all $ n \in \N $ that $ (n \in \A \Rightarrow n+1 \in \A )$. Combining this with the fact that $ 1 \in \A $ and induction demonstrates that $ \N \subseteq \A $. This establishes \eqref{mlp_integrability:claim}. The proof of \cref{lem:mlp_integrability} is thus completed. 
\end{proof}

\subsection{Bias and variance estimates for MLP approximations} \label{subsec:bias_variance_estimates}

\begin{lemma}[Mean] \label{lem:mean}
	Assume 
		\cref{setting}, 
	let $L\in\R $, $ \varepsilon \in (0,\infty) $,  
	and assume for all $x\in\R^d$, $v,w\in\R$ that 
		$|f(x,v)-f(x,w)|\leq L|v-w|$ 
	and 
		$\int_0^{\infty} e^{(\varepsilon-\lambda) s}\,\EXP{|f(x+BW^{0}_s,0)|}\,ds<\infty$.  
	Then 
\begin{enumerate}[(i)]
	\item \label{mean:item1b}
	it holds for all 
		$ n \in \N $, 
		$ x \in \R^d $ 
	that 
		$
		\int_0^{\infty} e^{(\varepsilon-\lambda) s}\,
		\EXP{|f(x+BW^{0}_s,U^{0}_{n-1}(x+BW^{0}_s))|}\,ds<\infty
		$
	and
	\item \label{mean:item2} 
	it holds for all 
		$ n \in \N $, 
		$ \theta \in \Theta $, 
		$ x \in \R^d $ 
	that 
	\begin{equation} 
	\begin{split}
	& \Exp{U^{\theta}_{n}(x)} 
	= 
	\frac{1}{\lambda} \Exp{f\big(x+BW^{0}_{R^0},U^{0}_{n-1}(x+BW^{0}_{R^{0}})\big)}\!.
	\end{split}
	\end{equation}
	\end{enumerate}
\end{lemma}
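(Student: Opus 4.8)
The plan is to establish the two items separately: \eqref{mean:item1b} is a short consequence of the Lipschitz bound together with \cref{lem:mlp_integrability}, while \eqref{mean:item2} follows by taking expectations in the recursion \eqref{mlp_scheme} and recognising a telescoping sum.

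For \eqref{mean:item1b} I would first note that the Lipschitz assumption yields, for all $ n \in \N $, $ s \in [0,\infty) $, $ x \in \R^d $,
\begin{equation*}
| f( x + B W^0_s, U^0_{n-1}( x + B W^0_s ) ) | \leq | f( x + B W^0_s, 0 ) | + L \, | U^0_{n-1}( x + B W^0_s ) | ,
\end{equation*}
that the right-hand side is jointly measurable because $ U^0_{n-1} $ is a continuous random field by Item~\eqref{properties_of_mlp:item1} in \cref{lem:properties_of_mlp} and $ f \in C(\R^d\times\R,\R) $, and that multiplying by $ e^{(\varepsilon-\lambda)s} $, taking expectations, and integrating over $ s \in (0,\infty) $ reduces \eqref{mean:item1b} to the two facts $ \int_0^{\infty} e^{(\varepsilon-\lambda)s} \EXP{| f(x+BW^0_s,0)|}\,ds < \infty $ (an assumption of the lemma) and $ \int_0^{\infty} e^{(\varepsilon-\lambda)s} \EXP{|U^0_{n-1}(x+BW^0_s)|}\,ds < \infty $. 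The latter is precisely the conclusion of \cref{lem:mlp_integrability} applied with $ p \is 1 $, whose hypotheses are exactly the present ones. (The same bound, together with the independence of $ R^0 $ and $ W^0 $, also shows that $ \EXPP{|f(x+BW^0_{R^0},U^0_{n-1}(x+BW^0_{R^0}))|} < \infty $, so that the right-hand side of the identity claimed in \eqref{mean:item2} is well defined.)

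For \eqref{mean:item2} I would take expectations on both sides of \eqref{mlp_scheme}. Integrability of each individual summand, needed to exchange $ \E $ with the finitely many sums, follows from the Lipschitz splitting above combined with \cref{lem:elementary_integration_rules} and \cref{lem:mlp_integrability}. Writing $ \Phi_k $ for the quantity $ \EXPP{f(x+BW^0_{R^0},U^0_k(x+BW^0_{R^0}))} $, $ k \in \N_0 $, the key point is that every expectation produced by this exchange collapses to one of the $ \Phi_k $. Indeed, for fixed $ n,k,m $ the random field $ U^{(\theta,k,m)}_k $ is $ \sigma_{\Omega}((W^{(\theta,k,m,\vartheta)})_{\vartheta\in\Theta},(R^{(\theta,k,m,\vartheta)})_{\vartheta\in\Theta}) $-measurable by Item~\eqref{properties_of_mlp:item2} in \cref{lem:properties_of_mlp}, hence independent of $ (W^{(\theta,k,m)},R^{(\theta,k,m)}) $; disintegrating over this pair, using the identical-distribution statements Items~\eqref{properties_of_mlp:item3} and \eqref{properties_of_mlp:item4} in \cref{lem:properties_of_mlp} together with a transfer lemma of the type of Hutzenthaler et al.~\cite[Lemma 2.2]{Overcoming}, one obtains $ \EXPP{ f( x+BW^{(\theta,k,m)}_{R^{(\theta,k,m)}}, U^{(\theta,k,m)}_k( x+BW^{(\theta,k,m)}_{R^{(\theta,k,m)}}) ) } = \Phi_k $; the analogous $ U^{(\theta,k,-m)}_{k-1} $-term equals $ \Phi_{k-1} $ (here the index $ (\theta,k,-m) $ is distinct from $ (\theta,k,m) $ since $ m \geq 1 $), and each $ \EXPP{f(x+BW^{(\theta,0,m)}_{R^{(\theta,0,m)}},0)} $ equals $ \EXPP{f(x+BW^0_{R^0},0)} = \Phi_0 $, the last equality because $ U^0_0 \equiv 0 $. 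Since these values do not depend on $ m $, the inner sum over $ m $ merely multiplies by $ M^{(n-k)} $ (respectively $ M^n $), cancelling the prefactor $ \frac{1}{M^{(n-k)}} $ (respectively $ \frac{1}{M^n} $), and \eqref{mlp_scheme} turns into
\begin{equation*}
\EXP{U^{\theta}_n(x)} = \frac{1}{\lambda} \sum_{k=1}^{n-1} ( \Phi_k - \Phi_{k-1} ) + \frac{1}{\lambda} \Phi_0 = \frac{1}{\lambda} ( \Phi_{n-1} - \Phi_0 ) + \frac{1}{\lambda} \Phi_0 = \frac{1}{\lambda} \Phi_{n-1} ,
\end{equation*}
which is exactly the asserted formula (for $ n=1 $ the sum is empty and $ \Phi_{n-1}=\Phi_0 $, so the identity still holds).

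I expect the main obstacle to be the measure-theoretic bookkeeping behind the identities $ \EXPP{f(x+BW^{(\theta,k,m)}_{R^{(\theta,k,m)}},U^{(\theta,k,m)}_k(\cdot))} = \Phi_k $ and their variants: one has to isolate cleanly the independence between $ (W^{(\theta,k,m)},R^{(\theta,k,m)}) $ and the sub-tree randomness generating $ U^{(\theta,k,m)}_k $, combine it with the distributional identities of \cref{lem:properties_of_mlp}, and apply the appropriate disintegration lemma — and likewise to justify interchanging expectation with the sums. Once these are in hand, the telescoping and the cancellation against the $ U^0_0\equiv 0 $ term are immediate.
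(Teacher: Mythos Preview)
Your proposal is correct and follows essentially the same route as the paper: Item~\eqref{mean:item1b} via the Lipschitz split combined with \cref{lem:mlp_integrability} at $p=1$, and Item~\eqref{mean:item2} by taking expectations in \eqref{mlp_scheme}, identifying each summand's expectation with a $\Phi_k$, and telescoping. The only cosmetic difference is that the paper packages the identification step through \cite[Corollary~2.5]{Overcoming} (concluding directly that the compound random fields are identically distributed), whereas you unbundle this into independence (via Item~\eqref{properties_of_mlp:item2} of \cref{lem:properties_of_mlp}) plus a disintegration via \cite[Lemma~2.2]{Overcoming} and then the distributional identities of Items~\eqref{properties_of_mlp:item3}--\eqref{properties_of_mlp:item4}; both arrive at the same place.
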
 

\begin{proof}[Proof of \cref{lem:mean}]
	First, observe that \cref{lem:mlp_integrability} (with 
		$ \varepsilon \is \varepsilon $, 
		$ p \is 1 $, 
		$ L \is L $ 
	in the notation of \cref{lem:mlp_integrability})
	assures for all 
		$ n \in \N_0 $, 
		$ x \in \R^d $
	that 
		\begin{equation} 
		\int_0^{\infty} e^{(\varepsilon-\lambda) t}\,\EXP{|U^0_n(x+BW^{0}_t)|}\,dt < \infty. 
		\end{equation} 
	Combining this with the assumption that for all 
		$ x \in \R^d $ 
	it holds that 
		$ 
		\int_0^{\infty} e^{(\varepsilon-\lambda) s}\,\EXP{|f(x+BW^{0}_s,0)|}\,ds < \infty
		$ 
	and the assumption that for all 
		$ x \in \R^d $, 
		$ v,w \in \R $ 
	it holds that 
		$ | f(x,v) - f(x,w) | \leq L |v-w| $ 
	ensures for all 
		$ n \in \N $, 
		$ x \in \R^d $ 
	that 
		\begin{equation} 
		\begin{split}
		& 
		\int_0^{\infty} e^{(\varepsilon-\lambda)t}\,\Exp{|f(x+BW^{0}_t,U^0_{n-1}(x+BW^{0}_t))|}\!\,dt
		\\
		& \leq 
		\int_0^{\infty} 
		e^{(\varepsilon-\lambda)t}\,\Exp{|f(x+BW^{0}_t,0)|}\!\,dt 
		+ 
		L \int_0^{\infty} 
		e^{(\varepsilon-\lambda)t}\,\Exp{|U^0_{n-1}(x+BW^{0}_t)|}\!\,dt < \infty. 
		\end{split}
		\end{equation} 
	This establishes Item~\eqref{mean:item1b}. 
	Next note that Items~\eqref{properties_of_mlp:item1}--\eqref{properties_of_mlp:item4} in \cref{lem:properties_of_mlp} 
	and Hutzenthaler et al.~\cite[Corollary 2.5]{Overcoming} 
	ensure 
	\begin{itemize}
		\item 
		for all 
			$ \theta \in \Theta $, 
			$ k,m \in \N $ 
		that 
			$ \R^d \times \Omega \ni (x,\omega) \mapsto f(x+BW^{(\theta,k,m)}_{R^{(\theta,k,m)}(\omega)}(\omega),U^{(\theta,k,m)}_k(x+BW^{(\theta,k,m)}_{R^{(\theta,k,m)}(\omega)}(\omega),\omega)) \in \R $ 
		and 
			$ \R^d \times \Omega \ni (x,\omega) \mapsto f(x+BW^{0}_{R^{0}(\omega)}(\omega),U^{0}_k(x+BW^{0}_{R^{0}(\omega)}(\omega),\omega)) \in \R $ 
		are identically distributed random fields 
		and  
		\item 
		for all 
			$ \theta \in \Theta $, 
			$ k,m \in \N $ 
		that 
			$ \R^d \times \Omega \ni (x,\omega) \mapsto f(x+BW^{(\theta,k,m)}_{R^{(\theta,k,m)}(\omega)}(\omega),U^{(\theta,k,-m)}_{k-1}(x+BW^{(\theta,k,m)}_{R^{(\theta,k,m)}(\omega)}(\omega),\omega)) \in \R $ 
		and 
			$ \R^d \times \Omega \ni (x,\omega) \mapsto f(x+BW^{0}_{R^{0}(\omega)}(\omega),U^{0}_{k-1}(x+BW^{0}_{R^{0}(\omega)}(\omega),\omega)) \in \R $ 
		are identically distributed random fields.  
		\end{itemize}
	Combining this with Item~\eqref{mean:item1b} shows for all 
		$ \theta \in \Theta $,
		$ n \in \N $, 
		$ x \in \R^d $
	that 
		\begin{align} \label{mean:computing_01}
		\nonumber
		& \Exp{U^{\theta}_{n}(x)} 
		= 
		\sum_{k=1}^{n-1} 
		\frac{1}{\lambda M^{(n-k)}} 
		\sum_{m=1}^{M^{(n-k)}} 
		\bigg(
		\Exp{f\Big(x+BW^{(\theta,k,m)}_{R^{(\theta,k,m)}},U^{(\theta,k,m)}_{k}\big(x+BW^{(\theta,k,m)}_{R^{(\theta,k,m)}}\big)\Big)}
		\\ \nonumber
		& -
		\Exp{ 
			f\Big( 
			x+BW^{(\theta,k,m)}_{R^{(\theta,k,m)}},U^{(\theta,k,-m)}_{k-1}\big(x+BW^{(\theta,k,m)}_{R^{(\theta,k,m)}}\big)
			\Big)}\bigg)
		+
		\frac{1}{\lambda M^n} 
		\sum_{m=1}^{M^{n}} \Exp{f\big(x+BW^{(\theta,0,m)}_{R^{(\theta,0,m)}},0\big)}\!
		\\ 
		& 
		= 
		\sum_{k=1}^{n-1} 
		\frac{1}{\lambda} 
		\bigg(
		\Exp{f\big(x+BW^{0}_{R^{0}},U^{0}_{k}(x+BW^{0}_{R^{0}})\big)}
		-
		\Exp{f\big(x+BW^{0}_{R^{0}},U^{0}_{k-1}(x+BW^{0}_{R^{0}})\big)}\bigg)
		\\ \nonumber
		& +
		\frac{1}{\lambda} \Exp{f\big(x+BW^{0}_{R^{0}},0\big)}.
	\end{align}
	The assumption that for all 
		$ x \in \R^d $ 
	it holds that 
		$ U^0_0(x) = 0 $ 
	hence proves for all 
		$ \theta \in \Theta $, 
		$ n \in \N $, 
		$ x \in \R^d $ 
	that 	
		\begin{equation} 
		\begin{split}
		\Exp{U^{\theta}_{n}(x)} 
		& = 
		\frac{1}{\lambda} 
		\Big(
		\Exp{f\big(x+BW^0_{R^{0}},0\big)}
		\\
		& + 
		\Exp{f\big(x+BW^0_{R^{0}},U^{0}_{n-1}(x+BW^0_{R^{0}})\big)}
		-
		\Exp{f\big(x+BW^0_{R^{0}},U^{0}_{0}(x+BW^0_{R^{0}})\big)}
			\Big)
		\\
		& = 
		\frac{1}{\lambda} \Exp{f\big(x+BW^{0}_{R^0},U^{0}_{n-1}(x+BW^{0}_{R^{0}})\big)}\!.
		\end{split}
		\end{equation}
	This establishes Item~\eqref{mean:item2}. 
	The proof of \cref{lem:mean} is thus completed. 
\end{proof}

\begin{lemma}[Bias] \label{lem:bias}
	Assume \cref{setting}, 
	let $ L \in \R $, 
		$ \varepsilon \in (0,\infty) $, 
		$ p \in [1,\infty) $, 
	assume for all 
		$ x \in \R^d $, 
		$ v , w \in \R $  
	that 
		$ | f ( x , v ) - f ( x , w ) | \leq L | v - w | $ 
	and 
		$\int_0^{\infty} e^{(\varepsilon-\lambda) s} \, \EXP{|f(x+BW^{0}_s,0)|}\,ds<\infty$, 
	let $u \colon \R^d \to \R$ be $ \Borel(\R^d) $/$ \Borel(\R) $-measurable, 
	and assume for all 
		$ x \in \R^d $ 
	that 
		$ \int_0^{\infty} e^{ -\lambda s } \, \EXP{ | u ( x+BW^{ 0 }_s ) | }\,ds < \infty $
	and
		\begin{equation} \label{bias:exact_solution}
		u(x) 
		= 
		\frac{1}{\lambda} 
		\Exp{ 
			f(x+BW^{0}_{R^0},u(x+BW^{0}_{R^0})) }
		=
		\Exp{
		\int_0^{\infty} 
		e^{-\lambda s} f(x+BW^{0}_s,u(x+BW^{0}_s))\,ds}\!.
		\end{equation}
Then it holds for all 
	$ n \in \N $, 
	$ \theta \in \Theta $, 
	$ x \in \R^d $ 
that	
	\begin{equation} \label{bias:claim}
	\left|\Exp{U^{\theta}_{n}(x)}-u(x)\right|^p 
	\leq
	\frac{L^p}{\lambda^p} 
	\Exp{|U^0_{n-1}(x+BW^{0}_{R^{0}})-u(x+BW^{0}_{R^0})|^p}\!. 
	\end{equation} 
\end{lemma}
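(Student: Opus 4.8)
The plan is to derive the mean of $U^\theta_n(x)$ from \cref{lem:mean}, to subtract the stochastic fixed point equation \eqref{bias:exact_solution} satisfied by $u$, and to control the resulting difference by means of the Lipschitz hypothesis on $f$ and Jensen's inequality.

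First I would observe that the standing hypotheses of this lemma --- the Lipschitz estimate $|f(x,v)-f(x,w)|\leq L|v-w|$ and the integrability assumption $\int_0^\infty e^{(\varepsilon-\lambda)s}\,\EXP{|f(x+BW^{0}_s,0)|}\,ds<\infty$ --- are precisely the hypotheses of \cref{lem:mean}. Hence Item~\eqref{mean:item2} in \cref{lem:mean} shows for all $n\in\N$, $\theta\in\Theta$, $x\in\R^d$ that
\begin{equation*}
\Exp{U^{\theta}_{n}(x)}=\frac{1}{\lambda}\,\Exp{f\big(x+BW^{0}_{R^0},U^{0}_{n-1}(x+BW^{0}_{R^0})\big)},
\end{equation*}
and both sides of this identity are finite (the finiteness of the right-hand side being a consequence of Item~\eqref{mean:item1b} in \cref{lem:mean} together with the assumption $\varepsilon>0$). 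Likewise, the first equality in \eqref{bias:exact_solution} gives $u(x)=\frac{1}{\lambda}\,\Exp{f(x+BW^{0}_{R^0},u(x+BW^{0}_{R^0}))}$, where the expectation on the right-hand side is finite since $|f(y,u(y))|\leq|f(y,0)|+L|u(y)|$ and since both $\int_0^\infty e^{-\lambda s}\,\EXP{|f(x+BW^{0}_s,0)|}\,ds<\infty$ and $\int_0^\infty e^{-\lambda s}\,\EXP{|u(x+BW^{0}_s)|}\,ds<\infty$.

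Next I would subtract these two identities. Since all the expectations involved are finite, linearity of the expectation yields
\begin{equation*}
\Exp{U^{\theta}_{n}(x)}-u(x)=\frac{1}{\lambda}\,\Exp{f\big(x+BW^{0}_{R^0},U^{0}_{n-1}(x+BW^{0}_{R^0})\big)-f\big(x+BW^{0}_{R^0},u(x+BW^{0}_{R^0})\big)}.
\end{equation*}
The triangle inequality for the expectation followed by the pointwise Lipschitz estimate (applied with the random arguments $x+BW^{0}_{R^0}$, $U^{0}_{n-1}(x+BW^{0}_{R^0})$, and $u(x+BW^{0}_{R^0})$) then gives
\begin{equation*}
\big|\Exp{U^{\theta}_{n}(x)}-u(x)\big|\leq\frac{L}{\lambda}\,\Exp{\big|U^{0}_{n-1}(x+BW^{0}_{R^0})-u(x+BW^{0}_{R^0})\big|}.
\end{equation*}
Finally I would raise this to the power $p$ and invoke Jensen's inequality for the convex function $[0,\infty)\ni t\mapsto t^p$ (equivalently, the inclusion $L^p(\P)\subseteq L^1(\P)$ for $p\geq1$), which yields $(\Exp{|Z|})^p\leq\Exp{|Z|^p}$ for $Z=U^{0}_{n-1}(x+BW^{0}_{R^0})-u(x+BW^{0}_{R^0})$; if the right-hand side is infinite the claimed bound is trivial. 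Combining the last two displays establishes \eqref{bias:claim}.

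I do not expect a genuine obstacle here: the argument is a one-step comparison between the MLP mean formula and the fixed point equation. The only mildly delicate point is the bookkeeping of the finiteness conditions needed to split the difference of expectations and to apply the triangle inequality, and these are already furnished by \cref{lem:mean} and \cref{lem:mlp_integrability} together with the integrability hypotheses imposed on $f$ and $u$.
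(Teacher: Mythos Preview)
Your proposal is correct and follows essentially the same route as the paper: invoke \cref{lem:mean} to express $\Exp{U^{\theta}_n(x)}$, subtract the fixed point equation \eqref{bias:exact_solution}, apply the Lipschitz bound, and finish with Jensen's inequality. Your additional remarks on the finiteness of the individual expectations are slightly more explicit than the paper's write-up but do not change the argument.
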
 

\begin{proof}[Proof of \cref{lem:bias}] 
First, observe that \cref{lem:mean} (with 
	$ \varepsilon \is \varepsilon $, 
	$ L \is L $ 
in the notation of \cref{lem:mean}) and \eqref{bias:exact_solution} guarantee for all 
	$ n \in \N $, 
	$ \theta \in \Theta $, 
	$ x \in \R^d $ 
that 
	\begin{equation} 
	\begin{split}
	\Exp{U^{\theta}_{n}(x)}-u(x) 
	& =  
	\frac{1}{\lambda} \Exp{f(x+BW^{0}_{R^0},U^0_{n-1}(x+BW^{0}_{R^0})) - f(x+BW^{0}_{R^0},u(x+BW^{0}_{R^0}))}\!.
	\end{split} 
	\end{equation} 
The assumption that for all 
	$ x \in \R^d $, 
	$ v,w \in \R $ 
it holds that 
	$|f(x,v)-f(x,w)|\leq L|v-w|$ therefore implies 
for all 
	$ n \in \N $, 
	$ \theta \in \Theta $, 
	$ x \in \R^d $ 
that 
	\begin{equation}
	\left|\Exp{U^{\theta}_{n}(x)}-u(x)\right|
	\leq  
	\frac{L}{\lambda} \Exp{|U^0_{n-1}(x+BW^{0}_{R^0})-u(x+BW^{0}_{R^0})|}\!.
	\end{equation}
Jensen's inequality hence establishes \eqref{bias:claim}. The proof of \cref{lem:bias} is thus completed. 
\end{proof} 

\begin{lemma} [Variance] \label{lem:variance}
Assume  
	\cref{setting}, 
let $ L \in \R $, 
	$ \varepsilon \in (0,\infty) $,
and assume for all 
	$ x \in \R^d $, 
	$ v,w \in \R $ 
that 
	$ |f(x,v)-f(x,w)|\leq L|v-w| $ 
and 
	$ \int_0^{\infty} e^{(\varepsilon-\lambda) s}\,\EXP{|f(x+BW^0_s,0)|}\,ds<\infty $. 
Then it holds for all 
	$ n \in \N $, 
	$ \theta \in \Theta $, 
	$ x \in \R^d $
that 
	\begin{equation} \label{variance:claim}
	\begin{split}
	& \Var{U^{\theta}_{n}(x)} 
	\\
	& \leq 
	\frac{1}{\lambda^2} \bigg[ 
	\frac{1}{M^n} 
	\Exp{|f(x+BW^{0}_{R^0},0)|^2} 
	+ 
	\sum_{k=1}^{n-1} \frac{L^2}{M^{(n-k)}} 
	\Exp{|U^0_{k}(x+BW^{0}_{R^0})-U^1_{k-1}(x+BW^{0}_{R^0})|^2}\bigg].
	\end{split}
	\end{equation} 
\end{lemma}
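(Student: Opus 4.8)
The plan is to exploit the branching structure encoded in \eqref{mlp_scheme}: one exhibits $ U^{\theta}_n(x) $ as an affine combination of \emph{independent} summands, so that $ \Var{U^{\theta}_n(x)} $ is the corresponding affine combination of the \emph{variances} of those summands. First I would set, for $ k \in \{1,\ldots,n-1\} $ and $ m \in \{1,\ldots,M^{(n-k)}\} $,
\begin{equation*}
\Delta^{\theta}_{k,m}
=
f\big(x+BW^{(\theta,k,m)}_{R^{(\theta,k,m)}},U^{(\theta,k,m)}_{k}(x+BW^{(\theta,k,m)}_{R^{(\theta,k,m)}})\big)
-
f\big(x+BW^{(\theta,k,m)}_{R^{(\theta,k,m)}},U^{(\theta,k,-m)}_{k-1}(x+BW^{(\theta,k,m)}_{R^{(\theta,k,m)}})\big),
\end{equation*}
so that \eqref{mlp_scheme} reads $ U^{\theta}_n(x) = \tfrac{1}{\lambda M^n}\sum_{m=1}^{M^n} f(x+BW^{(\theta,0,m)}_{R^{(\theta,0,m)}},0) + \sum_{k=1}^{n-1} \tfrac{1}{\lambda M^{(n-k)}} \sum_{m=1}^{M^{(n-k)}} \Delta^{\theta}_{k,m} $.

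The crucial step is to verify that the $ M^n $ base summands $ f(x+BW^{(\theta,0,m)}_{R^{(\theta,0,m)}},0) $, $ m \in \{1,\ldots,M^n\} $, together with the summands $ \Delta^{\theta}_{k,m} $, $ k \in \{1,\ldots,n-1\} $, $ m \in \{1,\ldots,M^{(n-k)}\} $, are mutually independent. By Item~\eqref{properties_of_mlp:item2} in \cref{lem:properties_of_mlp}, $ U^{(\theta,k,m)}_k $ is measurable with respect to the $ \sigma $-algebra generated by $ (W^{(\theta,k,m,\vartheta)})_{\vartheta\in\Theta} $ and $ (R^{(\theta,k,m,\vartheta)})_{\vartheta\in\Theta} $, and $ U^{(\theta,k,-m)}_{k-1} $ with respect to the one generated by $ (W^{(\theta,k,-m,\vartheta)})_{\vartheta\in\Theta} $ and $ (R^{(\theta,k,-m,\vartheta)})_{\vartheta\in\Theta} $; hence $ \Delta^{\theta}_{k,m} $ depends only on $ W^{(\theta,k,m)} $, $ R^{(\theta,k,m)} $ and on those $ W^{\eta} $, $ R^{\eta} $ whose index $ \eta $ has $ (\theta,k,m) $ or $ (\theta,k,-m) $ as a strict prefix, while the $ m $-th base summand depends only on $ W^{(\theta,0,m)} $ and $ R^{(\theta,0,m)} $. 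Since the indices $ (\theta,0,m) $, $ (\theta,k,m) $, $ (\theta,k,-m) $ that occur all have length $ |\theta|+2 $ and are pairwise distinct (using $ m\geq 1 $, so $ m\neq -m $), whereas every descendant index has length at least $ |\theta|+3 $, the $ \Theta $-index families attached to the summands are pairwise disjoint; the hypotheses in \cref{setting} that $ (W^{\vartheta})_{\vartheta\in\Theta} $ and $ (R^{\vartheta})_{\vartheta\in\Theta} $ are i.i.d.\ and independent of one another then give the asserted mutual independence.

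Granting this, the bound is trivial when its right-hand side equals $ +\infty $; otherwise, using Items~\eqref{properties_of_mlp:item3}--\eqref{properties_of_mlp:item4} in \cref{lem:properties_of_mlp}, Hutzenthaler et al.~\cite[Corollary 2.5]{Overcoming}, and the Lipschitz and distributional ingredients invoked below, the summands are identically distributed within each level and square integrable, so the variance of a sum of independent random variables being the sum of the variances yields, after cancelling one power of $ M $ per level,
\begin{equation*}
\Var{U^{\theta}_n(x)}
=
\tfrac{1}{\lambda^2 M^n}\Var{f(x+BW^{(\theta,0,1)}_{R^{(\theta,0,1)}},0)}
+
\sum_{k=1}^{n-1} \tfrac{1}{\lambda^2 M^{(n-k)}} \Var{\Delta^{\theta}_{k,1}}.
\end{equation*}
Bounding each variance by the corresponding second moment, applying $ |f(x,a)-f(x,b)|\leq L|a-b| $ to $ \Delta^{\theta}_{k,1} $, and invoking once more that the triple $ \big(U^{(\theta,k,1)}_k,\,U^{(\theta,k,-1)}_{k-1},\,(W^{(\theta,k,1)},R^{(\theta,k,1)})\big) $ has the same law as the independent triple $ \big(U^0_k,\,U^1_{k-1},\,(W^0,R^0)\big) $ --- independence being a consequence of Item~\eqref{properties_of_mlp:item2} in \cref{lem:properties_of_mlp} and the i.i.d.\ structure --- then produces exactly \eqref{variance:claim}.

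I expect the main obstacle to be the bookkeeping in the independence step: reading off correctly from Item~\eqref{properties_of_mlp:item2} in \cref{lem:properties_of_mlp} the pairwise disjoint sets of $ \Theta $-indices on which the individual summands depend, and in particular keeping the index $ (\theta,k,m) $ that carries the Brownian increment of $ \Delta^{\theta}_{k,m} $ separate from the descendant indices $ (\theta,k,m,\cdot) $ and $ (\theta,k,-m,\cdot) $ that carry the two recursive fields $ U^{(\theta,k,m)}_k $ and $ U^{(\theta,k,-m)}_{k-1} $. Once independence and the distributional identities recorded in \cref{lem:properties_of_mlp} are in hand, the remaining manipulations --- $ \Var\leq $ second moment, the Lipschitz estimate, and the cancellation of the power of $ M $ --- are routine.
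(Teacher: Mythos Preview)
Your proposal is correct and follows essentially the same route as the paper: both arguments use Item~\eqref{properties_of_mlp:item2} of \cref{lem:properties_of_mlp} to verify mutual independence of the summands in \eqref{mlp_scheme}, decompose the variance as a sum of variances, invoke the distributional identities of \cref{lem:properties_of_mlp} together with Hutzenthaler et al.~\cite[Corollary 2.5]{Overcoming} to pass to the canonical indices $(W^0,R^0,U^0,U^1)$, and finish by bounding each variance by the corresponding second moment and applying the Lipschitz hypothesis. The only cosmetic difference is that the paper secures integrability of the summands (so that $\Var{U^{\theta}_n(x)}$ is well-defined in all cases) via Item~\eqref{mean:item1b} of \cref{lem:mean}, whereas you handle this by casing on whether the right-hand side of \eqref{variance:claim} is finite.
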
 

\begin{proof}[Proof of \cref{lem:variance}] 
First, observe that Item~\eqref{properties_of_mlp:item2} in \cref{lem:properties_of_mlp} and \cref{setting} ensure for all 
	$ \theta \in \Theta $, 
	$ x \in \R^d $
that 
	\begin{equation}
	\begin{split} 
	& 
	\N_0 \times \N \ni (k,m) 
	\mapsto \\
	&  
	\begin{cases} 
		\left( 
		\begin{array}{l}
		\Omega \ni \omega  
		\mapsto 
		f(x+BW^{(\theta,k,m)}_{R^{(\theta,k,m)}(\omega)}(\omega), 
		U_k^{(\theta,k,m)}(x+BW^{(\theta,k,m)}_{R^{(\theta,k,m)}(\omega)}(\omega),\omega)) 
		\\
		  - 
		f(x+BW^{(\theta,k,m)}_{R^{(\theta,k,m)}(\omega)}(\omega), 
		U_{k-1}^{(\theta,k,-m)}(x+BW^{(\theta,k,m)}_{R^{(\theta,k,m)}(\omega)}(\omega),\omega)) \in \R
		\end{array} \right) & \colon k\geq 1 \\
		\Omega \ni \omega \mapsto f(x+BW^{(\theta,0,m)}_{R^{(\theta,0,m)}(\omega)}(\omega),0) \in \R & \colon k=0
	\end{cases} 
	\end{split}
	\end{equation} 
	is an independent family of random variables. This, Item~\eqref{mean:item1b} in \cref{lem:mean}, and \eqref{mlp_scheme} imply for all 
		$ n \in \N $, 
		$ \theta \in \Theta $, 
		$ x \in \R^d $ 
	that 
	\begin{equation} 
	\begin{split}
	\Var{U^{\theta}_{n}(x)} 
	& = 
	\frac{1}{\lambda^2} 
	\Bigg(
	\sum_{m=1}^{M^{n}} \var{\frac{f(x+BW^{(\theta,0,m)}_{R^{(\theta,0,m)}},0)}{M^n}}
	\\
	& 
	+ 
	\sum_{k=1}^{n-1} 
	\sum_{m=1}^{M^{(n-k)}} 
	\VARRRRR{ \frac{ f(x+BW^{(\theta,k,m)}_{R^{(\theta,k,m)}},U^{(\theta,k,m)}_{k}(x+BW^{(\theta,k,m)}_{R^{(\theta,k,m)}}) ) }{M^{(n-k)}} \ldots
	\\
	& \qquad \qquad \qquad \qquad \ldots - 
	\frac{f(x+BW^{(\theta,k,m)}_{R^{(\theta,k,m)}},U^{(\theta,k,-m)}_{k-1}(x+BW^{(\theta,k,m)}_{R^{(\theta,k,m)}}))	}{M^{(n-k)}} }
	\Bigg).
	\end{split}
	\end{equation}
Items~\eqref{properties_of_mlp:item1}--\eqref{properties_of_mlp:item4} in \cref{lem:properties_of_mlp} and Hutzenthaler et al.~\cite[Corollary 2.5]{Overcoming} hence imply for all 
	$ n \in \N $, 
	$ \theta \in \Theta $, 
	$ x \in \R^d $ 
that 
	\begin{equation}
	\begin{split}	
	&\var{U^{\theta}_n(x)}
	= 
	\frac{1}{\lambda^2} 
	\Bigg[
	\frac{1}{M^n} \Var{f(x+BW^{0}_{R^{0}},0)}	
	\\
	& + 
	\sum_{k=1}^{n-1} \frac{1}{M^{(n-k)}} 
	\var{ f(x+BW^{0}_{R^{0}},U^{0}_{k}(x+BW^{0}_{R^{0}} ) )
		- 
		f(x+BW^{0}_{R^{0}},U^{1}_{k-1}(x+BW^{0}_{R^{0}})) }
	\Bigg]. 
	\end{split}
	\end{equation} 
Combining this with the fact that for every random variable 
	$Y\colon\Omega\to\R$ 
with 
	$\Exp{|Y|}<\infty$ 
it holds that 
	$ \VAR{Y} \leq \EXP{Y^2} \in [0,\infty] $ 
guarantees that for all 
	$ n \in \N $, 
	$ \theta \in \Theta $, 
	$ x \in \R^d $ 
it holds that 
	\begin{equation} 
	\begin{split}
	& 
	\var{U^{\theta}_{n}(x)} 
	\leq 
	\frac{1}{\lambda^2} \bigg[
	\frac{1}{M^n} 
	\Exp{|f(x+BW^{0}_{R^0},0)|^2} 
	\\
	& 
	+ 
	\sum_{k=1}^{n-1} 
	\frac{1}{M^{(n-k)}} 
	\Exp{|f(x+BW^{0}_{R^{0}},U^{0}_{k}(x+BW^{0}_{R^{0}} )) 
	- 
	f(x+BW^{0}_{R^{0}},U^{1}_{k-1}(x+BW^{0}_{R^{0}} ))|^2}
	\bigg].  
	\end{split}
	\end{equation} 
The assumption that for all 
	$ x \in \R^d $, 
	$ v,w \in \R $ 
it holds that 
	$ |f(x,v)-f(x,w)| \leq L |v-w| $
therefore ensures for all 
	$ n \in \N $, 
	$ \theta \in \Theta $, 
	$ x \in \R^d $ 
that 
	\begin{equation} 
	\begin{split}
	&
	\var{U^{\theta}_n(x)}
	\\
	& \leq 
	\frac{1}{\lambda^2} \bigg[ 
	\frac{1}{M^n} 
	\Exp{|f(x+BW^{0}_{R^0},0)|^2} 
	+ 
	\sum_{k=1}^{n-1} \frac{L^2}{M^{(n-k)}} 
	\Exp{|U^0_{k}(x+BW^{0}_{R^0})-U^1_{k-1}(x+BW^0_{R^0})|^2}\bigg]. 
	\end{split}
	\end{equation}
	This establishes \eqref{variance:claim}. 
The proof of \cref{lem:variance} is thus completed. 
\end{proof} 

\subsection{Error analysis for MLP approximations} \label{subsec:convergence_analysis_mlp}

\begin{lemma}[Recursive overall error estimate] \label{lem:recursive_estimate}
	Assume  
		\cref{setting}, 
	let 
		$ L \in \R $, 
		$ \varepsilon \in (0,\infty) $, 
	assume for all 
		$ x \in \R^d $, 
		$ v,w \in \R $ 
	that 
		$ | f( x , v ) - f( x , w ) | \leq L | v - w | $ 
	and 
		$\int_0^{\infty} e^{(\varepsilon-\lambda) s} \, \EXP{|f(x+BW^{0}_s,0)|}\,ds<\infty$,
	let 
		$u \colon \R^d \to \R$ be $ \Borel(\R^{d}) $/$ \Borel(\R) $-measurable, 
	assume for all 
		$ x \in \R^d $ 
	that 
		$ \int_0^{\infty} e^{-\lambda s} \, \EXP{|u(x+BW^{0}_s)|}\,ds < \infty $
	and 
		\begin{equation} \label{recursive_estimate:exact_solution}
		u(x) 
		= 
		\frac{1}{\lambda} 
		\Exp{ 
			f(x+BW^{0}_{R^0},u(x+BW^{0}_{R^0})) }
		= 
		\Exp{
			\int_0^{\infty} 
			e^{-\lambda s} f(x+BW^{0}_s,u(x+BW^{0}_s))\,ds}\!, 
		\end{equation}
	and let 	
		$ \alpha, \beta \colon \Borel([0,\infty) ) \to [0,\infty) $ 
	be measures which satisfy for all 
		$ A \in \Borel([0,\infty)) $ 
	that 
		$ \beta(A) \geq \int_A e^{-\lambda s} (\int_{[0,s]} e^{\lambda t}\,\alpha(dt))\,ds $. 
	Then it holds for all 
		$ n \in \N $, 
		$ \theta \in \Theta $, 
		$ x \in \R^d $ 
	that 
		\begin{equation} \label{recursive_estimate:claim}
		\begin{split}
		& 
		\left[ \int_{[0,\infty)} \Exp{|U^{\theta}_n(x+BW^{\theta}_t) - u(x+BW^{\theta}_t)|^2}\!\,\alpha(dt) \right]^{\nicefrac12}
		\\&\qquad
		\leq 
		\frac{1}{\sqrt{\lambda M^n}} 
		\left[ 
		\int_{[0,\infty)} \Exp{|f(x+BW^{0}_{s},0)|^2}\!\,\beta(ds) \right]^{\nicefrac12} 
		\\
		&\qquad\qquad + 
		\sum_{k=0}^{n-1} 
		\frac{L(1+\sqrt{M})}{\sqrt{\lambda M^{(n-k)}}} 
		\left[ 
		\int_{[0,\infty)} \Exp{|U^0_{k}(x+BW^{0}_{s})-u(x+BW^{0}_{s})|^2} \beta(ds)
		\right]^{\nicefrac12}.  
		\end{split}
		\end{equation}	
\end{lemma}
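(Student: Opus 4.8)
The plan is to combine a bias--variance decomposition with the integral transformation rules of \cref{lem:elementary_integration_rules} and a Fubini/Chapman--Kolmogorov computation that converts $\alpha$-integrals of nested expectations into $\beta$-integrals. Throughout, let $\nu_{t,x}$ denote the transition kernels from \cref{lem:elementary_integration_rules} and fix $x\in\R^d$. First I observe that \cref{lem:mlp_integrability} (with $p=1$) together with the assumed integrability of $f(\cdot,0)$ shows that every $U^\theta_n(y)$ is integrable, so the means $\E[U^\theta_n(y)]$ are well defined; for deterministic $y$, since $u(y)$ is a constant, the identity $\E[|U^\theta_n(y)-u(y)|^2]=\Var[U^\theta_n(y)]+|\E[U^\theta_n(y)]-u(y)|^2$ holds in $[0,\infty]$. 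Composing with the random base point $x+BW^\theta_t$ and using that $W^\theta$ is independent of $U^\theta_n$ together with the identical-distribution statements in \cref{lem:properties_of_mlp} (arguing exactly as in the proof of \cref{lem:elementary_integration_rules}) I obtain
\[
\E\big[|U^\theta_n(x+BW^\theta_t)-u(x+BW^\theta_t)|^2\big]=\int_{\R^d}\Big(\Var[U^0_n(y)]+|\E[U^0_n(y)]-u(y)|^2\Big)\,\nu_{t,x}(dy).
\]

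Next I bound the two summands using \cref{lem:variance} and \cref{lem:bias} (the latter with $p=2$), integrate against $\alpha(dt)$, and evaluate the resulting terms. Each of them has the form $\int_{[0,\infty)}\int_{\R^d}\E[\Phi(y+BW^0_{R^0})]\,\nu_{t,x}(dy)\,\alpha(dt)$ with $\Phi\in\{|f(\cdot,0)|^2,\,|U^0_k-U^1_{k-1}|^2,\,|U^0_{n-1}-u|^2\}$. Item~(i) of \cref{lem:elementary_integration_rules} --- applied verbatim to the genuine MLP field and, by an identical argument, to the difference fields $U^0_k-U^1_{k-1}$ and $U^0_{n-1}-u$ (which inherit the relevant measurability and independence from \cref{lem:properties_of_mlp}, because $u$ is deterministic) --- gives $\E[\Phi(y+BW^0_{R^0})]=\lambda\int_0^\infty e^{-\lambda s}\E[\Phi(y+BW^0_s)]\,ds$; the Chapman--Kolmogorov identity $\int_{\R^d}\nu_{s,y}(\cdot)\,\nu_{t,x}(dy)=\nu_{t+s,x}(\cdot)$ collapses the $y$-integral, and the substitution $r=t+s$ followed by Fubini's theorem yields
\[
\int_{[0,\infty)}\!\int_{\R^d}\E\big[\Phi(y+BW^0_{R^0})\big]\,\nu_{t,x}(dy)\,\alpha(dt)=\lambda\!\int_{[0,\infty)}\!\E\big[\Phi(x+BW^0_r)\big]\,e^{-\lambda r}\Big(\int_{[0,r]}e^{\lambda t}\,\alpha(dt)\Big)dr\le\lambda\!\int_{[0,\infty)}\!\E\big[\Phi(x+BW^0_r)\big]\,\beta(dr),
\]
where the final inequality is exactly the hypothesis relating $\beta$ and $\alpha$; the extra factor $\lambda$ cancels one of the two factors $\lambda^{-1}$ produced by \cref{lem:variance} and \cref{lem:bias}.

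Putting these pieces together, and abbreviating $\mathcal E_k:=[\int_{[0,\infty)}\E[|U^0_k(x+BW^0_r)-u(x+BW^0_r)|^2]\,\beta(dr)]^{1/2}$, I arrive at
\[
\int_{[0,\infty)}\!\E\big[|U^\theta_n(x+BW^\theta_t)-u(x+BW^\theta_t)|^2\big]\alpha(dt)\le\frac{1}{\lambda M^n}\!\int_{[0,\infty)}\!\E\big[|f(x+BW^0_s,0)|^2\big]\beta(ds)+\frac{L^2}{\lambda M}\mathcal E_{n-1}^2+\sum_{k=1}^{n-1}\frac{L^2}{\lambda M^{(n-k)}}\!\int_{[0,\infty)}\!\E\big[|(U^0_k-U^1_{k-1})(x+BW^0_r)|^2\big]\beta(dr).
\]
Taking square roots via $\sqrt{a+b}\le\sqrt a+\sqrt b$ and $(\sum_i c_i)^{1/2}\le\sum_i c_i^{1/2}$, and treating the difference $U^0_k-U^1_{k-1}=(U^0_k-u)-(U^1_{k-1}-u)$ by Minkowski's inequality in $L^2(\P\otimes\beta)$, gives $[\int_{[0,\infty)}\E[|(U^0_k-U^1_{k-1})(x+BW^0_r)|^2]\beta(dr)]^{1/2}\le\mathcal E_k+\mathcal E_{k-1}$, where the reduction of the $U^1_{k-1}$-term to $\mathcal E_{k-1}$ uses that $(U^1_{k-1}-u)(x+BW^0_\cdot)$ and $(U^0_{k-1}-u)(x+BW^0_\cdot)$ are identically distributed (again by \cref{lem:properties_of_mlp} and the independence of $W^0$ from $U^0_{k-1},U^1_{k-1}$). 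Reindexing the double sum so that the coefficient of each $\mathcal E_k$ is collected --- the $U^0_k$-part of the $k$-th variance summand, the $U^1_{k-1}$-part of the $(k+1)$-st, and, for $k=n-1$, the bias term --- and bounding the resulting prefactor by $L(1+\sqrt M)/\sqrt{\lambda M^{(n-k)}}$ (using $\sqrt M\le1+\sqrt M$ for the extreme index) produces exactly \eqref{recursive_estimate:claim}. The main obstacle is bookkeeping: justifying the measurability/independence reductions for the non-MLP difference fields $U^0_k-u$ and $U^0_k-U^1_{k-1}$, and tracking the reindexing so that the bias contribution supplies the missing $1$ alongside the $\sqrt M$ in the top-index coefficient.
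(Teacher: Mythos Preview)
Your approach is correct and follows the same route as the paper (bias--variance split via \cref{lem:bias,lem:variance}, the independence and identical-distribution reductions from \cref{lem:properties_of_mlp}, the Chapman--Kolmogorov/Fubini conversion of the $\alpha$-integral into the $\beta$-integral, and a reindexing to collect the $L(1+\sqrt M)$ coefficients); the only cosmetic difference is that the paper takes square roots pointwise first and then integrates via Minkowski, whereas you integrate the squared quantities first and take the root at the end. One slip: the bias contribution in your display should be $\tfrac{L^2}{\lambda}\,\mathcal E_{n-1}^2$, not $\tfrac{L^2}{\lambda M}\,\mathcal E_{n-1}^2$ (\cref{lem:bias} carries no factor $M$), so after the square root it is the bias that supplies the $\sqrt{M}$-part of the $k=n-1$ coefficient rather than the $1$-part, and the reindexing then closes exactly as you describe.
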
 

\begin{proof}[Proof of \cref{lem:recursive_estimate}]
Throughout this proof let 
	$ \nu_{t,x} \colon \Borel(\R^d) \to [0,\infty) $, $ t\in [0,\infty) $, $ x \in \R^d $, 
satisfy for all 
	$ t\in [0,\infty) $, 
	$ x \in \R^d $, 
	$ A \in \Borel(\R^d) $ 
that 
	\begin{equation} \label{recursive_estimate:nu_t_x}
	\nu_{t,x}(A) = \P(x+BW^0_t\in A).
	\end{equation} 
Observe that the triangle inequality ensures for all 
	$ n \in \N $,
	$ \theta \in \Theta $,  
	$ x \in \R^d $
that 
	\begin{equation} 
	\begin{split}
	& \left(\Exp{|U^{\theta}_{n}(x)-u(x)|^2}\right)^{\!\nicefrac12}
	\leq 
	\left(\Exp{|U^{\theta}_{n}(x)-\Exp{U^{\theta}_{n}(x)}|^2}\right)^{\!\nicefrac12}
	+ 
	\left(\Exp{|\Exp{U^{\theta}_{n}(x)}-u(x)|^2}\right)^{\!\nicefrac12}
	\\
	& = 
	\left(\Exp{|U^{\theta}_{n}(x)-\Exp{U^{\theta}_{n}(x)}|^2}\right)^{\!\nicefrac12}
	+ 
	\left| \Exp{U^{\theta}_{n}(x)} - u(x) \right| 
	=  \sqrt{\Var{U^{\theta}_{n}(x)}} + \left| \Exp{U^{\theta}_{n}(x)} - u(x) \right| \!.
	\end{split}
	\end{equation} 
\cref{lem:bias} (with 
	$ L \is L $, 
	$ \varepsilon \is \varepsilon $, 
	$ p \is 2 $, 
	$ u \is u $
in the notation of \cref{lem:bias}) and \cref{lem:variance} (with 
	$ L \is L $, 
	$ \varepsilon \is \varepsilon $
in the notation of \cref{lem:variance}) hence prove for all 
	$ n \in \N $, 
	$ \theta \in \Theta $, 
	$ x \in \R^d $ 
that 
	\begin{equation} 
	\begin{split}
	& \left(\Exp{|U^{\theta}_{n}(x)-u(x)|^2}\right)^{\!\nicefrac12}
	\leq 
	\frac{L}{\lambda} 
	\left(\Exp{|U^0_{n-1}(x+BW^{0}_{R^{0}})-u(x+BW^{0}_{R^0})|^2}\right)^{\!\nicefrac12}
	\\
	& + 
	\frac{1}{\lambda} \bigg[ 
	\frac{1}{M^n} 
	\Exp{|f(x+BW^{0}_{R^0},0)|^2} 
	+ 
	\sum_{k=1}^{n-1} \frac{L^2}{M^{(n-k)}} 
	\Exp{|U^0_{k}(x+BW^{0}_{R^0})-U^1_{k-1}(x+BW^{0}_{R^0})|^2}\bigg]^{\nicefrac12}.
	\end{split}
	\end{equation}
Combining this with the fact that for all 
	$n\in\N$, 
	$a_1,a_2,\ldots,a_n\in [0,\infty)$  
it holds that 
	$\sqrt{\sum_{k=1}^n a_k} \leq \sum_{k=1}^n \sqrt{a_k}$	
shows for all 
	$ n \in \N $, 
	$ \theta \in \Theta $,  
	$ x \in \R^d $ 
that 
	\begin{equation}
	\begin{split}
	& 
	\left(\Exp{|U^{\theta}_{n}(x)-u(x)|^2}\right)^{\!\nicefrac12}
	\\[1ex]
	& 
	\leq 
		\frac{L}{\lambda} 
	\left(\Exp{|U^0_{n-1}(x+BW^{0}_{R^{0}})-u(x+BW^0_{R^0})|^2}\right)^{\!\nicefrac12}
	+ 
	\frac{1}{\lambda\sqrt{M^n}} \left(  
	\Exp{|f(x+BW^{0}_{R^0},0)|^2} \right)^{\!\nicefrac12}
	\\
	& + 
	\sum_{k=1}^{n-1} \frac{L}{\lambda\sqrt{M^{(n-k)}}} 
	\left( 
	\Exp{|U^0_{k}(x+BW^{0}_{R^0})-U^1_{k-1}(x+BW^{0}_{R^0})|^2}\right)^{\!\nicefrac12}.
	\end{split}
	\end{equation}
The triangle inequality hence ensures for all 
	$ n \in \N $, 
	$ \theta \in \Theta $, 
	$ x \in \R^d $ 
that 
	\begin{equation}
	\begin{split}
	& 
	\left(\Exp{|U^{\theta}_{n}(x)-u(x)|^2}\right)^{\!\nicefrac12}
	\\
	& \leq 
	\frac{L}{\lambda} 
	\left(\Exp{|U^0_{n-1}(x+BW^{0}_{R^{0}})-u(x+BW^{0}_{R^0})|^2}\right)^{\!\nicefrac12}
	 + 
	\frac{1}{\lambda\sqrt{M^n}} \left(  
	\Exp{|f(x+BW^{0}_{R^0},0)|^2} \right)^{\!\nicefrac12}
	\\
	&  + 
	\sum_{k=1}^{n-1} \frac{L}{\lambda\sqrt{M^{(n-k)}}} 
	\Bigg[ 
	\left( 
	\Exp{|U^0_{k}(x+BW^{0}_{R^0})-u(x+BW^{0}_{R^0})|^2}\right)^{\!\nicefrac12} 
	\\
	& \qquad\qquad\qquad\qquad	+
	\left(\Exp{|U^1_{k-1}(x+BW^{0}_{R^0})-u(x+BW^{0}_{R^0})|^2}\right)^{\!\nicefrac12} 
	\Bigg] .
	\end{split}
	\end{equation}
This, Items~\eqref{properties_of_mlp:item2} and \eqref{properties_of_mlp:item4} in \cref{lem:properties_of_mlp}, and Hutzenthaler et al.~\cite[Lemma 2.2]{Overcoming} (with 
	$ (\Omega,\mc F,\P) \is (\Omega,\mc F,\P) $, 
	$ \mc G \is \sigma_{\Omega}( (W^{(1,\vartheta)})_{\vartheta\in\Theta}, (R^{(1,\vartheta)})_{\vartheta\in\Theta} ) $, 
	$ (S,\mc S) \is (\R^d,\Borel(\R^d)) $, 
	$ U \is ( \R^d \times \Omega \ni (y,\omega) \mapsto |U^1_{k-1}(y,\omega)-u(y)|^2 \in [0,\infty) ) $, 
	$ Y \is ( \Omega \ni \omega \mapsto x + BW^0_{R^0(\omega)}(\omega) \in \R^d ) $ 
for $ x \in \R^d $, 
	$ k \in \N $
in the notation of Hutzenthaler et al.~\cite[Lemma 2.2]{Overcoming}) demonstrate for all 
	$ n \in \N $, 
	$ \theta \in \Theta $, 
	$ x \in \R^d $
that
	\begin{equation} 
	\begin{split}
	\left(\Exp{|U^{\theta}_{n}(x)-u(x)|^2}\right)^{\!\nicefrac12}
	& \leq 
	\frac{1}{\lambda\sqrt{M^n}} \left(  
	\Exp{|f(x+BW^0_{R^0},0)|^2} \right)^{\!\nicefrac12}
	\\
	& + 
	\sum_{k=1}^{n-1} \frac{L}{\lambda\sqrt{M^{(n-k)}}} 
	\left( 
	\Exp{|U^0_{k}(x+BW^0_{R^0})-u(x+BW^0_{R^0})|^2}\right)^{\!\nicefrac12} 
	\\
	& + 
	\sum_{k=1}^{n} \frac{L}{\lambda\sqrt{M^{(n-k)}}} 
	\left(\Exp{|U^0_{k-1}(x+BW^0_{R^0})-u(x+BW^0_{R^0})|^2}\right)^{\!\nicefrac12}. 
	\end{split} 
	\end{equation}
Hence, we obtain for all 
	$ n \in \N $, 
	$ \theta \in \Theta $, 
	$ x \in \R^d $ 
that 
	\begin{equation} 
	\begin{split}
	\left(\Exp{|U^{\theta}_{n}(x)-u(x)|^2}\right)^{\!\nicefrac12}
	& \leq 
	\frac{1}{\lambda\sqrt{M^n}} \left(  
	\Exp{|f(x+BW^0_{R^0},0)|^2} \right)^{\!\nicefrac12}
	\\
	&
	+ 
	\sum_{k=0}^{n-1} \frac{L\mathbbm{1}_{\N}(k)+L\sqrt{M}}{\lambda\sqrt{M^{(n-k)}}} 
	\left( 
	\Exp{|U^0_{k}(x+BW^0_{R^0})-u(x+BW^0_{R^0})|^2}\right)^{\!\nicefrac12} 
	\\
	& \leq 
	\frac{1}{\lambda\sqrt{M^n}} \left(  
	\Exp{|f(x+BW^{0}_{R^0},0)|^2} \right)^{\!\nicefrac12}
	\\
	& + 
	\sum_{k=0}^{n-1} \frac{L(1+\sqrt{M})}{\lambda\sqrt{M^{(n-k)}}} 
	\left(\Exp{|U^0_{k}(x+BW^0_{R^0})-u(x+BW^{0}_{R^0})|^2}\right)^{\!\nicefrac12}
	. 
	\end{split} 
	\end{equation}
Item~\eqref{properties_of_mlp:item2} in \cref{lem:properties_of_mlp} and Hutzenthaler et al.~\cite[Lemma 2.2]{Overcoming} hence ensure for all 
	$ n \in \N $, 
	$ \theta \in \Theta $, 
	$ x \in \R^d $
that 
\begin{equation} \label{overall_analysis:eq03}
\begin{split}
	& 
	\left(\Exp{|U^{\theta}_{n}(x)-u(x)|^2}\right)^{\!\nicefrac12}
	\leq 
	\frac{1}{\sqrt{\lambda M^n}} 
	\left( 
	\int_0^{\infty} e^{-\lambda s}\,\Exp{|f(x+BW^{0}_s,0)|^2}\!\,ds 
	\right)^{\!\nicefrac12}
	\\
	& \qquad + 
	\sum_{k=0}^{n-1} \frac{L(1+\sqrt{M})}{\sqrt{\lambda M^{(n-k)}}} 
	\left( \int_0^{\infty} e^{-\lambda s} \, \Exp{|U^0_{k}(x+BW^0_s)-u(x+BW^0_s)|^2}\!\,ds \right)^{\!\nicefrac12}.
	\end{split}
	\end{equation} 
Moreover, note that Items~\eqref{properties_of_mlp:item2} and \eqref{properties_of_mlp:item4} in \cref{lem:properties_of_mlp}, Hutzenthaler et al.~\cite[Lemma 2.2]{Overcoming} (with 
	$ (\Omega,\mc F,\P) \is (\Omega,\mc F,\P) $, 
	$ \mc G \is \sigma_{\Omega}( (W^{(\theta,\vartheta)})_{\vartheta\in\Theta}, (R^{(\theta,\vartheta)})_{\vartheta\in\Theta} ) $, 
	$ (S,\mc S) \is (\R^d,\Borel(\R^d)) $, 
	$ U \is ( \R^d\times\Omega \ni (y,\omega) \mapsto |U^{\theta}_n(y,\omega) - u(y) |^2 \in [0,\infty) ) $, 
	$ Y \is (\Omega \ni \omega \mapsto x + BW^{\theta}_{t}(\omega) \in \R^d) $ 
for 
	$ \theta \in \Theta $, 
	$ t \in [0,\infty) $,
	$ x \in \R^d $, 
	$ n \in \N $
in the notation of Hutzenthaler et al.~\cite[Lemma 2.2]{Overcoming}), and \eqref{recursive_estimate:nu_t_x} prove for all 
	$ n \in \N $, 
	$ \theta \in \Theta $,  
	$ t \in [0,\infty) $,  
	$ x \in \R^d $
that 
	\begin{equation} 
	\begin{split}
	\left(\Exp{|U^{\theta}_n(x+BW^{\theta}_t)-u(x+BW^{\theta}_t)|^2}\right)^{\!\nicefrac12}	
	& = 
	\left[\int_{\R^d} 
	\Exp{|U^{\theta}_n(y)-u(y)|^2} \!\,\nu_{t,x}(dy)\right]^{\nicefrac12}. 
	\\
	& = 
	\left[\int_{\R^d} 
	 \Exp{|U^0_n(y)-u(y)|^2} \!\,\nu_{t,x}(dy)\right]^{\nicefrac12}.  
	\end{split} 
	\end{equation} 
The triangle inequality and \eqref{overall_analysis:eq03} hence show for all 
	$ n \in \N $, 
	$ \theta \in \Theta $, 
	$ t \in [0,\infty) $,
	$ x \in \R^d $
that 
	\begin{equation} 
	\begin{split}
	& 
	\left( \Exp{|U^{\theta}_n(x+BW^{\theta}_t) - u(x+BW^{\theta}_t)|^2} \right)^{\!\nicefrac12} 
	\\
	& \leq 
	\frac{1}{\sqrt{\lambda M^n}} 
	\left[ \int_{\R^d}
	\int_0^{\infty} e^{-\lambda s}\,\Exp{|f(y+BW^{0}_{s},0)|^2}\!\,ds\,\nu_{t,x}(dy)  
	\right]^{\nicefrac12}
	\\
	& 
	+ 
	\sum_{k=0}^{n-1} \frac{L(1+\sqrt{M})}{\sqrt{\lambda M^{(n-k)}}} 
	\left[
	\int_{\R^d} 
	\int_0^{\infty} e^{-\lambda s} \, \Exp{|U^0_{k}(y+BW^{0}_{s})-u(y+BW^{0}_{s})|^2}\!\,ds\,\nu_{t,x}(dy)
	\right]^{\nicefrac12}.
	\end{split}
	\end{equation} 
Combining this with the fact that for all 
	$ t,s \in [0,\infty) $, 
	$ x \in \R^d $, 
	$ A \in \Borel(\R^d) $ 
it holds that 
	$ \nu_{t+s,x}(A) = \int_{\R^d} \nu_{s,y}(A) \,\nu_{t,x}(dy) $
and Fubini's theorem proves for all 
	$ n \in \N $, 
	$ \theta \in \Theta $, 
	$ t \in [0,\infty) $,	
	$ x \in \R^{d} $
that 
	\begin{equation}
	\begin{split}
	& 
	\left( \Exp{|U^{\theta}_n(x+BW^{\theta}_t) - u(x+BW^{\theta}_t)|^2} \right)^{\!\nicefrac12} 
	\\
	& \leq 
	\frac{1}{\sqrt{\lambda M^n}} 
	\left[ \int_0^{\infty} e^{-\lambda s}\,\Exp{|f(x+BW^{0}_{t+s},0)|^2}\!\,ds
	\right]^{\nicefrac12}
	\\
	& + 
	\sum_{k=0}^{n-1} \frac{L(1+\sqrt{M})}{\sqrt{\lambda M^{(n-k)}}} 
	\left[
	\int_0^{\infty} e^{-\lambda s} \, \Exp{|U^0_{k}(x+BW^{0}_{t+s})-u(x+BW^{0}_{t+s})|^2}\!\,ds 
	\right]^{\nicefrac12}.
	\end{split}
	\end{equation} 
	The triangle inequality hence ensures for all 
		$ n \in \N $, 
		$ \theta \in \Theta $, 
		$ x \in \R^d $ 
	that 
		\begin{equation} 
		\begin{split}
		&\left[ \int_{[0,\infty)} \Exp{|U^{\theta}_n(x+BW^{\theta}_t) - u(x+BW^{\theta}_t)|^2}\!\,\alpha(dt) \right]^{\nicefrac12}
		\\
		&\leq 
		\frac{1}{\sqrt{\lambda M^n}} 
		\left[ 
		\int_{[0,\infty)} \int_0^{\infty} e^{-\lambda s}\,\Exp{|f(x+BW^{0}_{t+s},0)|^2}\!\,ds\,\alpha(dt) \right]^{\nicefrac12} 
		\\
		& + 
		\sum_{k=0}^{n-1} 
		\frac{L(1+\sqrt{M})}{\sqrt{\lambda M^{(n-k)}}} 
		\left[ 
		\int_{[0,\infty)}
		\int_0^{\infty} e^{-\lambda s} \, \Exp{|U^0_{k}(x+BW^{0}_{t+s})-u(x+BW^{0}_{t+s})|^2}\!\,ds 
		\,\alpha(dt)
		\right]^{\nicefrac12}
		\\
		& = 
		\frac{1}{\sqrt{\lambda M^n}} 
		\left[ 
		\int_{[0,\infty)} e^{\lambda t} \int_t^{\infty} e^{-\lambda s}\,\Exp{|f(x+BW^{0}_{s},0)|^2}\!\,ds\,\alpha(dt) \right]^{\nicefrac12} 
		\\
		& + 
		\sum_{k=0}^{n-1} 
		\frac{L(1+\sqrt{M})}{\sqrt{\lambda M^{(n-k)}}} 
		\left[ 
		\int_{[0,\infty)} e^{\lambda t}
		\int_t^{\infty} e^{-\lambda s} \, \Exp{|U^0_{k}(x+BW^{0}_{s})-u(x+BW^{0}_{s})|^2}\!\,ds 
		\,\alpha(dt)
		\right]^{\nicefrac12}
		. 
		\end{split}
		\end{equation} 	
	Fubini's theorem therefore implies for all 
		$ n \in \N $, 
		$ \theta \in \Theta $, 
		$ x \in \R^d $ 
	that 
		\begin{equation} 
		\begin{split}
		&\left[ \int_{[0,\infty)} \Exp{|U^{\theta}_n(x+BW^{\theta}_t) - u(x+BW^{\theta}_t)|^2}\!\,\alpha(dt) \right]^{\nicefrac12}
		\\
		&\leq 
		\frac{1}{\sqrt{\lambda M^n}} 
		\left[ 
		\int_0^{\infty} \left(\int_{[0,s]} e^{\lambda t}\,\alpha(dt)\right) e^{-\lambda s}\,\Exp{|f(x+BW^{0}_{s},0)|^2}\!\,ds \right]^{\nicefrac12} 
		\\
		& + 
		\sum_{k=0}^{n-1} 
		\frac{L(1+\sqrt{M})}{\sqrt{\lambda M^{(n-k)}}} 
		\left[ 
		\int_0^{\infty} \left(\int_{[0,s]} e^{\lambda t}\,\alpha(dt)\right) e^{-\lambda s} \, \Exp{|U^0_{k}(x+BW^{0}_{s})-u(x+BW^{0}_{s})|^2}\!\,ds 
		\right]^{\nicefrac12}. 
		\end{split}
		\end{equation} 	
	The assumption that for all $ A \in \Borel([0,\infty)) $ it holds that $\beta(A) \geq \int_A e^{-\lambda s} (\int_{[0,s]} e^{\lambda t}\,\alpha(dt))\, ds $ hence ensures for all 
		$ n \in \N $, 
		$ \theta \in \Theta $,
		$ x \in \R^d $ 
	that 
		\begin{equation} 
		\begin{split}
		& 
		\left[ \int_{[0,\infty)} \Exp{|U^{\theta}_n(x+BW^{\theta}_t) - u(x+BW^{\theta}_t)|^2}\!\,\alpha(dt) \right]^{\nicefrac12}
		\\&\qquad
		\leq 
		\frac{1}{\sqrt{\lambda M^n}} 
		\left[ 
		\int_{[0,\infty)} \Exp{|f(x+BW^{0}_{s},0)|^2}\!\,\beta(ds) \right]^{\nicefrac12} 
		\\
		&\qquad\qquad + 
		\sum_{k=0}^{n-1} 
		\frac{L(1+\sqrt{M})}{\sqrt{\lambda M^{(n-k)}}} 
		\left[ 
		\int_{[0,\infty)} \Exp{|U^0_{k}(x+BW^{0}_{s})-u(x+BW^{0}_{s})|^2} \beta(ds)
		\right]^{\nicefrac12}. 
		\end{split}
		\end{equation} 
	This establishes \eqref{recursive_estimate:claim}. The proof of \cref{lem:recursive_estimate} is thus completed. 
\end{proof}

\begin{prop}[Overall error estimate] \label{prop:combined_estimate}
	Assume  
		\cref{setting}, 
	let 
		$ L \in (0,\lambda) $, 
	assume for all 
		$ x \in \R^d $, 
		$ v,w \in \R $ 
	that 
		$ | f( x , v ) - f( x , w ) | \leq L | v - w | $ 
	and 
		$\int_0^{\infty} e^{(L-\lambda) s} \, \EXP{|f(x+BW^{0}_s,0)|}\,ds<\infty$, 
	let 
		$u \colon \R^d \to \R$ be $ \Borel(\R^d) $/$ \Borel(\R) $-measurable, 
	and assume for all 
		$ x \in \R^d $ 
	that 
		$ \int_0^{\infty} e^{(L-\lambda) s} \, \EXP{|u(x+BW^{0}_s)|^2}\,ds < \infty $
	and 
		\begin{equation} \label{combined_estimate:exact_solution}
			u(x)
			= 
			\frac{1}{\lambda} 
			\Exp{ 
				f(x+BW^{0}_{R^0},u(x+BW^{0}_{R^0})) } 
			= 
			\Exp{
				\int_0^{\infty} 
				e^{-\lambda s} f(x+BW^{0}_s,u(x+BW^{0}_s))\,ds}\!.
		\end{equation}
	Then it holds for all 
		$ n \in \N_0 $, 
		$ \theta \in \Theta $, 
		$ x \in \R^d $ 
	that 
		\begin{equation} \label{overall_error_analysis:claim}
		\begin{split}
		& 
		\left(\Exp{|U^{\theta}_{n}(x)-u(x)|^2}\right)^{\!\nicefrac12}
		\\
		& \leq 
		\frac{1}{\sqrt{\lambda}-\sqrt{L}}
		\left( 
		\int_0^{\infty} e^{(L-\lambda) s}\,\Exp{|f(x+BW^{0}_s,0)|^2}\!\,ds 
		\right)^{\!\nicefrac12}
		\frac{1}{\sqrt{M^n}} 
			\left[ 1 + (1+\sqrt{M})\sqrt{\frac{L}{\lambda}} \right]^{n}.   
		\end{split}
		\end{equation}	
\end{prop}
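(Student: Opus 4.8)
The plan is to apply the recursive overall error estimate \cref{lem:recursive_estimate} twice, with two different pairs of measures $ (\alpha,\beta) $, and then to unwind the resulting scalar recursion by induction, using the a priori estimate \cref{lem:a_priori_estimate_exact_solution} as input. Throughout the argument I would fix $ x\in\R^d $, abbreviate $ \varphi = \int_0^\infty e^{(L-\lambda)s}\,\Exp{|f(x+BW^{0}_s,0)|^2}\,ds $ and $ g_n = \int_0^\infty e^{(L-\lambda)s}\,\Exp{|U^{0}_n(x+BW^{0}_s)-u(x+BW^{0}_s)|^2}\,ds $ for $ n\in\N_0 $, and set $ c = (1+\sqrt M)\sqrt{\nicefrac{L}{\lambda}} $. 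Since \eqref{overall_error_analysis:claim} is trivial when $ \varphi=\infty $, we may assume $ \varphi<\infty $. First I would check that the hypotheses of \cref{lem:recursive_estimate} (with $ \varepsilon\is L $) and of \cref{lem:a_priori_estimate_exact_solution} (with $ m\is d $, $ \varepsilon\is L $, $ W\is W^{0} $; admissible because $ L<\lambda $ forces $ L>\nicefrac{L^2}{\lambda} $) hold; in particular the conditions $ \int_0^\infty e^{-\lambda s}\,\Exp{|f(x+BW^{0}_s,0)|}\,ds<\infty $ and $ \int_0^\infty e^{-\lambda s}\,\Exp{|u(x+BW^{0}_s)|}\,ds<\infty $ follow from the assumed integrability together with $ L>0 $ (so $ e^{-\lambda s}\leq e^{(L-\lambda)s} $) and the Cauchy--Schwarz inequality.

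For the first application I would take $ \alpha(dt)=Le^{(L-\lambda)t}\,dt $ and $ \beta(dt)=e^{(L-\lambda)t}\,dt $; these satisfy the compatibility requirement of \cref{lem:recursive_estimate} since $ \int_{[0,s]}e^{\lambda t}\,\alpha(dt)=e^{Ls}-1\leq e^{Ls} $ for all $ s\in[0,\infty) $. Evaluating \eqref{recursive_estimate:claim} at $ \theta=0 $ has left-hand side $ \sqrt{L\,g_n} $, and after dividing by $ \sqrt L $ and multiplying by $ \sqrt{M^n} $ it gives, for every $ n\in\N $,
\[
\sqrt{M^n g_n}\ \leq\ \frac{\sqrt\varphi}{\sqrt{\lambda L}}\ +\ c\sum_{k=0}^{n-1}\sqrt{M^k g_k}.
\]
Item~\eqref{a_priori_estimate:item2} in \cref{lem:a_priori_estimate_exact_solution} (with $ \varepsilon\is L $, recalling $ U^{0}_0\equiv 0 $) yields $ \sqrt{g_0}\leq\frac{\sqrt\varphi}{\sqrt{\lambda L}-L}=\frac{\sqrt\varphi}{\sqrt L(\sqrt\lambda-\sqrt L)}=:D_0 $, and since $ \frac{1}{\sqrt{\lambda L}}\leq\frac{1}{\sqrt L(\sqrt\lambda-\sqrt L)} $ a routine strong induction over $ n $ (summing the geometric series $ \sum_{k=0}^{n-1}(1+c)^k $) shows $ \sqrt{M^n g_n}\leq D_0(1+c)^n $ for all $ n\in\N_0 $; in particular every $ g_n $ is finite.

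For the second application I would take $ \alpha=\delta_0 $, the Dirac measure at $ 0 $, together with $ \beta(dt)=e^{-\lambda t}\,dt $, which are compatible because $ \int_{[0,s]}e^{\lambda t}\,\alpha(dt)=1 $. Since $ W^{\theta}_0=0 $, the left-hand side of \eqref{recursive_estimate:claim} is now $ (\Exp{|U^{\theta}_n(x)-u(x)|^2})^{\nicefrac12} $; bounding $ e^{-\lambda s}\leq e^{(L-\lambda)s} $ on the right replaces the two $ e^{-\lambda s} $-weighted integrals there by $ \sqrt\varphi $ and by $ \sqrt{g_k} $, and inserting $ \sqrt{g_k}\leq D_0(1+c)^k M^{-k/2} $ from the previous step and simplifying the powers of $ M $ gives, for $ n\in\N $,
\[
\left(\Exp{|U^{\theta}_n(x)-u(x)|^2}\right)^{\!\nicefrac12}\ \leq\ \frac{\sqrt\varphi}{\sqrt{M^n}}\left[\frac{1}{\sqrt\lambda}+\frac{(1+c)^n-1}{\sqrt\lambda-\sqrt L}\right]\ \leq\ \frac{\sqrt\varphi}{(\sqrt\lambda-\sqrt L)\sqrt{M^n}}(1+c)^n,
\]
where the last step uses $ \frac{1}{\sqrt\lambda}\leq\frac{1}{\sqrt\lambda-\sqrt L} $. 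This is exactly \eqref{overall_error_analysis:claim} for $ n\in\N $, and the case $ n=0 $ follows from $ U^{\theta}_0\equiv 0 $ and Item~\eqref{a_priori_estimate:item3} in \cref{lem:a_priori_estimate_exact_solution}, which yields $ |u(x)|\leq\frac{\sqrt L}{\sqrt{\lambda L}-L}\sqrt\varphi=\frac{\sqrt\varphi}{\sqrt\lambda-\sqrt L} $.

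The routine parts are the two measure-compatibility computations and the bookkeeping with the powers of $ M $ and the constants; the one conceptual point is that the forcing term $ \int\Exp{|f(x+BW^{0}_s,0)|^2}\,\beta(ds) $ and the lower-order terms $ \int\Exp{|U^{0}_k(x+BW^{0}_s)-u(x+BW^{0}_s)|^2}\,\beta(ds) $ in \eqref{recursive_estimate:claim} are evaluated at the \emph{same} point $ x $ as the left-hand side, so the whole error propagation collapses to a genuinely scalar recursion in $ n $ and no semigroup estimate for $ \varphi $ is needed. Hence the main thing to watch is only the coordinated choice of the two pairs $ (\alpha,\beta) $ — the first tailored so that $ \beta $ reproduces the $ e^{(L-\lambda)\cdot} $-weight that appears at the next recursion level (thereby closing the recursion for $ g_n $), the second ($ \alpha=\delta_0 $) tailored to read off the pointwise $ L^2 $-error — and the careful tracking of constants so that the prefactor comes out exactly $ \tfrac{1}{\sqrt\lambda-\sqrt L} $ and the amplification factor exactly $ [\,1+(1+\sqrt M)\sqrt{\nicefrac{L}{\lambda}}\,]^n $.
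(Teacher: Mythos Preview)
Your proposal is correct and follows essentially the same approach as the paper's proof: two applications of \cref{lem:recursive_estimate} --- first with an $e^{(L-\lambda)t}$-weighted measure to close a scalar recursion for the time-integrated errors, then with $\alpha=\delta_0$ to extract the pointwise $L^2$-error --- combined with the a~priori bound from \cref{lem:a_priori_estimate_exact_solution} for the base step. The only differences are cosmetic: the paper scales the first pair as $\alpha(dt)=e^{(L-\lambda)t}\,dt$, $\beta(dt)=\tfrac{1}{L}e^{(L-\lambda)t}\,dt$ (you put the factor $L$ into $\alpha$ instead), takes $\beta(dt)=e^{(L-\lambda)t}\,dt$ directly in the second application (you use $e^{-\lambda t}\,dt$ and bound afterward), and invokes a named discrete Gronwall inequality where you do the induction by hand.
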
 
	
\begin{proof}[Proof of \cref{prop:combined_estimate}]
	Throughout this proof let 
		$ \delta_0 \colon \Borel([0,\infty)) \to [0,\infty) $ 
	satisfy for all 
		$ A \in \Borel([0,\infty)) $ 
	that 
		\begin{equation}  
		\delta_0(A) 
		= 
		\begin{cases}
		1 & \colon 0 \in A \\
		0 & \colon 0 \notin A. 
		\end{cases}  
		\end{equation} 
	Observe that 
		\begin{enumerate}[(i)]
			\item\label{combined_estimate:proof_item1} for all 
				$ A \in \Borel([0,\infty)) $ 
			it holds that 
				\begin{equation} 
				\int_A 
				e^{-\lambda s} 
				\int_{[0,s]} e^{\lambda t} \,\delta_0(dt) \,ds
				= \int_A e^{-\lambda s} \,ds 
				\leq \int_A e^{(L-\lambda) s}\,ds
				\end{equation}
			and 
			\item\label{combined_estimate:proof_item2} for all 
				$ A \in \Borel([0,\infty)) $ 
			it holds that 
				\begin{equation} 
				\int_A 
				e^{-\lambda s} 
				\int_{[0,s]} e^{\lambda t}\, e^{(L-\lambda)t} \,dt \,ds 
				= 
				\int_A e^{-\lambda s} \frac{e^{Ls}-1}{L}\,ds 
				\leq 
				\frac{1}{L}	\int_A {e^{(L-\lambda) s}}\,ds. 
				\end{equation} 
		\end{enumerate}			
	Note that Item~\eqref{combined_estimate:proof_item2} and \cref{lem:recursive_estimate} (with 
		$ \varepsilon \is L $, 
		$ L \is L $, 
		$ u \is u $, 
		$ \alpha \is ( \Borel([0,\infty)) \ni A \mapsto \int_A e^{(L-\lambda)t}\,dt \in [0,\infty) ) $,  
		$ \beta \is ( \Borel([0,\infty)) \ni A \mapsto \frac{1}{L} \int_A e^{(L-\lambda)t}\,dt \in [0,\infty)) $ 
	in the notation of \cref{lem:recursive_estimate}) ensure for all 
		$ n \in \N $, 
		$ x \in \R^d $ 
	that 
		\begin{equation} \label{combined_estimate:recursive_inequalities}
		\begin{split}
		& 
		\left(
		\int_0^{\infty} e^{(L-\lambda)t} \,
		\Exp{|U^0_{n}(x+BW^{0}_t)-u(x+BW^{0}_t)|^2}\!\,dt
		\right)^{\!\nicefrac12}
		\\
		& \qquad \leq 
		\frac{1}{\sqrt{L\lambda M^n}} 
		\left( 
		\int_0^{\infty} e^{(L-\lambda) t} \, \Exp{|f(x+BW^{0}_{t},0)|^2}\!\,dt
		\right)^{\!\nicefrac12}
		\\
		& \qquad\qquad + \sum_{k=0}^{n-1} 
		\sqrt{\frac{L}{\lambda}} 
		\frac{1+\sqrt{M}}{\sqrt{M^{(n-k)}}} 
		\left(  
		\int_0^{\infty} e^{(L-\lambda)t}\,\Exp{|U^0_k(x+BW^{0}_t)-u(x+BW^{0}_t)|^2}\!\,dt
		\right)^{\!\nicefrac12}. 
		\end{split}
		\end{equation} 
	For the next step let 
		$ \eta^{(x)}_n \in [0,\infty] $, $ n \in \N_0 $, $ x \in \R^d $, 
	satisfy for all 
		$ n \in \N_0 $, 
		$ x \in \R^d $
	that 
		\begin{equation} 
		\eta_n^{(x)} = \sqrt{M^n} \left( \int_0^{\infty} e^{(L-\lambda)t} \, \Exp{|U^0_n(x+BW^0_t)-u(x+BW^0_t)|^2}\!\,dt \right)^{\!\nicefrac12}
		\end{equation} 
	and let $ a_1^{(x)},a_2^{(x)} \in [0,\infty] $, $ x \in \R^d $, 
	satisfy for all 
		$ x \in \R^d $ 
	that 
		\begin{equation} 
		a_1^{(x)} = \frac{1}{\sqrt{L\lambda}} \left( \int_0^{\infty} e^{(L-\lambda)t} \, \Exp{|f(x+BW^0_t,0)|^2}\!\,dt\right)^{\!\nicefrac12}
		\qandq 
		a_2^{(x)} =  (1 + \sqrt{M})\sqrt{\frac{L}{\lambda}}. 
		\end{equation} 
	Note that \eqref{combined_estimate:recursive_inequalities} ensures for all 
		$ n \in \N $, 
		$ x \in \R^d $ 
	that 
		\begin{equation} 
		\eta^{(x)}_n \leq a_1^{(x)} + a_2^{(x)} \sum_{k=0}^{n-1} \eta^{(x)}_k. 
		\end{equation} 
	The discrete Gronwall inequality therefore proves for all 
		$ n \in \N $, 
		$ x \in \R^d $ 
	that 
		\begin{equation} \label{combined_estimate:gronwall}
		\eta^{(x)}_n \leq ( a^{(x)}_1 + a^{(x)}_2\eta^{(x)}_0 )	( 1 + a_2^{(x)})^{n-1}. 
		\end{equation} 
	Next observe that Item~\eqref{a_priori_estimate:item2} in \cref{lem:a_priori_estimate_exact_solution} (with 
		$ d \is d $, 
		$ m \is d $, 
		$ B \is B $,
		$ L \is L $, 
		$ \lambda \is \lambda $, 
		$ \varepsilon \is L $, 
		$ (\Omega,\mc F,\P) \is (\Omega,\mc F,\P) $,
		$ W \is W^0 $,
		$ f \is f $, 
		$ u \is u $
	in the notation of \cref{lem:a_priori_estimate_exact_solution}) demonstrates for all 
		$ x \in \R^d $ 
	that 
		\begin{equation} 
		\begin{split}
		\eta^{(x)}_0 & = \left( \int_0^{\infty} e^{(L-\lambda)t} \, \Exp{|u(x+BW^0_t)|^2}\!\,dt \right)^{\!\nicefrac12} 
		\\
		& \leq 
		\frac{1}{\sqrt{L\lambda}-L} 
		\left( \int_0^{\infty} e^{(L-\lambda)t} \, \Exp{|f(x+BW^0_t,0)|^2} \!\,dt \right)^{\!\nicefrac12} 
		= 
		\frac{a_1^{(x)}}{ 1 - \sqrt{\frac{L}{\lambda}}}.  
		\end{split}
		\end{equation} 
	This and \eqref{combined_estimate:gronwall} ensure for all 
		$ n \in \N_0 $, 
		$ x \in \R^d $ 
	that $ \eta^{(x)}_n \leq [1-\sqrt{\nicefrac{L}{\lambda}}]^{-1} a_1^{(x)} (1+a_2^{(x)})^n $. 
	Hence, we obtain for all 
		$ n \in \N_0 $, 
		$ x \in \R^d $ 
	that 
		\begin{equation} \label{combined_estimate:eq01}
		\begin{split}
		& 
		\left(
		\int_0^{\infty} e^{(L-\lambda)t} \,
		\Exp{|U^0_{n}(x+BW^{0}_t)-u(x+BW^{0}_t)|^2}\!\,dt
		\right)^{\!\nicefrac12}
		\\
		& \leq 
		\frac{1}{\sqrt{M^n}}
		\left[ 1 + (1+\sqrt{M})\sqrt{\frac{L}{\lambda}} \right]^n
		\frac{1}{\sqrt{L\lambda}-L} 
		\left(  
		\int_0^{\infty} e^{(L-\lambda)t} \, \Exp{|f(x+BW^{0}_t,0)|^2}\!\,dt 
		\right)^{\!\nicefrac12}.
		\end{split}
		\end{equation}	
	Next note that Item~\eqref{combined_estimate:proof_item1} and \cref{lem:recursive_estimate} (with 
		$ \varepsilon \is L $, 
		$ L \is L $, 
		$ u \is u $,
		$ \alpha \is \delta_0 $, 
		$ \beta \is (\Borel([0,\infty)) \ni A \mapsto \int_A e^{(L-\lambda)s}\,ds \in [0,\infty))$ 
	in the notation of \cref{lem:recursive_estimate}) imply for all 
		$ n \in \N $, 
		$ \theta \in \Theta $, 
		$ x \in \R^d $ 
	that 
		\begin{equation}
		\begin{split}
		& 
		\left(\Exp{|U^{\theta}_{n}(x)-u(x)|^2}\right)^{\!\nicefrac12}
		\leq 
		\frac{1}{\sqrt{\lambda M^n}} 
		\left( 
		\int_0^{\infty} e^{(L-\lambda) s}\,\Exp{|f(x+BW^{0}_s,0)|^2}\!\,ds 
		\right)^{\!\nicefrac12}
		\\
		& + 
		\sum_{k=0}^{n-1} \frac{L(1+\sqrt{M})}{\sqrt{\lambda M^{(n-k)}}}
		\left( 
		\int_0^{\infty} 
		e^{(L-\lambda) s} \, \Exp{|U^0_{k}(x+BW^{0}_s)-u(x+BW^{0}_s)|^2}\!\,ds 
		\right)^{\!\nicefrac12}.
		\end{split}
		\end{equation}
	This and \eqref{combined_estimate:eq01} demonstrate for all 
		$ n \in \N $, 
		$ \theta \in \Theta $, 
		$ x \in \R^d $ 
	that 
		\begin{align}
		& 
		\left(\Exp{|U^{\theta}_{n}(x)-u(x)|^2}\right)^{\!\nicefrac12} 
		\leq 
		\frac{1}{\sqrt{\lambda M^n}} 
		\left( 
		\int_0^{\infty} e^{(L-\lambda) s}\,\Exp{|f(x+BW^{0}_s,0)|^2}\!\,ds 
		\right)^{\!\nicefrac12}
		\\ \nonumber
		& + 
		\sum_{k=0}^{n-1} \frac{L(1+\sqrt{M})}{\sqrt{\lambda M^{n}}}
		\frac{1}{\sqrt{L\lambda}-L}
		\left(  
		\int_0^{\infty} e^{(L-\lambda)s}\, \Exp{|f(x+BW^{0}_s,0)|^2}\!\,ds 
		\right)^{\!\nicefrac12}
		\left[ 1 + (1+\sqrt{M})\sqrt{\frac{L}{\lambda}} \right]^k. 
		\end{align} 
	Hence, we obtain for all 
		$ n \in \N $, 
		$ \theta \in \Theta $, 
		$ x \in \R^d $ 
	that 
	\begin{equation}
	\begin{split} 
	\left(\Exp{|U^{\theta}_{n}(x)-u(x)|^2}\right)^{\!\nicefrac12}
	& \leq 
	\frac{1}{\sqrt{\lambda M^n}}
	\left( 
	\int_0^{\infty} e^{(L-\lambda) s}\,\Exp{|f(x+BW^{0}_s,0)|^2}\!\,ds 
	\right)^{\!\nicefrac12}
	\\
	& \qquad \cdot
	\left[ 
	1 + 
	\sum_{k=0}^{n-1} \frac{L(1+\sqrt{M})}{\sqrt{L\lambda}-L}
	\left( 1 + (1+\sqrt{M})\sqrt{\frac{L}{\lambda}} \right)^{\!k}  
	\right]\!. 
	\end{split}
	\end{equation}
	The fact that for all 
		$ q \in \R $, 
		$ n \in \N $ 
	it holds that 
		$ (q-1) \sum_{k=0}^{n-1} q^k = q^n-1 $ 
	therefore implies for all 
		$ n \in \N $, 
		$ \theta \in \Theta $, 
		$ x \in \R^d $ 
	that 
	\begin{equation}
	\begin{split}
	\left(\Exp{|U^{\theta}_{n}(x)-u(x)|^2}\right)^{\!\nicefrac12}
	& \leq 
	\frac{1}{\sqrt{\lambda M^n}}
	\left( 
	\int_0^{\infty} e^{(L-\lambda) s}\,\Exp{|f(x+BW^{0}_s,0)|^2}\!\,ds 
	\right)^{\!\nicefrac12}
	\\
	& \cdot \left[ 1 + \frac{\sqrt{L\lambda}}{\sqrt{L\lambda}-L} \left( 1 + (1+\sqrt{M})\sqrt{\frac{L}{\lambda}} \right)^{\!n} - \frac{\sqrt{L\lambda}}{\sqrt{L\lambda}-L} \right]\!.
	\end{split}
	\end{equation}
	The fact that $ \frac{\sqrt{L\lambda}}{\sqrt{L\lambda}-L}>1$ hence implies for all 
		$ n \in \N $, 
		$ \theta \in \Theta $, 
		$ x \in \R^d $
	that 
		\begin{equation}
		\begin{split}
		& 
		\left(\Exp{|U^{\theta}_{n}(x)-u(x)|^2}\right)^{\!\nicefrac12}
		\\
		& \leq
		\frac{1}{\sqrt{\lambda M^n}}
		\left( 
		\int_0^{\infty} e^{(L-\lambda) s}\,\Exp{|f(x+BW^{0}_s,0)|^2}\!\,ds 
		\right)^{\!\nicefrac12}
		\frac{\sqrt{L\lambda}}{\sqrt{L\lambda}-L} 
		\left[ 1 + (1+\sqrt{M})\sqrt{\frac{L}{\lambda}} \right]^{\!n} 
		\\ 
		& =
		\frac{1}{\sqrt{\lambda}-\sqrt{L}}
		\left( 
		\int_0^{\infty} e^{(L-\lambda) s}\,\Exp{|f(x+BW^{0}_s,0)|^2}\!\,ds 
		\right)^{\!\nicefrac12}
		\frac{1}{\sqrt{M^n}} 
		\left[ 1 + (1+\sqrt{M})\sqrt{\frac{L}{\lambda}} \right]^{n}.   
		\end{split}
		\end{equation}
	Combining this with the fact that for all 
		$ \theta \in \Theta $, 
		$ x \in \R^d $ 
	it holds that $ U^{\theta}_0(x) = 0 $ and Item~\eqref{a_priori_estimate:item3} in \cref{lem:a_priori_estimate_exact_solution} establishes \eqref{overall_error_analysis:claim}. 
	The proof of \cref{prop:combined_estimate} is thus completed. 
\end{proof}

\begin{cor} \label{cor:L_zero_case}
	Assume \cref{setting},
	let $ \varepsilon \in (0,\infty) $, 
	assume for all 
		$ x \in \R^d $, 
		$ v \in \R $ 
	that 
		$ f(x,v) = f(x,0) $
	and 
		$ \int_0^{\infty} e^{(\varepsilon-\lambda)s}\,\EXP{|f(x+BW^{0}_s,0)|^2}\,ds < \infty $, 
	let 
		$ u \colon \R^d \to \R $ 
	be $ \Borel(\R^d) $/$ \Borel(\R) $-measurable, 
	and assume for all 
		$ x \in \R^d $ 
	that 
		\begin{equation} \label{L_zero_case:fixed_point_equation}
		u(x) = 
		\Exp{ \int_0^{\infty} e^{-\lambda s} f(x+BW^{0}_s,u(x+BW^{0}_s))\,ds} 
		= 
		\Exp{\int_0^{\infty} e^{-\lambda s} 
		f(x+BW^{0}_s,0)\,ds}\!.
		\end{equation} 
	Then it holds for all 
		$ n \in \N_0 $, 
		$ \theta \in \Theta $, 
		$ x \in \R^d $ 
	that 
		\begin{equation} \label{L_zero_case:claim}
		\left( \Exp{|U^{\theta}_n(x) - u(x) |^2} \right)^{\!\nicefrac12} 
		\leq 
		\frac{1}{\sqrt{\lambda M^n}} 
		\left( 
		\int_0^{\infty} e^{-\lambda s}\,\Exp{|f(x+BW^{0}_s,0)|^2}\!\,ds \right)^{\!\nicefrac12}.  
		\end{equation} 
\end{cor}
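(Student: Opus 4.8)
The plan is to exploit the fact that, since $f(x,v)=f(x,0)$ does not depend on its second argument, the Lipschitz constant of $f$ may be taken to be $0$ and the MLP recursion \eqref{mlp_scheme} collapses: every telescoping summand $f(x+BW,\cdot)-f(x+BW,\cdot)$ vanishes, so for all $n\in\N$, $\theta\in\Theta$, $x\in\R^d$ one has $U^{\theta}_n(x)=\tfrac{1}{\lambda M^n}\sum_{m=1}^{M^n} f\big(x+BW^{(\theta,0,m)}_{R^{(\theta,0,m)}},0\big)$, i.e.\ $U^{\theta}_n(x)$ is a plain Monte Carlo average of $M^n$ samples. Accordingly I would apply \cref{lem:mean} and \cref{lem:variance} with the Lipschitz constant $L\is 0$, which is legitimate since $f(x,\cdot)\equiv f(x,0)$ trivially gives $|f(x,v)-f(x,w)|\le 0\cdot|v-w|$.

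First I would clear the integrability bookkeeping. Fixing any $\tilde\varepsilon\in(0,\min\{\varepsilon,\lambda\})$, the measure $e^{(\tilde\varepsilon-\lambda)s}\,ds$ on $[0,\infty)$ is finite, so the Cauchy--Schwarz and Jensen inequalities together with the assumption $\int_0^\infty e^{(\varepsilon-\lambda)s}\,\EXP{|f(x+BW^{0}_s,0)|^2}\,ds<\infty$ yield $\int_0^\infty e^{(\tilde\varepsilon-\lambda)s}\,\EXP{|f(x+BW^{0}_s,0)|}\,ds<\infty$; hence the hypotheses of \cref{lem:mean} and \cref{lem:variance} hold with $\varepsilon\is\tilde\varepsilon$ and $L\is 0$. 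Moreover, the independence of $R^0$ and $W^0$, the fact that $\P(R^0\ge t)=e^{-\lambda t}$, Hutzenthaler et al.~\cite[Lemma 2.2]{Overcoming}, and Fubini's theorem give $\EXP{f(x+BW^{0}_{R^0},0)}=\lambda\int_0^\infty e^{-\lambda s}\,\EXP{f(x+BW^{0}_s,0)}\,ds$ and $\EXP{|f(x+BW^{0}_{R^0},0)|^2}=\lambda\int_0^\infty e^{-\lambda s}\,\EXP{|f(x+BW^{0}_s,0)|^2}\,ds$ (the latter being finite since $\varepsilon>0$), and by \eqref{L_zero_case:fixed_point_equation} and Fubini's theorem the former equals $\lambda\,u(x)$.

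Then, for $n\in\N$, I would combine \cref{lem:mean} (which, using $f(x,\cdot)\equiv f(x,0)$, gives $\EXP{U^{\theta}_n(x)}=\tfrac1\lambda\EXP{f(x+BW^{0}_{R^0},0)}=u(x)$, so the bias vanishes) with \cref{lem:variance} (whose bound \eqref{variance:claim}, again using $f(x,\cdot)\equiv f(x,0)$, has its sum over $k$ equal to $0$, whence $\Var{U^{\theta}_n(x)}\le\tfrac{1}{\lambda^2 M^n}\EXP{|f(x+BW^{0}_{R^0},0)|^2}=\tfrac{1}{\lambda M^n}\int_0^\infty e^{-\lambda s}\,\EXP{|f(x+BW^{0}_s,0)|^2}\,ds$). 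The bias--variance identity $\EXP{|U^{\theta}_n(x)-u(x)|^2}=\Var{U^{\theta}_n(x)}+|\EXP{U^{\theta}_n(x)}-u(x)|^2=\Var{U^{\theta}_n(x)}$ then yields \eqref{L_zero_case:claim} for all $n\in\N$ upon taking square roots.

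Finally, I would dispatch the base case $n=0$ by hand, since it is not covered by \cref{lem:mean} and \cref{lem:variance}: there $U^{\theta}_0(x)=0$, so $\EXP{|U^{\theta}_0(x)-u(x)|^2}=|u(x)|^2$, and applying the Cauchy--Schwarz inequality with respect to the measure $e^{-\lambda s}\,ds$ and Jensen's inequality to the representation $u(x)=\Exp{\int_0^\infty e^{-\lambda s} f(x+BW^{0}_s,0)\,ds}=\int_0^\infty e^{-\lambda s}\,\EXP{f(x+BW^{0}_s,0)}\,ds$ gives $|u(x)|^2\le\tfrac1\lambda\int_0^\infty e^{-\lambda s}\,\EXP{|f(x+BW^{0}_s,0)|^2}\,ds$, which is precisely \eqref{L_zero_case:claim} for $n=0$ since $M^0=1$. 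I do not foresee any real obstacle: the only mildly delicate points are deriving the $L^1$-integrability hypotheses of \cref{lem:mean} and \cref{lem:variance} from the assumed $L^2$-integrability (handled by Cauchy--Schwarz) and isolating the trivial case $n=0$.
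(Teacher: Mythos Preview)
Your argument is correct, and it takes a genuinely different route from the paper. You exploit directly that with $f(x,v)=f(x,0)$ the MLP recursion collapses to a plain Monte Carlo average, apply \cref{lem:mean} and \cref{lem:variance} with the legitimate choice $L=0$ to see that the bias vanishes and the variance is bounded by $\tfrac{1}{\lambda^2 M^n}\EXP{|f(x+BW^0_{R^0},0)|^2}$, convert the latter via the density of $R^0$ into the claimed time-integral, and treat $n=0$ by a direct Cauchy--Schwarz/Jensen estimate on the representation \eqref{L_zero_case:fixed_point_equation}. The paper instead argues indirectly: it first verifies the square-integrability hypothesis $\int_0^\infty e^{(\varepsilon-\lambda)t}\,\EXP{|u(x+BW^0_t)|^2}\,dt<\infty$, then applies the full overall error estimate \cref{prop:combined_estimate} (which itself rests on \cref{lem:recursive_estimate} and the a~priori bound \cref{lem:a_priori_estimate_exact_solution}) with an artificial positive Lipschitz constant $L\in(0,\min\{\varepsilon,\lambda\})$, and finally lets $L\searrow 0$ using dominated convergence. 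Your approach is shorter and more transparent for this degenerate case, avoiding the recursive Gronwall machinery entirely; the paper's route has the conceptual advantage of exhibiting the $L=0$ bound as the continuous limit of the general bound in \cref{prop:combined_estimate}, which is useful in the subsequent unification \cref{cor:all_cases}.
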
 

\begin{proof}[Proof of \cref{cor:L_zero_case}]
	Throughout this proof let $ \nu_{t,x} \colon \Borel(\R^d) \to [0,\infty) $, $ t\in [0,\infty) $, $ x \in \R^d $, satisfy for all 
		$ t \in [0,\infty) $, 
		$ x \in \R^d $, 
		$ A \in \Borel(\R^d) $ 
	that 
		$ \nu_{t,x}(A) = \P(x+BW^0_t\in A) $. 
	Observe that the Cauchy-Schwarz inequality and \eqref{L_zero_case:fixed_point_equation} ensure for all 
		$ y \in \R^d $ 
	that 
		\begin{equation} 
		|u(y)|^2 \leq \frac{1}{\lambda} \int_0^{\infty} e^{-\lambda s} \, \Exp{|f(y+BW^0_s,0)|^2}\!\,ds. 
		\end{equation} 
	Fubini's theorem therefore shows for all 
		$ x \in \R^d $ 
	that 
		\begin{equation} 
		\begin{split} 
		& \int_0^{\infty} e^{(\varepsilon-\lambda)t} \, \Exp{|u(x+BW^0_t)|^2}\!\,dt 
		= 
		\int_0^{\infty} e^{(\varepsilon-\lambda)t} \int_{\R^d} 
		|u(y)|^2\,\nu_{t,x}(dy)\,dt
		\\
		& \leq 
		\int_0^{\infty} e^{(\varepsilon-\lambda)t}\,
		\int_{\R^d} \frac{1}{\lambda} \int_0^{\infty} e^{-\lambda s}\,\Exp{|f(y+BW^0_s,0)|^2}\!\,ds\,\nu_{t,x}(dy)\,dt
		\\
		& = 
		\frac{1}{\lambda} 
		\int_0^{\infty} e^{(\varepsilon-\lambda)t} 
		\int_0^{\infty} e^{-\lambda s} \, \int_{\R^d} \Exp{|f(y+BW^0_s,0)|^2}\!\,\nu_{t,x}(dy)\,ds\,dt . 
		\end{split}
		\end{equation}
	The fact that for all 
		$ t,s \in [0,\infty) $, 
		$ x \in \R^d $, 
		$ A \in \Borel(\R^d) $ 
	it holds that 
		$ \nu_{t+s,x}(A) = \int_{\R^d} \nu_{s,y}(A)\,\nu_{t,x}(dy) $ 
	and Fubini's theorem hence ensure that for all 
		$ x \in \R^d $ 
	it holds that		
		\begin{equation} 
		\begin{split}
		\int_0^{\infty} e^{(\varepsilon-\lambda)t} \, \Exp{|u(x+BW^0_t)|^2}\!\,dt 
		& 
		\leq  
		\frac{1}{\lambda} 
		\int_0^{\infty} e^{(\varepsilon-\lambda)t} 
		\int_0^{\infty} e^{-\lambda s} \, \Exp{|f(x+BW^0_{t+s},0)|^2}\!\,ds\,dt 
		\\
		& = 
		\frac{1}{\lambda} 
		\int_0^{\infty} e^{\varepsilon t} 
		\int_t^{\infty} e^{-\lambda s} \, \Exp{|f(x+BW^0_{s},0)|^2}\!\,ds\,dt 
		\\
		& = 
		\frac{1}{\lambda} 
		\int_0^{\infty} \left[ \int_0^s e^{\varepsilon t} \,dt\right]\!\,
		e^{-\lambda s} \, \Exp{|f(x+BW^0_s,0)|^2}\!\,ds 
		\\
		& \leq 
		\frac{1}{\varepsilon\lambda} 
		\int_0^{\infty} e^{(\varepsilon-\lambda)s} \, \Exp{|f(x+BW^0_s,0)|^2}\!\,ds < \infty. 
		\end{split}
		\end{equation} 
	\cref{prop:combined_estimate} (with 
		$ L \is L $, 
		$ u \is u $
	for $ L \in (0,\min\{\varepsilon,\lambda\}) $ 
	in the notation of \cref{prop:combined_estimate}) therefore proves for all 
		$ L \in (0,\min\{\varepsilon,\lambda\}) $, 
		$ n \in \N_0 $, 
		$ \theta \in \Theta $, 
		$ x \in \R^d $ 
	that 
		\begin{equation} 
		\begin{split}
		&
		\left( \Exp{| U^{\theta}_n(x) - u(x) |^2} \right)^{\!\nicefrac12} 
		\\
		&\leq 
		\frac{1}{\sqrt{\lambda}-\sqrt{L}} 
		\left( 
		 \int_0^{\infty} e^{(L-\lambda)s} \,\Exp{|f(x+BW^{0}_s,0)|^2}\!\,ds 
		\right)^{\!\nicefrac12} 
		\frac{1}{\sqrt{M^n}} \left[ 
		1 + (1+\sqrt{M})\sqrt{\frac{L}{\lambda}}
		\right]^n.   
		\end{split}
		\end{equation} 
	Lebesgue's dominated convergence theorem hence ensures for all 
		$ n \in \N_0 $, 
		$ \theta \in \Theta $,
		$ x \in \R^d $ 
	that 
		\begin{equation} 
		\begin{split}
		& 		\left( \Exp{| U^{\theta}_n(x) - u(x) |^2} \right)^{\!\nicefrac12} 
		\\ 
		& 
		\leq 
		\lim_{ L \searrow 0 }
		\left(
		\frac{1}{\sqrt{\lambda}-\sqrt{L}} 
		\left( 
		\int_0^{\infty} e^{(L-\lambda)s} \,\Exp{|f(x+BW^{0}_s,0)|^2}\!\,ds 
		\right)^{\!\nicefrac12} 
		\frac{1}{\sqrt{M^n}} \left[ 
		1 + (1+\sqrt{M})\sqrt{\frac{L}{\lambda}}
		\right]^n \right)
		\\ 
		& =    
		\frac{1}{\sqrt{\lambda M^n}} 
			\left( 
		\int_0^{\infty} e^{-\lambda s} \,\Exp{|f(x+BW^{0}_s,0)|^2}\!\,ds 
		\right)^{\!\nicefrac12}. 
		\end{split} 
		\end{equation} 
	This establishes \eqref{L_zero_case:claim}. The proof of \cref{cor:L_zero_case} is thus completed. 
\end{proof} 

\begin{cor} \label{cor:all_cases}
	Assume \cref{setting}, 
	let $ L \in [0,\lambda) $, 
		$ \eta \in (0,\infty) $, 	
	assume for all 
		$ x \in \R^d $, 
		$ v,w \in \R $ 
	that 
		$ | f( x, v ) - f( x, w ) | \leq L | v - w | $ 
	and 
		$ \int_0^{\infty} e^{(\max\{L,\eta\}-\lambda) s} \, \EXP{| f( x + BW^0_s, 0) |^2 } \, ds < \infty $, 
	let 
		$ u \colon \R^d \to \R $ 
	be 
		$ \Borel(\R^d) $/$ \Borel(\R) $-measurable, 
	and assume for all 
		$ x \in \R^d $ 
	that 
		$ \int_0^{\infty} e^{(L-\lambda)s} \, \EXP{|u(x+BW^0_s)|^2}\, ds < \infty $ 
	and 
		\begin{equation} 
		u(x) = \frac{1}{\lambda} \Exp{f(x+BW^0_{R^0},u(x+BW^0_{R^0}))} 
		= 
		\int_0^{\infty} e^{-\lambda s}\,\Exp{f(x+BW^0_s,u(x+BW^0_s))}\!\,ds.   
		\end{equation} 
	Then it holds for all 
		$ \theta \in \Theta $, 
		$ n \in \N_0 $, 
		$ x \in \R^d $ 
	that 
		\begin{equation} \label{all_cases:claim}
		\begin{split}
		& 
		\left(\Exp{|U_{n}^{\theta}(x)-u(x)|^2}\right)^{\!\nicefrac12} 
		\\
		& 
		\leq 
		\frac{1}{\sqrt{\lambda}-\sqrt{L}}\left( \int_0^{\infty} e^{(L-\lambda)s}\,\Exp{|f(x+BW^0_s,0)|^2}\!\,ds \right)^{\!\nicefrac12} \frac{1}{\sqrt{M^n}}\left[ 1 + (1
		+\sqrt{M})\sqrt{\frac{L}{\lambda}} \right]^n .
		\end{split}
		\end{equation}  
	\end{cor}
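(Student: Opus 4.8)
The plan is to prove \cref{cor:all_cases} by a case distinction on whether $L=0$ or $L\in (0,\lambda)$, invoking in each case one of the two results already established for the respective regime, namely \cref{cor:L_zero_case} and \cref{prop:combined_estimate}, and then checking that the cited conclusion is literally the right-hand side of \eqref{all_cases:claim} for that value of $L$.

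As a preliminary step I would record, for every $x\in\R^d$, the auxiliary integrability property $\int_0^{\infty} e^{(L-\lambda)s}\,\EXP{|f(x+BW^0_s,0)|}\,ds<\infty$. This follows from the elementary bound $|a|\leq 1+|a|^2$ for $a\in\R$, the hypothesis $L<\lambda$, and the assumption $\int_0^{\infty}e^{(\max\{L,\eta\}-\lambda)s}\,\EXP{|f(x+BW^0_s,0)|^2}\,ds<\infty$; indeed, since $\max\{L,\eta\}\geq L$ one has $e^{(L-\lambda)s}\leq e^{(\max\{L,\eta\}-\lambda)s}$ for all $s\in [0,\infty)$, whence $\int_0^{\infty}e^{(L-\lambda)s}\,\EXP{|f(x+BW^0_s,0)|}\,ds\leq \frac{1}{\lambda-L}+\int_0^{\infty}e^{(\max\{L,\eta\}-\lambda)s}\,\EXP{|f(x+BW^0_s,0)|^2}\,ds<\infty$.

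In the case $L\in (0,\lambda)$ I would apply \cref{prop:combined_estimate}: its Lipschitz hypothesis, the path-integrability $\int_0^\infty e^{(L-\lambda)s}\,\EXP{|u(x+BW^0_s)|^2}\,ds<\infty$, and the stochastic fixed point equation for $u$ are assumed verbatim in \cref{cor:all_cases}, while the remaining hypothesis $\int_0^\infty e^{(L-\lambda)s}\,\EXP{|f(x+BW^0_s,0)|}\,ds<\infty$ was just checked; hence the conclusion \eqref{overall_error_analysis:claim} of \cref{prop:combined_estimate} is exactly \eqref{all_cases:claim}. In the case $L=0$ the Lipschitz hypothesis forces $f(x,v)=f(x,0)$ for all $x\in\R^d$, $v\in\R$, so that the fixed point equation of \cref{cor:all_cases} reduces to $u(x)=\EXPP{\int_0^\infty e^{-\lambda s}f(x+BW^0_s,0)\,ds}$; choosing $\varepsilon=\eta$ (so that $\max\{0,\eta\}=\eta$) all hypotheses of \cref{cor:L_zero_case} are met, and its conclusion \eqref{L_zero_case:claim} yields $(\EXP{|U^\theta_n(x)-u(x)|^2})^{\nicefrac12}\leq \frac{1}{\sqrt{\lambda M^n}}(\int_0^\infty e^{-\lambda s}\,\EXP{|f(x+BW^0_s,0)|^2}\,ds)^{\nicefrac12}$, which coincides with the right-hand side of \eqref{all_cases:claim} for $L=0$ since then $\sqrt{\lambda}-\sqrt{L}=\sqrt{\lambda}$ and $[1+(1+\sqrt{M})\sqrt{\nicefrac{L}{\lambda}}]^n=1$. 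Combining the two cases establishes \eqref{all_cases:claim}.

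I do not anticipate a genuine obstacle here, since the statement is essentially a bookkeeping consolidation of \cref{prop:combined_estimate} and \cref{cor:L_zero_case} into a single estimate valid for all $L\in [0,\lambda)$. The only mildly delicate points are the reduction of the assumed second-moment integrability of $f(\cdot,0)$ along Brownian paths to the first-moment integrability needed by \cref{prop:combined_estimate}, and the verification that the two cited bounds collapse to the single expression on the right-hand side of \eqref{all_cases:claim} at the respective parameter values $L>0$ and $L=0$.
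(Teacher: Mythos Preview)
Your proposal is correct and follows essentially the same approach as the paper: a case distinction on $L=0$ versus $L\in(0,\lambda)$, invoking \cref{cor:L_zero_case} (with $\varepsilon\is\eta$) and \cref{prop:combined_estimate} respectively, after noting that the $\max\{L,\eta\}$-weighted second-moment assumption dominates both the $L$- and $\eta$-weighted integrals. Your explicit verification of the first-moment integrability $\int_0^\infty e^{(L-\lambda)s}\,\EXP{|f(x+BW^0_s,0)|}\,ds<\infty$ needed for \cref{prop:combined_estimate} is a nice addition that the paper's proof leaves implicit.
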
 

\begin{proof}[Proof of \cref{cor:all_cases}] 
	First, observe that the assumption that for all 
		$ x \in \R^d $ 
	it holds that 
		$ \int_0^{\infty} e^{(\max\{L,\eta\}-\lambda)s} \,\EXP{|f(x+BW^0_s,0)|^2}\,ds < \infty $  
	guarantees for all 
		$ x \in \R^d $ 
	that 
		$ \int_0^{\infty} e^{(L-\lambda)s} \, \EXP{|f(x+BW^0_s,0)|^2}\,ds < \infty $ 
	and 
		$ \int_0^{\infty} e^{(\eta -\lambda)s} \, \EXP{|f(x+BW^0_s,0)|^2}\,ds < \infty $. 
	Therefore, we obtain that \cref{prop:combined_estimate} (with 
		$ L \is L $, 
		$ u \is u $ 
	in the notation of \cref{prop:combined_estimate}) establishes \eqref{all_cases:claim} in the case $ L \in (0,\lambda) $ and \cref{cor:L_zero_case} (with 
		$ \varepsilon \is \eta $, 
		$ u \is u $
	in the notation of \cref{cor:L_zero_case}) establishes \eqref{all_cases:claim} in the case $ L = 0 $. The proof of \cref{cor:all_cases} is thus completed.  
\end{proof} 

\begin{cor} \label{cor:error_estimate_viscosity_solutions}
	Assume \cref{setting}, 
	let	$ L,\rho \in [0,\infty) $,
	assume that
		$ \lambda \in (L+2\rho,\infty) $, 
	let $ \norm{\cdot} \colon \R^d \to [0,\infty) $ be the standard norm on $ \R^d $, 
	let $ u \in C(\R^d,\R) $, $ V \in C^2(\R^d,(0,\infty)) $ 
	satisfy for all 
		$ x \in \R^d $, 
		$ v,w \in \R $ 
	that 
		$ | f(x,v) - f(x,w) | \leq L | v-w | $ 
	and 
	\begin{equation} \label{error_estimate_viscosity_solutions:lyapunov}
	\operatorname{Trace}(BB^{*}(\operatorname{Hess} V)(x))+\tfrac{\norm{B^{*}(\nabla V)(x)}^2}{V(x)} \leq 2\rho V(x),
	\end{equation} 
	assume that 
	$ \sup_{r\in (0,\infty)} [\inf_{\norm{x}>r} V(x)] = \infty $ 
	and 
		$ \inf_{r\in (0,\infty)} [\sup_{\norm{x}>r} (\frac{|f(x,0)|+|u(x)|}{V(x)})] = 0 $, 
	and assume that $ u $ is a viscosity solution of 
	\begin{equation} 
		\lambda u(x) - \tfrac12\operatorname{Trace}\!\big(BB^{*}(\operatorname{Hess} u)(x)\big) =  f(x,u(x)) 
	\end{equation} 
	for $ x \in \R^d $ (cf.~\cref{cor:existence_and_uniqueness}). 
	Then it holds for all 
		$ \theta \in \Theta $, 
		$ n \in \N_0 $, 
		$ x \in \R^d $ 
	that 
		\begin{equation} \label{error_estimate_viscosity_solutions:claim}
		\begin{split}
		&\left( \Exp{\left| \frac{U^{\theta}_{n}(x) - u(x) }{V(x)}\right|^2 }\right)^{\!\nicefrac12} 
		\\
		& \leq  
		\frac{1}{(\sqrt{\lambda}-\sqrt{L})\sqrt{\lambda-(L+2\rho)}} \left[\sup_{y\in\R^d}\left( \frac{|f(y,0)|}{V(y)} \right) \right]
				\left[ 
		\frac{1}{\sqrt{M}}
		\left( 
		1 + (1 + \sqrt{M})\sqrt{\frac{L}{\lambda}}
		\right) 		
		\right]^n. 
		\end{split}
		\end{equation}  
\end{cor}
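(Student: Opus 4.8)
The plan is to deduce the claim from \cref{cor:all_cases} once two preliminary facts are in place: a supermartingale-type control for $V^2$ along the Brownian shifts $x\mapsto x+BW^0_t$, and a verification of the hypotheses of \cref{cor:all_cases} in the present setting.

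First I would show that \eqref{error_estimate_viscosity_solutions:lyapunov} upgrades to a Lyapunov inequality for $V^2$ with the doubled constant $2\rho$. Using $(\operatorname{Hess} V^2)(x)=2V(x)(\operatorname{Hess} V)(x)+2\,(\nabla V)(x)\otimes(\nabla V)(x)$ together with the identity $\operatorname{Trace}(BB^{*}((\nabla V)(x)\otimes(\nabla V)(x)))=\norm{B^{*}(\nabla V)(x)}^2$, one obtains $\tfrac12\operatorname{Trace}(BB^{*}(\operatorname{Hess} V^2)(x))=V(x)[\operatorname{Trace}(BB^{*}(\operatorname{Hess} V)(x))+\tfrac{\norm{B^{*}(\nabla V)(x)}^2}{V(x)}]\leq 2\rho V(x)^2$. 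Combining this with \cite[Lemmas 3.1 and 3.2]{StochasticFixedPointEquations} (applied exactly as in the proof of \cref{cor:existence_of_fixpoint}, but with $V^2$ and $2\rho$ in place of $V$ and $\rho$) then yields $\EXP{e^{-2\rho t}V(x+BW^0_t)^2}\leq V(x)^2$ for all $t\in[0,\infty)$, $x\in\R^d$. Along the way I would also record that, since $\inf_{r\in(0,\infty)}[\sup_{\norm{x}>r}(\frac{|f(x,0)|+|u(x)|}{V(x)})]=0$ and $f(\cdot,0),u,V$ are continuous with $V>0$, the constants $\mf c_f:=\sup_{y\in\R^d}(\frac{|f(y,0)|}{V(y)})$ and $\mf c_u:=\sup_{y\in\R^d}(\frac{|u(y)|}{V(y)})$ are finite, and that \eqref{error_estimate_viscosity_solutions:lyapunov} in particular implies $\tfrac12\operatorname{Trace}(BB^{*}(\operatorname{Hess} V)(x))\leq\rho V(x)$, so that \cref{cor:existence_and_uniqueness} (whose hypothesis $\lambda\in(\rho+L,\infty)$ follows from $\lambda>L+2\rho$ and $\rho\geq 0$) is applicable.

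Next I would check the hypotheses of \cref{cor:all_cases}. Fix $\eta\in(0,\lambda-(L+2\rho))$, which is nonempty since $\lambda>L+2\rho$, so that $\max\{L,\eta\}<\lambda-2\rho$. From $|f(x,0)|\leq\mf c_f V(x)$ and the $V^2$-bound above one gets, for every $x\in\R^d$, $\int_0^{\infty}e^{(\max\{L,\eta\}-\lambda)s}\,\EXP{|f(x+BW^0_s,0)|^2}\,ds\leq\mf c_f^2V(x)^2\int_0^{\infty}e^{(\max\{L,\eta\}-\lambda+2\rho)s}\,ds<\infty$, and likewise $\int_0^{\infty}e^{(L-\lambda)s}\,\EXP{|u(x+BW^0_s)|^2}\,ds\leq\mf c_u^2V(x)^2\int_0^{\infty}e^{(L-\lambda+2\rho)s}\,ds<\infty$. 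Moreover, $u$ belongs to $\{v\in C(\R^d,\R)\colon\inf_{r\in(0,\infty)}[\sup_{\norm{x}>r}(\frac{|v(x)|}{V(x)})]=0\}$ and is a viscosity solution of the stated PDE, so by the uniqueness part of Item~\eqref{existence_and_uniqueness:item1} in \cref{cor:existence_and_uniqueness} it coincides with the solution constructed there; hence Item~\eqref{existence_and_uniqueness:item2} in \cref{cor:existence_and_uniqueness}, Fubini's theorem, and the exponential law of $R^0$ (the disintegration already used in \cref{lem:elementary_integration_rules}) give $u(x)=\frac1\lambda\EXP{f(x+BW^0_{R^0},u(x+BW^0_{R^0}))}=\int_0^{\infty}e^{-\lambda s}\,\EXP{f(x+BW^0_s,u(x+BW^0_s))}\,ds$ for all $x\in\R^d$.

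Finally I would apply \cref{cor:all_cases} with this $\eta$ and this $u$ to obtain, for all $\theta\in\Theta$, $n\in\N_0$, $x\in\R^d$, the estimate $(\EXP{|U^{\theta}_n(x)-u(x)|^2})^{\nicefrac12}\leq\frac{1}{\sqrt{\lambda}-\sqrt{L}}(\int_0^{\infty}e^{(L-\lambda)s}\EXP{|f(x+BW^0_s,0)|^2}\,ds)^{\nicefrac12}\frac{1}{\sqrt{M^n}}[1+(1+\sqrt{M})\sqrt{L/\lambda}]^n$, then bound $\int_0^{\infty}e^{(L-\lambda)s}\EXP{|f(x+BW^0_s,0)|^2}\,ds\leq\mf c_f^2V(x)^2(\lambda-(L+2\rho))^{-1}$ as above, divide by $V(x)$, and rewrite $\frac{1}{\sqrt{M^n}}[1+(1+\sqrt{M})\sqrt{L/\lambda}]^n=[\tfrac{1}{\sqrt{M}}(1+(1+\sqrt{M})\sqrt{L/\lambda})]^n$; this is precisely \eqref{error_estimate_viscosity_solutions:claim} (with $\sqrt{\lambda}-\sqrt{L}>0$ since $L<\lambda$). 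I expect the only genuinely nonroutine ingredient to be the $V^2$ Lyapunov computation that feeds the exponential moment bound for $V(x+BW^0_t)^2$; the remaining work is bookkeeping, and the main care is in citing the correct integrability and uniqueness statements from the earlier results rather than in establishing any new estimate.
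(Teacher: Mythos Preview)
Your proposal is correct and follows essentially the same route as the paper's proof: derive the $V^2$ Lyapunov inequality from \eqref{error_estimate_viscosity_solutions:lyapunov}, use it (via \cite[Lemmas~3.1 and~3.2]{StochasticFixedPointEquations}) to obtain $\EXP{|V(x+BW^0_t)|^2}\leq e^{2\rho t}V(x)^2$, invoke \cref{cor:existence_and_uniqueness} for the SFPE representation of $u$, verify the integrability hypotheses of \cref{cor:all_cases} via the pointwise bounds $|f(\cdot,0)|\leq \mf c_f V$ and $|u|\leq \mf c_u V$, and then plug the resulting integral bound into the conclusion of \cref{cor:all_cases}. The paper does the same steps in the same order, with only cosmetic differences in how the auxiliary $\eta$ is chosen.
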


\begin{proof}[Proof of \cref{cor:error_estimate_viscosity_solutions}] 
	Throughout this proof let $ c\in \R$ satisfy for all 
		$ x \in \R^d $ 
	that 
		$ |u(x)| \leq c V(x) $. 
	Note that \eqref{error_estimate_viscosity_solutions:lyapunov} ensures that 
	\begin{enumerate} [(I)]
		\item \label{error_estimate_viscosity_solutions:proof_item1}
		for all 
			$ x \in \R^d $ 
		it holds that 
			$ \frac12\operatorname{Trace}(BB^{*}(\operatorname{Hess} V)(x)) \leq \rho V(x) $
		and 
		\item \label{error_estimate_viscosity_solutions:proof_item2} 
		for all 
			$ x \in \R^d $ 
		it holds that 
			\begin{equation} 
			\begin{split}
			& 
			\tfrac12\operatorname{Trace}\!\big(BB^{*}(\operatorname{Hess} V^2)(x) \big) 
			\\
			& 
			=
			\tfrac12\operatorname{Trace}\!\big(BB^{*}(
			2 V(x) (\operatorname{Hess} V)(x) + 2 (\nabla V)(x)\otimes (\nabla V)(x) ) \big) 
			\\& 
			= 
			V(x) \left[ 
			\operatorname{Trace}\!\big(BB^{*}(\operatorname{Hess} V)(x) \big) 
			+ 
			\tfrac{\norm{B^{*}(\nabla V)(x)}^2}{V(x)}
			\right] \leq 2 \rho [V(x)]^2. 
			\end{split} 
			\end{equation} 
	\end{enumerate}
	Item~\eqref{error_estimate_viscosity_solutions:proof_item1} and \cref{cor:existence_and_uniqueness} (with 
		$ d \is d $, 
		$ m \is d $, 
		$ B \is B $, 
		$ L \is L $, 
		$ \rho \is \rho $, 
		$ \lambda \is \lambda $, 
		$ \norm{\cdot} \is \norm{\cdot} $, 
		$ f \is f $, 
		$ V \is V $, 
		$ (\Omega,\mc F,\P) \is (\Omega,\mc F,\P) $, 
		$ W \is W^0 $
	in the notation of \cref{cor:existence_and_uniqueness}) demonstrate for all 
		$ x \in \R^d $ 
	that 
		\begin{equation} \label{comp_cost:exact_solution}
			u(x) = \Exp{\int_0^{\infty} e^{-\lambda s} f(x+BW^{0}_s,u(x+BW^{0}_s))\,ds}\!.
		\end{equation} 	
	Next note that \cite[Lemmas 3.1 and 3.2]{StochasticFixedPointEquations} and  Item~\eqref{error_estimate_viscosity_solutions:proof_item2} guarantee for all 
		$ t \in [0,\infty) $, 
		$ x \in \R^d $
	that 
		\begin{equation} \label{comp_cost_viscosity_solutions:Lyapunov_squared}
		\EXPP{ |V(x+BW^0_t)|^2 } \leq e^{2\rho t} [V(x)]^2 .
		\end{equation} 
	The fact that $ \sup_{y\in\R^d} (\frac{|f(y,0)|}{V(y)}) < \infty $ hence implies for all 
		$ x \in \R^d $, 
		$ \eta \in [0,\infty) $
	with 
		$ L+2\rho+\eta < \lambda $ 
	that 
		\begin{equation} \label{comp_cost_viscosity_solutions:estimate_on_f}
		\begin{split}
		& \int_0^{\infty} e^{(L+\eta-\lambda)s} \, 
		\Exp{|f(x+BW^0_s,0)|^2}\!\,ds 
		\\& 
		\leq 
		\int_0^{\infty} e^{(L+\eta-\lambda)s} 
		\left[\sup_{y\in\R^d}\left( \frac{|f(y,0)|}{V(y)} \right) \right]^2 \, \Exp{|V(x+BW^0_s)|^2}\!\,ds 
		\\& 
		\leq 
		\left[\sup_{y\in\R^d}\left( \frac{|f(y,0)|}{V(y)} \right) \right]^2  
		\left[ \int_0^{\infty} e^{(L+\eta+2\rho-\lambda)s} \,ds \right] |V(x)|^2 
		\\& 
		= 
		\frac{|V(x)|^2}{\lambda-(L+\eta+2\rho)} \left[\sup_{y\in\R^d}\left( \frac{|f(y,0)|}{V(y)} \right) \right]^2  
		< \infty. 
		\end{split} 
		\end{equation} 
	In addition, note that the fact that for all 
		$ x \in \R^d $ 
	it holds that $ |u(x)| \leq c V(x) $ and \eqref{comp_cost_viscosity_solutions:Lyapunov_squared} prove for all 
		$ x \in \R^d $ 
	that 
	\begin{equation} \label{comp_cost_special_case:square_integrability_for_u}
	\begin{split}
	\int_{0}^{\infty} e^{(L-\lambda)s}\,\Exp{|u(x+BW^0_s)|^2}\!\,ds 
	& \leq 
	\int_0^{\infty} 
	e^{(L-\lambda)s}\,c^2 \, \Exp{|V(x+BW^0_s)|^2}\!\,ds
	\\
	& \leq 
	c^2 |V(x)|^2\int_0^{\infty} 
	e^{(L+2\rho -\lambda)s} \,ds 
	= 
	\frac{[cV(x)]^2}{\lambda-(L+2\rho)} < \infty.  
	\end{split} 
	\end{equation} 
	This, \eqref{comp_cost:exact_solution}, \eqref{comp_cost_viscosity_solutions:estimate_on_f}, and \cref{cor:all_cases} assure for all 
		$ \theta \in \Theta $, 
		$ n \in \N_0 $, 
		$ x \in \R^d $ 
	that 
	\begin{equation} 
	\begin{split}
	&\left(\Exp{|U^{\theta}_{n}(x)-u(x)|^2}\right)^{\!\nicefrac12}
	\\
	& \leq 
	\frac{1}{\sqrt{\lambda}-\sqrt{L}}
	\left( 
	\int_0^{\infty} e^{(L-\lambda) s}\,\Exp{|f(x+BW^0_s,0)|^2}\!\,ds 
	\right)^{\!\nicefrac12} 
	\frac{1}{\sqrt{M^n}}
	\left[ 
	1 + (1 + \sqrt{M})\sqrt{\frac{L}{\lambda}}
	\right]^n
	\\
	& \leq 
	\frac{1}{\sqrt{\lambda}-\sqrt{L}}
	\frac{V(x)}{\sqrt{\lambda-(L+2\rho)}} \left[\sup_{y\in\R^d}\left( \frac{|f(y,0)|}{V(y)} \right) \right]
	\frac{1}{\sqrt{M^n}}
	\left[ 
	1 + (1 + \sqrt{M})\sqrt{\frac{L}{\lambda}}
	\right]^n
	.
	\end{split}
	\end{equation}
	This establishes \eqref{error_estimate_viscosity_solutions:claim}. The proof of \cref{cor:error_estimate_viscosity_solutions} is thus completed. 
\end{proof}

\begin{cor} \label{cor:error_estimate_semilinear_elliptic}
	Let $ d,M \in \N $, 
		$ B \in \R^{d\times d} $, 
		$ L \in \R $, 
		$ \lambda \in (L,\infty) $,
		$ \Theta = \cup_{n\in\N} \Z^n $, 
		$ f \in C(\R^d\times\R,\R) $ 
	satisfy for all 
		$ x \in \R^d $, 
		$ v,w \in \R $ 
	that 
		$ | f(x,v) - f(x,w) - \lambda ( v - w ) | \leq L | v-w | $, 
	assume that $ f $ is at most polynomially growing, 
	let $ \norm{\cdot} \colon \R^d \to [0,\infty) $ be the standard norm on $ \R^d $, 
	let $ ( \Omega, \mc F, \P ) $ be a probability space, 
	let $ W^{\theta} \colon [0,\infty) \times \Omega \to \R^d $, $ \theta\in\Theta$, be i.i.d.~standard Brownian motions, 
	let $ R^{\theta} \colon \Omega \to [0,\infty) $, $ \theta \in \Theta $, be i.i.d.~random variables, 
	assume for all 
		$ t\in [0,\infty) $ 
	that 	
		$ \P(R^0 \geq t) = e^{-\lambda t} $, 
	assume that 
		$ (R^{\theta})_{\theta\in\Theta} $ and $ (W^{\theta})_{\theta\in\Theta}$ are independent, 
	let $ U^{\theta}_{n} = (U^{\theta}_{n}(x))_{x\in\R^d}\colon\R^d\times\Omega\to\R$, $\theta\in\Theta$, $n\in\N_0$, 
	satisfy for all 
		$ \theta\in\Theta $, 
		$ n \in \N $, 
		$ x \in \R^d $
	that 
		$ U^{\theta}_{0}(x) = 0 $ 
	and 
		\begin{equation} \label{error_estimate_semilinear_elliptic:mlp_scheme}
		\begin{split}
		U^{\theta}_{n}(x) 
		& =  
		\frac{-1}{\lambda M^n} \left[ 
		\sum_{m=1}^{M^n} f(x+BW^{(\theta,0,m)}_{R^{(\theta,0,m)}},0) 
		\right]
		\\
		& 
		+ \sum_{k=1}^{n-1} \frac{1}{\lambda M^{(n-k)}} 
		\Bigg[ 
		\sum_{m=1}^ {M^{(n-k)}} 
		\bigg( 
		\lambda \Big[  U^{(\theta,k,m)}_{k}(x+BW^{(\theta,k,m)}_{R^{(\theta,k,m)}})
		- 
		U^{(\theta,k,-m)}_{k-1}(x+BW^{(\theta,k,m)}_{R^{(\theta,k,m)}}) \Big]
		\\& 
		\qquad \qquad \qquad \qquad \qquad - 
		\Big[   
		f\!\left( x+BW^{(\theta,k,m)}_{R^{(\theta,k,m)}}, 
		U^{(\theta,k,m)}_{k}(x+BW^{(\theta,k,m)}_{R^{(\theta,k,m)}}) \right)
		\\
		& \qquad \qquad \qquad \qquad \qquad \qquad  
		- f\!\left(  
		x+BW^{(\theta,k,m)}_{R^{(\theta,k,m)}}, 
		U^{(\theta,k,-m)}_{k-1}(x+BW^{(\theta,k,m)}_{R^{(\theta,k,m)}})
		\right) 
		\Big]
		\bigg)
		\Bigg], 
		\end{split} 
		\end{equation}
	and 
	let $ u \in \{ v \in C(\R^d,\R) \colon (\Forall\varepsilon\in (0,\infty)\colon
		[\sup_{x=(x_1,\ldots,x_d)\in\R^d} (|v(x)|\exp(-\varepsilon \sum_{i=1}^d |x_i|) )] < \infty) \} $ 
	be a viscosity solution of 
		\begin{equation} \label{error_estimate_semilinear_elliptic:viscosity_solution}
		\tfrac12\operatorname{Trace}\!\big(BB^{*}(\operatorname{Hess} u)(x)\big) = f(x,u(x)) 
		\end{equation}  
	for $ x \in \R^d $ (cf.~\cref{cor:existence_and_uniqueness_polynomial_growing_nonlinearity}). 
	Then it holds for all 
		$ n \in \N_0 $, 
		$ \theta \in \Theta $, 
		$ x \in \R^d $, 
		$ \varepsilon \in (0,\infty) $
	with 
		$ 2 (\varepsilon^2+\varepsilon d) [\sup_{y\in\R^d\setminus\{0\}} (\norm{By}\norm{y}^{-1})]^2 < \lambda - L $
	that 
		\begin{equation} \label{error_estimate_semilinear_elliptic:claim}
		\begin{split}
		\left( \Exp{\left| {U^{\theta}_{n}(x) - u(x) }\right|^2 }\right)^{\!\nicefrac12} 
		& \leq  
		\left[ 
		\frac{1}{\sqrt{M}}
		\left( 
		1 + (1 + \sqrt{M})\sqrt{\frac{L}{\lambda}}
		\right) 		
		\right]^n \left[\sup_{y\in\R^d}\left( \frac{|f(y,0)|}{\exp(\varepsilon ( 1 + \norm{y}^2 )^{\nicefrac12})} \right) \right]
		\\
		& \cdot \frac{\exp\!\left(\varepsilon (1 + \norm{x}^2)^{\nicefrac12} \right)}{(\sqrt{\lambda}-\sqrt{L})\sqrt{\lambda-(L+2(\varepsilon^2+\varepsilon d)[\sup_{y\in\R^d\setminus\{0\}}(\norm{By}\norm{y}^{-1})]^2)}}.
		\end{split}
		\end{equation}  
\end{cor}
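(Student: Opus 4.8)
The plan is to reduce the assertion to \cref{cor:error_estimate_viscosity_solutions} by recasting the elliptic PDE \eqref{error_estimate_semilinear_elliptic:viscosity_solution} into the resolvent-type form treated there. First I would set $ g \in C(\R^d\times\R,\R) $, $ g(x,v) = \lambda v - f(x,v) $, and, for each $ \varepsilon \in (0,\infty) $, $ V_{\varepsilon} \in C^2(\R^d,(0,\infty)) $, $ V_{\varepsilon}(x) = \exp(\varepsilon(1+\norm{x}^2)^{\nicefrac12}) $. The hypothesis $ |f(x,v)-f(x,w)-\lambda(v-w)| \leq L|v-w| $ gives $ |g(x,v)-g(x,w)| \leq L|v-w| $ and, since the left-hand side is nonnegative while $ L|v-w|<0 $ whenever $ L<0 $ and $ v\neq w $, forces $ L\in [0,\infty) $; moreover $ g(\cdot,0) = -f(\cdot,0) $ is at most polynomially growing. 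Substituting $ g(x,v) = \lambda v - f(x,v) $ into the MLP recursion \eqref{mlp_scheme} of \cref{setting} (with $ f \is g $) reproduces \eqref{error_estimate_semilinear_elliptic:mlp_scheme} termwise: the term $ \tfrac{1}{\lambda M^n}\sum_{m} g(x+BW^{(\theta,0,m)}_{R^{(\theta,0,m)}},0) $ equals $ \tfrac{-1}{\lambda M^n}\sum_{m} f(x+BW^{(\theta,0,m)}_{R^{(\theta,0,m)}},0) $, and each summand $ g(\cdot,U^{(\theta,k,m)}_k(\cdot)) - g(\cdot,U^{(\theta,k,-m)}_{k-1}(\cdot)) $ equals $ \lambda[U^{(\theta,k,m)}_k(\cdot) - U^{(\theta,k,-m)}_{k-1}(\cdot)] - [f(\cdot,U^{(\theta,k,m)}_k(\cdot)) - f(\cdot,U^{(\theta,k,-m)}_{k-1}(\cdot))] $; hence \cref{setting} holds with $ f $ replaced by $ g $.

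Next I would record that, exactly as in the proof of \cref{cor:existence_and_uniqueness_strange_lipschitz}, for $ u\in C(\R^d,\R) $ the property that $ u $ is a viscosity solution of $ \tfrac12\operatorname{Trace}(BB^{*}(\operatorname{Hess} u)(x)) = f(x,u(x)) $ for $ x\in\R^d $ is equivalent to the property that $ u $ is a viscosity solution of $ \lambda u(x) - \tfrac12\operatorname{Trace}(BB^{*}(\operatorname{Hess} u)(x)) = g(x,u(x)) $ for $ x\in\R^d $; thus the given $ u $ is a viscosity solution of the latter. I would then fix $ \varepsilon \in (0,\infty) $ with $ 2(\varepsilon^2+\varepsilon d)[\sup_{y\in\R^d\setminus\{0\}}(\norm{By}\norm{y}^{-1})]^2 < \lambda - L $ and verify the hypotheses of \cref{cor:error_estimate_viscosity_solutions} in the instance of \cref{setting} with $ f\is g $, applied with $ V\is V_{\varepsilon} $ and $ \rho\is (\varepsilon^2+\varepsilon d)[\sup_{y\in\R^d\setminus\{0\}}(\norm{By}\norm{y}^{-1})]^2 $: adding Items~\eqref{special_lyapunov_functions:item1} and \eqref{special_lyapunov_functions:item2} in \cref{lem:special_lyapunov_functions} and using $ 2\varepsilon^2+\varepsilon d\leq 2(\varepsilon^2+\varepsilon d) $ yields \eqref{error_estimate_viscosity_solutions:lyapunov}; the choice of $ \varepsilon $ gives $ \lambda\in (L+2\rho,\infty) $; $ \sup_{r\in (0,\infty)}[\inf_{\norm{x}>r} V_{\varepsilon}(x)]=\infty $ is immediate; and $ \inf_{r\in (0,\infty)}[\sup_{\norm{x}>r}(\tfrac{|g(x,0)|+|u(x)|}{V_{\varepsilon}(x)})]=0 $ follows since $ g(\cdot,0) $ is polynomially growing and since, by Cauchy--Schwarz, $ \sum_{i=1}^d|x_i|\leq\sqrt{d}\,\norm{x} $, so the membership of $ u $ in its growth class gives $ |u(x)|\leq C_{\eta}\exp(\eta\sqrt{d}\,\norm{x}) $ for every $ \eta\in (0,\infty) $ and some $ C_{\eta}\in\R $, which is negligible against $ V_{\varepsilon} $ once $ \eta<\varepsilon/\sqrt{d} $.

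Finally, \cref{cor:error_estimate_viscosity_solutions} bounds $ (\Exp{|(U^{\theta}_n(x)-u(x))/V_{\varepsilon}(x)|^2})^{\nicefrac12} $; multiplying through by $ V_{\varepsilon}(x)=\exp(\varepsilon(1+\norm{x}^2)^{\nicefrac12}) $, substituting the chosen $ \rho $ and $ V_{\varepsilon} $, and using $ |g(y,0)|=|f(y,0)| $ yields exactly \eqref{error_estimate_semilinear_elliptic:claim}. I expect the only genuinely delicate step to be the termwise identification of the two MLP recursions after the substitution $ g(x,v)=\lambda v-f(x,v) $ — this is purely algebraic bookkeeping, but one must keep track of the $ \tfrac{1}{\lambda} $ prefactors and of the sign flip in the initialization term — while the remaining work is a routine transfer of the growth, Lipschitz, and Lyapunov hypotheses.
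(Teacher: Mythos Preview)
Your proposal is correct and follows essentially the same approach as the paper: define $g(x,v)=\lambda v-f(x,v)$ and $V_{\varepsilon}(x)=\exp(\varepsilon(1+\norm{x}^2)^{\nicefrac12})$, verify that the MLP recursion \eqref{error_estimate_semilinear_elliptic:mlp_scheme} coincides with \cref{setting} for $g$, rewrite the PDE in resolvent form, invoke \cref{lem:special_lyapunov_functions} to obtain the Lyapunov condition with $\rho=(\varepsilon^2+\varepsilon d)[\sup_{y\neq 0}(\norm{By}\norm{y}^{-1})]^2$, and then apply \cref{cor:error_estimate_viscosity_solutions}. Your extra remarks---that the Lipschitz hypothesis forces $L\geq 0$ and the Cauchy--Schwarz step $\sum_i|x_i|\leq\sqrt{d}\,\norm{x}$ needed to control $|u(x)|/V_{\varepsilon}(x)$---fill in details the paper leaves implicit, but the argument is otherwise identical.
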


\begin{proof}[Proof of \cref{cor:error_estimate_semilinear_elliptic}]
	Throughout this proof let 
		$ g \colon \R^d \times \R \to \R $ 
	satisfy for all 
		$ x \in \R^d $, 
		$ v \in \R $ 
	that 
		$ g(x,v) = \lambda v - f(x,v) $, 
	let $ \beta \in [0,\infty) $ satisfy  
		$ \beta = [\sup_{y\in\R^d\setminus\{0\}} (\frac{\norm{By}}{\norm{y}})]^2$, 
	and let 
		$ V_{\varepsilon} \colon \R^d \to (0,\infty) $, $ \varepsilon \in (0,\infty) $, 
	satisfy for all 
		$ \varepsilon \in (0,\infty) $, 
		$ x \in \R^d $ 
	that 
		\begin{equation} 
		V_{\varepsilon}(x) = \exp\!\left( \varepsilon (1+\norm{x}^2)^{\!\nicefrac12} \right)\!.
		\end{equation} 
	Observe that \cref{lem:special_lyapunov_functions} implies for all 
		$ \varepsilon \in (0,\infty) $, 
		$ x \in \R^d $ 
	that 
		\begin{equation} \label{error_estimate_semilinear_elliptic:lyapunov}
		\operatorname{Trace}\!\big( BB^{*}(\operatorname{Hess} V_{\varepsilon})(x) \big) 
		+ 
		\frac{\norm{B^{*}(\nabla V_{\varepsilon})(x)}^2}{V_{\varepsilon}(x)} 
		\leq 
		2(\varepsilon^2 + \varepsilon d) \beta 
		V_{\varepsilon}(x).  
		\end{equation}
	In addition, note that the fact that for all 
		$ x \in \R^d $, 
		$ v \in \R $ 
	it holds that 
		$ g(x,v) = \lambda v - f(x,v) $
	and \eqref{error_estimate_semilinear_elliptic:mlp_scheme} ensure for all 
		$ \theta \in \Theta $, 
		$ n \in \N $, 
		$ x \in \R^d $ 
	that 
		\begin{equation} \label{error_estimate_semilinear_elliptic:transformed_scheme}
		\begin{split}
		& 
		U^{\theta}_{n}(x) 
		=  
		\sum_{k=1}^{n-1} \frac{1}{\lambda M^{(n-k)}} 
		\Bigg[ \sum_{m=1}^{M^{(n-k)}} \bigg( 
		g\!\left( x+BW^{(\theta,k,m)}_{R^{(\theta,k,m)}}, 
		U^{(\theta,k,m)}_{k}(x+BW^{(\theta,k,m)}_{R^{(\theta,k,m)}}) \right)
		\\
		& - g\!\left(  
		x+BW^{(\theta,k,m)}_{R^{(\theta,k,m)}}, 
		U^{(\theta,k,-m)}_{k-1}(x+BW^{(\theta,k,m)}_{R^{(\theta,k,m)}})
		\right) 
		\bigg) 
		\Bigg]
		+ \frac{1}{\lambda M^n} \left[ 
		\sum_{m=1}^{M^n} g(x+BW^{(\theta,0,m)}_{R^{(\theta,0,m)}},0) 
		\right]
		\!.  
		\end{split} 
		\end{equation} 
	Moreover, observe that the fact that for all 
		$ x \in \R^d $, 
		$ v \in \R $ 
	it holds that 
		$ g(x,v) = \lambda v - f(x,v) $ 
	and \eqref{error_estimate_semilinear_elliptic:viscosity_solution} ensure that 
		$ u $ 
	is a viscosity solution of 
		\begin{equation} \label{error_estimate_semilinear_elliptic:equation_solved}
		\lambda u(x) - \tfrac12 \operatorname{Trace}\!\big( BB^{*}(\operatorname{Hess} u)(x) \big) = g(x,u(x)) 
		\end{equation} 
	for $ x \in \R^d $.
	In addition, note that the fact that for all 
		$ \varepsilon \in (0,\infty) $ 
	it holds that 
		$ \sup_{ x \in \R^d } ( \frac{ |g(x,0)|+|u(x)| }{ V_{\varepsilon}(x) } ) < \infty $ 
	guarantees for all 
		$ \varepsilon \in (0,\infty) $ 
	that 
		$ \inf_{ r \in (0,\infty) } [ \sup_{ \norm{x}>r } ( \frac{ |g(x,0)|+|u(x)| }{ V_{\varepsilon}(x) } ) ] = 0 $. 
	\cref{cor:error_estimate_viscosity_solutions} (with 
		$ d \is d $, 
		$ M \is M $, 
		$ \lambda \is \lambda $, 
		$ \Theta \is \Theta $, 
		$ B \is B $, 
		$ f \is g $, 
		$ (\Omega,\mc F,\P) \is (\Omega,\mc F,\P) $, 
		$ (W^{\theta})_{\theta\in\Theta} \is (W^{\theta})_{\theta\in\Theta} $, 
		$ (R^{\theta})_{\theta\in\Theta} \is (R^{\theta})_{\theta \in \Theta} $, 
		$ L \is L $, 
		$ \rho \is (\varepsilon^2+\varepsilon d)\beta $, 
		$ u \is u $, 
		$ V \is V_{\varepsilon} $
	for $ \varepsilon \in (0,\infty) $ 
	in the notation of \cref{cor:error_estimate_viscosity_solutions}), \eqref{error_estimate_semilinear_elliptic:lyapunov},  \eqref{error_estimate_semilinear_elliptic:transformed_scheme}, and \eqref{error_estimate_semilinear_elliptic:equation_solved}  therefore demonstrate for all
		$ \theta \in \Theta $, 
		$ n \in \N_0 $, 
		$ x \in \R^d $,  
		$ \varepsilon \in (0,\infty) $ 
	with 
		$ 2(\varepsilon^2 + \varepsilon d ) \beta < \lambda - L $ 
	that 
		\begin{equation} 
		\begin{split}
		& 
		\frac{\left(\Exp{|U^{\theta}_n(x)-u(x)|^2}\right)^{\!\nicefrac12}}{V_{\varepsilon}(x)} 
		\\& 
		\leq 
		\frac{		\left[ \sup_{y\in\R^d} \left( \frac{|g(y,0)|}{V_{\varepsilon}(y)} \right) \right]}{(\sqrt{\lambda}-\sqrt{L})\sqrt{\lambda-(L+2(\varepsilon^2+\varepsilon d)[\sup_{y\in\R^d\setminus\{0\}}(\frac{\norm{By}}{\norm{y}})]^2)}}
		\left[ \frac{1}{\sqrt{M}}\left( 1 + (1+\sqrt{M})\sqrt{\frac{L}{\lambda}} \right) \right]^n . 
		\end{split} 
		\end{equation} 
	This and the fact that for all 
		$ y \in \R^d $ 
	it holds that 
		$ -f(y,0) = g(y,0) $ 
	establish \eqref{error_estimate_semilinear_elliptic:claim}. The proof of \cref{cor:error_estimate_semilinear_elliptic} is thus completed. 
\end{proof}

\subsection{Computational cost analysis for MLP approximations} \label{subsec:computational_effort}

\begin{lemma} \label{lem:comp_cost_gronwall}
	Let $ \alpha,\beta \in [0,\infty) $, 
	$ M \in [1,\infty) $, 
	$ (C_n)_{n\in\N_0} \subseteq [0,\infty) $ 
	satisfy for all 
	$ n \in \N $ 
	that 
	\begin{equation} \label{comp_cost_gronwall:recursion_relation}
	C_n \leq \alpha M^n + \sum_{k=1}^{n-1} M^{(n-k)} ( \beta + C_k + C_{k-1} ).  
	\end{equation}
	Then it holds for all 
	$ n \in \N $ 
	that 
	\begin{equation} \label{comp_cost_gronwall:claim}
		C_n \leq \left[\frac{\alpha + \beta + C_0 }{2}\right] (2M+1)^n 
			\leq \left[\frac{\alpha + \beta + C_0 }{2}\right] (3M)^n. 
	\end{equation} 
\end{lemma}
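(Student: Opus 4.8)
The plan is to run a strong induction on $n$, but not directly on $(C_n)_{n\in\N_0}$: since \eqref{comp_cost_gronwall:recursion_relation} is only an inequality and nothing forces $C_0$ to be small, one first replaces $(C_n)_{n\in\N_0}$ by a dominating sequence satisfying a genuine three-term recursion. Throughout write $c=\tfrac12(\alpha+\beta+C_0)$, so that $\alpha,\beta,C_0\in[0,2c]$. Note also that the second inequality in \eqref{comp_cost_gronwall:claim} is immediate, because $M\geq 1$ yields $0\leq 2M+1\leq 3M$; hence it suffices to prove $C_n\leq c(2M+1)^n$ for all $n\in\N$.

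First I would introduce $(\bar C_n)_{n\in\N_0}\subseteq[0,\infty)$ by $\bar C_0=C_0$ and $\bar C_n=\alpha M^n+\sum_{k=1}^{n-1}M^{n-k}(\beta+C_k+C_{k-1})$ for $n\in\N$. Then \eqref{comp_cost_gronwall:recursion_relation} gives $C_n\leq\bar C_n$ for all $n\in\N$, and $\bar C_1=\alpha M$. The point is that for $n\geq 2$ one can peel off the summand with $k=n-1$ to obtain $\bar C_n=M\bar C_{n-1}+M(\beta+C_{n-1}+C_{n-2})$, and then, using $C_{n-1}\leq\bar C_{n-1}$ together with $C_{n-2}\leq\bar C_{n-2}$ (which for $n=2$ reads $C_0=\bar C_0$), one arrives at
\begin{equation}
\bar C_n\leq 2M\bar C_{n-1}+M\bar C_{n-2}+M\beta\qquad\text{for all }n\in\N\cap[2,\infty).
\end{equation}

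It then remains to show $\bar C_n\leq c(2M+1)^n$ for all $n\in\N$, which I would establish by strong induction with the two base cases $n=1$ and $n=2$. For $n=1$ this follows from $\alpha M\leq 2cM\leq c(2M+1)$. For $n=2$ one inserts $\bar C_1=\alpha M$ and $\bar C_0=C_0$ into the three-term estimate to get $\bar C_2\leq 2\alpha M^2+M\beta+MC_0$, and then compares the coefficients of $\alpha$, $\beta$, and $C_0$ separately with those in $c(2M+1)^2=\tfrac12(\alpha+\beta+C_0)(4M^2+4M+1)$, using the elementary inequalities $4M^2\leq(2M+1)^2$ and $2M\leq(2M+1)^2$. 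For the inductive step $n\geq 3$, assuming the bound for $n-1$ and $n-2$, the three-term estimate gives $\bar C_n\leq cM(2M+1)^{n-2}(4M+3)+M\beta$, and since $(2M+1)^2-M(4M+3)=M+1$ the claim reduces to $M\beta\leq c(2M+1)^{n-2}(M+1)$; this holds because $n-2\geq 1$ forces $(2M+1)^{n-2}\geq 2M+1$ and $\tfrac12\beta(2M+1)(M+1)\geq M\beta$ since $(2M+1)(M+1)=2M^2+3M+1\geq 2M$.

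Finally, combining $C_n\leq\bar C_n\leq c(2M+1)^n$ with $(2M+1)^n\leq(3M)^n$ yields both inequalities in \eqref{comp_cost_gronwall:claim}. The only place that needs genuine care is the passage from $(C_n)_{n\in\N_0}$ to $(\bar C_n)_{n\in\N_0}$ and the associated one-step unfolding of the recursion: carrying the unfolded three-term recursion is precisely what makes the strong induction close despite $C_0$ being unconstrained. Every other step reduces to the handful of elementary estimates for $M\geq 1$ recorded above, so there is no substantial obstacle beyond bookkeeping.
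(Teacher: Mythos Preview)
Your proof is correct. The unfolding step $\bar C_n = M\bar C_{n-1} + M(\beta + C_{n-1} + C_{n-2})$, the resulting three-term inequality $\bar C_n \leq 2M\bar C_{n-1} + M\bar C_{n-2} + M\beta$, and the two base cases plus inductive step all check out; in particular the coefficient comparison $(2M+1)^2 - M(4M+3) = M+1$ and the estimate $\tfrac12\beta(2M+1)(M+1)\geq M\beta$ are exactly what is needed.

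Your route is genuinely different from the paper's. The paper normalizes by $M^n$, setting $c_n = C_n/M^n$, and then introduces an auxiliary quantity $S_n = \tfrac{\beta n}{1+1/M} + \tfrac{\beta}{(1+1/M)^2} + \tfrac{\alpha-c_0}{1+1/M} + \sum_{k=0}^n c_k$ engineered so that the recursion collapses to $S_n \leq (2+\tfrac{1}{M})S_{n-1}$; from this geometric growth it backs out the bound on $c_n$ and hence on $C_n$. Your approach instead stays with the original scale, passes to the dominating sequence $\bar C_n$, derives a clean second-order linear recursion, and closes it by strong induction with explicit base cases. Your argument is more elementary and transparent for this particular lemma; the paper's partial-sum device is somewhat more delicate but is a standard pattern for discrete Gronwall-type estimates and may be more familiar to readers in that context. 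Both yield the same constant $\tfrac12(\alpha+\beta+C_0)(2M+1)^n$.
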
 

\begin{proof}[Proof of \cref{lem:comp_cost_gronwall}] 
	Throughout this proof let 
		$ c_n \in [0,\infty) $, $ n \in \N_0 $, 
	satisfy for all 
		$ n \in \N_0 $ 
	that 
		$ c_n = \frac{C_n}{M^n} $
	and let 
		$ S_n \in [0,\infty) $, $ n \in \N_0 $, 
	satisfy for all 
		$ n \in \N_0 $ 
	that 
		\begin{equation} \label{comp_cost_gronwall:S_n_definition}
		S_n = 
		\frac{\beta n}{1+\nicefrac{1}{M}} 
		+ \frac{\beta}{(1+\nicefrac{1}{M})^2}
		+ \frac{\alpha-c_0}{1+\nicefrac{1}{M}} 
		+ \sum_{k=0}^n c_k. 
		\end{equation} 
	Note that \eqref{comp_cost_gronwall:recursion_relation}, the fact that $ (c_n)_{n\in\N_0} \subseteq [0,\infty) $, and the assumption that $ M \geq 1 $ ensure for all 
		$ n \in \N $ 
	that 
		\begin{equation} \label{comp_cost_gronwall:transform}
		\begin{split}
		c_n 
		& \leq \alpha + \sum_{k=1}^{n-1} M^{-k}(\beta + C_{k} + C_{k-1}) 
		\leq \alpha + (n-1) \beta + \sum_{k=1}^{n-1} ( c_k + \frac{c_{k-1}}{M} )
		\\
		& = \alpha + (n-1) \beta + \sum_{k=1}^{n-1} c_k + \frac{1}{M} \sum_{k=0}^{n-2} c_{k} 
		\leq 
		\alpha - c_0 + (n-1) \beta + (1+\frac{1}{M})\sum_{k=0}^{n-1} c_k 
		\\
		& = (1+\frac{1}{M}) S_{n-1} - \frac{\beta}{1+\nicefrac{1}{M}} . 
		\end{split} 
		\end{equation}
	Combining this with \eqref{comp_cost_gronwall:S_n_definition} proves for all 
		$ n \in \N \cap [2,\infty) $ 
	that 
		\begin{align} 
		& S_n 
		= S_{n-1} + \frac{\beta}{1+\nicefrac{1}{M}} + c_n  
		\leq (2+\frac{1}{M}) S_{n-1} . 
		\end{align}
	This implies for all 
		$ n \in \N $ 
	that 
		$ S_{n} \leq (2+\frac{1}{M})^{n-1} S_1 $. 
	Combining this with
	\eqref{comp_cost_gronwall:transform} and the assumption that $ \beta \geq 0 $ demonstrates for all 
		$ n \in \N \cap [2,\infty) $ 
	that 
		\begin{equation} \label{comp_cost_gronwall:case_geq_two}
		\begin{split} 
		c_n
		& 
		\leq (1+\nicefrac{1}{M}) S_{n-1} - \frac{\beta}{1+\nicefrac{1}{M}}
		\leq (1+\nicefrac{1}{M}) S_{n-1}
		\leq (1+\nicefrac{1}{M})(2+\nicefrac{1}{M})^{n-2} S_1 . 
		\end{split}
		\end{equation} 
	Next observe that \eqref{comp_cost_gronwall:transform} implies that 
		$ c_1 \leq \alpha $. 
	Combining this with \eqref{comp_cost_gronwall:S_n_definition} ensures that 
		\begin{equation} 
		\begin{split}
		S_1 
		& = \frac{\beta}{1+\nicefrac{1}{M}} + \frac{\beta}{(1+\nicefrac{1}{M})^2} + \frac{\alpha-c_0}{1+\nicefrac{1}{M}} 
		+ c_0 + c_1
		\\
		& \leq \frac{\beta}{1+\nicefrac{1}{M}} + \frac{\beta}{(1+\nicefrac{1}{M})^2} + \frac{\alpha-c_0}{1+\nicefrac{1}{M}}
		+ c_0 + \alpha 
		\\
		& = \frac{2+\nicefrac{1}{M}}{1+\nicefrac{1}{M}}\left[ \frac{\beta}{1+\nicefrac{1}{M}} + \alpha \right] 
		+ \frac{\nicefrac{1}{M}}{1+\nicefrac{1}{M}} C_0 
		\leq \frac{2+\nicefrac{1}{M}}{1+\nicefrac{1}{M}} ( \alpha + \beta + c_0 ). 
		\end{split}
		\end{equation} 
	This and \eqref{comp_cost_gronwall:case_geq_two} show for all 
		$ n \in \N \cap [2,\infty) $
	that 
		\begin{equation} 
		\begin{split} 
		c_n 
		& 
		\leq (2+\nicefrac{1}{M})^{n-2} (1+\nicefrac{1}{M}) \left[\frac{2+\nicefrac{1}{M}}{1+\nicefrac{1}{M}}\right](\alpha + \beta + c_0 ) 
		= (2+\nicefrac{1}{M})^{n-1} ( \alpha + \beta + c_0 )
		\\
		& \leq (2+\nicefrac{1}{M})^n \left[\frac{\alpha+\beta+c_0}{2}\right]\!.
		\end{split} 
		\end{equation} 
	Combining this with the fact that $ c_1 \leq \alpha \leq (2+\nicefrac{1}{M}) \frac{\alpha+\beta+c_0}{2} $ and the fact that for all 
		$ n \in \N_0 $ 
	it holds that $ C_n = c_n M^n $ establishes \eqref{comp_cost_gronwall:claim}. The proof of \cref{lem:comp_cost_gronwall} is thus completed. 
\end{proof} 

\begin{lemma} \label{lem:elementary_comp_cost_estimate}
	Let $ m \in \N_0 $, 
		$ \alpha,\beta,\kappa_1,\kappa_2 \in \R $ 
	satisfy  	
		$ 0 < \alpha < 1 < \beta $, 
	let $ c_n, e_n \in [0,\infty) $, $ n \in \N_0 \cap [m,\infty) $, 
	assume for all $ n \in \N_0 \cap [m,\infty) $ that 
		$ e_n \leq \kappa_1 \alpha^n $
	and  
		$ c_n \leq \kappa_2 \beta^n $, 
	and let 
		$ N \colon (0,1] \to \N_0 \cap [m,\infty) $ 
	satisfy for all 
		$ \varepsilon \in (0,1] $ 
	that 
		\begin{equation} \label{elementary:N_epsilon}
		N_{\varepsilon} = \min\{n\in\N_0\cap [m,\infty) \colon \kappa_1 \alpha^n \leq \varepsilon \}. 
		\end{equation} 
	Then it holds for all 
		$ \varepsilon \in (0,1] $ 
	that $ e_{N_{\varepsilon}} \leq \varepsilon $ and  
		\begin{equation} \label{elementary:claim}
		c_{N_{\varepsilon}} \leq \kappa_2 \max\left\{ \beta^m, \beta \kappa_1^{(\frac{\ln(\beta)}{\ln(1/\alpha)})} \right\} \left[ \frac{1}{\varepsilon} \right]^{(\frac{\ln(\beta)}{\ln(1/\alpha)})}. 
		\end{equation} 
\end{lemma}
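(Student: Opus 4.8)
The plan is to directly unwind the definition of $N_\varepsilon$ in \eqref{elementary:N_epsilon} and combine it with the two assumed bounds $e_n \le \kappa_1 \alpha^n$ and $c_n \le \kappa_2\beta^n$. First I would establish the claim $e_{N_\varepsilon} \le \varepsilon$: by the very definition of $N_\varepsilon$ as the minimal $n \in \N_0 \cap [m,\infty)$ with $\kappa_1\alpha^n \le \varepsilon$, we have $\kappa_1 \alpha^{N_\varepsilon} \le \varepsilon$, and hence $e_{N_\varepsilon} \le \kappa_1 \alpha^{N_\varepsilon} \le \varepsilon$. (One should note that the set $\{n\in\N_0\cap[m,\infty)\colon \kappa_1\alpha^n \le \varepsilon\}$ is non-empty since $0<\alpha<1$ forces $\kappa_1\alpha^n \to 0$, so the minimum exists; this is the only mild well-definedness point.)

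Next I would bound $N_\varepsilon$ from above. The key observation is that either $N_\varepsilon = m$, or $N_\varepsilon > m$ and then by minimality $\kappa_1 \alpha^{N_\varepsilon - 1} > \varepsilon$, i.e.\ $\alpha^{N_\varepsilon-1} > \varepsilon/\kappa_1$. Taking logarithms (using $\ln(1/\alpha) > 0$ since $0<\alpha<1$) gives $(N_\varepsilon - 1)\ln(\alpha) > \ln(\varepsilon/\kappa_1)$, hence $N_\varepsilon - 1 < \frac{\ln(\kappa_1/\varepsilon)}{\ln(1/\alpha)}$, so $N_\varepsilon < 1 + \frac{\ln(\kappa_1/\varepsilon)}{\ln(1/\alpha)}$. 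This gives, using $\beta > 1$ so that $\ln(\beta)>0$,
\begin{equation}
\beta^{N_\varepsilon} < \beta \cdot \beta^{\frac{\ln(\kappa_1/\varepsilon)}{\ln(1/\alpha)}} = \beta \cdot \exp\!\Big(\tfrac{\ln(\beta)}{\ln(1/\alpha)}\ln(\kappa_1/\varepsilon)\Big) = \beta\, (\kappa_1/\varepsilon)^{\frac{\ln(\beta)}{\ln(1/\alpha)}} = \beta\, \kappa_1^{\frac{\ln(\beta)}{\ln(1/\alpha)}} \big[\tfrac{1}{\varepsilon}\big]^{\frac{\ln(\beta)}{\ln(1/\alpha)}}.
\end{equation}
In the remaining case $N_\varepsilon = m$ we simply have $\beta^{N_\varepsilon} = \beta^m \le \beta^m [1/\varepsilon]^{\frac{\ln(\beta)}{\ln(1/\alpha)}}$, since $1/\varepsilon \ge 1$ and the exponent $\frac{\ln(\beta)}{\ln(1/\alpha)}$ is positive. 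Combining the two cases yields $\beta^{N_\varepsilon} \le \max\{\beta^m, \beta\kappa_1^{\frac{\ln(\beta)}{\ln(1/\alpha)}}\}[1/\varepsilon]^{\frac{\ln(\beta)}{\ln(1/\alpha)}}$.

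Finally I would feed this into the assumed bound $c_{N_\varepsilon} \le \kappa_2 \beta^{N_\varepsilon}$ to obtain \eqref{elementary:claim} directly. There is no real obstacle here — the only thing requiring a moment's care is the case split on whether $N_\varepsilon = m$ (so that the minimality-based lower bound $\kappa_1\alpha^{N_\varepsilon-1}>\varepsilon$ is available only when $N_\varepsilon>m$), and being careful that $1/\varepsilon \ge 1$ together with positivity of the exponent makes the $\beta^m$ term fit the claimed form. Everything else is a routine logarithm manipulation, so I expect the proof to be short.

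\begin{proof}[Proof of \cref{lem:elementary_comp_cost_estimate}]
	Throughout this proof let $ \varepsilon \in (0,1] $ and let $ \gamma \in (0,\infty) $ be given by $ \gamma = \frac{\ln(\beta)}{\ln(1/\alpha)} $ (note that $ \ln(1/\alpha) > 0 $ and $ \ln(\beta) > 0 $ since $ 0 < \alpha < 1 < \beta $).
	First, observe that the assumption that $ 0 < \alpha < 1 $ ensures that $ \limsup_{n\to\infty} (\kappa_1 \alpha^n) \leq 0 < \varepsilon $, which implies that the set $ \{ n\in\N_0\cap [m,\infty) \colon \kappa_1 \alpha^n \leq \varepsilon \} $ is a non-empty subset of $ \N_0 $, and hence $ N_{\varepsilon} $ in \eqref{elementary:N_epsilon} is well defined.
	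In addition, note that \eqref{elementary:N_epsilon} ensures that $ \kappa_1 \alpha^{N_{\varepsilon}} \leq \varepsilon $.
	The assumption that $ e_n \leq \kappa_1 \alpha^n $ for all $ n \in \N_0 \cap [m,\infty) $ therefore proves that $ e_{N_{\varepsilon}} \leq \kappa_1 \alpha^{N_{\varepsilon}} \leq \varepsilon $.
	Next we distinguish between the case $ N_{\varepsilon} = m $ and the case $ N_{\varepsilon} > m $.
	In the case $ N_{\varepsilon} = m $ we have, using that $ \frac{1}{\varepsilon} \geq 1 $ and that $ \gamma > 0 $, that
	\begin{equation}
	\beta^{N_{\varepsilon}} = \beta^m \leq \beta^m \left[\frac{1}{\varepsilon}\right]^{\gamma}.
	\end{equation}
	In the case $ N_{\varepsilon} > m $ the minimality of $ N_{\varepsilon} $ in \eqref{elementary:N_epsilon} ensures that $ \kappa_1 \alpha^{N_{\varepsilon}-1} > \varepsilon $, hence $ \alpha^{N_{\varepsilon}-1} > \frac{\varepsilon}{\kappa_1} $.
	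Taking logarithms and using that $ \ln(\alpha) = -\ln(1/\alpha) < 0 $ therefore yields
	\begin{equation}
	-(N_{\varepsilon}-1)\ln(1/\alpha) = (N_{\varepsilon}-1)\ln(\alpha) > \ln\!\left(\frac{\varepsilon}{\kappa_1}\right) = -\ln\!\left(\frac{\kappa_1}{\varepsilon}\right)\!,
	\end{equation}
	so that $ N_{\varepsilon} - 1 < \frac{\ln(\kappa_1/\varepsilon)}{\ln(1/\alpha)} $ and hence $ N_{\varepsilon} < 1 + \frac{\ln(\kappa_1/\varepsilon)}{\ln(1/\alpha)} $.
	Combining this with the fact that $ \ln(\beta) > 0 $ ensures that
	\begin{equation}
	\beta^{N_{\varepsilon}} < \beta \cdot \exp\!\left( \ln(\beta)\,\frac{\ln(\kappa_1/\varepsilon)}{\ln(1/\alpha)} \right) = \beta \cdot \exp\!\big( \gamma \ln(\kappa_1/\varepsilon) \big) = \beta \left(\frac{\kappa_1}{\varepsilon}\right)^{\gamma} = \beta\, \kappa_1^{\gamma} \left[\frac{1}{\varepsilon}\right]^{\gamma}.
	\end{equation}
	Hence, in both cases we obtain that
	\begin{equation}
	\beta^{N_{\varepsilon}} \leq \max\!\left\{ \beta^m, \beta\, \kappa_1^{\gamma} \right\} \left[\frac{1}{\varepsilon}\right]^{\gamma}.
	\end{equation}
	This and the assumption that $ c_n \leq \kappa_2 \beta^n $ for all $ n \in \N_0 \cap [m,\infty) $ demonstrate that
	\begin{equation}
	c_{N_{\varepsilon}} \leq \kappa_2 \beta^{N_{\varepsilon}} \leq \kappa_2 \max\!\left\{ \beta^m, \beta\, \kappa_1^{\gamma} \right\} \left[\frac{1}{\varepsilon}\right]^{\gamma}.
	\end{equation}
	This establishes \eqref{elementary:claim}. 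The proof of \cref{lem:elementary_comp_cost_estimate} is thus completed.
\end{proof}
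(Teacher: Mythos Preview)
Your proof is correct and follows essentially the same approach as the paper's own proof: both split into the cases $N_\varepsilon = m$ and $N_\varepsilon > m$, use minimality in the latter case to obtain $\kappa_1 \alpha^{N_\varepsilon-1} > \varepsilon$, and then take logarithms to bound $\beta^{N_\varepsilon}$. Your version additionally spells out the well-definedness of $N_\varepsilon$ and introduces the shorthand $\gamma = \tfrac{\ln(\beta)}{\ln(1/\alpha)}$, but these are purely cosmetic differences.
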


\begin{proof}[Proof of \cref{lem:elementary_comp_cost_estimate}]
	First, observe that \eqref{elementary:N_epsilon} ensures for all 
		$ \varepsilon \in (0,1] $ 
	that 
		$ e_{N_{\varepsilon}} \leq \varepsilon $. 
	Moreover, note that \eqref{elementary:N_epsilon} guarantees for all 
		$ \varepsilon \in (0,1] $ 
	with $ N_{\varepsilon} \geq m + 1 $ that $ \kappa_1 \alpha^{N_{\varepsilon}-1} > \varepsilon $. 
	Hence, we obtain for all $ \varepsilon \in (0,1] $ 
	with $ N_{\varepsilon} \geq m + 1 $ that 
		\begin{equation} 
		(N_{\varepsilon}-1) \ln(\nicefrac{1}{\alpha}) < \ln(\kappa_1)-\ln(\varepsilon). 
		\end{equation}
	This implies for all 
		$ \varepsilon \in (0,1] $ 
	with $ N_{\varepsilon} \geq m + 1 $ that
		\begin{equation} \label{elementary:eq01}
		\begin{split}
		c_{N_{\varepsilon}} 
		& \leq \kappa_2 \beta^{N_{\varepsilon}} 
		= \kappa_2 \beta \exp((N_{\varepsilon}-1)\ln(\beta)) 
		\leq 
		\kappa_2 \beta \exp\!\left((\ln(\kappa_1)-\ln(\varepsilon))\frac{\ln(\beta)}{\ln(\nicefrac{1}{\alpha})}\right)
		\\
		& = \left[ \kappa_2 \beta \kappa_1^{(\frac{\ln(\beta)}{\ln(1/\alpha)})} \right] \left[ \frac{1}{\varepsilon} \right]^{(\frac{\ln(\beta)}{\ln(1/\alpha)})}. 
		\end{split} 
		\end{equation} 
	Next note that the assumption that for all 
		$ n \in \N_0 \cap [m,\infty) $ 
	it holds that 
		$ c_n \leq \kappa_2 \beta^n $ 
	and the fact that 
		$ \frac{\ln(\beta)}{\ln(\nicefrac{1}{\alpha})} \in (0,\infty) $
	ensure that for all 
		$ \varepsilon \in (0,1] $ 
	with $ N_{\varepsilon} = m $ it holds that
		\begin{equation} 
		c_{N_{\varepsilon}} = c_m \leq \kappa_2 \beta^m \leq \kappa_2 \beta^m \left[ \frac{1}{\varepsilon} \right]^{(\frac{\ln(\beta)}{\ln(1/\alpha)})}. 
		\end{equation} 
	This and \eqref{elementary:eq01} establish \eqref{elementary:claim}. The proof of \cref{lem:elementary_comp_cost_estimate} is thus completed. 
\end{proof} 

\subsection{Overall complexity analysis for MLP approximations}
\label{subsec:complexity_analysis}

\begin{theorem} \label{prop:overcoming_the_curse}
	Let $ \kappa,L,\mf n,p,q,r,s \in [0,\infty) $, 
		$ \lambda \in (L,\infty) $,
		$ M \in \N \cap ( (\sqrt{\lambda}+\sqrt{L})^2 (\sqrt{\lambda}-\sqrt{L})^{-2} , \infty ) $, 
		$ \alpha = -\ln(3M)[\ln( \nicefrac{1}{\sqrt{M}} + (1+\nicefrac{1}{\sqrt{M}})\sqrt{\nicefrac{L}{\lambda}} )]^{-1} $, 
		$ \Theta = \cup_{n\in\N} \Z^n $, 
	let $ u_d \in C(\R^d,\R) $, $ d \in \N $, 
	let $ B_d \in \R^{d\times d} $, $ d \in \N $, 
	let $ f_d \in C(\R^d\times\R,\R) $, $ d \in \N $,  
	assume for every $ d \in \N $ that $ u_d $ is a viscosity solution of 
		\begin{equation} 
		\tfrac12\operatorname{Trace}(B_d(B_d)^{*}(\operatorname{Hess} u_d)(x)) = f_d(x,u_d(x)) 
		\end{equation} 
	for $ x \in \R^d $, 
	let $ ( \Omega, \mc F, \P ) $ be a probability space, 
	let $ W^{d,\theta} \colon [0,\infty) \times \Omega \to \R^d $, $ \theta\in\Theta$, $ d \in \N $, be i.i.d.~standard Brownian motions, 
	let $ R^{\theta} \colon \Omega \to [0,\infty) $, $ \theta \in \Theta $, be i.i.d.~random variables, 
	assume that 
		$ (R^{\theta})_{\theta\in\Theta} $ and $ (W^{d,\theta})_{(d,\theta)\in\N\times\Theta}$ are independent, 
	let	$ \norm{\cdot} \colon (\cup_{d\in\N} \R^d) \to [0,\infty) $ satisfy for all 
		$ d \in \N $, 
		$ x = (x_1,\ldots,x_d) \in \R^d $ 
	that 
		$ \norm{x} = [ \sum_{i=1}^d |x_i|^2 ]^{\nicefrac12} $, 
	assume for all 
		$ d \in \N $, 
		$ \varepsilon \in (0,\infty) $, 
		$ x \in \R^d $, 
		$ v,w \in \R $ 
	that 
		$ \Norm{B_d x} \leq \kappa d^r \Norm{x} $, 
		$ | f_d(x,0) | \leq \kappa d^{s}( 1 + \norm{x}^p ) $, 	
		$ | f_d(x,v) - f_d(x,w) - \lambda ( v - w ) | \leq L | v-w | $, 
		$ \sup_{y\in\R^d} [|u_d(y)|\exp(-\varepsilon \Norm{y})] < \infty $,  
	and  	
		$ \P(R^0 \geq \varepsilon) = e^{-\lambda \varepsilon} $, 
	let $ U^{d,\theta}_{n} = (U^{d,\theta}_{n}(x))_{x\in\R^d}\colon\R^d\times\Omega\to\R$, $\theta\in\Theta$, $ d, n \in \N_0 $, 
	satisfy for all 
		$ d,n \in \N $, 
		$ \theta\in\Theta $, 
		$ x \in \R^d $
	that 
		$ U^{d,\theta}_{0}(x) = 0 $ 
	and 
		\begin{equation} 
		\begin{split}
		&U^{d,\theta}_{n}(x) 
		=  
		\frac{-1}{\lambda M^n} \left[ 
		\sum_{m=1}^{M^n} f_d(x+B_dW^{d,(\theta,0,m)}_{R^{(\theta,0,m)}},0) 
		\right]
		\\
		& 
		+ \sum_{k=1}^{n-1} \frac{1}{\lambda M^{(n-k)}} 
		\Bigg[ 
		\sum_{m=1}^ {M^{(n-k)}} 
		\bigg( 
		\lambda \Big[  U^{d,(\theta,k,m)}_{k}(x+B_dW^{d,(\theta,k,m)}_{R^{(\theta,k,m)}})
		- 
		U^{d,(\theta,k,-m)}_{k-1}(x+B_dW^{d,(\theta,k,m)}_{R^{(\theta,k,m)}}) \Big]
		\\& 
		\qquad \qquad \qquad \qquad \qquad - 
		\Big[   
		f_d\!\left( x+B_dW^{d,(\theta,k,m)}_{R^{(\theta,k,m)}}, 
		U^{d,(\theta,k,m)}_{k}(x+B_dW^{d,(\theta,k,m)}_{R^{(\theta,k,m)}}) \right)
		\\
		& \qquad \qquad \qquad \qquad \qquad \qquad  
		- f_d\!\left(  
		x+B_dW^{d,(\theta,k,m)}_{R^{(\theta,k,m)}}, 
		U^{d,(\theta,k,-m)}_{k-1}(x+B_dW^{d,(\theta,k,m)}_{R^{(\theta,k,m)}})
		\right) 
		\Big]
		\bigg)
		\Bigg], 
		\end{split} 
		\end{equation}
	and let $ \Cost_{d,n} \in \R $, $ d,n\in\N_0$, 
	satisfy for all 
		$ d,n \in \N_0 $ 
	that 
		$ \Cost_{d,n} \leq (d+1) M^n + \sum_{k=1}^{n-1} M^{(n-k)} ( d + 1 + \Cost_{d,k} + \Cost_{d,k-1} ) $.
	Then there exist $ c \in \R $ and $ \mf N \colon (0,1] \times \N \to (\Z \cap [\mf n,\infty)) $
	such that for all 
		$ \varepsilon \in (0,1] $, 
		$ d \in \N $
	it holds that 	
		\begin{equation} \label{overcoming_the_curse:claim}
		\begin{split}
		& 
		\Cost_{d,\mf N_{\varepsilon,d}} \leq c d^{1 + \alpha ( s + p \max\{q,2r+1\} )}
		{\varepsilon}^{-\alpha} 
		\quad \text{and} \quad
		\sup\nolimits_{ x \in \R^d, \norm{x} \leq \kappa d^q}
		\big( \EXPP{ | u_d(x) - U^{d,0}_{\mf N_{\varepsilon,d}}(x) |^2 }\big)^{\!\nicefrac12} 
		\leq \varepsilon . 
		\end{split}
		\end{equation} 
\end{theorem}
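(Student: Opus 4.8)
The proof combines the $L^2$-error bound from \cref{cor:error_estimate_semilinear_elliptic} with the resolution of the cost recursion in \cref{lem:comp_cost_gronwall} and the error/cost balancing in \cref{lem:elementary_comp_cost_estimate}. The first task is to choose, for every $d\in\N$, a weight exponent $\mf e_d\in(0,1]$ that simultaneously respects the Lyapunov restriction of \cref{cor:error_estimate_semilinear_elliptic} and stays well behaved on the balls $\{x\in\R^d\colon\norm{x}\le\kappa d^q\}$. Since $\sup_{y\in\R^d\setminus\{0\}}(\norm{B_dy}\norm{y}^{-1})\le\kappa d^r$, the restriction $2(\mf e_d^2+\mf e_d d)[\sup_{y\ne0}(\norm{B_dy}\norm{y}^{-1})]^2<\lambda-L$ is implied by $2(\mf e_d^2+\mf e_d d)\kappa^2d^{2r}<\lambda-L$. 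Using $\mf e_d\le1\le d$ one has $\mf e_d^2\le\mf e_d d$, so it suffices that $4\mf e_d d\,\kappa^2d^{2r}<\lambda-L$; I would therefore fix a constant $c_0\in(0,1]$ depending only on $\kappa,L,\lambda$ and set $\mf e_d=c_0d^{-\max\{q,2r+1\}}\in(0,1]$, which gives $\mf e_d d\,\kappa^2d^{2r}\le c_0\kappa^2d^{2r+1-\max\{q,2r+1\}}\le c_0\kappa^2$ and hence, for $c_0$ small enough, both $2(\mf e_d^2+\mf e_d d)\kappa^2d^{2r}<\lambda-L$ and $\lambda-(L+2(\mf e_d^2+\mf e_d d)\kappa^2d^{2r})\ge\tfrac12(\lambda-L)>0$, uniformly in $d$.

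Next I would apply \cref{cor:error_estimate_semilinear_elliptic} with $B\is B_d$, $f\is f_d$, $u\is u_d$, $W^{\theta}\is W^{d,\theta}$, $R^{\theta}\is R^{\theta}$, $M\is M$, $\lambda\is\lambda$, $L\is L$, and $\varepsilon\is\mf e_d$ (the hypotheses hold: $f_d$ is at most polynomially growing by $\norm{B_dx}\le\kappa d^r\norm{x}$, $|f_d(x,0)|\le\kappa d^s(1+\norm{x}^p)$, and the shifted Lipschitz bound; $u_d$ lies in the required weighted space because $\sum_i|y_i|\ge\norm{y}$ gives $\sup_y(|u_d(y)|\exp(-\varepsilon\sum_i|y_i|))\le\sup_y(|u_d(y)|\exp(-\varepsilon\norm{y}))<\infty$; and $(R^{\theta})_{\theta\in\Theta}$ is independent of $(W^{d,\theta})_{\theta\in\Theta}$). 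This yields, for all $n\in\N_0$, $\theta\in\Theta$, $x\in\R^d$, that $(\EXP{|U^{d,\theta}_n(x)-u_d(x)|^2})^{\nicefrac12}\le\rho^n\,\Gamma_d\,\exp(\mf e_d(1+\norm{x}^2)^{\nicefrac12})$, where $\rho=\tfrac{1}{\sqrt M}(1+(1+\tfrac{1}{\sqrt M})\sqrt{L/\lambda})$ and $\Gamma_d=[(\sqrt\lambda-\sqrt L)\sqrt{\lambda-(L+2(\mf e_d^2+\mf e_d d)\kappa^2d^{2r})}\,]^{-1}[\sup_{y\in\R^d}(|f_d(y,0)|\exp(-\mf e_d(1+\norm{y}^2)^{\nicefrac12}))]$; the assumption $M>(\sqrt\lambda+\sqrt L)^2(\sqrt\lambda-\sqrt L)^{-2}$ is exactly what forces $\rho\in(0,1)$, and then the constant $\alpha$ of the statement equals $\ln(3M)/\ln(1/\rho)$. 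I would then bound the $d$-dependence: the denominator of $\Gamma_d$ is $\ge(\sqrt\lambda-\sqrt L)\sqrt{(\lambda-L)/2}$; using $|f_d(y,0)|\le\kappa d^s(1+\norm{y}^p)$ and $\sup_{t\ge0}(t^pe^{-\mf e_dt})=(p/(e\mf e_d))^p$ together with $\mf e_d^{-1}\le c_0^{-1}d^{\max\{q,2r+1\}}$ gives $\sup_{y\in\R^d}(|f_d(y,0)|\exp(-\mf e_d(1+\norm{y}^2)^{\nicefrac12}))\le c_1 d^{s+p\max\{q,2r+1\}}$, so $\Gamma_d\le c_2 d^{s+p\max\{q,2r+1\}}$; and for $\norm{x}\le\kappa d^q$ one has $\mf e_d(1+\norm{x}^2)^{\nicefrac12}\le\mf e_d(1+\kappa d^q)\le c_0(1+\kappa)$, so the spatial factor is bounded by a constant. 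Hence $\sup_{\norm{x}\le\kappa d^q}(\EXP{|U^{d,0}_n(x)-u_d(x)|^2})^{\nicefrac12}\le\kappa_1^{(d)}\rho^n$ with $\kappa_1^{(d)}\le c_3 d^{s+p\max\{q,2r+1\}}$.

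For the cost, the hypothesis $\Cost_{d,n}\le(d+1)M^n+\sum_{k=1}^{n-1}M^{(n-k)}(d+1+\Cost_{d,k}+\Cost_{d,k-1})$ is exactly the recursion of \cref{lem:comp_cost_gronwall} with $\alpha\is d+1$, $\beta\is d+1$, $M\is M$, $C_n\is\Cost_{d,n}$, and (from the $n=0$ case) $C_0=\Cost_{d,0}\le d+1$, whence $\Cost_{d,n}\le\tfrac{3(d+1)}{2}(3M)^n\le\kappa_2^{(d)}(3M)^n$ with $\kappa_2^{(d)}=3d$. Finally I would apply \cref{lem:elementary_comp_cost_estimate} with $m\is\lceil\mf n\rceil$, $\alpha\is\rho$, $\beta\is3M$, $\kappa_1\is\kappa_1^{(d)}$, $\kappa_2\is\kappa_2^{(d)}$, $e_n\is\sup_{\norm{x}\le\kappa d^q}(\EXP{|U^{d,0}_n(x)-u_d(x)|^2})^{\nicefrac12}$, $c_n\is\Cost_{d,n}$, and define $\mf N_{\varepsilon,d}$ to be the resulting threshold $N_{\varepsilon}\in\N_0\cap[\lceil\mf n\rceil,\infty)\subseteq\Z\cap[\mf n,\infty)$; note $0<\rho<1<3M$ (as $M\ge2$). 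Since $\ln(3M)/\ln(1/\rho)=\alpha$, \cref{lem:elementary_comp_cost_estimate} directly gives $e_{\mf N_{\varepsilon,d}}\le\varepsilon$ (the second inequality in \eqref{overcoming_the_curse:claim}) and $\Cost_{d,\mf N_{\varepsilon,d}}\le\kappa_2^{(d)}\max\{(3M)^{\lceil\mf n\rceil},(3M)(\kappa_1^{(d)})^{\alpha}\}\varepsilon^{-\alpha}$; bounding $\kappa_2^{(d)}\le3d$, $(\kappa_1^{(d)})^{\alpha}\le c_3^{\alpha}d^{\alpha(s+p\max\{q,2r+1\})}$, and using $d\ge1$ together with $1+\alpha(s+p\max\{q,2r+1\})\ge1$, this is $\le c\,d^{1+\alpha(s+p\max\{q,2r+1\})}\varepsilon^{-\alpha}$ for a suitable $c\in\R$, which is the first inequality in \eqref{overcoming_the_curse:claim}. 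The one genuinely delicate step is the $d$-uniform choice of $\mf e_d$: it must shrink fast enough in $d$ to fit inside the Lyapunov constraint of \cref{cor:error_estimate_semilinear_elliptic} (forcing $\mf e_d\lesssim d^{-(2r+1)}$) while $\mf e_d\norm{x}$ stays bounded on $\{\norm{x}\le\kappa d^q\}$ (forcing $\mf e_d\lesssim d^{-q}$), and it is precisely the interplay of these two constraints that produces the exponent $\max\{q,2r+1\}$ — hence the factor $d^{1+\alpha(s+p\max\{q,2r+1\})}$ — in the final complexity bound; the remaining steps are routine tracking of polynomial prefactors.
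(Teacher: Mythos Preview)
Your proposal is correct and follows essentially the same route as the paper: choose a $d$-dependent weight exponent of order $d^{-\max\{q,2r+1\}}$ to satisfy the Lyapunov constraint of \cref{cor:error_estimate_semilinear_elliptic} while keeping $\mf e_d\norm{x}$ bounded on the ball of radius $\kappa d^q$, then feed the resulting geometric error bound and the cost bound from \cref{lem:comp_cost_gronwall} into \cref{lem:elementary_comp_cost_estimate}. Two minor points: your displayed formula for $\rho$ has a typo (it should read $\tfrac{1}{\sqrt M}\big(1+(1+\sqrt M)\sqrt{L/\lambda}\big)=\tfrac{1}{\sqrt M}+(1+\tfrac{1}{\sqrt M})\sqrt{L/\lambda}$, matching the statement's $\alpha$), and when invoking \cref{lem:comp_cost_gronwall} you should, as the paper does, apply it to $\max\{\Cost_{d,n},0\}$ since the lemma assumes nonnegative $C_n$.
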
 

\begin{proof}[Proof of \cref{prop:overcoming_the_curse}] 
	Throughout this proof let 
		$ \lceil \cdot \rceil \colon \R \to \Z $ 
	satisfy for all 
		$ x \in \R $ 
	that 
		$ \lceil x \rceil = \min( \Z \cap [x,\infty) ) $, 		
	let $ c_1, c_2, c_3 \in \R $ 
	satisfy for all 
		$ d \in \N $, 
		$ x \in \R^d $ 
	that 
		$ \norm{B_d x} \leq c_1 d^{r} \Norm{x} $,
		$ |f_d(x,0)| \leq c_2 d^{s} (1+\norm{x}^p) $, 	
	and
		\begin{equation} 
		c_3 = \frac{4c_2((\lceil p\rceil)!)}{(\sqrt{\lambda}-\sqrt{L})^2}
		\max\left\{1,\left[\frac{8|c_1|^2}{\lambda-L}\right]^{p} \right\}
		\exp( 1 + \kappa ),  
		\end{equation} 
	and let 
		$ \eta_d \in (0,\infty) $, $ d \in \N $, 
	satisfy for all 
		$ d \in \N $ 
	that 
		$ \eta_d = \sup\{ t \in [0,\infty) \colon (8|c_1|^2 d^{2r+1} t \leq \lambda - L~\text{and}~ t d^q \leq 1) \} $. 
	Observe that the fact that for all 
		$ d \in \N $ 
	it holds that 
		$ \eta_d \leq 1 $, 
	the fact that for all 
		$ d \in \N $, 
		$ x \in \R^d $ 
	it holds that 
		$ \norm{ B_d x } \leq c_1 d^r \norm{x} $,  
	and the fact that for all 
		$ d \in \N $ 
	it holds that 
		$ 8 \eta_d |c_1|^2 d^{2r+1} \leq \lambda - L $
	ensure that for all 
		$ d \in \N $
	it holds that 
		\begin{equation} 
		|\eta_d|^2 \left[ \sup_{x\in\R^d\setminus\{0\}} \left(\frac{\norm{B_dx}}{\norm{x}}\right) \right]^2 
		\leq 
		d \eta_d \left[ \sup_{x\in\R^d\setminus\{0\}} \left(\frac{\norm{B_dx}}{\norm{x}}\right) \right]^2 
		\leq 
		\eta_d |c_1|^2 d^{2r+1} \leq \frac{\lambda - L}{8}. 
		\end{equation} 
	This implies for all 
		$ d \in \N $ 
	that 
		\begin{equation} 
		\lambda - L - 2(|\eta_d|^2+d\eta_d) \left[\sup_{x\in\R^d\setminus\{0\}} \left(\frac{\norm{B_d x}}{\norm{x}}\right)\right]^2
		\geq
		\frac{\lambda - L}{2} . 
		\end{equation}  
	Next note that the fact that for all 
		$ d \in \N $, 
		$ x = (x_1,\ldots,x_d) \in \R^d $ 
	it holds that 
		$ \norm{x} \leq \sum_{i=1}^d |x_i| $	
	and the assumption that for all 
		$ d \in \N $, 
		$ \varepsilon \in (0,\infty) $ 
	it holds that 
		$ \sup_{ x \in \R^d } [ |u_d(x)| \exp(-\varepsilon\norm{x})] < \infty $ 
	prove that for all 
		$ d \in \N $,
		$ \varepsilon \in (0,\infty) $
	it holds that 
		\begin{equation} 
		\sup_{x=(x_1,\ldots,x_d)\in\R^d} \left[ \frac{|u_d(x)|}{\exp(\varepsilon \sum_{i=1}^d |x_i| )} \right] 
		\leq 
		\sup_{x\in\R^d} \left[ \frac{|u_d(x)|}{\exp(\varepsilon \Norm{x})} \right] < \infty. 
		\end{equation} 
	\cref{cor:error_estimate_semilinear_elliptic} (with 
		$ d \is d $, 
		$ M \is M $, 
		$ B \is B_d $, 
		$ L \is L $, 
		$ \lambda \is \lambda $, 
		$ \Theta \is \Theta $, 
		$ f \is f_d $, 
		$ (\Omega,\mc F,\P) \is (\Omega,\mc F,\P) $, 
		$ (W^{\theta})_{\theta\in\Theta} \is (W^{d,\theta})_{\theta\in\Theta} $, 
		$ (R^{\theta})_{\theta\in\Theta} \is (R^{\theta})_{\theta\in\Theta} $, 
		$ u \is u_d $
	for $ d \in \N $
	in the notation of \cref{cor:error_estimate_semilinear_elliptic}) therefore ensures for all 
		$ d \in \N $, 
		$ n \in \N_0 $, 
		$ x \in \R^d $
	that 
		\begin{equation} 
		\begin{split}
		&
		\left( 
		\Exp{|U^{d,0}_{n}(x) - u_d(x)|^2}
		\right)^{\!\nicefrac12}
		\\
		& \leq 
		\left[ \frac{1}{\sqrt{M}}\left( 1 + ( 1 + \sqrt{M} ) \sqrt{\nicefrac{L}{\lambda}} \right) \right]^n 
		\left[ \sup_{y\in\R^d} \left( \frac{|f_d(y,0)|}{\exp(\eta_d ( 1 + \norm{y}^2 )^{\nicefrac12})} \right) \right] 
		\frac{\sqrt2\exp(\eta_d(1+\norm{x}^2)^{\nicefrac12})}{(\sqrt{\lambda}-\sqrt{L})\sqrt{\lambda - L}}
		\\
		& \leq 
		2c_2 \left[ \frac{1}{\sqrt{M}}\left( 1 + ( 1 + \sqrt{M} )\sqrt{\nicefrac{L}{\lambda}} \right) \right]^n 
		\left[ \sup_{y\in\R^d} \left( \frac{ d^{s}(1 + \norm{y}^p)  }{\exp(\eta_d(1+\norm{y}^2)^{\nicefrac12})} \right) \right]\frac{\exp(\eta_d(1+\norm{x}^2)^{\nicefrac{1}{2}})}{(\sqrt{\lambda}-\sqrt{L})^2}
		\\
		& \leq 
		4c_2 \left[ \frac{1}{\sqrt{M}}\left( 1 + ( 1 + \sqrt{M} )\sqrt{\nicefrac{L}{\lambda}} \right) \right]^n 
		\left[ \sup_{y\in\R^d} \left( \frac{ d^s(1 + \norm{y}^2)^{\nicefrac{p}{2}}  }{\exp(\eta_d(1+\norm{y}^2)^{\nicefrac12})} \right) \right]\frac{\exp(\eta_d(1+\norm{x}^2)^{\nicefrac{1}{2}})}{(\sqrt{\lambda}-\sqrt{L})^2}
		. 
		\end{split} 
		\end{equation} 
	The fact that for all 
		$ x \in (0,\infty) $
	it holds that 
		$ x^p \leq ((\lceil p \rceil)!)\exp(x) $
	and the fact that for all 
		$ d \in \N $ 
	it holds that 
		$ \eta_d d^q \leq 1 $
	hence imply that for all 
		$ d \in \N $, 
		$ n \in \N_0 $
	it holds that 
		\begin{equation}  \label{overcoming_the_curse:error_estimate}
		\begin{split}
		& 
		\sup_{x \in \R^d, \norm{x} \leq \kappa d^q}
		\left( 
		\Exp{|U^{d,0}_{n}(x) - u_d(x)|^2}
		\right)^{\!\nicefrac12} 
		\\&
		\leq 
		\frac{4c_2((\lceil p \rceil)!)}{(\sqrt{\lambda}-\sqrt{L})^2}
		\left[ \frac{1}{\sqrt{M}}\left( 1 + ( 1 + \sqrt{M} ) \sqrt{\nicefrac{L}{\lambda}} \right) \right]^n 
		\frac{d^s}{\eta_d^{p}}
		 \exp\!\left( 1 + \kappa \right)  
		\\& 
		\leq 
		c_3 \max\left\{ d^{s+pq}, d^{s+p(2r+1)} \right\} \left[ \frac{1}{\sqrt{M}}\left( 1 + ( 1 + \sqrt{M} ) \sqrt{\nicefrac{L}{\lambda}} \right) \right]^n . 
		\end{split}
		\end{equation}
	Moreover, observe that \cref{lem:comp_cost_gronwall} (with 
		$ \alpha \is d+1 $, 
		$ \beta \is d+1 $, 
		$ M \is M $,
		$ (C_n)_{ n \in \N_0 } \is (\max\{\Cost_{d,n},0\})_{ n \in \N_0 } $ 
	for $ d \in \N $ 
	in the notation of \cref{lem:comp_cost_gronwall}) ensures for all 
		 $ d,n \in \N $ 
	that 
		$ \Cost_{d,n} \leq 3d(3M)^n $. 
	Combining this with the fact that for all 
		$ d \in \N $ 
	it holds that 
		$ \Cost_{d,0} \leq d+1 \leq 3d $ 
	implies that for all 
		$ d \in \N $, 
		$ n \in \N_0 $ 
	it holds that 
		$ \Cost_{d,n} \leq 3d(3M)^n $. 
	Next let 
		$ N \colon (0,1] \times \N \to \N_0 $ 
	satisfy for all 
		$ d \in \N $, 
		$ \varepsilon \in (0,1] $
	that 
		\begin{equation}
		\begin{split}
		N_{\varepsilon,d} 
		= 
		\min\left\{ n \in \N_0 \cap [ \mf n, \infty) \colon 
		c_3 d^{s+p\max\{q,2r+1\}}\left[ \frac{1}{\sqrt{M}}\left( 1 + ( 1 + \sqrt{M} ) \sqrt{\nicefrac{L}{\lambda}} \right) \right]^n 
		\leq 
		\varepsilon 
		\right\}\!. 
		\end{split}
		\end{equation} 
	\cref{lem:elementary_comp_cost_estimate} (with 
		$ \alpha \is \frac{1}{\sqrt{M}}[1+(1+\sqrt{M})\sqrt{\nicefrac{L}{\lambda}}] $, 
		$ \beta \is 3M $, 
		$ \kappa_1 \is c_3 d^{s+p\max\{q,2r+1\}} $, 
		$ \kappa_2 \is 3d $, 
		$ m \is  \lceil\mf n\rceil $, 
		$ (e_n)_{n\in\Z\cap[ \mf n ,\infty) } \is (\sup_{x\in\R^d,\norm{x}\leq \kappa d^q} (\EXP{|U^{d,0}_{n}(x)-u_d(x)|^2})^{\nicefrac12})_{n\in\Z\cap [ \mf n , \infty) } $, 
		$ (c_n)_{n\in\Z\cap[ \mf n ,\infty)} \is (\max\{ \Cost_{d,n}, 0\} )_{n\in\Z\cap[ \mf n ,\infty)}  $
	for $ d \in \N $
	in the notation of \cref{lem:elementary_comp_cost_estimate}), the fact that for all 
		$ d \in \N $,
		$ n \in \N_0 $
	it holds that $ \Cost_{d,n} \leq 3d(3M)^n $, 
	and \eqref{overcoming_the_curse:error_estimate} therefore imply that for all 
		$ d \in \N $, 
		$ \varepsilon \in (0,1] $
	it holds that 
		$ \sup_{x \in \R^d, \norm{x}\leq \kappa d^q} (\EXP{|U^{d,0}_{N_{\varepsilon,d}}(x)-u_d(x)|^2})^{\nicefrac{1}{2}} \leq \varepsilon $ 
	and 
		\begin{equation} 
		\begin{split}
		\Cost_{d,N_{\varepsilon, d}}
		& 
		\leq 
		3 d
		\max\left\{ (3M)^{\lceil \mf n \rceil}, 3M (c_3  d^{ s + p \max\{ q , 2r+1 \} })^{\alpha} \right\}
		\left[ \frac{1}{\varepsilon} \right]^{\alpha}
		\\
		& \leq  
		\max\left\{ 3 (3M)^{\lceil \mf n \rceil}, 9M c_3^{\alpha} \right\}
		d^{ 1 + ( s + p \max\{ q , 2r+1 \} ) \,\alpha }
		\left[ \frac{1}{\varepsilon} \right]^{\alpha}. 
		\end{split}
		\end{equation} 
	This establishes \eqref{overcoming_the_curse:claim}. The proof of \cref{prop:overcoming_the_curse} is thus completed. 
\end{proof}

\begin{cor} \label{prop:overcoming_the_curse_2}
	Let $ c,L \in [0,\infty) $, 
		$ \lambda \in (L,\infty) $,
		$ M \in \N \cap ( (\sqrt{\lambda}+\sqrt{L})^2(\sqrt{\lambda}-\sqrt{L})^{-2}, \infty ) $, 
		$ \Theta = \cup_{n\in\N} \Z^n $, 
	let $ u_d \in C(\R^d,\R) $, $ d \in \N $, 
	let $ B_d \in \R^{d\times d} $, $ d \in \N $, 
	let $ f_d \in C(\R^d\times\R,\R) $, $ d \in \N $, 
	assume for every $ d \in \N $ that $ u_d $ is a viscosity solution of 
		\begin{equation} 
		\tfrac12\operatorname{Trace}(B_d(B_d)^{*}(\operatorname{Hess} u_d)(x)) = f_d(x,u_d(x)) 
		\end{equation} 
	for $ x \in \R^d $, 
	let $ ( \Omega, \mc F, \P ) $ be a probability space, 
	let $ W^{d,\theta} \colon [0,\infty) \times \Omega \to \R^d $, $ \theta\in\Theta$, $ d \in \N $, be i.i.d.~standard Brownian motions, 
	let $ R^{\theta} \colon \Omega \to [0,\infty) $, $ \theta \in \Theta $, be i.i.d.~random variables, 
	assume that 
		$ (R^{\theta})_{\theta\in\Theta} $ and $ (W^{d,\theta})_{(d,\theta)\in\N\times\Theta}$ are independent, 
	let	$ \norm{\cdot} \colon (\cup_{d\in\N} \R^d) \to [0,\infty) $ satisfy for all 
		$ d \in \N $, 
		$ x = (x_1,\ldots,x_d) \in \R^d $ 
	that 
		$ \norm{x} = [ \sum_{i=1}^d |x_i|^2 ]^{\nicefrac12} $, 
	assume for all 
		$ d \in \N $, 
		$ x \in \R^d $, 
		$ v,w \in \R $, 
		$ \varepsilon \in (0,\infty) $ 
	that 
		$ \norm{B_dx} \leq c d^{c} \Norm{x} $,  
		$ | f_d(x,v) - f_d(x,w) - \lambda ( v - w ) | \leq L | v-w | $, 
		$ |f_d(x,0)| \leq c d^{c} (1+\norm{x}^c) $,  
		$ \sup_{y\in\R^d} [|u_d(y)|\exp(-\varepsilon \Norm{y})] < \infty $,  
	and 
		$ \P(R^0 \geq \varepsilon) = e^{-\lambda \varepsilon} $, 
	let $ U^{d,\theta}_{n} = (U^{d,\theta}_{n}(x))_{x\in\R^d}\colon\R^d\times\Omega\to\R$, $\theta\in\Theta$, $d,n\in\N_0$, 
	satisfy for all 
		$ d,n \in \N $, 
		$ \theta\in\Theta $, 
		$ x \in \R^d $
	that 
		$ U^{d,\theta}_{0}(x) = 0 $ 
	and 
		\begin{equation} 
		\begin{split}
		&U^{d,\theta}_{n}(x) 
		=  
		\frac{-1}{\lambda M^n} \left[ 
		\sum_{m=1}^{M^n} f_d(x+B_dW^{d,(\theta,0,m)}_{R^{(\theta,0,m)}},0) 
		\right]
		\\
		& 
		+ \sum_{k=1}^{n-1} \frac{1}{\lambda M^{(n-k)}} 
		\Bigg[ 
		\sum_{m=1}^ {M^{(n-k)}} 
		\bigg( 
		\lambda \Big[  U^{d,(\theta,k,m)}_{k}(x+B_dW^{d,(\theta,k,m)}_{R^{(\theta,k,m)}})
		- 
		U^{d,(\theta,k,-m)}_{k-1}(x+B_dW^{d,(\theta,k,m)}_{R^{(\theta,k,m)}}) \Big]
		\\& 
		\qquad \qquad \qquad \qquad \qquad - 
		\Big[   
		f_d\!\left( x+B_dW^{d,(\theta,k,m)}_{R^{(\theta,k,m)}}, 
		U^{d,(\theta,k,m)}_{k}(x+B_dW^{d,(\theta,k,m)}_{R^{(\theta,k,m)}}) \right)
		\\
		& \qquad \qquad \qquad \qquad \qquad \qquad  
		- f_d\!\left(  
		x+B_dW^{d,(\theta,k,m)}_{R^{(\theta,k,m)}}, 
		U^{d,(\theta,k,-m)}_{k-1}(x+B_dW^{d,(\theta,k,m)}_{R^{(\theta,k,m)}})
		\right) 
		\Big]
		\bigg)
		\Bigg], 
		\end{split} 
		\end{equation}
	and let $ \Cost_{d,n} \in \R $, $ d,n\in\N_0$, 
	satisfy for all 
		$ d,n \in \N_0 $ 
	that 
		$ \Cost_{d,n} \leq (d+1) M^n + \sum_{k=1}^{n-1} M^{(n-k)} ( d + 1 + \Cost_{d,k} + \Cost_{d,k-1} ) $. 
	Then there exist $ \kappa \in \R $ and $ \mf N \colon (0,1] \times \N \to \N $
	such that for all 
		$ \varepsilon \in (0,1] $, 
		$ d \in \N $
	it holds that 
		\begin{equation} \label{overcoming_the_curse2:claim}
		\begin{split}
		& 
		\Cost_{d,\mf N_{\varepsilon,d}} \leq \kappa d^{\kappa}
		{\varepsilon}^{-\kappa} 
		\qquad \text{and} \qquad
		\sup\nolimits_{ x \in [-cd^c,cd^c]^d }
		\big( \EXPP{ | u_d(x) - U^{d,0}_{\mf N_{\varepsilon,d}}(x) |^2 }\big)^{\!\nicefrac12} 
		\leq \varepsilon . 
		\end{split}
		\end{equation} 
\end{cor}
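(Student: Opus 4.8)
The plan is to obtain \cref{prop:overcoming_the_curse_2} as a direct specialization of \cref{prop:overcoming_the_curse}. First I would apply \cref{prop:overcoming_the_curse} with, in the notation of \cref{prop:overcoming_the_curse}, $\kappa \is \max\{c,1\}$, $\mf n \is 1$, $p \is c$, $q \is c+1$, $r \is c$, $s \is c$ (and with $L$, $\lambda$, $M$, $\Theta$, $(u_d)_{d\in\N}$, $(B_d)_{d\in\N}$, $(f_d)_{d\in\N}$, $(\Omega,\mc F,\P)$, $(W^{d,\theta})$, $(R^{\theta})$, $(U^{d,\theta}_n)$, $(\Cost_{d,n})$ unchanged). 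All hypotheses of \cref{prop:overcoming_the_curse} then hold: since $\max\{c,1\} \ge c$ one has $\norm{B_d x} \le c d^c \norm x \le \max\{c,1\}\, d^c \norm x$ and $|f_d(x,0)| \le c d^c(1+\norm x^c) \le \max\{c,1\}\, d^c(1+\norm x^c)$; the Lipschitz bound $|f_d(x,v)-f_d(x,w)-\lambda(v-w)| \le L|v-w|$, the growth bound $\sup_{y\in\R^d}[|u_d(y)|\exp(-\varepsilon\norm y)] < \infty$, the identity $\P(R^0 \ge \varepsilon) = e^{-\lambda\varepsilon}$, the viscosity-solution property of $u_d$, and the recursion for $\Cost_{d,n}$ transfer verbatim; and the condition $M \in \N \cap ((\sqrt\lambda+\sqrt L)^2(\sqrt\lambda-\sqrt L)^{-2},\infty)$ is identical. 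Hence \cref{prop:overcoming_the_curse} produces a constant $\mathbf c \in \R$ and a map $\mf N \colon (0,1]\times\N \to \Z\cap[1,\infty) = \N$ with the two asserted properties, relative to the region $\{x \in \R^d \colon \norm x \le \max\{c,1\}\, d^{c+1}\}$.

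Next I would replace this $\ell^2$-ball region by the $\ell^\infty$-box region required in \eqref{overcoming_the_curse2:claim}. For every $d \in \N$ and every $x = (x_1,\ldots,x_d) \in [-cd^c,cd^c]^d$ one has $\norm x = (\sum_{i=1}^d |x_i|^2)^{\nicefrac12} \le (d\, c^2 d^{2c})^{\nicefrac12} = c\, d^{c+\nicefrac12} \le \max\{c,1\}\, d^{c+1}$, so $[-cd^c,cd^c]^d \subseteq \{x\in\R^d\colon\norm x \le \max\{c,1\}\, d^{c+1}\}$ (the inclusion holds trivially when $c=0$, both sets then reducing to $\{0\}$). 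Consequently the $L^2$-error supremum over the box is at most the supremum over the ball, which by the previous paragraph is bounded by $\varepsilon$; this gives the second inequality in \eqref{overcoming_the_curse2:claim}.

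Finally I would convert the cost estimate. With the above parameter choices \cref{prop:overcoming_the_curse} gives, for all $\varepsilon\in(0,1]$ and $d\in\N$, $\Cost_{d,\mf N_{\varepsilon,d}} \le \mathbf c\, d^{\,1 + \alpha(c + c\max\{c+1,\,2c+1\})}\varepsilon^{-\alpha}$, where $\alpha = -\ln(3M)[\ln(\nicefrac{1}{\sqrt M} + (1+\nicefrac{1}{\sqrt M})\sqrt{\nicefrac L\lambda})]^{-1}$; here the assumption on $M$ forces $\nicefrac{1}{\sqrt M} + (1+\nicefrac{1}{\sqrt M})\sqrt{\nicefrac L\lambda} < 1$ and hence $\alpha\in(0,\infty)$. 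Taking $\kappa = \max\{\mathbf c,\ 1 + \alpha(c + c\max\{c+1,2c+1\}),\ \alpha\} \in \R$ and using $\varepsilon^{-\alpha} \le \varepsilon^{-\kappa}$ for $\varepsilon\in(0,1]$ then yields $\Cost_{d,\mf N_{\varepsilon,d}} \le \kappa\, d^{\kappa}\varepsilon^{-\kappa}$, which is the first inequality in \eqref{overcoming_the_curse2:claim}; since $\mf N$ takes values in $\N$, this completes the proof. There is no genuinely hard step here; the only points demanding care are the norm comparison (the extra factor $d^{\nicefrac12}$ is absorbed by the choice $q = c+1$), checking that the polynomial-growth hypotheses of \cref{prop:overcoming_the_curse} subsume those of \cref{prop:overcoming_the_curse_2}, and keeping the input symbol $\kappa$ of \cref{prop:overcoming_the_curse} notationally distinct from the output constant $\kappa$ of \cref{prop:overcoming_the_curse_2} (hence the use of $\mathbf c$ above).
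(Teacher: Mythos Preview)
Your proposal is correct and follows essentially the same approach as the paper: both apply \cref{prop:overcoming_the_curse} with $p=r=s=c$ and $\mf n=1$, use the norm comparison $\norm{x}\le c d^{c+\nicefrac12}$ for $x\in[-cd^c,cd^c]^d$ to pass from the $\ell^2$-ball to the box, and then absorb the resulting exponents into a single constant $\kappa$. The only differences are cosmetic parameter choices: the paper takes $\kappa\is c$ and $q\is c+\nicefrac12$ (so the ball radius matches the box diagonal exactly), whereas you take $\kappa\is\max\{c,1\}$ and $q\is c+1$; both are valid and lead to the same final exponent $1+2(c^2+c)\alpha$ since $\max\{c+1,2c+1\}=2c+1$ for $c\ge 0$.
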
 

\begin{proof}[Proof of \cref{prop:overcoming_the_curse_2}]
	Throughout this proof let $ \alpha \in \R $ satisfy	 
		$ \alpha = -\ln(3M)[\ln(\nicefrac{1}{\sqrt{M}}+(1+\nicefrac{1}{\sqrt{M}})\sqrt{\nicefrac{L}{\lambda}})]^{-1} $. 
	Note that \cref{prop:overcoming_the_curse} (with 
		$ \kappa \is c $, 
		$ L \is L $, 
		$ \mf n \is 1 $,
		$ p \is c $, 
		$ q \is c+\nicefrac12 $, 
		$ r \is c $, 
		$ s \is c $,
		$ \lambda \is \lambda $, 
		$ M \is M $, 
		$ \Theta \is \Theta $, 
		$ (u_d)_{ d \in \N } \is (u_d)_{ d \in \N } $, 
		$ (B_d)_{ d \in \N } \is (B_d)_{ d \in \N } $, 
		$ (f_d)_{ d \in \N } \is (f_d)_{ d \in \N } $, 
		$ ( \Omega, \mc F, \P ) \is ( \Omega, \mc F, \P ) $, 
		$ (W^{d,\theta})_{(d,\theta)\in\N\times\Theta} \is (W^{d,\theta})_{(d,\theta)\in\N\times\Theta} $, 
		$ (R^{\theta})_{ \theta \in \Theta } \is (R^{\theta})_{ \theta \in \Theta } $, 
		$ (\Cost_{d,n})_{ (d,n) \in \N_0\times\N_0 } \is (\Cost_{d,n})_{ (d,n) \in \N_0\times\N_0 } $ 		
	in the notation of \cref{prop:overcoming_the_curse}) ensures that there exist $ \gamma \in (0,\infty) $ and $ \mf N \colon (0,1] \times \N \to \N $ which satisfy for all 
		$ \varepsilon \in (0,1] $, 
		$ d \in \N $
	that 
		\begin{equation} 
		\Cost_{d,\mf N_{\varepsilon,d}} \leq \gamma d^{1+2(c^2+c)\alpha} \varepsilon^{-\alpha} 
		\qquad
		\text{and} 
		\,\,\,
		\sup_{ x \in \R^d, \norm{x} \leq c d^{(c+\nicefrac12)}} 
		\left( \Exp{ \big| U^{d,0}_{\mf N_{\varepsilon,d}}(x) - u_d(x) \big|^2 }\right)^{\!\nicefrac12} 
		\leq \varepsilon . 
		\end{equation} 
	This, the fact that for all 
		$ d \in \N $, $ x \in [-cd^c,cd^c]^d $ 
	it holds that 
		$ \norm{x} \leq c d^{c+\nicefrac12} $, 
	and   
	the fact that for all 
		$ d \in \N $, $ \varepsilon \in (0,1] $ 
	it holds that 
		$ \gamma d^{1+2(c^2+c)\alpha} \varepsilon^{-\alpha} \leq \max\{1,\gamma,\alpha,1+2(c^2+c)\alpha\} (\nicefrac{d}{\varepsilon})^{\max\{1,\gamma,\alpha,1+2(c^2+c)\alpha\}} $ 
	establish \eqref{overcoming_the_curse2:claim}. The proof of \cref{prop:overcoming_the_curse_2} is thus completed. 
\end{proof} 

\subsection*{Acknowledgements}

The third author acknowledges funding by the Deutsche Forschungsgemeinschaft (DFG, German Research Foundation) under Germany's Excellence Strategy EXC 2044-390685587, Mathematics Muenster: Dynamics-Geometry-Structure. 

\bibliographystyle{acm}
\bibliography{References}

\end{document}